\newcommand{\Aa}{\mathcal{A}}
\newcommand{\Dd}{\mathcal{D}}
\newcommand{\Gg}{\mathcal{G}}
\newcommand{\Bb}{\mathcal{B}}
\newcommand{\Cc}{\mathcal{C}}
\newcommand{\Rr}{\mathcal{R}}
\newcommand{\Hh}{\mathcal{H}}
\newcommand{\Ww}{\mathcal{W}}
\newcommand{\Ll}{\mathcal{L}}
\newcommand{\Ff}{\mathcal{F}}
\newcommand{\Kk}{\mathcal{K}}
\newcommand{\R}{\mathbb{R}}
\newcommand{\N}{\mathbb{N}}
\newcommand{\C}{\mathbb{C}}
\newcommand{\forms}{\mathbf{s}}
\newcommand{\formb}{\mathbf{b}}
\newcommand{\formc}{\mathbf{c}}
\newcommand{\Cn}{\C^n}
\newcommand{\Cnn}{\C^{n\times n}}
\newcommand{\Rn}{\R^n}
\newcommand{\Loneloc}{L^1_{\operatorname{loc}}}
\newcommand{\dprod}[2]{\left( #1,#2 \right)}
\newcommand{\iprod}[2]{\langle #1,#2 \rangle}
\newcommand{\Coo}{C_0^\infty}
\newcommand{\norm}[1]{\lVert#1\rVert}
\newcommand{\abs}[1]{\lvert#1\rvert}
\newcommand{\set}[2]{\{#1:#2\}}
\newcommand{\ls}{\operatorname{span}}
\newcommand{\sgn}{\operatorname{sgn}}
\newcommand{\ran}{\operatorname{ran}}
\newcommand{\dom}{\operatorname{dom}}
\newcommand{\diag}{\operatorname{diag}}
\newcommand{\dist}{\operatorname{dist}}
\newcommand{\re}{\operatorname{Re}}
\newcommand{\im}{\operatorname{Im}}
\newcommand{\e}{\operatorname{e}}
\renewcommand{\d}{\,{\rm d}}
\renewcommand{\i}{\operatorname{i}}
\newcommand{\coker}{\operatorname{coker}}
\newcommand{\essran}{\operatorname{ess} \ran}
\newcommand{\boldpi}{\boldsymbol\pi}
\DeclareMathOperator*{\esssup}{ess\,sup}
\newcommand{\eps}{\varepsilon}
\newcommand{\la}{\lambda}
\newcommand{\pspec}{\sigma_{\operatorname{p}}}
\newcommand{\essspec}{\sigma_{\operatorname{e2}}}
\newcommand{\defeq}{\vcentcolon=}
\newcommand{\eqdef}{=\vcentcolon}
\newtheorem{thm}{Theorem}[section]
\newtheorem{cor}[thm]{Corollary}
\newtheorem{lem}[thm]{Lemma}
\newtheorem{prop}[thm]{Proposition}
\theoremstyle{definition}
\newtheorem{defi}[thm]{Definition}
\newtheorem{exple}[thm]{Example}
\newtheorem{asm}[thm]{Assumption}
\theoremstyle{remark}
\newtheorem{rem}[thm]{Remark}
\numberwithin{equation}{section}
\setlist{leftmargin=9mm}
\begin{document}
	
\title{Schur complement dominant operator matrices}

\author{Borbala Gerhat}

\address{
	Mathematical Institute, University of Bern, Sidlerstrasse 5, 3012 Bern, CH}
\curraddr{
Department of Mathematics, Faculty of Nuclear Sciences and Physical Engineering, Czech Technical University in Prague, Trojanova 13, 120 00 Prague, Czech Republic}

\email{borbala.gerhat@fjfi.cvut.cz}

\subjclass[2010]{47A08, 47A10, 47A56,  35P05, 35L05, 35G35, 35Q40, 47D06}

\keywords{Schur complement, operator matrices, distributional triplets, generalised form methods, essential spectrum, semi-Fredholmness, damped wave equations, Dirac operators}

\begin{abstract}

We propose a method for the spectral analysis of unbounded operator matrices in a general setting which fully abstains from standard perturbative arguments. Rather than requiring the matrix to act in a Hilbert space $\mathcal{H}$, we extend its action to a suitable distributional triple $\Dd \subset \Hh \subset \Dd_-$ and restrict it to its maximal domain in $\Hh$. The crucial point in our approach is the choice of the spaces $\Dd$ and $\Dd_-$ which are essentially determined by the Schur complement of the matrix. We show spectral equivalence between the resulting operator matrix in $\Hh$ and its Schur complement, which allows to pass from a suitable representation of the Schur complement (e.g.~by generalised form methods) to a representation of the operator matrix. We thereby generalise classical spectral equivalence results imposing standard dominance patterns.

The  abstract results are applied to damped wave equations with possibly unbounded and/or singular damping, to Dirac operators with Coulomb-type potentials, as well as to generic second order matrix differential operators. By means of our methods, previous regularity assumptions can be weakened substantially.

\end{abstract}

\thanks{The author gratefully acknowledges the support of the \emph{Swiss National Science Foundation}, grant No.~169104. This work was also partially supported by the EXPRO grant No.~20-17749X of the \emph{Czech Science Foundation}. The author would like to thank the \emph{SEMP} student exchange program for their support, as well as the \emph{Queen's University Belfast} for their hospitality. She expresses her gratitude for fruitful mathematical discussions with \emph{O. Ibrogimov} and \emph{P. Siegl}, Graz University of Technology. Finally, she thanks \emph{J.-C. Cuenin}, Loughborough University, and her former PhD supervisor \emph{C. Tretter}, University of Bern, for drawing her attention to the relevant references \cite{Esteban-Loss-2007,Esteban-Loss-2008} and~\cite{Schimmer-Solovej-Tokus-2020}, respectively}.

\date{23 May 2021}
\maketitle

\section{Introduction}

Motivated by a wide range of applications, unbounded (non-self-adjoint) operator matrices emerge from coupled systems of linear partial differential equations and have been of considerable interest, see e.g.\ the monograph \cite{Tretter-2008} and the references therein.
%
%
Typically, the spectral analysis of such problems is rather challenging, starting with the non-trivial task of determining a suitable domain of definition on which the resulting operator matrix is closed and has non-empty resolvent set. An often fruitful approach is to establish a certain spectral correspondence between the operator matrix
\begin{equation*}
	\Aa = \left(
	\begin{array}{cc}
		A & B \\
		C & D
	\end{array}
	\right): \Hh \supset \dom \Aa \to \Hh, \qquad \Hh = \Hh_1 \oplus \Hh_2,
\end{equation*}
and one of its (two) so-called Schur complements $S (\cdot)$, the (scalar) operator family
\begin{equation}
	S(\lambda)=A-\lambda-B(D-\lambda)^{-1}C : \Hh_1 \supset \dom S(\la) \to \Hh_1, \qquad \lambda \in \rho (D).
\end{equation}
Although the latter gives rise to a non-linear spectral problem, in general, more methods are available for its analysis.
Up to now, the spectral correspondence described above has mainly been achieved by taking advantage of certain patterns of relative boundedness between the entries of the operator matrix, see e.g.~the pioneering work~\cite{Nagel-1989} or the more recent article~\cite{Langer-Strauss-2017}. There are several results in \cite{Esteban-Loss-2007, Esteban-Loss-2008, Freitas-Siegl-Tretter-2018, Ibrogimov-2017, Ibrogimov-Siegl-Tretter-2016, Ibrogimov-Tretter-2017, Schimmer-Solovej-Tokus-2020}, however, which seem to abstain from this type of perturbative argument. Aiming to capture these phenomena, we propose a more general framework for operator matrices and allow a systematic approach to the spectral analysis of a wider class of problems. We point out that even though our approach was inspired by the conceptual observations in \cite{Freitas-Siegl-Tretter-2018, Ibrogimov-2017, Ibrogimov-Siegl-Tretter-2016, Ibrogimov-Tretter-2017}, its scope goes beyond the latter; not only are our results due to their abstract nature much more versatile, but even applied to particular problems in the mentioned references, they allow much weaker natural and in some sense minimal (even distributional) regularity of the coefficients, see the applications in Sections~\ref{sec:DWE} and~\ref{sec:singular.matrix.DE}.
%

%
Our method combines a distributional technique with the assumption that, in a certain sense, the Schur complement dominates all other terms in the Frobenius-Schur factorisation of the resolvent. Said distributional approach consists of extending the operator matrix to a suitable triplet of Hilbert spaces
\begin{equation*}
	\Dd \subset \Hh \subset \Dd_-,
\end{equation*}
where each of the above inclusions represents a continuous embedding with dense range. More precisely, we define the action of $\Aa$ on a space of test functions $\Dd$ with values in a space of distributions $\Dd_-$ and consider its restriction $\Aa_0$ to the maximal domain
\begin{equation}
	\dom \Aa_0 = \set{\mathbf x \in \Dd}{\Aa \mathbf x \in \Hh}.
\end{equation}
This method has been employed successfully in the past; however, except in~\cite{Esteban-Loss-2007, Esteban-Loss-2008, Schimmer-Solovej-Tokus-2020}, it seems that the spaces of test functions and distributions have consistently been determined by the underlying patterns of relative boundedness within the operator matrix, e.g.\ as form domain of some entry and its dual space in \cite[Chap.\ 1.2.1]{Ammari-Nicaise-2015} or \cite{Jacob-Tretter-Trunk-Vogt-2018}.
The key novelty in our approach is to choose the spaces $\Dd$ and $\Dd_-$ in a way that the Schur complement $S (\cdot)$ consists of bounded and boundedly invertible operators between their first components $\Dd_S$ and $\Dd_{-S}$. This choice guarantees the required dominance of the Schur complement and allows us to relate invertibility and semi-Fredholmness of $\Aa_0$ to invertibility and semi-Fredholmness of $S_0 (\cdot)$ defined as family of maximal operators in $\Hh_1$. We thus obtain equivalence of their (point and essential) spectra, which in applications might eventually lead to desired properties like closedness and non-empty resolvent set of the operator matrix and its semigroup generation. Our technique can be viewed as a strategy to relate a well-behaved representation of the Schur complement, which can be implemented e.g.~by the generalised form methods in~\cite{Almog-Helffer-2015}, to a well-behaved representation of the operator matrix. We thereby extend classical results on spectral equivalence due to standard dominance patterns as e.g.~diagonal dominance in~\cite{Nagel-1989}, see Remark~\ref{rem:diag.dom}.

Although not closely related to our framework, we mention another non-standard approach in~\cite{Nagel-1990} towards the spectral analysis of operator matrices. The setting therein, however, covers problems of different type and essentially aims at incorporating mixing boundary conditions. From a structural point of view,~\cite{Nagel-1990} is more restrictive than our approach since it requires diagonal dominance of the underlying operator matrix; the dominance assumption therein was relaxed to more general patterns in \cite{Batkai-Binding-Dijksma-Hryniv-Langer-2005}.

We apply our abstract results to linearly damped wave equations on $\Omega \subset \Rn$ with non-negative damping $a$ and potential $q$, which give rise to the operator matrix
\begin{equation}
\label{eq:intro.DWE}
	\Aa = \left(
	\begin{array}{cc}
	0 & I \\
	\Delta - q & -2a
	\end{array}
	\right)
\end{equation}
in a suitable Hilbert space. Unlike in most of the previously existing results, see e.g.\ \cite{Ammari-Nicaise-2015, Gesztesy-Goldstein-Holden-Teschl-2012, Jacob-Tretter-Trunk-Vogt-2018}, we allow the damping and potential to be singular and/or unbounded at infinity and do not require the damping to be relatively bounded with respect to $\Delta - q$. To the best of our knowledge, this case has only been covered in the works \cite{Freitas-Siegl-Tretter-2018, Ikehata-Takeda-2020} so far.  In \cite{Freitas-Siegl-Tretter-2018}, essentially assuming that $a \in W^{1,\infty}_{\operatorname{loc}} (\overline \Omega)$ and that for every $\eps >0 $ there exists $C_\eps \ge 0$ with $\abs{\nabla a} \le \eps a^\frac{3}{2} + C_\eps$, see~\cite[Asm.~I]{Freitas-Siegl-Tretter-2018} for the precise more general assumptions, the spectral equivalence of $\Aa$ to its second Schur complement
\begin{equation}
	S (\lambda) = - \frac{1}{\lambda} (- \Delta + q + 2 \lambda a  + \lambda^2), \qquad \la \in \C \setminus \{0\},
\end{equation}
as an operator family in $L^2 (\Omega)$ is established. This leads to the generation of a contraction semigroup and thus to the existence and uniqueness of the solutions to the underlying equation. In \cite{Ikehata-Takeda-2020} on the other hand, an approximation procedure is performed to construct a unique weak solution, whose norm and total energy are shown to decay polynomially in time.
\emph{Merely assuming} $a, q \in \Loneloc (\Omega)$, our methods provide spectral correspondence between matrix and Schur complement, as well as the generation of a strongly continuous contraction semigroup. We thereby significantly generalise \cite{Freitas-Siegl-Tretter-2018}, where technical assumptions on growth and regularity of the damping are needed in order to describe the operator domain of the Schur complement. Notice that under the latter more restrictive assumptions, our realisation of the operator matrix \eqref{eq:intro.DWE} coincides with the one defined in \cite{Freitas-Siegl-Tretter-2018}, see Remark~\ref{rem:DWE.same.operator}. Moreover, we point out that our method can equally be employed to realise more general distributional dampings including the ones considered in~\cite{Ammari-Nicaise-2015, Krejcirik-Royer-2022-arxiv, Krejcirik-Kurimaiova-2020}, see Remark~\ref{rem:DWE.distr.damping}. 
As another application of our results, we present second order matrix differential operators of the form
\begin{equation}
	\label{eq:singular.DE.matrix}
	\Aa = \left(
	\begin{array}{cc}
		- \Delta + q & \nabla \cdot \formb \\
		\formc \cdot \nabla & d
	\end{array}
	\right)
\end{equation}
on $\Omega \subset \Rn$ with low regularity coefficients. Problems of this type arise in areas like magnetohydrodynamics or astrophysics and have been previously studied in e.g.\ \cite{Boegli-Marletta-2020, Ibrogimov-2017, Ibrogimov-Siegl-Tretter-2016, Ibrogimov-Tretter-2017, Konstantinov-1998, Kurasov-Lelyavin-Naboko-2008}. Our methods allow us to avoid typical technical assumptions like $q \in C (\Omega)$, $\formb, \formc \in C^1 (\Omega)^n$ and $d \in C^1 (\Omega)$, see e.g\ \cite{Ibrogimov-Siegl-Tretter-2016}. Under certain \emph{natural weak regularity conditions} on the coefficients, see Assumption~\ref{asm:IV}, we are able to define a closed realisation of the operator matrix \eqref{eq:singular.DE.matrix} in the space $L^2 (\Omega) \oplus L^2 (\Omega)$ with non-empty resolvent set. We show spectral equivalence to its first Schur complement and, imposing additional assumptions on the structure of the problem, that the operator matrix generates a  strongly continuous contraction semigroup.

Based on the application to Dirac operators subject to Coulomb type potentials in three dimensions, we compare our work to the results in~\cite{Esteban-Loss-2007,Esteban-Loss-2008,Schimmer-Solovej-Tokus-2020}; the latter came to our attention only after developing the method at hand and are placed in a more restrictive self-adjoint setting. More precisely, we construct a self-adjoint realisation of the symmetric matrix differential expression
\begin{equation}\label{eq:intro.Hardy}
	\Aa = \begin{pmatrix}
		V + 1 & - \i \sigma \cdot \nabla \\
		- \i \sigma \cdot \nabla & V - 1
	\end{pmatrix}
\end{equation}
in $L^2 (\R^3)^2 \oplus L^2 (\R^3)^2$; see Section~\ref{sec:Dirac} for details e.g.~on the precise definition of $\sigma \cdot \nabla$. Besides assuming that the real-valued potential $V$ is square integrable and bounded above, we suppose that it satisfies a Hardy-Dirac inequality, i.e.~that there exists a constant $\Lambda > \sup_{\R^3} V - 1$ such that
\begin{equation}
	\label{eq:intro.HD}
	\int_{\R^3} \frac{|\sigma \cdot \nabla f|^2}{1 + \Lambda - V} \d x + \int_{\R^3} (V + 1 - \Lambda) |f|^2 \d x \ge 0
\end{equation}
for $f\in\Coo(\R^3)^2$. It was shown in~\cite{Dolbeault-Esteban-Loss-Vega-2004,Dolbeault-Esteban-Sere-2000} that the latter holds for a class of potentials which in particular include Coulombic ones. Inequality~\eqref{eq:intro.HD} is crucial and translates into coercivity of the first Schur complement on its form domain for certain spectral parameters, which eventually leads to the desired self-adjointness by establishing a spectral gap.

In the innovative works~\cite{Esteban-Loss-2007,Esteban-Loss-2008}, the authors take a similar approach and construct the same self-adjoint Dirac operator by means of~\eqref{eq:intro.HD}. Even though they engage both a dominant Schur complement and a distributional setting, comparing to our result they require an additional assumption, see~\eqref{eq:EL.grad} below, which ensures that smooth compactly supported functions lie in the operator domain of the Schur complement. Since the Coulomb potentials $V(x) = - \nu / |x|$ satisfy latter condition, \cite{Esteban-Loss-2007, Esteban-Loss-2008} provides a distinguished self-adjoint extension of the minimal operator for the full range of parameters $\nu \in (0,1]$. In~\cite{Esteban-Lewin-Sere-2019,Esteban-Lewin-Sere-2021}, the form domain $\Dd_S$ was further described in order to obtain min-max principles for the eigenvalues in the spectral gap. Approaching the problem from a slightly different more abstract perspective, in~\cite{Schimmer-Solovej-Tokus-2020} a robust method was developed to obtain distinguished self-adjoint extensions of symmetric operators with gap (as well as to establish suitable min-max principles for their eigenvalues in the gap). Applied to the case of Dirac operators with Coulomb potentials, the technique therein allows to omit the requirement~\eqref{eq:EL.grad} in~\cite{Esteban-Loss-2007,Esteban-Loss-2008} and recovers the same set of assumptions as our method.  We point out that, while the constructions in~\cite{Esteban-Loss-2008,Schimmer-Solovej-Tokus-2020} are implemented on a fairly general abstract level, unlike our results  they strongly rely on the symmetric structure of the underlying problem.


We next study Klein-Gordon equations with purely imaginary, merely locally square integrable potentials. The physically relevant case of real potentials, in which the problem exhibits an indefinite structure, was studied using several different methods, see e.g.~\cite{Langer-Najman-Tretter-2006,Langer-Najman-Tretter-2008} where Krein spaces were employed or~\cite{Veselic-1991} where for linear potentials a different approach via oscillatory integrals was taken. The case of (purely) imaginary potentials, however, seems to have not been considered so far. In fact, the problem is then equivalent to a wave equation with suitable damping and potential and can be treated by applying our previously obtained results. We show spectral equivalence between the matrix and its second Schur complement and conclude that the resulting operator matrix is densely defined and boundedly invertible. In the particular case of the potential $V(x) = \i x$ in one dimension, we prove that the spectrum of the matrix is empty; this is not surprising considering similar results related to the complex Airy operator in e.g.~\cite{Almog-2008,Almog-Grebenkov-Helffer-2019}.

Finally, we show that our method is not limited to applications where the spaces $\Dd_S$ and $\Dd_{-S}$ are the form domain of the Schur complement and its dual space. We demonstrate this fact based on a constant coefficient differential operator matrix where $\Dd_S = H^1 (\Rn)$ and $\Dd_{-S} = H^{-2}(\Rn)$, while the natural form domain of one of the  Schur complements
\begin{equation}
	S(\lambda) = \Delta - \lambda + \Delta^2 (\Delta - \lambda)^{-1} \sqrt{- \Delta}, \qquad \lambda \in \C \setminus (-\infty, 0],
\end{equation}
in $L^2(\R^n)$ is the fractional Sobolev space $H^\frac32 (\Rn)$. We stress that this example serves an illustrative purpose only and is kept as simple as possible; relevant problems with similar structure can be found in \cite{Ibrogimov-2017,Ibrogimov-Siegl-Tretter-2016,Ibrogimov-Tretter-2017}.
This paper is organised as follows. In Section~\ref{sec:abstract.results}, we present our main abstract results and lay the ground for the spectral equivalence between $\Aa_0$ and $S_0$; the main theorems are Theorem~\ref{thm:first.implication} and~\ref{thm:second.implication}, where we show that invertibility or semi-Fredholmness of $\Aa_0$ and $S_0$, respectively, imply invertibility or semi-Fredholmness of $S_0$ and $\Aa_0$. In Section~\ref{sec:matrix.Schur.complement}, we translate the results of Section~\ref{sec:abstract.results} into several corollaries providing spectral equivalence between the operator matrix and its Schur complement as an operator family. In Section~\ref{sec:DWE}, we apply the established spectral equivalence to the damped wave equation with unbounded and/or singular damping and potential; in particular, the main Theorem~\ref{thm:DWE} states the generation of a contraction semigroup for the underlying problem. 
Section~\ref{sec:singular.matrix.DE} contains the application of our results to second order matrix differential operators with low regularity coefficients; the main results are Theorem~\ref{thm:matrix.DE}, which states spectral equivalence between matrix and Schur complement, and Theorem~\ref{thm:matrix.DE.accretive}, which shows the generation of a contraction semigroup under additional structural assumptions. Section~\ref{sec:Dirac} is devoted to Dirac operators with Coulomb type potentials; in the main Theorem~\ref{thm:Dirac}, we construct a self-adjoint realisation by establishing a spectral gap. While Section~\ref{sec:KG} concerns the Klein-Gordon equation with purely imaginary potential, Section~\ref{sec:Laplace.powers} further illustrates the nature of our results based on a constant coefficient matrix differential operator.

\subsection{Notation and preliminaries}

Let $n \in \N$ be the spatial dimension, let $\iprod{\cdot}{\cdot}_{\Cn}$ be the inner product and $\abs{\cdot}$ the euclidean norm on $\Cn$. We denote the real scalar product and tensor product on $\Cn$, respectively, by
\begin{equation}
	\xi \cdot \eta  = \sum_{j = 1}^{n} \xi_j \eta_j, \qquad \xi \otimes \eta = (\xi_j \eta_k)_{j,k} \in \Cnn, \qquad \xi, \eta \in \Cn.
\end{equation}

For $\Omega \subset \Rn$ and a measurable function $m: \Omega \to \C$, we denote by $\dom m$ the maximal domain of the corresponding multiplication operator in $L^2 (\Omega)$.
%
%
Moreover, recall the definition of the essential range
\begin{equation}
	\essran m = \set{z \in \C}{\lambda^n (m^{-1}(B_\varepsilon (z)))>0 ~ \textnormal{for any} ~ \varepsilon > 0},
\end{equation}
where $\lambda^n$ is the $n$-dimensional Lebesgue measure, and note that $\essran m$ is closed.
The space of matrix-valued locally integrable functions will be denoted
\begin{equation}
	\Loneloc (\Omega)^{n \times n} = \set{M = (M_{jk})_{j,k}: \Omega \to \Cnn}{M_{jk} \in \Loneloc (\Omega), ~1 \le j,k \le n};
\end{equation}
note that this is equivalent to $\norm{M} \in \Loneloc (\Omega)$ for any norm $\norm{\cdot}$ on $\Cnn$. The spaces $L^\infty (\Omega)^n$ and $L^2_{\operatorname{loc}} (\Omega)^n$ shall be defined analogously. Moreover, $L^2 (\Omega; \omega)$ denotes the weighted $L^2$-space on $\Omega$ with a non-negative measurable weight $\omega$.

Let $\Gg_1$, $\Gg_2$ and $\Gg$ be Hilbert spaces. The set of closed linear operators from $\Gg_1$ to $\Gg_2$ will be denoted by $\Cc(\Gg_1, \Gg_2)$, the space of everywhere defined and bounded operators by $\Bb (\Gg_1, \Gg_2)$ and the space of compact operators by $\Kk(\Gg_1, \Gg_2)$. Moreover, as usual, we write $\Cc (\Gg) = \Cc(\Gg, \Gg)$, $\Bb (\Gg) = \Bb(\Gg, \Gg)$ and $\Kk (\Gg) = \Kk (\Gg,\Gg)$.
We denote the duality pairing and anti-dual space of $\Gg$ by
\begin{equation}
	\dprod{\phi}{f}_{\Gg^* \times \Gg} = \phi (f), \quad f \in \Gg, \quad \phi \in \Gg^* = \set{\phi : \Gg \to \C}{\phi ~ \textnormal{antilinear, bounded}};
\end{equation}
note that we hereby adopt the conventions in \cite{Edmunds-Evans-1987} and work with \emph{antilinear} functionals, which allows us to identify bounded sesquilinear forms on $\Gg$ with bounded linear operators $\Gg \to \Gg^*$. This convention includes defining the space of distributions on $\Omega\subset \Rn$, denoted $\Dd'(\Omega)$, as \emph{antilinear} continuous functionals on the space of test functions $\Coo(\Omega)$.
The resolvent set, spectrum and point spectrum of a linear operator $T$ in $\Gg$ will be denoted by $\rho (T)$, $\sigma (T)$ and $\pspec (T)$, respectively. We use the following (one of five in general non-equivalent) definitions of the essential spectrum
\begin{equation}
	\essspec (T) = \set{\lambda \in \C}{T - \lambda \notin \Ff_+ (\Gg)};
\end{equation}
here the semi-Fredholm operators in $\Gg$ with finite nullity and deficiency, respectively, are defined in the following way
\begin{equation}
	\begin{aligned}
		\Ff_+ (\Gg) & = \set{T \in \Cc (\Gg)}{\ran T ~ \textnormal{closed}, ~ \dim \ker T < \infty}, \\
		\Ff_- (\Gg) & = \set{T \in \Cc (\Gg)}{\ran T ~ \textnormal{closed}, ~ \dim \coker T < \infty},
	\end{aligned}
\end{equation}
and the co-kernel of $T$ is the closed subspace $\coker T = (\ran T)^\perp$. If $T \in \Cc (\Gg)$ and $\ran T$ is closed, one can define a generalised inverse $T^\# \in \Bb (\Gg)$ of $T$ as
\begin{equation}
	\label{eq:gen.inv}
	T^\# x = \begin{cases}
		(T\vert_{\dom T \cap (\ker T)^\perp})^{-1} x & \quad x \in \ran T, \\
		0 & \quad x \in \coker T;
	\end{cases}
\end{equation}
it follows from the closedness of $T$ and $\ran T$ that the above indeed defines a bounded operator. With this definition, if $P$ and $Q$, respectively, denote the orthogonal projections on $\ker T$ and $\coker T$, then
\begin{equation}
	\label{eq:gen.inv.comp}
	\ran T^\# \subset \dom T, \qquad T T^\# = I - Q, \qquad T^\# T = I-P\vert_{\dom T}.
\end{equation}
Note that, since $T$ is closed, so is $\ker T$ and both projections above are bounded.

Throughout the entire paper, $\Hh_1$ and $\Hh_2$ denote complex Hilbert spaces and $\Hh = \Hh_1 \oplus \Hh_2$ their orthogonal sum. The canonical projections of $\Hh$ on $\Hh_1$ and $\Hh_2$, respectively, are denoted by $\pi_1$ and $\pi_2$; their adjoints $\pi_1^*$ and $\pi_2^*$ are the corresponding canonical embeddings of $\Hh_1$ and $\Hh_2$ in $\Hh$.

Finally, we write $\lesssim$ or $\gtrsim$ if the respective inequalities hold with a multiplicative constant depending only on quantities which are fixed and the dependence on which is thus irrelevant.

\section{Main abstract results}
\label{sec:abstract.results}

We present a new abstract setting for the spectral analysis of operator matrices. Under fairly general assumptions, our approach allows to establish a correspondence between semi-Fredholmness and invertibility of an operator matrix and one of its Schur complements; the latter provides a relation between (point and essential) spectra of matrix and Schur complement as an operator family, see Section~\ref{sec:matrix.Schur.complement}.

Our strategy is the following. Rather than directly defining an operator matrix in the product space $\Hh = \Hh_1 \oplus \Hh_2$, we consider its entries acting in suitable triplets \eqref{eq:triplets} and restrict the matrix action to the maximal domain in $\Hh$, see Definition~\ref{def:matrix.Schur.compl}. A crucial point of our construction is the choice of spaces $\Dd_S$ and $\Dd_{-S}$, such that the Schur complement $S: \Dd_S \to \Dd_{-S}$ is bounded and boundedly invertible. Notice also that we construct a generalised Schur complement, which unlike the classical definition does not require the entry $D$ to be boundedly invertible but merely to have a generalised inverse, see Assumption~\ref{asm:I}~(iii) and \eqref{eq:gen.inv}, \eqref{eq:gen.inv.comp}.

Proofs of the results in this section can be found in Section \ref{sec:proofs}.

\subsection{Assumptions and definitions}

We work exclusively with the first Schur complement; clearly, all results in this section hold under analogous assumptions for the second Schur complement, see Remark~\ref{rem:second.Schur.compl}.

\begin{asm} \label{asm:I}
	\begin{enumerate}
		\item Let $\Dd_S$, $\Dd_2$, $\Dd_{-S}$, $\Dd_{-1}$ and $\Dd_{-2}$ be complex Hilbert spaces. Assume that
		\begin{equation}\label{eq:triplets}
			\Dd_S \subset \Hh_1 \subset \Dd_{-S}, \qquad \Dd_2 \subset \Hh_2 \subset \Dd_{-2},
		\end{equation}
		where the corresponding canonical embeddings are continuous and have dense ranges. Moreover, let $\Dd_{-S} \subset \Dd_{-1}$ be continuously embedded.
		\item Assume that the operators $A$, $B$ and $C$ are bounded between the spaces
		\begin{equation}
			A \in \Bb(\Dd_S, \Dd_{-1}), \qquad B \in \Bb (\Dd_2, \Dd_{-1}), \qquad C \in \Bb (\Dd_S, \Dd_{-2}).
		\end{equation}
		\item Let $D_0 \in \Cc (\Hh_2)$ have closed range in $\Hh_2$, let $\dom D_0 \subset \Dd_2$ be dense in $\Dd_2$ and assume that there exist extensions
		\begin{equation}
			D_0 \subset D \in \Bb (\Dd_2, \Dd_{-2}), \qquad D_0^\# \subset D^\ddagger \in \Bb (\Dd_{-2}, \Dd_2). \tag*{//}
		\end{equation}
	\end{enumerate}
\end{asm}

\begin{rem}
		We point out that $\Dd_{-S} = \Dd_{-1}$ in the applications in Sections~\ref{sec:DWE},~\ref{sec:KG} and~\ref{sec:Laplace.powers}. However, allowing $A$ and $B$ to map to a larger space widens the range of applicability of our assumptions, see e.g.\ Sections~\ref{sec:singular.matrix.DE} and~\ref{sec:Dirac} where $\Dd_{-S} \subsetneq \Dd_{-1}$. While $\Dd_{-S}$ plays an essential role and is determined by the Schur complement, the auxiliary space $\Dd_{-1}$ merely provides an environment for the operations needed in order to define the latter; notice that in Sections~\ref{sec:singular.matrix.DE} and~\ref{sec:Dirac} it is clear from the construction that $\Dd_{-1}$ can even be chosen arbitrarily large in a suitable sense. \hfill //
\end{rem}

The operator matrix $\Aa_0$ and its first Schur complement $S_0$ are defined as the following maximal operators in the underlying Hilbert spaces. We emphasise that, although their extensions $\Aa$ and $S$ are assumed to be bounded between suitable spaces, $\Aa_0$ and $S_0$ are in general unbounded in $\Hh$ and $\Hh_1$, respectively.

\begin{defi}
	\label{def:matrix.Schur.compl}
	Let Assumption~\ref{asm:I} be satisfied. We define
	\begin{equation}
	\label{eq:def.Aa.S}
		\Aa \defeq \left(
		\begin{array}{cc}
			A & B \\
			C & D
		\end{array}
		\right) \in \Bb (\Dd, \Dd_-), \qquad S \defeq A - B D^\ddagger C \in \Bb (\Dd_S, \Dd_{-1}),
	\end{equation}
	where $\Dd \defeq \Dd_S \oplus \Dd_2$ and $\Dd_- \defeq \Dd_{-1} \oplus \Dd_{-2}$. 
	Moreover, let the corresponding maximal operators $\Aa_0 \defeq \Aa\vert_{\dom\Aa_0}$ in $\Hh$ and $S_0 \defeq S\vert_{\dom S_0}$ in $\Hh_1$ be defined on their respective domains
	\begin{equation}
		\dom \Aa_0 \defeq \set{(f,g) \in \Dd}{\Aa (f,g) \in \Hh}, \qquad \dom S_0 \defeq \set{f \in \Dd_S}{Sf \in \Hh_1}. \tag*{//}
	\end{equation}
\end{defi}

Notice that if $0 \in \rho(D_0)$, then $D^\ddagger = D^{-1}$ and the definition of $S$ reduces to the standard formula for the Schur complement, see Lemma~\ref{lem:T.inverse} below.

\begin{rem}
	\label{rem:second.Schur.compl}
	All results in the present section hold analogously for the second Schur complement. The assumptions have to be translated in a straightforward way as follows. One assumes the following inclusions between the Hilbert spaces
	\begin{equation}
		\Dd_1 \subset \Hh_1 \subset \Dd_{-1}, \qquad \Dd_S \subset \Hh_2 \subset \Dd_{-S} \subset \Dd_{-2},
	\end{equation}
	where the corresponding canonical embeddings are continuous and all except the embedding $\Dd_{-S} \hookrightarrow \Dd_{-2}$ are assumed to have dense range. Moreover,
	\begin{equation}
		B \in \Bb (\Dd_S, \Dd_{-1}), \qquad C \in \Bb (\Dd_1, \Dd_{-2}), \qquad D \in \Bb (\Dd_S, \Dd_{-2}).
	\end{equation}
	The operator $A_0 \in \Cc (\Hh_1)$ has closed range in $\Hh_1$, $\dom A_0$ is dense in $\Dd_1$ and
	\begin{equation}
		A_0 \subset A \in \Bb (\Dd_1, \Dd_{-1}), \qquad A_0^\# \subset A^\ddagger \in \Bb (\Dd_{-1}, \Dd_1).
	\end{equation}
	Then the matrix $\Aa$ is defined as in \eqref{eq:def.Aa.S} with $\Dd = \Dd_1 \oplus \Dd_S$ and the second Schur complement is given by
	\begin{equation}
		S = D - C A^\ddagger B \in \Bb (\Dd_S, \Dd_{-2}).
	\end{equation}
	With domains analogous to the ones in Definition~\ref{def:matrix.Schur.compl}, $\Aa_0$ and $S_0$, respectively, are the corresponding maximal operators in $\Hh$ and $\Hh_2$. \hfill //
\end{rem}

\subsection{Dense domain and point spectrum}

We start our ana\-ly\-sis of the relation between $\Aa_0$ and $S_0$ by providing sufficient conditions for the matrix $\Aa_0$ to be densely defined in $\Hh$.

\begin{prop}
	\label{prop:Aa.dom.dense}
	Let Assumption~{\rm\ref{asm:I}} be satisfied.
	
	\begin{enumerate}
		\item Let $\coker D_0 = \{0\}$, let $\dom S_0$ be dense in $\Hh_1$ and assume with
		\begin{equation}
			\label{eq:B.dom}
			\dom B_0 \defeq \set{f \in \Dd_2}{B f \in \Hh_1}
		\end{equation}
		that $\dom B_0 \cap \dom D_0$ is dense in $\Hh_2$. Then $\dom \Aa_0$ is dense in $\Hh$.
		
		\item Let $\Dd_{-S} = \Dd_{-1}$, let $0\in\rho(D_0) \cap \rho(S_0)$ and let $\dom S_0$ be dense in $\Dd_S$. Moreover, assume there exists $z \in \C$ such that $\ran (S_0 - z)$ is closed in $\Hh_1$ with $\coker (S_0 - z) = \{0\}$ and such that there exists an extension
		\begin{equation}\label{eq:S.gen.inv.shift.ext}
			(S_0 - z)^\# \subset S^\ddagger_z \in \Bb (\Dd_{-S}, \Dd_S).
		\end{equation}
		Then $\dom \Aa_0$ is dense in both $\Dd$ and $\Hh$.
	\end{enumerate} 
\end{prop}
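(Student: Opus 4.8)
The plan is to handle the two parts by exhibiting, in each case, a concrete subspace of $\dom \Aa_0$ whose closure fills the relevant space. In both parts the engine is the density of $\Hh_2$ in $\Dd_{-2}$ and of $\Hh_1$ in $\Dd_{-S}$ from Assumption~\ref{asm:I}(i), which promotes the generalised-inverse identities \eqref{eq:gen.inv.comp} from the pivot Hilbert spaces to the surrounding distributional spaces.

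For part (i), I would first note that $\coker D_0 = \{0\}$ gives $D_0 D_0^\# = I$ on $\Hh_2$ by \eqref{eq:gen.inv.comp}; since $DD^\ddagger \in \Bb(\Dd_{-2})$ coincides with it on the dense subspace $\Hh_2$, continuity yields $DD^\ddagger = I$ on all of $\Dd_{-2}$. Two families of elements of $\dom \Aa_0$ then appear. For $g \in \dom B_0 \cap \dom D_0$ the pair $(0,g)$ lies in $\dom \Aa_0$, its image being $(Bg, D_0 g) \in \Hh$; letting $g$ range over this set, dense in $\Hh_2$, gives $\{0\} \oplus \Hh_2 \subset \overline{\dom \Aa_0}$. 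For $f \in \dom S_0$ the pair $(f, -D^\ddagger C f)$ lies in $\dom \Aa_0$ since its first component is $Sf \in \Hh_1$ and its second component is $Cf - DD^\ddagger Cf = 0$; subtracting $(0, -D^\ddagger C f) \in \{0\}\oplus \Hh_2$, already in the closure, leaves $(f,0) \in \overline{\dom \Aa_0}$. Density of $\dom S_0$ in $\Hh_1$ then gives $\Hh_1 \oplus \{0\} \subset \overline{\dom \Aa_0}$, and the two inclusions combine to $\overline{\dom \Aa_0} = \Hh$.

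For part (ii), the assumption $0 \in \rho(D_0)$ turns $D^\ddagger$ into a genuine bounded inverse of $D \colon \Dd_2 \to \Dd_{-2}$ by Lemma~\ref{lem:T.inverse}, so $DD^\ddagger = D^\ddagger D = I$ and the Schur factorisation $\Aa = M L$ holds with $M = \bigl(\begin{smallmatrix} S & B \\ 0 & D \end{smallmatrix}\bigr)$ and $L = \bigl(\begin{smallmatrix} I & 0 \\ D^\ddagger C & I \end{smallmatrix}\bigr) \in \Bb(\Dd)$ boundedly invertible. As $L$ is a homeomorphism of $\Dd$, it suffices to prove that $\dom M_0 = L(\dom \Aa_0)$ is dense in $\Dd$; unwinding the definitions and using $D^\ddagger = D^{-1}$, this set is $\{(\tilde f, \tilde g) \in \Dd : \tilde g \in \dom D_0,\ S\tilde f + B\tilde g \in \Hh_1\}$. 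Density in $\Dd$ then transfers to density in $\Hh$ through the continuous dense embedding $\Dd \hookrightarrow \Hh$.

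The crux is to invert $S - z$ between the distributional spaces. Since $\Dd_{-1} = \Dd_{-S}$ and $\tilde f \in \Dd_S \subset \Hh_1$, the membership $S\tilde f + B\tilde g \in \Hh_1$ is equivalent to $(S-z)\tilde f + B\tilde g \in \Hh_1$. I would then argue, exactly as in part (i), that $S_z^\ddagger$ is a bounded right inverse of $S-z \in \Bb(\Dd_S, \Dd_{-S})$: by \eqref{eq:gen.inv.comp} and $\coker(S_0 - z) = \{0\}$ one has $(S-z)S_z^\ddagger = (S_0 - z)(S_0 - z)^\# = I$ on $\Hh_1$, and density of $\Hh_1$ in $\Dd_{-S}$ extends this to $(S-z)S_z^\ddagger = I$ on $\Dd_{-S}$. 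Consequently, for fixed $\tilde g \in \dom D_0$ the element $\tilde f_0 = -S_z^\ddagger B\tilde g$ solves $(S-z)\tilde f_0 + B\tilde g = 0$, and the whole solution set is the affine space $\tilde f_0 + \dom S_0$, each of whose members lies in $\dom M_0$ because the correction $\delta \in \dom S_0$ contributes $(S_0 - z)\delta \in \Hh_1$. Since $\dom S_0$ is dense in $\Dd_S$, this affine set is dense in $\Dd_S$; choosing $\tilde g \in \dom D_0$ close to $\psi$ (possible by Assumption~\ref{asm:I}(iii)) and then $\tilde f \in \tilde f_0 + \dom S_0$ close to $\phi$ approximates an arbitrary $(\phi, \psi) \in \Dd$. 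I expect the main obstacle to be precisely this passage from the Hilbert-space generalised inverse to a bounded right inverse on $\Dd_{-S}$: it is exactly here that the extension hypothesis \eqref{eq:S.gen.inv.shift.ext} is indispensable, for without $S_z^\ddagger \in \Bb(\Dd_{-S}, \Dd_S)$ one cannot solve $(S-z)\tilde f = -B\tilde g$ within $\Dd_S$.
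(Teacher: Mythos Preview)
Your argument is correct in both parts.

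For part (i), your approach coincides with the paper's: both exploit the same two families of elements, namely $(0,g)$ with $g \in \dom B_0 \cap \dom D_0$ and $(f, -D^\ddagger Cf)$ with $f \in \dom S_0$. The paper phrases the conclusion via an orthogonality argument whereas you work directly with closures, but the content is identical.

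For part (ii), you take a genuinely different route. The paper does not factorise $\Aa$; instead it leverages the machinery already established in Theorem~\ref{thm:second.implication}~(i) and the proof of Corollary~\ref{cor:second.implication}~(i) to produce a two-sided inverse $\widetilde\Ll = \Aa^{-1} \in \Bb(\Dd_-, \Dd)$, and then observes that $\dom \Aa_0 = \widetilde\Ll(\Hh)$ is dense in $\Dd$ because $\Hh$ is dense in $\Dd_-$ and $\widetilde\Ll$ is a homeomorphism. Your triangular factorisation $\Aa = ML$ reduces the problem to the upper-triangular piece $M$ and then solves $(S-z)\tilde f + B\tilde g \in \Hh_1$ directly via $S_z^\ddagger$, avoiding the full resolvent construction. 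Your argument is more self-contained---it does not invoke Theorem~\ref{thm:second.implication} or Corollary~\ref{cor:second.implication}---and in fact never uses the hypothesis $0 \in \rho(S_0)$; the paper's route is shorter because it recycles those results, but it does need that additional hypothesis to assemble $S^{-1} \in \Bb(\Dd_{-S},\Dd_S)$ from $S_z^\ddagger$.
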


If $D_0$ is boundedly invertible, the kernels and thus point spectra of $\Aa_0$ and $S_0$ are related as follows.

\begin{prop}
	\label{prop:point.spectrum}
	Let Assumption~{\rm\ref{asm:I}} be satisfied and $0 \in \rho (D_0)$. Then
	\begin{equation}
		0 \in \pspec (\Aa_0) \iff 0 \in \pspec (S_0)
	\end{equation}
	and the following identities hold
	\begin{equation}
		\ker \Aa_0 = \set{(f,-D^{-1} Cf)}{f \in \ker S_0}, \qquad \dim \ker \Aa_0 = \dim \ker S_0.
	\end{equation}
\end{prop}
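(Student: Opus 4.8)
The plan is to reduce the whole statement to the kernel of the \emph{bounded} operator $\Aa\colon\Dd\to\Dd_-$ and to solve the resulting $2\times 2$ homogeneous system explicitly, exploiting that the hypothesis $0\in\rho(D_0)$ upgrades $D$ to a boundedly invertible map between the triplet spaces. Concretely, I would first invoke Lemma~\ref{lem:T.inverse} to record that, since $0\in\rho(D_0)$, the extension $D\in\Bb(\Dd_2,\Dd_{-2})$ is boundedly invertible with $D^{-1}=D^\ddagger\in\Bb(\Dd_{-2},\Dd_2)$, so that the Schur complement of Definition~\ref{def:matrix.Schur.compl} takes the classical form $S=A-BD^{-1}C\in\Bb(\Dd_S,\Dd_{-1})$.

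The next step is to observe that kernel elements automatically satisfy the maximal-domain constraints built into $\Aa_0$ and $S_0$. Indeed, if $\Aa(f,g)=0$ then $\Aa(f,g)=0\in\Hh$, so $(f,g)\in\dom\Aa_0$; likewise $Sf=0$ gives $Sf=0\in\Hh_1$, so $f\in\dom S_0$. Hence the maximal-domain conditions are redundant on kernels, and one has $\ker\Aa_0=\set{(f,g)\in\Dd}{\Aa(f,g)=0}$ together with $\ker S_0=\set{f\in\Dd_S}{Sf=0}$. It therefore suffices to analyse the kernels of the two bounded operators between the triplet spaces.

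I would then solve the homogeneous system $\Aa(f,g)=0$, which reads $Af+Bg=0$ in $\Dd_{-1}$ and $Cf+Dg=0$ in $\Dd_{-2}$. Applying $D^{-1}=D^\ddagger$ to the second equation yields $g=-D^{-1}Cf$, an element of $\Dd_2$ since $C\in\Bb(\Dd_S,\Dd_{-2})$ and $D^\ddagger\in\Bb(\Dd_{-2},\Dd_2)$; substituting into the first equation gives $(A-BD^{-1}C)f=Sf=0$. Conversely, for any $f\in\ker S_0$ the pair $(f,-D^{-1}Cf)\in\Dd$ clearly lies in $\ker\Aa$. This establishes the claimed description $\ker\Aa_0=\set{(f,-D^{-1}Cf)}{f\in\ker S_0}$; since the assignment $f\mapsto(f,-D^{-1}Cf)$ is linear and its first component recovers $f$, it is a linear isomorphism of $\ker S_0$ onto $\ker\Aa_0$, whence $\dim\ker\Aa_0=\dim\ker S_0$ and, finally, the equivalence $0\in\pspec(\Aa_0)\iff 0\in\pspec(S_0)$.

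The argument is essentially a direct computation, so I do not expect a genuine obstacle; the only point deserving care is that the elimination of $g$ must take place within the triplet framework, which requires the bounded invertibility of $D$ as a map $\Dd_2\to\Dd_{-2}$ rather than mere invertibility of $D_0$ in $\Hh_2$. This is precisely what Lemma~\ref{lem:T.inverse} supplies, and keeping track of the spaces in which each identity holds is what makes the otherwise routine substitution legitimate.
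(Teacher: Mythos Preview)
Your proposal is correct and follows essentially the same approach as the paper: invoke Lemma~\ref{lem:T.inverse} to obtain $D^\ddagger=D^{-1}$, solve the $2\times 2$ system by eliminating $g=-D^{-1}Cf$ via the second row, substitute into the first to get $Sf=0$, and check the converse directly. Your additional remarks about the maximal-domain constraints being automatic on kernels and the explicit linear isomorphism for the dimension count are perfectly fine elaborations of what the paper leaves implicit.
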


\subsection{Semi-Fredholmness and bounded in\-ver\-ti\-bi\-li\-ty}

We proceed by establishing a relation between bounded in\-ver\-ti\-bi\-li\-ty/semi-Fred\-holm\-ness of the matrix and bounded in\-ver\-ti\-bi\-li\-ty/semi-Fred\-holm\-ness of its Schur complement. This in turn provides a connection between the (essential) spectra of $\Aa_0$ and $S_0 (\cdot)$ as an operator family, see Section~\ref{sec:matrix.Schur.complement}. Proofs of the following results can be found in Section~\ref{sec:proofs}.

\begin{thm}
	\label{thm:first.implication}
	Let Assumption~{\rm\ref{asm:I}} be satisfied.
	\begin{enumerate}
		\item Let $0 \in \rho (D_0)$. Then
			\begin{equation}
				0 \in \rho (\Aa_0) \implies 0 \in \rho (S_0).
			\end{equation}
		\item Let $S_0 \in \Cc (\Hh_1)$ and assume either that $\coker D_0 = \{0\}$ or that $D_0 \in \Ff_- (\Hh_2)$ with $\coker D_0 \subset \dom C_0^*$ and $\dom C_0$ is dense in $\Dd_S$, where
		\begin{equation}
			C_0 \defeq C\vert_{\dom C_0}, \qquad \dom C_0 \defeq \set{f \in \Dd_S}{Cf \in \Hh_2}.
		\end{equation}
		Then
		\begin{equation}
			\Aa_0 \in \Ff_+ (\Hh) \implies S_0 \in \Ff_+ (\Hh_1).
		\end{equation}
	\end{enumerate}
\end{thm}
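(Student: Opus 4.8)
The plan is to treat the two parts separately, in both cases exploiting the factorisation encoded in $S = A - BD^\ddagger C$ together with the generalised inverse identities \eqref{eq:gen.inv.comp}.

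For part (i), since $0 \in \rho(D_0)$, Lemma~\ref{lem:T.inverse} gives $D^\ddagger = D^{-1}$ as mutually inverse bounded maps between $\Dd_2$ and $\Dd_{-2}$, so $D^\ddagger D = I$ on $\Dd_2$ and $DD^\ddagger = I$ on $\Dd_{-2}$. Assuming $0 \in \rho(\Aa_0)$, I would show that $S_0$ is a bijection onto $\Hh_1$ with bounded inverse $S_0^{-1} = \pi_1 \Aa_0^{-1} \pi_1^*$. Concretely, for $u \in \Hh_1$ set $(f,g) \defeq \Aa_0^{-1}\pi_1^* u$; the defining equations $Af + Bg = u$ and $Cf + Dg = 0$ force $g = -D^{-1}Cf = -D^\ddagger Cf$, whence $Sf = Af - BD^\ddagger Cf = Af + Bg = u \in \Hh_1$. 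Thus $f \in \dom S_0$ with $S_0 f = u$, proving surjectivity, and $\norm{f}_{\Hh_1} \le \norm{(f,g)}_\Hh \le \norm{\Aa_0^{-1}}\norm{u}_{\Hh_1}$. For injectivity, $S_0 f = 0$ yields $\Aa(f,-D^\ddagger Cf) = (Sf, (I-DD^\ddagger)Cf) = (0,0)$, so $(f,-D^\ddagger Cf) \in \ker \Aa_0 = \{0\}$ and $f = 0$. An everywhere-defined bounded inverse forces $S_0$ closed, hence $0 \in \rho(S_0)$.

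For part (ii), since $S_0 \in \Cc(\Hh_1)$ it suffices to produce $G \in \Bb(\Hh_1)$ and a finite-rank $P \in \Bb(\Hh_1)$ with $GS_0 = I - P$ on $\dom S_0$; such a left regulariser modulo finite rank forces $\dim \ker S_0 < \infty$ and $\ran S_0$ closed, i.e.\ $S_0 \in \Ff_+(\Hh_1)$. As $\Aa_0 \in \Ff_+(\Hh)$, its generalised inverse $\Aa_0^\# \in \Bb(\Hh)$ from \eqref{eq:gen.inv} exists, and I set $G \defeq \pi_1 \Aa_0^\# \pi_1^*$. The bridge is the map $Jf \defeq (f, -D^\ddagger Cf)$: a computation using $D^\ddagger D D^\ddagger = D^\ddagger$ gives, whenever $Jf \in \dom \Aa_0$,
\begin{equation*}
	\Aa_0 Jf = \big(S_0 f,\, (I-DD^\ddagger)Cf\big),
\end{equation*}
and on $\Hh_2$ the identity $D_0 D_0^\# = I - Q$ from \eqref{eq:gen.inv.comp} identifies $I - DD^\ddagger$ with the orthogonal projection $Q$ onto the closed subspace $\coker D_0$. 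Feeding this into $\Aa_0^\# \Aa_0 Jf = (I - P_\Aa)Jf$, with $P_\Aa$ the finite-rank projection onto $\ker \Aa_0$, and applying $\pi_1$ yields $GS_0 f = f - \pi_1 P_\Aa Jf - \pi_1 \Aa_0^\# \pi_2^* QCf$ on the relevant $f$. Under the first alternative $\coker D_0 = \{0\}$ one has $Q = 0$, hence $DD^\ddagger = I$ on $\Dd_{-2}$, the second component of $\Aa_0 Jf$ vanishes, $Jf \in \dom \Aa_0$ for every $f \in \dom S_0$, and the last term disappears; here $J(\ker S_0) \subset \ker \Aa_0$ already gives $\dim \ker S_0 \le \dim \ker \Aa_0 < \infty$ directly. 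Under the second alternative $D_0 \in \Ff_-(\Hh_2)$, the projection $Q = \sum_j \iprod{\cdot}{\phi_j}\phi_j$ onto $\coker D_0$ has finite rank, and the assumption $\coker D_0 \subset \dom C_0^*$ lets me rewrite $\iprod{Cf}{\phi_j} = \iprod{f}{C_0^* \phi_j}$ for $f \in \dom C_0$, so that $f \mapsto QCf$ extends to a bounded finite-rank operator $\Hh_1 \to \coker D_0$ and simultaneously guarantees $Jf \in \dom \Aa_0$ for $f \in \dom C_0$. Both corrections are then finite-rank, and absorbing them into $P$ gives the regulariser identity.

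I expect the main obstacle to be precisely this second alternative, where $D_0$ fails to be boundedly invertible. Two difficulties intertwine there: the identity for $\Aa_0 Jf$ is only immediate on $\dom S_0 \cap \dom C_0$ (so that the cokernel term lands in $\Hh_2$ and $Jf$ reaches $\dom \Aa_0$), so one must use the density of $\dom C_0$ in $\Dd_S$ to propagate $GS_0 = I - P$ to all of $\dom S_0$; and the error term $\pi_1 P_\Aa Jf$ involves $D^\ddagger Cf$, which is controlled only in the $\Dd_S$-norm rather than the $\Hh_1$-norm, so establishing $\Hh_1$-boundedness of the corrections requires carefully reconciling the $\Dd_S$- and $\Hh_1$-topologies via the kernel vectors of $\Aa_0$ and the adjoint condition $\coker D_0 \subset \dom C_0^*$. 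The delicate bookkeeping of which pairings are legitimate between $\Dd_S$, $\Hh_1$, $\Dd_2$, $\Hh_2$ and their duals underlies the entire argument.
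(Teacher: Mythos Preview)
Your treatment of part~(i) is correct and matches the paper's proof essentially line for line: both identify $S_0^{-1}$ with $\pi_1\Aa_0^{-1}\pi_1^*$ via the map $f\mapsto(f,-D^{-1}Cf)$.

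For part~(ii), however, the paper proceeds by \emph{contraposition via singular sequences} rather than by constructing a left regulariser. Given a normalised sequence $\{f_m\}\subset\dom S_0$ with $S_0 f_m\to 0$ and $f_m\rightharpoonup 0$ in $\Hh_1$, it shows that $\mathbf x_m\defeq(f_m,-D^\ddagger Cf_m)$, suitably normalised, is a singular sequence for $\Aa_0$. The crucial step --- proving that $\{\widetilde{\mathbf x}_m\}$ has no convergent subsequence --- uses the orthogonality $\ran D^\ddagger\perp_{\Hh_2}\ker D_0$ (Lemma~\ref{lem:D.ker.ran}) together with $f_m\rightharpoonup 0$, and never requires any $\Hh_1$-operator bound on $f\mapsto D^\ddagger Cf$.

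Your regulariser approach, by contrast, runs into a genuine gap that you underestimate. You flag the difficulty only for the second alternative, but it is already present when $\coker D_0=\{0\}$: the identity $GS_0 f=f-\pi_1 P_\Aa Jf$ is useful for the Fredholm conclusion only if $f\mapsto\pi_1 P_\Aa Jf$ extends to a bounded (hence compact, being finite-rank) operator on $\Hh_1$. Writing $P_\Aa Jf=\sum_k\iprod{Jf}{\psi_k}_\Hh\psi_k$ with $\psi_k=(\psi_k^1,\psi_k^2)\in\ker\Aa_0$, the functional $f\mapsto\iprod{D^\ddagger Cf}{\psi_k^2}_{\Hh_2}$ is a~priori controlled only by $\norm{f}_{\Dd_S}$, not by $\norm{f}_{\Hh_1}$, and nothing in Assumption~\ref{asm:I} or in the kernel condition $\psi_k\in\ker\Aa_0$ forces $\psi_k^2$ into a space that would yield an $\Hh_1$-bound. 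Nor does the graph norm of $S_0$ help, since closedness of $S_0$ in $\Hh_1$ does not give $\norm{f}_{\Dd_S}\lesssim\norm{f}_{\Hh_1}+\norm{S_0 f}_{\Hh_1}$ in general. The paper's singular-sequence argument sidesteps this entirely: it needs only $\norm{Jf_m}_\Hh\ge 1$, the weak convergence $f_m\rightharpoonup 0$, and the orthogonality from Lemma~\ref{lem:D.ker.ran}; no uniform $\Hh_1$-control of $D^\ddagger Cf$ is required.
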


We establish the reverse implication, i.e.\ bounded invertibility/semi-Fred\-holm\-ness of $S_0$ implying bounded invertibility/semi-Fred\-holm\-ness of $\Aa_0$.

\begin{thm}
	\label{thm:second.implication}
	Let Assumption~{\rm\ref{asm:I}} be satisfied and assume that $S \in \Bb (\Dd_S, \Dd_{-S})$ and $B (\dom D_0) \subset \Dd_{-S}$.
	\begin{enumerate}
		\item Let  $0\in\rho(D_0)$. If $S^{-1} \in \Bb (\Dd_{-S}, \Dd_S)$ {\rm(}which by the continuity of the embeddings $\Dd_S \hookrightarrow \Hh_1 \hookrightarrow \Dd_{-S}$ implies that $0 \in \rho(S_0)${\rm)}, then $0 \in \rho(\Aa_0)$.
		\item Let $\dom S_0$ be dense in $\Dd_S$, let $D_0 \in \Ff_+ (\Hh_2)$ and $S_0 \in \Ff_+ (\Hh_1)$ such that there exists an extension $S^\ddagger \supset S^\#_0$ with $S^\ddagger \in \Bb(\Dd_{-S}, \Dd_S)$. Then $\Aa_0 \in \Ff_+ (\Hh)$.
	\end{enumerate}
\end{thm}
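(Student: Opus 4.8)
The plan is to realise the classical Frobenius--Schur factorisation at the level of the generalised inverses $S^\ddagger$ and $D^\ddagger$ and to extract from it an exact inverse (part (i)) or a left regulariser modulo a finite-rank operator (part (ii)) of $\Aa_0$ in $\Hh$. The algebraic backbone consists of the two identities
\[
	S^\ddagger S = I - P_S \text{ on } \Dd_S, \qquad D^\ddagger D = I - P \text{ on } \Dd_2,
\]
where $P_S$ and $P$ denote the finite-rank orthogonal projections of $\Hh_1$ onto $\ker S_0$ and of $\Hh_2$ onto $\ker D_0$. Indeed, on the dense subspaces $\dom S_0 \subset \Dd_S$ and $\dom D_0 \subset \Dd_2$ these reduce to the generalised-inverse relations $S_0^\# S_0 = I - P_S$ and $D_0^\# D_0 = I - P$ of \eqref{eq:gen.inv.comp}; since $\ker S_0$ and $\ker D_0$ are finite-dimensional, $P_S$ and $P$ are bounded on $\Dd_S$ and $\Dd_2$, and the identities extend by continuity. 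A point used repeatedly is that the everywhere-defined operator $BD^\ddagger \colon \Hh_2 \to \Dd_{-S}$ is bounded: it maps into $\Dd_{-S}$ because $D^\ddagger$ agrees with $D_0^\#$ on $\Hh_2$ and $B(\dom D_0) \subset \Dd_{-S}$, and it is closed into $\Dd_{-S}$ since a limit can be identified in the coarser space $\Dd_{-1}$ (using $BD^\ddagger \in \Bb(\Dd_{-2},\Dd_{-1})$ together with $\Hh_2 \hookrightarrow \Dd_{-2}$ and $\Dd_{-S} \hookrightarrow \Dd_{-1}$), so the closed graph theorem applies.

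For part (i), $0 \in \rho(D_0)$ gives $D^\ddagger = D^{-1}$ with $DD^\ddagger = I$ and $D^\ddagger D = I$ by Lemma~\ref{lem:T.inverse}, so the above projections vanish. I would define the candidate resolvent $\Rr(u,v) \defeq (f,g)$ by back-substitution, $f \defeq S^{-1}(u - BD^\ddagger v)$ and $g \defeq D^\ddagger(v - Cf)$, which is legitimate because $u - BD^\ddagger v \in \Dd_{-S}$ (here $BD^\ddagger v \in \Dd_{-S}$ for $v \in \Hh_2$) and $S^{-1} \in \Bb(\Dd_{-S}, \Dd_S)$. A direct computation using $DD^\ddagger = I$ and $SS^{-1} = I$ on $\Dd_{-S}$ shows $\Aa\Rr(u,v) = (u,v)$, whence $\Rr(u,v) \in \dom \Aa_0$ and $\Aa_0 \Rr = I_\Hh$; the reverse identity $\Rr\Aa_0 = I$ on $\dom \Aa_0$ follows analogously from $S^{-1}S = I$ and $D^\ddagger D = I$. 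Finally $\Rr \in \Bb(\Hh)$ is read off from $\|f\|_{\Dd_S} \lesssim \|u\|_{\Hh_1} + \|BD^\ddagger v\|_{\Dd_{-S}} \lesssim \|(u,v)\|_\Hh$ and $\|g\|_{\Dd_2} \lesssim \|v\|_{\Hh_2} + \|Cf\|_{\Dd_{-2}} \lesssim \|(u,v)\|_\Hh$, the first step relying on boundedness of $BD^\ddagger$. Hence $0 \in \rho(\Aa_0)$.

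For part (ii) I keep $\Rr(u,v) \defeq (f,g)$ with $f \defeq S^\ddagger(u - BD^\ddagger v)$ and $g \defeq D^\ddagger(v - Cf)$; exactly as above $\Rr$ maps $\Hh$ boundedly into $\Dd$. Substituting $(u,v) = \Aa(f,g)$ for $(f,g) \in \dom \Aa_0$ and using the two displayed identities to collapse the cross terms yields $\Rr\Aa_0 = I - \Kk$ on $\dom \Aa_0$, where
\[
	\Kk(f,g) = \bigl( P_S f - S^\ddagger B P g,\; Pg - D^\ddagger C(P_S f - S^\ddagger B P g) \bigr).
\]
Since $\Kk$ factors through $(f,g) \mapsto (P_S f, Pg) \in \ker S_0 \oplus \ker D_0$ followed by bounded maps into $\Dd$ (note $B(\ker D_0) \subset \Dd_{-S}$, so $S^\ddagger B P g$ is meaningful), it is a finite-rank, hence compact, operator on $\Hh$ with range in $\Dd$.

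The step I expect to be the crux is upgrading this algebraic identity to the topological conclusion $\Aa_0 \in \Ff_+(\Hh)$; in particular the maximal operator $\Aa_0$ must first be shown to be closed. Rewriting the identity as $(f,g) = \Rr\Aa_0(f,g) + \Kk(f,g)$ and estimating in the $\Dd$-norm gives the a priori bound
\[
	\|(f,g)\|_\Dd \lesssim \|\Aa_0(f,g)\|_\Hh + \|(f,g)\|_\Hh, \qquad (f,g) \in \dom \Aa_0,
\]
as both $\Rr$ and $\Kk$ map $\Hh$ boundedly into $\Dd$. Applied to differences of an $\Hh$-convergent sequence in $\dom \Aa_0$ whose images converge in $\Hh$, this bound makes the sequence Cauchy in $\Dd$; the continuous embedding $\Dd \hookrightarrow \Hh$ and the continuity of $\Aa \in \Bb(\Dd,\Dd_-)$ then place the limit inside $\dom \Aa_0$, proving $\Aa_0 \in \Cc(\Hh)$. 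Given closedness, $\ker \Aa_0 \subset \ker(I-\Kk)$ is finite-dimensional because $\Kk$ is compact, and a routine contradiction argument (any sequence in $(\ker \Aa_0)^\perp \cap \dom \Aa_0$ with $\|(f,g)\|_\Hh = 1$ and $\Aa_0(f,g) \to 0$ would, after passing to a subsequence along which $\Kk$ converges, converge to a unit vector in $\ker \Aa_0 \cap (\ker \Aa_0)^\perp = \{0\}$) shows $\Aa_0$ is bounded below on $(\ker \Aa_0)^\perp \cap \dom \Aa_0$ and hence has closed range. Therefore $\Aa_0 \in \Ff_+(\Hh)$.
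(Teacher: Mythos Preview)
Your proof is correct and follows essentially the same route as the paper. The paper isolates the algebraic identity $\Ll\Aa_0 = I + \Kk$ with $\Ll \in \Bb(\Hh,\Dd)$ and finite-rank $\Kk$ as a separate Proposition, then uses the same convergence argument (equivalent to your a priori bound) for closedness of $\Aa_0$ and finally cites \cite[Thm.~I.3.13]{Edmunds-Evans-1987} for the $\Ff_+$ conclusion; you instead carry out both the algebraic step and the Fredholm argument explicitly inline, but the underlying construction of the approximate left inverse via the Frobenius--Schur factorisation and the treatment of $BD^\ddagger$ through the closed graph theorem are identical.
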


Finally, we provide a sufficient condition for the existence of the extension $S^\ddagger$ above; a corollary of Theorem~\ref{thm:second.implication} then reads as follows.

\begin{cor}
	\label{cor:second.implication}
	Let Assumption~{\rm\ref{asm:I}} be satisfied, let $S \in \Bb (\Dd_S, \Dd_{-S})$, let $\dom S_0$ be dense in $\Dd_S$ and $B (\dom D_0) \subset \Dd_{-S}$. Moreover, assume there exists $z \in \C$ such that $\ran (S_0 - z)$ is closed in $\Hh_1$ with $\coker (S_0 - z) = \{0\}$ and such that there exists an extension $S^\ddagger_z$ as in \eqref{eq:S.gen.inv.shift.ext}.
	\begin{enumerate}
		\item Let $0\in\rho(D_0)$. Then
			\begin{equation}
				0 \in \rho(S_0) \implies 0 \in \rho(\Aa_0).
			\end{equation}
		\item Let $D_0 \in \Ff_+ (\Hh_2)$. Then
		\begin{equation}
			S_0 \in \Ff_+ (\Hh_1) \implies \Aa_0 \in \Ff_+ (\Hh).
		\end{equation}
	\end{enumerate}
\end{cor}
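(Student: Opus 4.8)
The plan is to deduce both parts from Theorem~\ref{thm:second.implication} by using the data in \eqref{eq:S.gen.inv.shift.ext} to manufacture, at the level of the triplet, precisely the objects that theorem requires: a bounded inverse $S^{-1}\in\Bb(\Dd_{-S},\Dd_S)$ in part (i), and a bounded extension $S^\ddagger\supset S_0^\#$ with $S^\ddagger\in\Bb(\Dd_{-S},\Dd_S)$ in part (ii). The common first step is to upgrade the shifted generalised inverse to a genuine right inverse on the distribution space. Since $\coker(S_0-z)=\{0\}$ and $\ran(S_0-z)$ is closed, $(S_0-z)^\#$ is defined on all of $\Hh_1$ and satisfies $(S_0-z)(S_0-z)^\#=I$ on $\Hh_1$ by \eqref{eq:gen.inv.comp}. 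For $x\in\Hh_1$ one has $S^\ddagger_z x=(S_0-z)^\# x\in\dom S_0\subset\Dd_S$ and hence $(S-z)S^\ddagger_z x=(S_0-z)(S_0-z)^\# x=x$; as both sides are bounded on $\Dd_{-S}$ and $\Hh_1$ is dense there, I obtain $(S-z)S^\ddagger_z=I_{\Dd_{-S}}$, so that $S-z$ is onto $\Dd_{-S}$ with bounded right inverse $S^\ddagger_z$.

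For part (i) I additionally know $0\in\rho(S_0)$. Injectivity of $S\colon\Dd_S\to\Dd_{-S}$ is immediate, since $Sf=0$ forces $f\in\dom S_0$ with $S_0f=0$, whence $f=0$. For surjectivity, given $\phi\in\Dd_{-S}$ I would set $g=S^\ddagger_z\phi$, so that $Sg=\phi+zg$ with $zg\in\Dd_S\subset\Hh_1$, and correct by $h=S_0^{-1}(zg)\in\dom S_0$; then $f=g-h\in\Dd_S$ solves $Sf=\phi$. Thus $S$ is a bounded bijection, the open mapping theorem yields $S^{-1}\in\Bb(\Dd_{-S},\Dd_S)$, and Theorem~\ref{thm:second.implication}(i) together with $0\in\rho(D_0)$ gives $0\in\rho(\Aa_0)$.

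For part (ii) I mimic this construction with $S_0^\#$ in place of $S_0^{-1}$ and absorb the resulting kernel error by a finite-rank projection. With $R\phi:=S^\ddagger_z\phi-S_0^\#\big(z\,S^\ddagger_z\phi\big)$ and $P$ the finite-rank orthogonal projection of $\Hh_1$ onto $\ker S_0$, I propose $S^\ddagger:=(I-P)R$. For $x\in\Hh_1$ one checks $Rx\in\dom S_0$, and using $S_0 S_0^\#=I-Q$ and $QS_0=0$ (with $Q$ the projection onto $\coker S_0$) a direct computation gives $S_0 Rx=(I-Q)x=S_0 S_0^\# x$, so $Rx-S_0^\# x\in\ker S_0$; since $S_0^\# x\in(\ker S_0)^\perp$, applying $I-P$ shows $S^\ddagger x=S_0^\# x$, i.e.\ $S^\ddagger\supset S_0^\#$. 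Feeding this $S^\ddagger$, together with $S_0\in\Ff_+(\Hh_1)$, $D_0\in\Ff_+(\Hh_2)$ and the density of $\dom S_0$ in $\Dd_S$, into Theorem~\ref{thm:second.implication}(ii) then yields $\Aa_0\in\Ff_+(\Hh)$.

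The main obstacle I foresee is the boundedness of $S^\ddagger$ as a map into $\Dd_S$ in part (ii). Whereas in part (i) boundedness of $S^{-1}$ is free from the open mapping theorem once bijectivity is known, here $S$ need not be surjective and no such argument applies; moreover the composition defining $R$ runs through $S_0^\#\in\Bb(\Hh_1)$, which a priori is only bounded into $\Hh_1$, not into the finer space $\Dd_S$. I expect to resolve this by a closed-graph argument: if $\phi_n\to\phi$ in $\Dd_{-S}$ and $S_0^\#(z\,S^\ddagger_z\phi_n)\to\eta$ in $\Dd_S$, then continuity of the embeddings $\Dd_S\hookrightarrow\Hh_1\hookrightarrow\Dd_{-S}$ and boundedness of $S_0^\#$ on $\Hh_1$ force $\eta=S_0^\#(z\,S^\ddagger_z\phi)$ in $\Hh_1$, hence in $\Dd_S$; the graph is therefore closed and the everywhere-defined map is bounded. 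Finite-rankness of $P$ (using $\ker S_0\subset\dom S_0\subset\Dd_S$) keeps $I-P$ bounded on $\Dd_S$, which completes the construction.
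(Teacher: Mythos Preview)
Your proof is correct and essentially the same as the paper's. Both arguments feed Theorem~\ref{thm:second.implication} by building a bounded extension $S^\ddagger\supset S_0^\#$ from the shifted data; in fact your operator $(I-P)R$ simplifies (via $PS_0^\#=0$) to $S_z^\ddagger-(zS_0^\#+P)S_z^\ddagger$, which is exactly the paper's extension obtained from the resolvent-type identity $(S_0-z)^\#-S_0^\#=(zS_0^\#+P)(S_0-z)^\#$. The boundedness issue you flag as the ``main obstacle'' is a non-problem: since $\ran S_0^\#\subset\dom S_0\subset\Dd_S$ and $\Dd_S\hookrightarrow\Hh_1$ continuously, Lemma~\ref{lem:closed.graph} immediately gives $S_0^\#\in\Bb(\Hh_1,\Dd_S)$, and then $S^\ddagger\in\Bb(\Dd_{-S},\Dd_S)$ follows by composition --- your closed-graph argument is just an ad hoc instance of that lemma. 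The only genuine difference is in part~(i): you establish $S^{-1}\in\Bb(\Dd_{-S},\Dd_S)$ via bijectivity and the open mapping theorem, whereas the paper gets it by noting $S^\ddagger=S^{-1}$ via Lemma~\ref{lem:T.inverse}; both are fine.
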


\begin{rem}
	The claims of both Proposition~\ref{prop:Aa.dom.dense}~(ii) and Corollary~\ref{cor:second.implication} remain true for finite dimensional $\coker (S_0 - z) \neq \{0\}$ if the orthogonal projection on $\coker (S_0 - z)$ in $\Hh_1$ has an extension in $\Bb (\Dd_{-S}, \Hh_1)$. Analogously, Proposition~\ref{prop:Aa.dom.dense}~(i) still holds if $\coker D_0 \neq \{0\}$ is finite dimensional and the orhtogonal projection on $\coker D_0$ in $\Hh_2$ has a bounded extension in $\Bb (\Dd_{-2},\Hh_2)$. \hfill //
\end{rem}

\begin{rem}\label{rem:diag.dom}
		Our results generalise the spectral equivalence in the diagonally dominant case established in the classical work~\cite{Nagel-1989}. More precisely, this concerns linear operator matrices
		\begin{equation}
			\Aa = \begin{pmatrix}
				A & B \\
				C & D
			\end{pmatrix} : \Hh \supset \dom A \times \dom D \to \Hh
		\end{equation}
	 	where $A$ and $D$, respectively, are densely defined with non-empty resolvent set in $\Hh_1$ and $\Hh_2$, and where $C$ and $B$, respectively, are linear operators (from $\Hh_1$ to $\Hh_2$ and vice versa) which are relatively bounded with respect to $A$ and $D$. The equivalence between~\cite[Thm.~2.4]{Nagel-1989}~(a) and~(b) can then be recovered from Theorems~\ref{thm:first.implication}~(i) and~\ref{thm:second.implication}~(i) applied to the matrix $\Aa - \la = \Aa_0 - \la$ and its Schur complement $S(\la) = S_0 (\la)$ in a straightforward way with 
	 	\begin{equation}
	 		\Dd_S := \dom A, \quad \Dd_{-S} := \Dd_{-1} := \Hh_1, \quad \Dd_2 := \dom D, \quad \Dd_{-2} := \Hh_2.
	 	\end{equation}
	 	Here the Hilbert spaces $\Dd_S$ and $\Dd_2$ shall be equipped with the scalar products associated to the graph norms of $A$ and $D$, respectively. Notice that in this particular case, we have
	 	\begin{equation}
	 		0 \in \rho (S(\la)) \implies S(\la)^{-1} \in \Bb (\Hh_1, \dom A),
	 	\end{equation}
 		cf.~Lemma~\ref{lem:closed.graph}. The equivalence between~\cite[Thm.~2.4]{Nagel-1989}~(a) and~(c) can be seen analogously using the second Schur complement.
	 	
	 	We mention that, while in~\cite{Nagel-1989} only spectral parameters in $\rho(A) \cap \rho(D)$ are considered, this extends the result to $\rho(D)$ or $\rho(A)$, respectively, when employing the first or second Schur complement; cf.~Corollaries~\ref{cor:family.first.implication}~(i) and~\ref{cor:family.second.implication}~(i) below.
	 	\hfill //
\end{rem}

\subsection{Technical lemmas}

We start by stating and proving some auxiliary results.
\begin{lem}
	\label{lem:closed.graph}
	Let $\Gg_1$, $\Gg_2$ and $\Gg_3$ be Hilbert spaces such that $\Gg_2 \subset \Gg_3$ is continuously embedded. If $T \in \Bb (\Gg_1, \Gg_3)$ and $\ran T \subset \Gg_2$, then $T \in \Bb (\Gg_1, \Gg_2)$.
\end{lem}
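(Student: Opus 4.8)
The plan is to apply the closed graph theorem. Since $\ran T \subset \Gg_2$ by hypothesis, the map $T$ is already well defined as a \emph{linear} operator $\Gg_1 \to \Gg_2$; the only thing in question is its boundedness with respect to the (generally stronger) norm of $\Gg_2$. As both $\Gg_1$ and $\Gg_2$ are Hilbert spaces, hence complete, it suffices to verify that the graph of $T: \Gg_1 \to \Gg_2$ is closed.

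To this end I would take a sequence $(x_k) \subset \Gg_1$ with $x_k \to x$ in $\Gg_1$ and $T x_k \to y$ in $\Gg_2$, and show $y = Tx$. The key point is to compare two notions of convergence inside the larger space $\Gg_3$. On the one hand, since $T \in \Bb(\Gg_1, \Gg_3)$, the convergence $x_k \to x$ in $\Gg_1$ yields $T x_k \to Tx$ in $\Gg_3$. On the other hand, the continuity of the embedding $\Gg_2 \subset \Gg_3$ turns the convergence $T x_k \to y$ in $\Gg_2$ into $T x_k \to y$ in $\Gg_3$. Uniqueness of limits in $\Gg_3$ then forces $Tx = y$, which closes the graph and, by the closed graph theorem, gives $T \in \Bb(\Gg_1, \Gg_2)$.

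There is no genuine obstacle here: the statement is a standard consequence of the closed graph theorem, and the single substantive ingredient is the use of the continuous embedding $\Gg_2 \hookrightarrow \Gg_3$ to transfer $\Gg_2$-convergence to $\Gg_3$-convergence, together with the fact that limits in the Hausdorff space $\Gg_3$ are unique. The only care needed is to record explicitly that $\Gg_1$ and $\Gg_2$ are complete, so that the hypotheses of the closed graph theorem are met.
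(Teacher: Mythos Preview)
Your proof is correct and follows the same approach as the paper: both verify that $T$ is closed as an operator $\Gg_1 \to \Gg_2$ (using the continuous embedding $\Gg_2 \hookrightarrow \Gg_3$ to identify limits) and then invoke the closed graph theorem. The paper's version is more terse, simply noting that $T \in \Cc(\Gg_1,\Gg_3)$ together with the continuous embedding gives $T \in \Cc(\Gg_1,\Gg_2)$, but your spelled-out sequence argument is exactly what underlies that step.
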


\begin{proof}
	Since $T \in \Cc (\Gg_1, \Gg_3)$ and $\Gg_2$ is continuously embedded in $\Gg_3$, we also have $T \in \Cc (\Gg_1, \Gg_2)$. The claim now follows from the closed graph theorem.
\end{proof}

\begin{lem}\label{lem:T.inverse}
	Let $\Gg_1$, $\Gg$ and $\Gg_{-1}$ be Hilbert spaces such that
	\begin{equation}
		\Gg_1 \subset \Gg \subset \Gg_{-1}
	\end{equation}
	 and the corresponding canonical embeddings are continuous. Let $T_0 \in \Cc (\Gg)$ have closed range in $\Gg$, let $\dom T_0 \subset \Gg_1$ and assume there exist extensions
	\begin{equation}
		T_0 \subset T \in \Bb (\Gg_1, \Gg_{-1}), \qquad T_0^\# \subset T^\ddagger \in \Bb (\Gg_{-1}, \Gg_1).
	\end{equation}
	Let $P$ and $Q$, respectively, denote the orthogonal projections on $\ker T_0$ and $\coker T_0$ in $\Gg$. Then
	\begin{equation}
		TT^\ddagger\vert_\Gg = I - Q
	\end{equation}
	and, if $\Gg$ is dense in $\Gg_{-1}$, then $\coker T_0 = \{0\}$ implies $TT^\ddagger = I$. If $\dom T_0$ is dense $\Gg_1$, then
	\begin{equation}
		T^\ddagger T = I - P\vert_{\Gg_1}
	\end{equation}
	and $\ker T_0 = \{0\}$ implies $T^\ddagger T = I$. In particular, if $0 \in \rho(T_0)$, $\Gg$ is dense in $\Gg_{-1}$ and $\dom T_0$ is dense in $\Gg_1$, then $T^\ddagger = T^{-1}$. 
\end{lem}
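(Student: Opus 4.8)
The plan is to reduce every assertion to the algebraic identities \eqref{eq:gen.inv.comp} satisfied by the generalised inverse $T_0^\#$ on $\Gg$, and then to propagate them to the larger space $\Gg_{-1}$ and the smaller space $\Gg_1$ by continuity and density.

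First I would establish the right-sided identity $T T^\ddagger\vert_\Gg = I - Q$. Since $\ran T_0$ is closed, $T_0^\#$ is a bounded operator defined on all of $\Gg$ and, by hypothesis $T_0^\# \subset T^\ddagger$, we have $T^\ddagger x = T_0^\# x$ for every $x \in \Gg$. For such $x$ the vector $T^\ddagger x = T_0^\# x$ lies in $\ran T_0^\# \subset \dom T_0$ by \eqref{eq:gen.inv.comp}, so that $T$ acts on it as its restriction $T_0$; combined with $T_0 T_0^\# = I - Q$ this yields $T T^\ddagger x = T_0 T_0^\# x = (I-Q)x$. If in addition $\coker T_0 = \{0\}$, then $Q = 0$ and $T T^\ddagger\vert_\Gg = I$; since $T T^\ddagger \in \Bb(\Gg_{-1})$ agrees with $I$ on the dense subspace $\Gg \subset \Gg_{-1}$, continuity forces $T T^\ddagger = I$ on all of $\Gg_{-1}$.

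The left-sided identity is the more delicate point. For $f \in \dom T_0$ one has $Tf = T_0 f \in \Gg$, hence $T^\ddagger T f = T_0^\# T_0 f = (I - P\vert_{\dom T_0}) f = f - Pf$ by \eqref{eq:gen.inv.comp}. Here $Pf \in \ker T_0 \subset \dom T_0 \subset \Gg_1$, so the operator $R \defeq I - T^\ddagger T \in \Bb(\Gg_1)$ coincides with $P$ on the dense subspace $\dom T_0 \subset \Gg_1$. To pass to all of $\Gg_1$, I would approximate an arbitrary $f \in \Gg_1$ by $f_n \in \dom T_0$ with $f_n \to f$ in $\Gg_1$; the continuity of the embedding $\Gg_1 \hookrightarrow \Gg$ gives $P f_n \to Pf$ in $\Gg$, while $R f_n = P f_n \to Rf$ in $\Gg_1$ and hence in $\Gg$. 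Uniqueness of limits in $\Gg$ then shows $Pf = Rf$, so that $Pf \in \Gg_1$ and $T^\ddagger T = I - P\vert_{\Gg_1}$ as bounded operators on $\Gg_1$; in particular $P$ restricts to a bounded operator on $\Gg_1$. If $\ker T_0 = \{0\}$ then $P = 0$ and $T^\ddagger T = I$.

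Finally, for the last assertion I would note that $0 \in \rho(T_0)$ forces $\ker T_0 = \{0\}$ and $\ran T_0 = \Gg$, so that $P = Q = 0$ and $T_0^\# = T_0^{-1}$. Under the stated density hypotheses the two previous steps give $T T^\ddagger = I$ on $\Gg_{-1}$ and $T^\ddagger T = I$ on $\Gg_1$, exhibiting $T^\ddagger$ as a two-sided bounded inverse of $T$ and hence $T^\ddagger = T^{-1}$. The main obstacle is the density extension in the left-sided identity, where one must keep track of the two different topologies on $\Gg_1$ and $\Gg$ and verify that the projection $P$, a priori only bounded on $\Gg$, indeed maps $\Gg_1$ boundedly into itself.
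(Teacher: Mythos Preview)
Your proof is correct and follows essentially the same route as the paper: reduce to the identities \eqref{eq:gen.inv.comp} on $\dom T_0$ and $\Gg$, then extend by continuity and density. The only cosmetic difference is that the paper first invokes the closed graph argument (Lemma~\ref{lem:closed.graph}) to obtain $P \in \Bb(\Gg,\Gg_1)$ directly, after which $I - P\vert_{\Gg_1}$ and $T^\ddagger T$ are two bounded operators on $\Gg_1$ agreeing on the dense set $\dom T_0$; your approximation argument with $R = I - T^\ddagger T$ achieves the same conclusion by hand.
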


\begin{proof}
	Using relation \eqref{eq:gen.inv.comp} and since $\ran T_0^\# \subset \dom T_0$, we obtain
		\begin{equation}
			TT^\ddagger\vert_\Gg = T_0 T_0^\# = I - Q.
		\end{equation}
	If $\Gg$ is dense in $\Gg_{-1}$ and $\coker T_0 = \{0\}$, then $TT^\ddagger = I$ follows from the above identity and from $TT^\ddagger \in \Bb (\Gg_{-1})$.
	
	Let $\dom T_0$ be dense in $\Gg_1$. From Lemma~\ref{lem:closed.graph}, we obtain $P \in \Bb (\Gg, \Gg_1)$. Consequently, again using \eqref{eq:gen.inv.comp}, we see that $T^\ddagger T \in \Bb (\Gg_1)$ and $I - P\vert_{\Gg_1} \in \Bb (\Gg_1)$ satisfy
	\begin{equation}
		\label{eq:T.gen.inv}
		T^\ddagger T \vert_{\dom T_0} = T_0^\# T_0 = I - P\vert_{\dom T_0}.
	\end{equation}
	Hence, $T^\ddagger T = I - P\vert_{\Gg_1}$ is a consequence of the density of $\dom T_0$ in $\Gg_1$ and clearly, $T^\ddagger T = I$ holds if $\ker T_0 = \{0\}$. The last claim is now immediate.
\end{proof}

\begin{lem}
	\label{lem:D.ker.ran}
	If Assumption~{\rm\ref{asm:I}} holds, then $\ker D = \ker D_0$ and $\ran D^\ddagger \perp_{\Hh_2} \ker D_0$.
\end{lem}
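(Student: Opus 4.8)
The plan is to deduce both assertions from the already-proved Lemma~\ref{lem:T.inverse} applied to the single operator $D_0$. Concretely, I would invoke that lemma with the identifications $\Gg_1 = \Dd_2$, $\Gg = \Hh_2$, $\Gg_{-1} = \Dd_{-2}$, $T_0 = D_0$, $T = D$ and $T^\ddagger = D^\ddagger$; Assumption~\ref{asm:I}~(i) and~(iii) supply exactly its hypotheses, namely the continuous embeddings $\Dd_2 \subset \Hh_2 \subset \Dd_{-2}$, the closed range of $D_0$, the inclusion $\dom D_0 \subset \Dd_2$ with density, and the two extensions $D_0 \subset D$ and $D_0^\# \subset D^\ddagger$. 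Since $\dom D_0$ is dense in $\Dd_2$, the lemma then yields the operator identity
\[
	D^\ddagger D = I - P\vert_{\Dd_2} \quad \text{on } \Dd_2,
\]
where $P \in \Bb(\Hh_2)$ is the orthogonal projection onto $\ker D_0$.

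From this identity the first claim is immediate. The inclusion $\ker D_0 \subset \ker D$ is trivial because $D$ extends $D_0$. Conversely, if $f \in \ker D \subset \Dd_2$, then applying $D^\ddagger$ gives $0 = D^\ddagger D f = f - Pf$, whence $f = Pf \in \ran P = \ker D_0$; this establishes $\ker D = \ker D_0$.

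For the orthogonality I would first use the definition~\eqref{eq:gen.inv} of the generalised inverse to observe that $\ran D_0^\# \subset (\ker D_0)^\perp$ in $\Hh_2$. As $D^\ddagger$ extends the everywhere-defined bounded operator $D_0^\#$, its restriction to $\Hh_2$ coincides with $D_0^\#$, so $D^\ddagger g \in (\ker D_0)^\perp$ for every $g \in \Hh_2$. The remaining point is to pass from $\Hh_2$ to the larger space $\Dd_{-2}$: given arbitrary $g \in \Dd_{-2}$, I would approximate it by a sequence $g_k \in \Hh_2$ (using that $\Hh_2$ is dense in $\Dd_{-2}$ by Assumption~\ref{asm:I}~(i)), invoke the continuity of $D^\ddagger \in \Bb(\Dd_{-2}, \Dd_2)$ together with the continuous embedding $\Dd_2 \hookrightarrow \Hh_2$ to deduce $D^\ddagger g_k \to D^\ddagger g$ in $\Hh_2$, and then use that $(\ker D_0)^\perp$ is closed in $\Hh_2$ to conclude $D^\ddagger g \in (\ker D_0)^\perp$. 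Hence $\ran D^\ddagger \perp_{\Hh_2} \ker D_0$.

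The routine parts (the trivial inclusion and the algebraic rearrangement) are immediate; the only genuinely delicate step is this last density-and-continuity argument, since the orthogonality is naturally visible only on $\Hh_2$ yet must be asserted for all of $\ran D^\ddagger$, i.e.\ for arguments drawn from the distributional space $\Dd_{-2}$. The interplay of the three topologies of $\Dd_{-2}$, $\Hh_2$ and $\Dd_2$ is precisely what makes Lemma~\ref{lem:closed.graph} (used internally by Lemma~\ref{lem:T.inverse}) and the dense-range assumptions indispensable here.
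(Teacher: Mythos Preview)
Your proof is correct. For the second assertion $\ran D^\ddagger \perp_{\Hh_2} \ker D_0$, your argument coincides with the paper's: both use $\ran D_0^\# \subset (\ker D_0)^\perp$, the density of $\Hh_2$ in $\Dd_{-2}$, and the continuity $D^\ddagger \in \Bb(\Dd_{-2},\Dd_2)$ together with the continuous embedding $\Dd_2 \hookrightarrow \Hh_2$ to pass to the closure.

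For the first assertion $\ker D = \ker D_0$, your route is slightly more streamlined than the paper's. The paper takes $f \in \ker D$, approximates it in $\Dd_2$ by $f_m \in \dom D_0$, and shows directly that $g_m \defeq (I - D_0^\# D_0)f_m \to f$ in $\Hh_2$, whence $f \in \ker D_0$ by closedness. You instead invoke Lemma~\ref{lem:T.inverse} to obtain the identity $D^\ddagger D = I - P\vert_{\Dd_2}$ on all of $\Dd_2$, from which $f \in \ker D$ immediately gives $f = Pf \in \ker D_0$. The two arguments are essentially the same idea (indeed the paper's proof of Lemma~\ref{lem:T.inverse} uses exactly this kind of approximation), but your version avoids repeating the approximation step and exploits the lemma that has already packaged it.
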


\begin{proof}
	We first show $\ker D = \ker D_0$. Let $f \in \ker D$. Since $\dom D_0$ is dense in $\Dd_2$, there exists a sequence $\{f_m\}_m \subset \dom D_0$ such that $f_m \to f$ in $\Dd_2$ as $m \to \infty$. Consequently, from $D_0^\# D_0 \subset D^\ddagger D \in \Bb (\Dd_2)$ and $Df = 0$, with $g_m \defeq (I - D_0^\# D_0) f_m$ we conclude
	\begin{equation}
		\norm{g_m - f}_{\Hh_2} \lesssim \norm{g_m - f}_{\Dd_2} \le \norm{f_m - f}_{\Dd_2} + \norm{D^\ddagger D f_m}_{\Dd_2} \to 0, \quad m \to \infty.
	\end{equation}
	Note that $I - D_0^\# D_0$ is the orthogonal projection on $\ker D_0$ in $\Hh_2$, see \eqref{eq:gen.inv.comp}. Hence, the closedness of $D_0$ and $g_m \in \ker D_0$, $m \in \N$, imply $f \in \dom D_0$ and $D_0 f = 0$.
	It remains to show $\ran D^\ddagger \perp_{\Hh_2} \ker D_0$. However, since $\Hh_2$ is dense in $\Dd_{-2}$ and $D^\ddagger : \Dd_{-2} \to \Dd_2$ is continuous, we have
	\begin{equation}\label{eq:lem.D.dagger.ran}
		\ran D^\ddagger = D^\ddagger \left(\overline{\Hh_2}^{\Dd_{-2}}\right) \subset \overline{D^\ddagger (\Hh_2)}^{\Dd_2}.
	\end{equation}
	Moreover, from $\Dd_2$ being continuously embedded in $\Hh_2$ and \eqref{eq:gen.inv}, we conclude
	\begin{equation}\label{eq:lem.D0.ker.ran}
		\overline{\ran D_0^\#}^{\Dd_2} \subset \overline{\ran D_0^\#}^{\Hh_2} \perp_{\Hh_2} \ker D_0.
	\end{equation}
	Considering $D^\ddagger \vert_{\Hh_2} = D_0^\#$, the claim now follows from \eqref{eq:lem.D.dagger.ran} and \eqref{eq:lem.D0.ker.ran}.
\end{proof}

\subsection{Proofs} \label{sec:proofs}

We start by proving Proposition~\ref{prop:Aa.dom.dense} (i); for technical reasons, the proof of (ii) is given at the end of this section.

\begin{proof}[Proof of Proposition~{\rm\ref{prop:Aa.dom.dense} (i)}]
	By Lemma~\ref{lem:T.inverse}, we have that $DD^\ddagger = I$. Moreover, it is straightforward to check the following inclusion
	\begin{equation}
		\label{eq:subdom.Aa}
		\set{(f, h - D^\ddagger Cf)}{f \in \dom S_0, \, h \in \dom B_0 \cap \dom D_0} \subset \dom \Aa_0.
	\end{equation}
	Let $(u, v) \in \Hh$ with $(u, v) \perp_\Hh \dom \Aa_0$; we need to show $u = v = 0$. By \eqref{eq:subdom.Aa},
	\begin{equation}
		\label{eq:subdom.Aa.ortho}
		\iprod{f}{u}_{\Hh_1} + \iprod{h - D^\ddagger Cf}{v}_{\Hh_2} = 0, \quad f \in \dom S_0, \quad h \in \dom B_0 \cap \dom D_0.
	\end{equation}
	Setting $f = 0$ in \eqref{eq:subdom.Aa.ortho}, from the density of $\dom B_0 \cap \dom D_0$ in $\Hh_2$ it follows that $v = 0$. Hence,~\eqref{eq:subdom.Aa.ortho} implies $u \perp \dom S_0$ and we obtain $u = 0$ from the density of $\dom S_0$ in $\Hh_1$.
\end{proof}

\begin{proof}[Proof of Proposition~{\rm\ref{prop:point.spectrum}}]
	By Lemma~\ref{lem:T.inverse}, $0 \in \rho (D_0)$ implies $D^\ddagger = D^{-1}$. Let $(f,g) \in \ker \Aa_0$, i.e.\ 
	\begin{equation}
		Af + Bg = 0, \qquad Cf + Dg = 0,
	\end{equation}
	where $f \in \Dd_S$ and $g\in \Dd_2$. Applying $D^{-1}$ to the second equation and inserting it in the first one, we obtain
	\begin{equation}
		g = -D^{-1} Cf, \qquad 0 = Af - B D^{-1} Cf = Sf = S_0 f.
	\end{equation}
	Conversely, $f \in \ker S_0$ and $g = -D^{-1} Cf$ clearly imply $0 = \Aa (f, g) = \Aa_0 (f, g)$. The equality of dimensions now follows immediately.
\end{proof}

\begin{proof}[Proof of Theorem {\rm~\ref{thm:first.implication}}]
	(i) We show that $S_0^{-1} = L \defeq \pi_1 \Aa_0^{-1} \pi_1^* \in \Bb(\Hh_1)$, thus implying $0 \in \rho(S_0)$; recall that $\pi_1$ denotes the canonical projection from $\Hh$ onto~$\Hh_1$. Let therefore $f \in \dom S_0$. Since $D^\ddagger = D^{-1}$ by $0 \in \rho(D_0)$ and Lemma~\ref{lem:T.inverse}, we have $(f, -D^{-1} Cf) \in \dom \Aa_0$ and
	\begin{equation}
		\Aa_0 (f, -D^{-1} Cf) = (S_0f, 0) = \pi_1^* S_0 f.
	\end{equation}
	By applying $\pi_1 \Aa_0^{-1}$ to the above identity, one obtains $f = LS_0 f$.
	Conversely, let $f \in \Hh_1$ be arbitrary. Then
	\begin{equation}
	\label{eq:S.inverse}
		(f, 0) = \Aa_0 \Aa_0^{-1} \pi_1^* f = (ALf + B \pi_2 \Aa_0^{-1} \pi_1^* f, CLf + D \pi_2 \Aa_0^{-1} \pi_1^* f).
	\end{equation}
	Applying $D^{-1}$ to the second component of \eqref{eq:S.inverse}, we obtain
	\begin{equation}
		\pi_2 \Aa_0^{-1} \pi_1^* f = -D^{-1} CLf.
	\end{equation}
	We insert this identity in the first component of \eqref{eq:S.inverse} to conclude
	\begin{equation}
	\Hh_1 \ni f = ALf - B D^{-1} CLf = SLf,
	\end{equation}
	which in turn implies that $Lf \in \dom S_0$ and $S_0 Lf = f$.

	(ii) Let us proceed by contraposition, i.e.\ let us show that if $S_0 \notin \Ff_+ (\Hh_1)$, then $\Aa_0 \notin \Ff_+ (\Hh)$. Assume that $\Aa_0$ is closed, since otherwise $\Aa_0 \notin \Ff_+ (\Hh)$ by definition. Since $S_0$ is closed, there exists a sequence $\{f_m\}_m \subset \dom S_0$ with $\norm{f_m}_{\Hh_1} = 1$, $m \in \N$, such that
	\begin{equation}
	\label{eq:S.sing.sequ}
		S_0 f_m \overset{\Hh_1}{\longrightarrow} 0, \qquad f_m \overset{w}{\longrightarrow} 0 ~ ~ {\rm in} ~ ~ \Hh_1, \qquad m \to \infty,
	\end{equation}
	see \cite[Thm.\ IX.1.3 (i)]{Edmunds-Evans-1987} and the preceding paragraph (note that the density of the domain is only needed for part (ii) of the Theorem).
	We construct a singular sequence for $\Aa_0$ and conclude $\Aa_0 \notin \Ff_+ (\Hh)$ by \cite[Thm.\ IX.1.3 (i)]{Edmunds-Evans-1987}. Define the sequence $\{\mathbf{x}_m \defeq (f_m, -D^\ddagger Cf_m)\}_m \subset \Dd$, then
	\begin{equation} \label{eq:Aa.sing.sequ.cand}
		\norm{\mathbf{x}_m}_\Hh \ge \norm{f_m}_{\Hh_1} = 1, \qquad \Aa \mathbf{x}_m = (S_0 f_m, (I - D D^\ddagger) Cf_m), \qquad m \in \N.
	\end{equation}
	We show that $\mathbf x_m \in \dom \Aa_0$ and $\Aa_0 \mathbf{x}_m \to 0$ in $\Hh$ as $m \to \infty$. If $\coker D_0 = \{0\}$, then Lemma~\ref{lem:T.inverse} implies $D D^\ddagger = I$ and consequently,
	\begin{equation}
	\Aa \mathbf{x}_m = (S_0 f_m, 0) = \Aa_0 \mathbf x_m \overset{\Hh}{\longrightarrow} 0, \qquad m \to \infty.
	\end{equation}
	Consider the case that
	\begin{equation}
		\{0\} \neq \coker D_0 = \ls \{\phi_j\}_{1 \le j \le k} \subset \dom C_0^*;
	\end{equation}
	recall that $\dim \coker D_0 < \infty$ as $D_0 \in \Ff_-(\Hh_2)$.
	Then, using that $Cf \in \Hh_2$ for $f \in \dom C_0 \subset \Dd_S$, that $DD^\ddagger \vert_{\Hh_2} = D_0 D_0^\#$ and relation \eqref{eq:gen.inv.comp}, we obtain the identity
	\begin{equation}
	\label{eq:QC.formula}
		(I - D D^\ddagger) Cf = \sum_{j=1}^{k} \iprod{Cf}{\phi_j}_{\Hh_2} \phi_j = \sum_{j=1}^{k} \iprod{f}{C_0^*\phi_j}_{\Hh_1} \phi_j, \quad f \in \dom C_0.
	\end{equation}
	By density of $\dom C_0$ in $\Dd_S$ and since both left and right hand side of \eqref{eq:QC.formula} are continuous in $f$ with respect to $\norm{\cdot}_S$ (with values in $\Dd_{-2}$), formula \eqref{eq:QC.formula} is valid for all $f \in \Dd_S$. We can thus extend
	\begin{equation}
		(I - D D^\ddagger) C \subset K \defeq \sum_{j=1}^{k} \iprod{\cdot}{C_0^*\phi_j}_{\Hh_1} \phi_j \in \Kk (\Hh_1, \Hh_2),
	\end{equation}
	hence from \eqref{eq:Aa.sing.sequ.cand} we see that $\mathbf x_m \in \dom \Aa_0$ and \eqref{eq:S.sing.sequ} implies that $\Aa_0 \mathbf{x}_m = (S_0 f_m, Kf_m) \to 0$ strongly in $\Hh$ as $m \to \infty$. Since $\norm{\mathbf{x}_m}_\Hh \ge 1$, in both cases $\coker D_0 = \{0\}$ and $\coker D_0 \neq \{0\}$, we obtain an $\Hh$-normalised sequence
	\begin{equation}
	\label{eq:Aa.sing.sequ}
		\widetilde{\mathbf{x}}_m \defeq \frac{\mathbf{x}_m}{\norm{\mathbf{x}_m}_\Hh} \in \dom \Aa_0, \qquad \Aa_0 \widetilde{\mathbf{x}}_m \overset{\Hh}{\to} 0, \qquad m \to \infty.
	\end{equation}
	It remains to show that $\{\widetilde{\mathbf{x}}_m\}_m$ has no convergent subsequence in $\Hh$. We assume the opposite, i.e.\ that there exists $(f, g) \in \Hh$ and a subsequence (again denoted by $\{\widetilde{\mathbf{x}}_m\}_m$) such that $\widetilde{\mathbf{x}}_m \to (f, g)$ in $\Hh$ as $m \to \infty$. Consequently, from $\norm{\mathbf{x}_m}_\Hh \ge 1$ and~\eqref{eq:S.sing.sequ} it follows for arbitrary $u \in \Hh_1$ that
	\begin{equation}
		\abs{\iprod{f}{u}_{\Hh_1}} = \lim_{m \to \infty} \frac{\abs{\iprod{f_m}{u}_{\Hh_1}}}{\norm{\mathbf{x}_m}_\Hh} \le \lim_{m \to \infty} \abs{\iprod{f_m}{u}_{\Hh_1}} = 0,
	\end{equation}
	thus $f = 0$. Since $\Aa_0$ is closed, \eqref{eq:Aa.sing.sequ} and $\widetilde{\mathbf{x}}_m \to (0, g)$ imply
	\begin{equation}
		(0, g) \in \dom \Aa_0, \qquad \Aa_0 (0, g) = (Bg, Dg) = 0,
	\end{equation}
	in particular $g \in \ker D = \ker D_0$ by Lemma~\ref{lem:D.ker.ran}. Since by construction $\pi_2 \widetilde{\mathbf{x}}_m \in \ran D^\ddagger$, Lemma~\ref{lem:D.ker.ran} ultimately leads to the contradiction
	\begin{equation*}
		1 = \lim_{m \to \infty} \norm{\widetilde{\mathbf{x}}_m}_\Hh^2 = \norm{g}_{\Hh_2}^2 = \lim_{m \to \infty} \iprod{\pi_2 \widetilde{\mathbf{x}}_m}{g}_{\Hh_2} = 0. \qedhere
	\end{equation*}
\end{proof}

The proof of Theorem~\ref{thm:second.implication} is based on constructing a (left approximate) inverse for $\Aa_0$. Its main ingredient is the following proposition.

\begin{prop}
	\label{prop:left.inverse}
	Let the assumptions of Theorem {\rm\ref{thm:second.implication} (ii)} be satisfied. Then
	\begin{equation}\label{eq:L.left.inverse}
		\Ll \defeq \left(
		\begin{array}{cc}
			S_0^\# & -S^\ddagger B D_0^\# \\
			-D^\ddagger C S_0^\# & D_0^\# + D^\ddagger C S^\ddagger B D_0^\#
		\end{array}
		\right)
		\in \Bb(\Hh, \Dd)
	\end{equation}
	satisfies $\Ll \Aa_0 = I + \Kk\vert_{\dom\Aa_0}$, where
	\begin{equation}
	\label{eq:K.left.inverse}
		\Kk \defeq \left(
		\begin{array}{cc}
			-P_S & S^\ddagger B P_D \\
			D^\ddagger C P_S & -P_D - D^\ddagger C S^\ddagger B P_D
		\end{array}
		\right)
		\in \Kk(\Hh, \Dd)
	\end{equation}
	has finite rank. Here $P_S$ and $P_D$ denote the orthogonal projections on $\ker S_0$ in $\Hh_1$ and on $\ker D_0$ in $\Hh_2$, respectively.
\end{prop}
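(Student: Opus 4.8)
The plan is to verify the three assertions of the proposition in turn: that $\Ll$ and $\Kk$ are bounded with the stated mapping properties, that $\Kk$ has finite rank, and finally the operator identity $\Ll\Aa_0 = I + \Kk\vert_{\dom\Aa_0}$. For the mapping properties I would first record that the generalised inverses satisfy $\ran S_0^\# \subset \dom S_0 \subset \Dd_S$ and $\ran D_0^\# \subset \dom D_0 \subset \Dd_2$, so Lemma~\ref{lem:closed.graph} upgrades them to $S_0^\# \in \Bb(\Hh_1, \Dd_S)$ and $D_0^\# \in \Bb(\Hh_2, \Dd_2)$. The decisive hypothesis $B(\dom D_0) \subset \Dd_{-S}$ gives $\ran(BD_0^\#) \subset \Dd_{-S}$, whence $BD_0^\# \in \Bb(\Hh_2, \Dd_{-S})$ by Lemma~\ref{lem:closed.graph} again; composing with $S^\ddagger \in \Bb(\Dd_{-S},\Dd_S)$, $C \in \Bb(\Dd_S, \Dd_{-2})$ and $D^\ddagger \in \Bb(\Dd_{-2},\Dd_2)$ then shows that every block of $\Ll$ and of $\Kk$ is bounded between the required spaces, so that $\Ll, \Kk \in \Bb(\Hh, \Dd)$.

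Since $S_0 \in \Ff_+(\Hh_1)$ and $D_0 \in \Ff_+(\Hh_2)$, the kernels $\ker S_0$ and $\ker D_0$ are finite dimensional and the orthogonal projections $P_S$, $P_D$ have finite rank. As each of the four blocks of $\Kk$ factors through $P_S$ or $P_D$, the operator $\Kk$ has finite rank and in particular $\Kk \in \Kk(\Hh, \Dd)$.

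The core of the proof is the identity. I would fix $(f,g) \in \dom\Aa_0$ and put $u \defeq Af + Bg \in \Hh_1$ and $w \defeq Cf + Dg \in \Hh_2$, so that $\Aa_0(f,g) = (u,w)$; using $S^\ddagger \supset S_0^\#$ and $D^\ddagger \supset D_0^\#$ I replace $S_0^\# u$ by $S^\ddagger u$ and $D_0^\# w$ by $D^\ddagger w$. Substituting $A = S + BD^\ddagger C$ yields $u = Sf + B(D^\ddagger Cf + g)$; crucially $B(D^\ddagger Cf + g) = u - Sf$ lies in $\Dd_{-S}$, so $S^\ddagger$ applies although $Bg$ and $BD^\ddagger Cf$ individually need not. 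Likewise $D^\ddagger w = D^\ddagger Cf + g - P_D g \in \dom D_0$, so $BD^\ddagger w \in \Dd_{-S}$. Invoking Lemma~\ref{lem:T.inverse}, which gives $S^\ddagger S = I - P_S$ on $\Dd_S$ and $D^\ddagger D = I - P_D$ on $\Dd_2$ (valid since $\dom S_0$, $\dom D_0$ are dense in $\Dd_S$, $\Dd_2$), the first component $S^\ddagger u - S^\ddagger BD^\ddagger w$ collapses to $f - P_S f + S^\ddagger B P_D g$, the two $S^\ddagger B(\cdot)$ terms differing only by $S^\ddagger B P_D g$. For the second component I would write $-D^\ddagger C S^\ddagger u + D^\ddagger w + D^\ddagger C S^\ddagger B D^\ddagger w = D^\ddagger w - D^\ddagger C(S^\ddagger u - S^\ddagger B D^\ddagger w)$, recognise the parenthesis as the first component just computed, expand $D^\ddagger w = D^\ddagger Cf + g - P_D g$, and observe that the $D^\ddagger Cf$ contributions cancel, leaving $g + D^\ddagger C P_S f - P_D g - D^\ddagger C S^\ddagger B P_D g$. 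Reading off the entries of $\Kk$ then shows that both components agree with $(f,g) + \Kk(f,g)$.

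The main obstacle I anticipate is bookkeeping rather than depth: every application of the only partially defined bounded operators $S^\ddagger$ and $D^\ddagger$ must be justified by first checking that its argument lies in $\Dd_{-S}$, respectively $\Dd_{-2}$, before sums may be split by linearity. This is exactly where membership $(f,g) \in \dom\Aa_0$, i.e.\ $Af + Bg \in \Hh_1 \subset \Dd_{-S}$, is indispensable: it is what legitimises treating $B(D^\ddagger Cf + g)$ as an element of $\Dd_{-S}$, whereas its constituents a priori live only in the larger space $\Dd_{-1}$. Once this is under control, the verification of the identity is a routine telescoping computation.
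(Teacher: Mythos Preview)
Your proposal is correct and follows essentially the same approach as the paper's proof: both establish boundedness of the blocks via Lemma~\ref{lem:closed.graph} and the hypothesis $B(\dom D_0)\subset\Dd_{-S}$, obtain finite rank from $S_0,D_0\in\Ff_+$, and verify the identity by invoking $S^\ddagger S = I - P_S$ and $D^\ddagger D = I - P_D$ from Lemma~\ref{lem:T.inverse}. The only cosmetic difference is that the paper combines the first-component terms directly as $S^\ddagger(Sf + Bg - BD^\ddagger Dg) = S^\ddagger(Sf + BP_Dg)$ and declares the second component analogous, whereas you introduce the auxiliary $u,w$ and compute the second component by reusing the first; your explicit bookkeeping of which sums lie in $\Dd_{-S}$ before splitting is exactly the care the paper's computation implicitly relies on.
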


\begin{proof}
	From Lemma~\ref{lem:closed.graph}, it follows that $P_S \in \Bb (\Hh_1, \Dd_S)$ and $P_D \in \Bb (\Hh_2, \Dd_2)$. Moreover, $B (\dom D_0) \subset \Dd_{-S}$ implies $\ran B P_D \subset \Dd_{-S}$ and again by Lemma~\ref{lem:closed.graph} we obtain $B P_D \in \Bb (\Hh_2, \Dd_{-S})$. Altogether, we have $\Kk \in \Bb (\Hh, \Dd)$. Since $\ker S_0$ and $\ker D_0$ are finite dimensional, $\Kk$ has finite rank and is thus compact.
	We next show that $\Ll$ is well defined and in $\Bb(\Hh,\Dd)$. Notice therefore that
	\begin{equation}
		D_0^\# = D^\ddagger\vert_{\Hh_2} \in \Bb (\Hh_2, \Dd_2), \qquad S_0^\# = S^\ddagger\vert_{\Hh_1} \in \Bb (\Hh_1, \Dd_S).
	\end{equation}
	Hence, from $B (\dom D_0) \subset \Dd_{-S}$, we conclude that $\ran BD_0^\# \subset \Dd_{-S}$ and, applying Lemma~\ref{lem:closed.graph}, that $BD_0^\# \in \Bb (\Hh_2, \Dd_{-S})$. This shows the claimed boundedness.
	It remains to prove that
	\begin{equation}
		\Ll \Aa_0 (f, g) = (f, g) + \Kk (f, g), \qquad (f, g) \in \dom \Aa_0.
	\end{equation}
	Since $S_0^\# \subset S^\ddagger$ and $D_0^\# \subset D^\ddagger : \Dd_{-2} \to \Dd_2$, and since $Cf, Dg \in \Dd_{-2}$, we can write
	\begin{equation}
	\label{eq:left.approx.inverse.proof}
		\pi_1 \Ll \Aa_0 (f, g) = S^\ddagger (Af + Bg - B D^\ddagger (Cf + Dg)) = S^\ddagger (Sf + Bg - B D^\ddagger Dg).
	\end{equation}
	Applying Lemma~\ref{lem:T.inverse} to $S_0$ and $D_0$, we obtain the identities
	\begin{equation}
		S^\ddagger S = I - P_S\vert_{\Dd_S}, \qquad D^\ddagger D = I - P_D\vert_{\Dd_2}.
	\end{equation}
	Using this,~\eqref{eq:left.approx.inverse.proof}, $S^\ddagger : \Dd_{-S} \to \Dd_S$ and  $Sf, BP_D g \in \Dd_{-S}$ gives
	\begin{equation}
		\pi_1 \Ll \Aa_0 (f, g) = S^\ddagger (Sf + B P_D g) = f - P_S f + S^\ddagger B P_D g = f + \pi_1 \Kk (f, g).
	\end{equation}
	The proof of $\pi_2 \Ll \Aa_0 (f, g) = g + \pi_2 \Kk (f, g)$ is analogous.
\end{proof}

Employing the above proposition, we are now able to prove Theorem~\ref{thm:second.implication}.

\begin{proof}[Proof of Theorem {\rm\ref{thm:second.implication}}]
	(i) We apply Proposition~\ref{prop:left.inverse}. Note that all the assumptions are satisfied with $S^\ddagger = S^{-1}$; the density of $\dom S_0$ in $\Dd_S$ follows from the density of $\Hh_1$ in $\Dd_{-S}$ and $S^{-1} \in \Bb (\Dd_{-S}, \Dd_S)$.
	From $0 \in \rho(D_0)$, we conclude $D_0^\# = D_0^{-1}$, $P_D = 0$ and $D^\ddagger = D^{-1}$ by Lemma~\ref{lem:T.inverse}. Moreover, Lemma~\ref{lem:T.inverse} gives $S_0^\# = S_0^{-1}$ and $P_S = 0$ since $0 \in \rho (S_0)$. Hence $\Kk = 0$, i.e.\ the operator matrix
	\begin{equation}
		\Ll = \left(
		\begin{array}{cc}
			S_0^{-1} & -S^{-1} B D_0^{-1} \\
			-D^{-1} C S_0^{-1} & D_0^{-1} + D^{-1} C S^{-1} B D_0^{-1}
		\end{array}
		\right)
	\end{equation}
	from Proposition~\ref{prop:left.inverse} is a left inverse for $\Aa_0$. We show that $\Ll  \in \Bb (\Hh, \Dd)$ is also a right inverse for $\Aa_0$; as $\Dd \subset \Hh$ is continuously embedded, $0 \in \rho(\Aa_0)$ then follows from
		\begin{equation}
			\Aa_0^{-1} = \Ll \in \Bb (\Hh).
		\end{equation}
	
	Let $(f, g) \in \Hh$, then we have  $\Ll (f, g) \in \Dd$ and, using $D_0^{-1} \subset D^{-1}$ and $S_0^{-1} \subset S^{-1}$, we can write
	\begin{equation}
	\begin{aligned}
		\pi_1 \Aa \Ll (f, g) & = A (S^{-1}f -S^{-1} B D^{-1} g) \\
		& \qquad \qquad \qquad  + B (-D^{-1} C S^{-1}f + D^{-1} g + D^{-1} C S^{-1} B D^{-1}g). \\
	\end{aligned}
	\end{equation}
	Since $A :\Dd_S \to \Dd_{-1}$ and $B : \Dd_2 \to \Dd_{-1}$, we can break the parentheses in the above identity and obtain
	\begin{equation}
		\pi_1 \Aa \Ll (f, g) = (A - B D^{-1} C) S^{-1}f + ( I - A S^{-1} + B D^{-1} C S^{-1} ) B D^{-1} g
	\end{equation}
	and further conclude
	\begin{equation}
		\pi_1 \Aa \Ll (f, g) = S S^{-1} f + (I - S S^{-1})  B D^{-1} g = f.
	\end{equation}
	In the same way, one shows that $\pi_2 \Aa \Ll (f, g) = g$. Hence, $\Aa \Ll (f, g) = (f, g)$, which implies $\ran \Ll \subset \dom \Aa_0$ and $\Aa_0 \Ll = I$.
	(ii) By Proposition~\ref{prop:left.inverse}, the identity
	\begin{equation}
	\label{eq:left.approx.inverse}
		\Ll \Aa_0 = I + \Kk\vert_{\dom\Aa_0}
	\end{equation}
	holds true with $\Ll \in \Bb (\Hh, \Dd)$ and $\Kk \in \Bb (\Hh, \Dd)$ as in \eqref{eq:L.left.inverse} and \eqref{eq:K.left.inverse}. We first show that $\Aa_0$ is closed in $\Hh$. Let therefore $\{(f_m, g_m)\}_m \subset \dom \Aa_0$ and $(f, g), (u, v) \in \Hh$ such that $(f_m, g_m) \to (f, g)$ and $\Aa_0 (f_m, g_m) \to (u, v)$ in $\Hh$ as $m \to \infty$. The continuity of $\Ll$ and $\Kk$ imply
	\begin{equation}
	\label{eq:Aa.closed}
		(f_m, g_m) = \Ll \Aa_0 (f_m, g_m) - \Kk (f_m, g_m) \overset{\Dd}{\longrightarrow} \Ll (u, v) - \Kk (f, g), \quad m \to \infty,
	\end{equation}
	and so the sequence $\{(f_m, g_m)\}_m$ is convergent in both $\Hh$ and $\Dd$. Since $\Dd \subset \Hh$ is con\-ti\-nu\-ous\-ly embedded, the limits must coincide, thus $(f_m, g_m) \to (f,g)$ in $\Dd$ as $m \to \infty$. Consequently, $\Aa \in \Bb (\Dd, \Dd_-)$ implies
	\begin{equation}
		\Aa_0 (f_m, g_m) = \Aa (f_m, g_m) \overset{\Dd_-}{\longrightarrow} \Aa (f, g), \quad m \to \infty.
	\end{equation}
	Hence, $\{\Aa_0 (f_m,g_m)\}_m$ is convergent in both $\Hh$ and $\Dd_-$, which, since $\Hh\subset\Dd_-$ is continuously embedded, gives the equality $\Aa (f, g) = (u, v) \in \Hh$. This in turn implies $(f, g) \in \dom \Aa_0$ and $\Aa_0 (f, g) = (u, v)$.
	By the continuity of the embedding $\Dd \subset \Hh$, we have $\Ll, \Kk \in \Bb (\Hh)$. Moreover, it follows from Proposition~\ref{prop:left.inverse} that $\Kk$ has finite rank, thus $\Kk \in \Kk (\Hh)$ and from \eqref{eq:left.approx.inverse} we see that $\Ll$ is a bounded left approximate inverse for $\Aa_0$. Finally, from the closedness of $\Aa_0$, \eqref{eq:left.approx.inverse} and \cite[Thm.\ I.3.13]{Edmunds-Evans-1987}, we conclude $\Aa_0 \in \Ff_+ (\Hh)$.
\end{proof}

\begin{proof}[Proof of Corollary {\rm\ref{cor:second.implication}}]
	Let $S_0 \in \Ff_+ (\Hh_1)$; note that this in particular holds if $0 \in \rho(S_0)$. We start by deriving a resolvent type identity for $S_0^\#$ in order to construct an extension
	\begin{equation}
			S_0^\# \subset S^\ddagger \in \Bb (\Dd_{-S}, \Dd_S).
	\end{equation}
	Let $P$ denote the orthogonal projection on $\ker S_0$ in $\Hh_1$. From \eqref{eq:gen.inv.comp}, we conclude (notice that $\coker S_0 = \{0\}$ and thus $Q=0$)
	\begin{equation}
		(S_0 - z)^\# - S_0^\# = (S_0^\# S_0 + P) (S_0 - z)^\# - S_0^\# (S_0 - z) (S_0 - z)^\# = (z S_0^\# + P)(S_0 - z)^\#.
	\end{equation}
	We can thus define an extension
	\begin{equation}
		S_0^\# \subset S^\ddagger_z - (z S_0^\# + P) S^\ddagger_z \eqdef S^\ddagger.
	\end{equation}
	By Lemma~\ref{lem:closed.graph}, one has $zS_0^\# + P \in \Bb (\Hh_1, \Dd_S)$. Moreover, $S^\ddagger_z \in \Bb (\Dd_{-S}, \Hh_1)$ since $\Dd_S$ is continuously embedded in $\Hh_1$. Altogether, $S^\ddagger \in \Bb (\Dd_{-S}, \Dd_S)$ is shown.
	
	The statements (i) and (ii), respectively, now immediately follow from Theorem~\ref{thm:second.implication} (i) and (ii); notice that in (i) the assumption $0 \in \rho (S_0)$ together with Lemma~\ref{lem:T.inverse} implies $S^{-1} = S^\ddagger$.
\end{proof}

It remains to prove Proposition~\ref{prop:Aa.dom.dense} (ii).

\begin{proof}[Proof of Proposition {\rm \ref{prop:Aa.dom.dense} (ii)}]
	Notice first that the assumptions of Theorem~\ref{thm:second.implication}~(i) are satisfied, in particular $S^{-1} \in \Bb (\Dd_{-S},\Dd_S)$, see the proof of Corollary~\ref{cor:second.implication} (i). Moreover, since $\Dd_{-S} = \Dd_{-1}$, the inverse $\Ll$ in the proof of Theorem~\ref{thm:second.implication}~(i) has the following bounded extension
	\begin{equation}
		\widetilde \Ll \defeq \begin{pmatrix}
			S^{-1} & -S^{-1} B D^{-1} \\
			-D^{-1} C S^{-1} & D^{-1} + D^{-1} C S^{-1} B D^{-1}
		\end{pmatrix} \in \Bb (\Dd_-,\Dd).
	\end{equation}
	It is straightforward to check that $\widetilde \Ll = \Aa^{-1}$, which by the density of $\Hh$ in $\Dd_-$ immediately implies the density of $\dom \Aa_0$ in $\Dd$. The density of $\dom \Aa_0$ in $\Hh$ then follows from the density of $\Dd$ in $\Hh$ and the continuity of its canonical embedding.
\end{proof}

\section{Spectral equivalence between operator matrix and Schur complement}
\label{sec:matrix.Schur.complement}

We apply the results from Section~\ref{sec:abstract.results} in order to obtain spectral correspondence between the matrix $\Aa_0$ and its Schur complement $S_0 (\cdot)$ as an operator family. More precisely, we assume that $\Theta \subset \C$ is a set such that, for a fixed $\lambda \in \Theta$, Assumption~\ref{asm:I} is satisfied with $A - \lambda$, $D_0 - \lambda$ and $D - \lambda$ instead of $A$, $D_0$ and $D$, respectively. Our main theorems (which correspond to the case $\lambda = 0$) then apply to the matrix $\Aa - \lambda$ and its (generalised) Schur complement $S (\lambda)$, see \eqref{eq:Schur.complement.family}. This relates semi-Fredholmness/bounded invertibility of $\Aa_0-\lambda$ to semi-Fredholmness/bounded invertibility of $S_0 (\lambda)$ and thereby provides an equivalence of the type
\begin{equation}
	\lambda \in \sigma (\Aa_0) \iff 0 \in \sigma (S_0 (\lambda)) \iff \lambda \in \sigma (S_0 (\cdot)),
\end{equation}
and analogously with $\sigma_{\rm p}$ and $\sigma_{\operatorname{e2}}$ instead of $\sigma$. Recall that the spectrum of the operator family $\set{S_0 (\lambda)}{\lambda \in \Theta}$ is defined as
\begin{equation}
	\sigma (S_0 (\cdot)) = \set{\lambda \in \Theta}{0 \in \sigma (S_0 (\lambda))};
\end{equation}
its resolvent set, essential spectrum and point spectrum are defined analogously.

\begin{asm} \label{asm:II}
	Throughout this section, we assume the following.
	\begin{enumerate}
		\item Let Assumption~\ref{asm:I}~(i) and (ii) be satisfied.
		\item Let $D_0 \in \Cc (\Hh_2)$ such that $\dom D_0 \subset \Dd_2$ is dense in $\Dd_2$ and assume that there exists an extension
		\begin{equation}
			D_0 \subset D \in \Bb (\Dd_2, \Dd_{-2}). \tag*{//}
		\end{equation}
	\end{enumerate}
\end{asm}

The assumptions above allow us to introduce the operator matrix $\Aa$ and its maximal restriction $\Aa_0$ in $\Hh$ analogously to Definition~\ref{def:matrix.Schur.compl}. Moreover, we can define the Schur complement $S(\lambda)$ for a subset $\Theta$ of spectral parameters $\lambda \in \C$ such that $D_0 - \lambda$ and its extension $D - \lambda$ satisfy Assumption~\ref{asm:I}~(iii). Notice that, in line with Assumption~\ref{asm:I}~(iii), we do not require $\la \in \rho(D_0)$ but allow $\lambda \in \C$ such that $D_0 - \lambda$ merely has a generalised inverse, see \eqref{eq:gen.inv} and \eqref{eq:gen.inv.comp}.

\begin{defi}
	\label{def:matrix.Schur.compl.family}
	Let Assumption~\ref{asm:II} be satisfied and define $\Aa \in \Bb (\Dd, \Dd_-)$, $\dom \Aa_0$ and $\Aa_0$ as in Definition~\ref{def:matrix.Schur.compl}. Moreover, define the following family of operators
	\begin{equation}
	\label{eq:Schur.complement.family}
		S (\lambda) \defeq A - \lambda - B D^\ddagger (\lambda) C \in \Bb (\Dd_S, \Dd_{-1}), \qquad \lambda \in \Theta,
	\end{equation}
	where the set $\Theta \subset \C$ satisfies
	\begin{equation}\label{eq:Theta}
		\begin{aligned}
			\Theta & ~\subset~ \set{\lambda \in \C}{\ran (D_0 - \lambda) ~ \textnormal{closed in} ~ \Hh_2, ~ \\
			& \hspace{3cm} \exists ~ \textnormal{extension} ~ (D_0 - \lambda)^\# \subset D^\ddagger (\lambda) \in \Bb (\Dd_{-2}, \Dd_2)}.
		\end{aligned}
	\end{equation}
	Let the corresponding family of maximal operators in $\Hh_1$ be defined as
	\begin{equation}
		S_0 (\lambda) \defeq S (\lambda)\vert_{\dom S_0 (\lambda)}, \quad \dom S_0 (\lambda) \defeq \set{f \in \Dd_S}{S (\lambda) f \in \Hh_1}, \quad \lambda \in \Theta. \tag*{//}
	\end{equation}
\end{defi} 

\begin{rem}
	\label{rem:families.variable.domain}
	The spaces in Assumption~\ref{asm:II}~(i) are chosen such that, independently of $\lambda \in \Theta$, the results of Section~\ref{sec:abstract.results} apply to $\Aa - \lambda$ and $S(\lambda)$. However, if the spaces and operators in Assumption~\ref{asm:II} may depend on $\lambda \in \Theta$, our method defines an operator matrix valued family
	\begin{equation}
		\Ll(\lambda) = \begin{pmatrix}
			A(\lambda) & B(\lambda) \\
			C(\lambda) & D(\lambda)
		\end{pmatrix} \in \Bb (\Dd (\lambda), \Dd_- (\lambda)), \quad \lambda \in \Theta,
	\end{equation}
	whose domain $\dom \Ll_0 (\lambda) \subset \Hh$ in general depends on the spectral parameter. \hfill //
\end{rem}

Let us state the first of four corollaries, which are a translation of the results in Section~\ref{sec:abstract.results} to the present setting. Namely, if the assumptions of Proposition~\ref{prop:Aa.dom.dense} are satisfied in one point $\lambda_0 \in \Theta$, then $\Aa_0$ is densely defined in $\Hh$ and/or $\Dd$.

\begin{cor}
	\label{cor:family.dense.domain}
	Let Assumption~{\rm\ref{asm:II}} be satisfied and let $\Aa_0$, $S_0$ and $\Theta$ be as in Definition {\rm\ref{def:matrix.Schur.compl.family}}.
	
	\begin{enumerate}
		\item  Let $\dom B_0 \cap \dom D_0$ {\rm(}with $\dom B_0$ defined as in \eqref{eq:B.dom}{\rm)} be dense in $\Hh_2$. Assume there exists $\lambda_0 \in \Theta$ such that $\coker (D_0-\lambda_0) = \{0\}$ and such that $\dom S_0 (\lambda_0)$ is dense in $\Hh_1$. Then $\dom \Aa_0$ is dense in $\Hh$.
		
		\item Let $\Dd_{-S} = \Dd_{-1}$ and assume that there exists $\lambda_0 \in \Theta$ such that $D_0 - \lambda_0$ and $S_0 (\lambda_0)$ satisfy the assumptions of Proposition~{\rm\ref{prop:Aa.dom.dense} (ii)}. Then $\dom \Aa_0$ is dense in both $\Dd$ and $\Hh$.
	\end{enumerate}
\end{cor}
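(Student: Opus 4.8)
The plan is to reduce both statements to the corresponding parts of Proposition~\ref{prop:Aa.dom.dense} by applying the latter to the shifted matrix $\Aa - \lambda_0$ together with the operators $A - \lambda_0$, $D_0 - \lambda_0$, $D - \lambda_0$ and the generalised Schur complement $S(\lambda_0)$. Two preliminary ingredients have to be set up: that $\Aa - \lambda_0$ satisfies Assumption~\ref{asm:I}, and that the maximal domain is insensitive to the scalar shift.

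For the first ingredient I would argue that Assumption~\ref{asm:II}~(i) already furnishes Assumption~\ref{asm:I}~(i), since the Hilbert space triplets are unaffected by the shift. For~(ii), the continuous chain $\Dd_S \hookrightarrow \Hh_1 \hookrightarrow \Dd_{-S} \hookrightarrow \Dd_{-1}$ gives $\lambda_0 I \in \Bb(\Dd_S, \Dd_{-1})$, whence $A - \lambda_0 \in \Bb(\Dd_S, \Dd_{-1})$, while $B$ and $C$ are untouched. For~(iii), the membership $\lambda_0 \in \Theta$ is precisely what guarantees that $\ran(D_0 - \lambda_0)$ is closed in $\Hh_2$ and that $(D_0 - \lambda_0)^\# \subset D^\ddagger(\lambda_0) \in \Bb(\Dd_{-2}, \Dd_2)$, see~\eqref{eq:Theta}; combined with $D - \lambda_0 \in \Bb(\Dd_2, \Dd_{-2})$ (again via $\Dd_2 \hookrightarrow \Hh_2 \hookrightarrow \Dd_{-2}$) and $\dom(D_0 - \lambda_0) = \dom D_0$ being dense in $\Dd_2$, this yields Assumption~\ref{asm:I}~(iii). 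By construction the associated generalised Schur complement of $\Aa - \lambda_0$ is then $(A - \lambda_0) - B D^\ddagger(\lambda_0) C = S(\lambda_0)$, matching Definition~\ref{def:matrix.Schur.compl.family}.

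For the second ingredient the key observation is the domain identity $\dom(\Aa - \lambda_0)_0 = \dom \Aa_0$. Indeed, $\Dd = \Dd_S \oplus \Dd_2$ embeds continuously into $\Hh$, so every $\mathbf x \in \Dd$ satisfies $\lambda_0 \mathbf x \in \Hh$; consequently $(\Aa - \lambda_0)\mathbf x \in \Hh$ holds if and only if $\Aa \mathbf x \in \Hh$, and the two maximal domains coincide.

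With these in place, part~(i) follows by invoking Proposition~\ref{prop:Aa.dom.dense}~(i) for $\Aa - \lambda_0$: its hypotheses $\coker(D_0 - \lambda_0) = \{0\}$ and density of $\dom S_0(\lambda_0)$ in $\Hh_1$ are assumed, and since the off-diagonal entry is still $B$ and $\dom(D_0 - \lambda_0) = \dom D_0$, the remaining hypothesis reads exactly as density of $\dom B_0 \cap \dom D_0$ in $\Hh_2$. Part~(ii) follows identically from Proposition~\ref{prop:Aa.dom.dense}~(ii), whose hypotheses are assumed verbatim for $D_0 - \lambda_0$ and $S_0(\lambda_0)$ under $\Dd_{-S} = \Dd_{-1}$. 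In both cases the density conclusion for $\dom(\Aa - \lambda_0)_0$ transfers to $\dom \Aa_0$ via the domain identity above. I do not anticipate a genuine obstacle here; the only point requiring care is the bookkeeping that the scalar shift preserves Assumption~\ref{asm:I} and leaves the maximal domain unchanged, which is where the continuity of the embeddings $\Dd \hookrightarrow \Hh$ and $\Dd_2 \hookrightarrow \Hh_2 \hookrightarrow \Dd_{-2}$ enters.
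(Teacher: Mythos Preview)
Your proposal is correct and follows essentially the same approach as the paper: verify that Assumption~\ref{asm:I} holds for the shifted data $A-\lambda_0$, $D_0-\lambda_0$, $D-\lambda_0$, note that $\dom(\Aa_0-\lambda_0)=\dom\Aa_0$ and $\dom(D_0-\lambda_0)=\dom D_0$, and then invoke Proposition~\ref{prop:Aa.dom.dense}. The paper's proof is a two-line version of exactly this argument; your additional bookkeeping (checking each item of Assumption~\ref{asm:I} explicitly and using the embeddings to justify the shift) is all correct and merely spells out what the paper leaves implicit.
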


\begin{proof}
	The assumptions imply that Assumption~\ref{asm:I} is satisfied with $A - \lambda_0$, $D_0 - \lambda_0$ and $D - \lambda_0$ instead of $A$, $D_0$ and $D$, respectively. Hence, the claims in (i) and (ii) follow from Proposition~\ref{prop:Aa.dom.dense} (i) and (ii) applied to $\Aa_0 - \lambda_0$ and $S_0 (\lambda_0)$ and from
	\begin{equation*}
		\dom \Aa_0 = \dom (\Aa_0 - \lambda_0), \qquad \dom D_0 = \dom (D_0 - \lambda_0). \qedhere
	\end{equation*}
\end{proof}

The point spectra of $\Aa_0$ and $S_0 (\cdot)$ correspond on $\Theta \cap \rho (D_0)$.

\begin{cor}
	\label{cor:point.spectrum.family}
	Let Assumption~{\rm\ref{asm:II}} be satisfied and let $\Aa_0$, $S_0$ and $\Theta$ be as in Definition {\rm\ref{def:matrix.Schur.compl.family}}. Then
	\begin{equation}
		\pspec (\Aa_0) \cap \rho (D_0) \cap \Theta = \pspec (S_0 (\cdot)) \cap \rho (D_0).
	\end{equation}
\end{cor}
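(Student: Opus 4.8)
The plan is to reduce the statement to a pointwise application of Proposition~\ref{prop:point.spectrum} after shifting by the spectral parameter. Recall that $\lambda \in \pspec(\Aa_0)$ means $0 \in \pspec(\Aa_0 - \lambda)$, that $\lambda \in \pspec(S_0(\cdot))$ means $\lambda \in \Theta$ together with $0 \in \pspec(S_0(\lambda))$, and in particular that $\pspec(S_0(\cdot)) \subset \Theta$ by definition. Consequently both sides of the claimed identity are contained in $\rho(D_0) \cap \Theta$, so it suffices to fix an arbitrary $\lambda \in \rho(D_0) \cap \Theta$ and establish the equivalence $0 \in \pspec(\Aa_0 - \lambda) \iff 0 \in \pspec(S_0(\lambda))$.

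First I would verify that Assumption~\ref{asm:I} holds verbatim for the triple $A - \lambda$, $D_0 - \lambda$, $D - \lambda$ in place of $A$, $D_0$, $D$, so that Proposition~\ref{prop:point.spectrum} becomes applicable. Parts (i) and (ii) are unaffected by the shift; since the embedding $\Dd_2 \hookrightarrow \Dd_{-2}$ is continuous, $D - \lambda \in \Bb(\Dd_2, \Dd_{-2})$ is still a bounded extension of $D_0 - \lambda$, while the membership $\lambda \in \Theta$ provides exactly the generalised inverse $(D_0 - \lambda)^\# \subset D^\ddagger(\lambda) \in \Bb(\Dd_{-2}, \Dd_2)$ demanded by Assumption~\ref{asm:I}~(iii). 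Moreover $\lambda \in \rho(D_0)$ yields $0 \in \rho(D_0 - \lambda)$, which is the additional hypothesis of Proposition~\ref{prop:point.spectrum}.

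The remaining point is to identify the maximal operators. A shift by $\lambda$ leaves maximal domains invariant: for $(f,g) \in \Dd \subset \Hh$ one has $\lambda(f,g) \in \Hh$, whence $(\Aa - \lambda)(f,g) \in \Hh \iff \Aa(f,g) \in \Hh$; thus the maximal restriction of $\Aa - \lambda$ to $\Hh$ has domain $\dom \Aa_0$ and coincides with $\Aa_0 - \lambda$. The generalised Schur complement of the shifted matrix is precisely $(A - \lambda) - B D^\ddagger(\lambda) C = S(\lambda)$, and its maximal restriction to $\Hh_1$ is $S_0(\lambda)$ from Definition~\ref{def:matrix.Schur.compl.family}. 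Applying Proposition~\ref{prop:point.spectrum} to $\Aa_0 - \lambda$ and $S_0(\lambda)$ then delivers the desired pointwise equivalence; letting $\lambda$ range over $\rho(D_0) \cap \Theta$ and using $\pspec(S_0(\cdot)) \subset \Theta$ completes the argument.

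I do not anticipate a genuine obstacle, as the corollary is essentially a parametrised restatement of Proposition~\ref{prop:point.spectrum}. The only points requiring care are the bookkeeping between $\Theta$ and $\rho(D_0)$, resolved by the observation that the family $S_0(\cdot)$ is defined only on $\Theta$, and the verification that the shift preserves both the validity of Assumption~\ref{asm:I} and the relevant maximal domains; neither of these is deep.
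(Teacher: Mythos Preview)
Your proposal is correct and follows essentially the same approach as the paper's proof: both reduce to a pointwise application of Proposition~\ref{prop:point.spectrum} to $\Aa_0 - \lambda$ and $S_0(\lambda)$ for $\lambda \in \rho(D_0) \cap \Theta$, using that $\pspec(S_0(\cdot)) \subset \Theta$ by definition to drop the intersection with $\Theta$ on the right. Your version spells out the verification of Assumption~\ref{asm:I} for the shifted data and the invariance of the maximal domains under shifts, which the paper leaves implicit, but the underlying argument is the same.
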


\begin{proof}
	Let $\lambda \in \Theta \cap \rho (D_0)$. By Proposition~\ref{prop:point.spectrum} applied to $\Aa_0 - \lambda$ and $S_0 (\lambda)$, we conclude that $0 \in \ker(\Aa_0-\lambda)$ if and only if $0 \in \ker S_0 (\lambda)$. Hence,
	\begin{equation*}
		\pspec (\Aa_0) \cap \rho (D_0) \cap \Theta = \pspec (S_0 (\cdot)) \cap \rho (D_0) \cap \Theta = \pspec (S_0 (\cdot)) \cap \rho (D_0). \qedhere
	\end{equation*}
\end{proof}

On subsets of $\Theta$ where the assumptions of Theorem~\ref{thm:first.implication} are satisfied, we obtain an inclusion of the (essential) spectrum of $S_0 (\cdot)$ in the (essential) spectrum of $\Aa_0$.

\begin{cor}
	\label{cor:family.first.implication}
	Let Assumption~{\rm\ref{asm:II}} be satisfied and let $\Aa_0$, $S_0$ and $\Theta$ be defined as in Definition {\rm\ref{def:matrix.Schur.compl.family}}.
	\begin{enumerate}
		\item The following inclusion holds
		\begin{equation}
			\sigma (S_0 (\cdot)) \cap \rho (D_0) \subset \sigma (\Aa_0) \cap \rho (D_0) \cap \Theta.
		\end{equation}
		\item If $\Sigma \subset \Theta$ is such that, for all $\lambda \in \Sigma$, the assumptions of Theorem~{\rm\ref{thm:first.implication}~(ii)} are satisfied with $\Aa_0 - \lambda$ and $S_0 (\lambda)$, then
		\begin{equation}
		 	\essspec (S_0 (\cdot)) \cap \Sigma \subset \essspec (\Aa_0) \cap \Sigma.
		\end{equation}
	\end{enumerate}
\end{cor}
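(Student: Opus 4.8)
The plan is to mirror the proofs of Corollaries~\ref{cor:family.dense.domain} and~\ref{cor:point.spectrum.family}: for each spectral parameter in the left-hand set I would note that Assumption~\ref{asm:I} holds for the shifted data $A-\lambda$, $D_0-\lambda$ and $D-\lambda$, so that Theorem~\ref{thm:first.implication} becomes applicable to $\Aa_0-\lambda$ and $S_0(\lambda)$, and then take contrapositives. Both inclusions should emerge as a pure translation, with no fresh analysis required.

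For part~(i) I would fix $\lambda \in \sigma(S_0(\cdot)) \cap \rho(D_0)$. Since by definition $\sigma(S_0(\cdot)) \subset \Theta$, membership already gives $\lambda \in \Theta$, while $\lambda \in \rho(D_0)$ yields $0 \in \rho(D_0-\lambda)$ and hence, via Lemma~\ref{lem:T.inverse}, $D^\ddagger(\lambda) = (D_0-\lambda)^{-1}$; thus Assumption~\ref{asm:I} is met for the shifted data. Theorem~\ref{thm:first.implication}~(i) applied to $\Aa_0-\lambda$ then reads $0 \in \rho(\Aa_0-\lambda) \implies 0 \in \rho(S_0(\lambda))$, and I would invoke its contrapositive. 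Combined with $0 \in \sigma(S_0(\lambda))$ — which is precisely what $\lambda \in \sigma(S_0(\cdot))$ encodes — this gives $0 \in \sigma(\Aa_0-\lambda)$, i.e.\ $\lambda \in \sigma(\Aa_0)$, and since $\lambda \in \rho(D_0) \cap \Theta$ is already in hand, the inclusion follows.

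For part~(ii) I would fix $\lambda \in \essspec(S_0(\cdot)) \cap \Sigma$. By the defining property of $\Sigma$, the hypotheses of Theorem~\ref{thm:first.implication}~(ii) hold for $\Aa_0-\lambda$ and $S_0(\lambda)$, so that theorem supplies $\Aa_0-\lambda \in \Ff_+(\Hh) \implies S_0(\lambda) \in \Ff_+(\Hh_1)$. Unwinding the definition, $\lambda \in \essspec(S_0(\cdot))$ means $S_0(\lambda) \notin \Ff_+(\Hh_1)$; taking the contrapositive of the implication then forces $\Aa_0-\lambda \notin \Ff_+(\Hh)$, i.e.\ $\lambda \in \essspec(\Aa_0)$, and intersecting with $\Sigma$ completes the argument.

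I do not expect a genuine obstacle here, since all the substance is carried by the abstract Theorem~\ref{thm:first.implication} and the corollary is essentially bookkeeping. The only steps demanding a little care are the set-theoretic inclusions — checking that $\lambda \in \Theta$ is inherited from $\sigma(S_0(\cdot))$ and that $\lambda \in \rho(D_0)$ supplies the bounded invertibility of $D_0-\lambda$ needed for part~(i) — together with the correct unwinding of the two family definitions $\sigma(S_0(\cdot)) = \set{\lambda \in \Theta}{0 \in \sigma(S_0(\lambda))}$ and $\essspec(S_0(\cdot)) = \set{\lambda \in \Theta}{S_0(\lambda) \notin \Ff_+(\Hh_1)}$.
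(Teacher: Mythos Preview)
The proposal is correct and follows essentially the same approach as the paper's own proof, which simply states that both parts follow by applying Theorem~\ref{thm:first.implication}~(i) and~(ii) to $\Aa_0-\lambda$ and $S_0(\lambda)$ for fixed $\lambda$ in the relevant set. Your write-up just spells out the contrapositive and the set-theoretic bookkeeping in more detail; the aside about Lemma~\ref{lem:T.inverse} and $D^\ddagger(\lambda)=(D_0-\lambda)^{-1}$ is harmless but not strictly needed, since Theorem~\ref{thm:first.implication}~(i) only requires $0\in\rho(D_0-\lambda)$.
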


\begin{proof}
	The statements in (i) and (ii), respectively, follow similarly to the proof of Corollary~\ref{cor:point.spectrum.family} by applying Theorem~\ref{thm:first.implication} (i) and (ii) to $\Aa_0 - \lambda$ and $S_0 (\lambda)$ (with fixed $\lambda \in \rho (D_0) \cap \Theta$  in (i) and $\lambda \in \Sigma$ in (ii)).
\end{proof}

The reverse inclusions are obtained from Corollary~\ref{cor:second.implication}.

\begin{cor}
	\label{cor:family.second.implication}
	Let Assumption~{\rm\ref{asm:II}} be satisfied and let $\Aa_0$, $S$, $S_0$ and $\Theta$ be defined as in Definition {\rm\ref{def:matrix.Schur.compl.family}}.
	\begin{enumerate}
		\item If $\Sigma \subset \Theta$ is such that, for all $\lambda \in \Sigma$, the assumptions of Corollary~{\rm\ref{cor:second.implication}~(i)} are satisfied with $\Aa_0 - \lambda$, $S (\lambda)$ and $S_0 (\lambda)$, then
		\begin{equation}
			\sigma (\Aa_0) \cap \Sigma \subset \sigma (S_0 (\cdot)) \cap \Sigma.
		\end{equation} 
		\item  If $\Sigma \subset \Theta$ is such that, for all $\lambda \in \Sigma$, the assumptions of Corollary~{\rm\ref{cor:second.implication}~(ii)} are satisfied with $\Aa_0 - \lambda$, $S (\lambda)$ and $S_0 (\lambda)$, then
		\begin{equation}
			\essspec (\Aa_0) \cap \Sigma \subset \essspec (S_0 (\cdot)) \cap \Sigma.
		\end{equation}
	\end{enumerate}
\end{cor}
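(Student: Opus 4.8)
The plan is to reduce both inclusions to the fixed-parameter statements of Corollary~\ref{cor:second.implication} by applying them pointwise in $\lambda$ and arguing by contraposition, exactly in the spirit of the proofs of Corollaries~\ref{cor:point.spectrum.family} and~\ref{cor:family.first.implication}. The essential preliminary observation is that for every $\lambda \in \Sigma \subset \Theta$, Assumption~\ref{asm:II} together with the defining property of $\Theta$ in \eqref{eq:Theta} guarantees that Assumption~\ref{asm:I} holds verbatim with $A - \lambda$, $D_0 - \lambda$ and $D - \lambda$ in place of $A$, $D_0$ and $D$. Consequently $\Aa_0 - \lambda$ and $S_0(\lambda)$ are precisely the maximal operator matrix and Schur complement of Definition~\ref{def:matrix.Schur.compl} associated to these shifted entries, and Corollary~\ref{cor:second.implication} applies to them.

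For part (i), I would fix $\lambda \in \Sigma$ for which the hypotheses of Corollary~\ref{cor:second.implication}~(i) hold with $\Aa_0 - \lambda$, $S(\lambda)$ and $S_0(\lambda)$; note that these include $0 \in \rho(D_0 - \lambda)$. Corollary~\ref{cor:second.implication}~(i) then yields the implication that $0 \in \rho(S_0(\lambda))$ forces $0 \in \rho(\Aa_0 - \lambda)$. Translating through $\dom(\Aa_0 - \lambda) = \dom \Aa_0$ and the equivalences $0 \in \rho(\Aa_0 - \lambda) \iff \lambda \notin \sigma(\Aa_0)$ and, for $\lambda \in \Theta$, $0 \in \rho(S_0(\lambda)) \iff \lambda \notin \sigma(S_0(\cdot))$ (the latter by the very definition of the spectrum of the family $S_0(\cdot)$ on $\Theta$), the contrapositive reads $\lambda \in \sigma(\Aa_0) \implies \lambda \in \sigma(S_0(\cdot))$. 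Intersecting with $\Sigma$ gives the claimed inclusion $\sigma(\Aa_0) \cap \Sigma \subset \sigma(S_0(\cdot)) \cap \Sigma$.

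Part (ii) proceeds identically, now invoking Corollary~\ref{cor:second.implication}~(ii): for each $\lambda \in \Sigma$ satisfying its hypotheses (in particular $D_0 - \lambda \in \Ff_+(\Hh_2)$) one obtains that $S_0(\lambda) \in \Ff_+(\Hh_1)$ implies $\Aa_0 - \lambda \in \Ff_+(\Hh)$. Using the definition $\essspec(T) = \set{\lambda \in \C}{T - \lambda \notin \Ff_+}$ applied to $\Aa_0$ and its family analogue $\lambda \in \essspec(S_0(\cdot)) \iff S_0(\lambda) \notin \Ff_+(\Hh_1)$, the contrapositive of this implication is precisely $\essspec(\Aa_0) \cap \Sigma \subset \essspec(S_0(\cdot)) \cap \Sigma$. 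I expect no analytic difficulty, since all of the substance was absorbed into Corollary~\ref{cor:second.implication}; the only points demanding care are the bookkeeping of the shift, namely verifying once and for all that $\Theta$ and Assumption~\ref{asm:II} make the shifted entries satisfy Assumption~\ref{asm:I}, and the faithful dictionary between the operator-family spectra and the pointwise statements $0 \in \rho(\cdot)$ respectively membership in $\Ff_+$.
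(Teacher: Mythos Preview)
Your proposal is correct and follows essentially the same approach as the paper: the paper's proof simply states that the claims follow similarly to the proof of Corollary~\ref{cor:point.spectrum.family} by applying Corollary~\ref{cor:second.implication} to $\Aa_0 - \lambda$ and $S_0(\lambda)$ for fixed $\lambda \in \Sigma$. Your write-up makes the contraposition and the bookkeeping of the shift explicit, but the argument is identical in substance.
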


\begin{proof}
	The statements follow similarly to the proof of Corollary~\ref{cor:point.spectrum.family} by applying Corollary~\ref{cor:second.implication} to $\Aa_0 - \lambda$ and $S_0 (\lambda)$ (with fixed $\lambda \in \Sigma$).
\end{proof}

\section{Damped wave equation with irregular damping and potential}
\label{sec:DWE}

As an application of our results, we consider the linearly damped wave equation
\begin{equation}
\label{eq:DWE}
	\partial^2_t u(t, x) + 2 a(x) \partial_t u(t, x) = (\Delta_x - q (x)) u (t, x), \qquad t>0, \quad x \in \Omega,
\end{equation}
on $\Omega \subset \Rn$ with non-negative and possibly singular and/or unbounded damping $a$ and potential $q$. Equation \eqref{eq:DWE} can be written as the following first order Cauchy problem 
\begin{equation}
	\label{eq:DWE.Cauchy.problem}
	\partial_t \left(
	\begin{array}{c}
		u_1 (t,x) \\
		u_2 (t,x)
	\end{array} \right) = \left(
	\begin{array}{cc}
		0 &  1 \\
		\Delta_x - q(x) & - 2a(x)
	\end{array} \right)\left(
	\begin{array}{c}
		u_1 (t,x) \\
		u_2 (t,x)
	\end{array} \right).
\end{equation}

Spectral properties of the operator matrix above determine existence, uniqueness and behaviour of the solutions to \eqref{eq:DWE} and have been studied extensively. While the majority of results relies on relative boundedness of $a$ with respect to $\Delta - q$, see e.g.\ \cite{Ammari-Nicaise-2015,Gesztesy-Goldstein-Holden-Teschl-2012,Jacob-Tretter-Trunk-Vogt-2018}, the recent results \cite{Freitas-Siegl-Tretter-2018,Ikehata-Takeda-2020} do not follow standard patterns and allow stronger damping. Combining a distributional approach similar to \cite{Ammari-Nicaise-2015,Gesztesy-Goldstein-Holden-Teschl-2012,Jacob-Tretter-Trunk-Vogt-2018} with structural observations from \cite{Freitas-Siegl-Tretter-2018}, our method enables us to not only omit the assumption on relative boundedness of the damping but also to substantially lower the regularity of the coefficients. We thereby close a gap which was left open between the technical assumptions in \cite{Freitas-Siegl-Tretter-2018} and the minimal ones suggested by the applied sesquilinear form techniques therein, see Remark~\ref{rem:DWE.same.operator}.

The main result of this section is Theorem~\ref{thm:DWE}; assuming only that $a,q \in \Loneloc (\Omega)$, we define an m-accretive realisation of the operator matrix on the right hand side of \eqref{eq:DWE.Cauchy.problem} in a suitable (standard choice) Hilbert space and show spectral equivalence to its second Schur complement. By semigroup theory, this guarantees existence and uniqueness of the solutions to the underlying Cauchy problem. We thereby cover the essential part of \cite{Freitas-Siegl-Tretter-2018}, where the generation of a strongly continuous contraction semigroup was shown under more restrictive assumptions, see Remark~\ref{rem:DWE.same.operator}. Moreover, we point out that our method can also be employed to realise distributional dampings as considered e.g.\ in \cite{Ammari-Nicaise-2015, Krejcirik-Royer-2022-arxiv, Krejcirik-Kurimaiova-2020}, see Remark~\ref{rem:DWE.distr.damping}.

\subsection{Assumptions and main result}

We make the following natural \emph{low regularity assumptions}.

\begin{asm} \label{asm:III}
	Let $\Omega \subset \Rn$ be open and let $a, q \in \Loneloc (\Omega)$ such that $a, q \ge 0$ almost everywhere in $\Omega$. \hfill //
\end{asm}

In the following, we denote $\norm{\cdot} \defeq \norm{\cdot}_{L^2 (\Omega)}$ and $\iprod{\cdot}{\cdot} \defeq \iprod{\cdot}{\cdot}_{L^2 (\Omega)}$ and use the same notation for norm and inner product in $L^2 (\Omega)^n$ if no confusion can arise.

\subsubsection{Spectral correspondence and semigroup generation}

	We establish the operator theoretic framework behind the Cauchy problem \eqref{eq:DWE.Cauchy.problem} under Assumption~\ref{asm:III}. Let $\Ww(\Omega)$ be the Hilbert space completion of $\Coo (\Omega)$ with respect to the inner product
	\begin{equation}\label{eq:DWE.norm.Ww.Omega}
		\iprod{f}{g}_\Ww \defeq \int_{\Omega} \nabla f \cdot \overline{\nabla g} \d x + \int_{\Omega} q f \overline g \d x, \qquad f, \, g \in \Coo (\Omega);
	\end{equation}
 	recall that $q \ge 0$ and $a \ge 0$ a.e.\ in $\Omega$. Moreover, define
 	\begin{equation}\label{eq:DWE.Dd.S}
 		\Dd_S \defeq H_0^1 (\Omega) \cap \dom q^\frac12 \cap \dom a^\frac12.
 	\end{equation}
	Let $\Aa_0$ and $S_0 (\cdot)$ be the (family of) maximal operators in $\Ww(\Omega) \oplus L^2 (\Omega)$ and $L^2 (\Omega)$, respectively, corresponding to the differential expressions
	\begin{equation}\label{eq:DWE.def.Aa.S}
		\Aa \defeq \begin{pmatrix}
			0 & I \\
			\Delta - q & -2a
		\end{pmatrix}, \quad S(\lambda) \defeq -\frac{1}{\lambda} (- \Delta + q + 2 \lambda a + \lambda^2), \quad \lambda \in \C \setminus \{0\},
	\end{equation}
	on their respective maximal domains
	\begin{equation}\label{eq:DWE.dom.Aa0.S0}
		\begin{aligned}
			\dom \Aa_0 & \defeq \set{(f, g) \in \Ww (\Omega) \times \Dd_S}{(\Delta - q) f - 2 a g \in L^2(\Omega)}, \\
			\dom S_0 (\lambda) & \defeq \set{f \in \Dd_S}{(\Delta - q - 2 \lambda a)f \in L^2 (\Omega)}.
		\end{aligned}
	\end{equation}
	Here the above operations are understood in a standard (antilinear) distributional sense; see Definitions~{\rm\ref{def:DWE.spaces}}, {\rm\ref{def:DWE.operators}}, {\rm\ref{def:DWE.Aa.S}} and Remark~\ref{rem:DWE.distr.act} below for details. Notice that the Schur complement $S_0 (\cdot)$ coincides with the usual definition by means of its quadratic form.
	
	The main result of this section reads as follows and is proven in Section~\ref{sec:DWE.proof}.

\begin{thm}
	\label{thm:DWE}
	Let Assumption~{\rm \ref{asm:III}} be satisfied and let $\Aa_0$ and $S_0(\cdot)$ be as in \eqref{eq:DWE.def.Aa.S} and \eqref{eq:DWE.dom.Aa0.S0}. Then $-\Aa_0$ is m-accretive, thus $\Aa_0$ generates a strong\-ly continuous contraction semigroup on $\Ww(\Omega) \oplus L^2 (\Omega)$ and its domain is dense both in $\Ww(\Omega) \oplus \Dd_S$ and in $\Ww(\Omega) \oplus L^2(\Omega)$. Moreover, the {\rm(}point and essential{\rm)} spectra of $\Aa_0$ and $S_0 (\cdot)$ are equivalent on $\C \setminus (-\infty, 0]$,
	\begin{equation}
		\label{eq:DWE.spectra.equiv} 
		\sigma (\Aa_0) \setminus (-\infty, 0] = \sigma (S_0 (\cdot)) \setminus (-\infty, 0],
	\end{equation}
	and analogously for $\sigma_{\operatorname{p}}$ and $\sigma_{\operatorname{e2}}$ instead of $\sigma$. Moreover, the following relations hold on $(-\infty,0)$
	\begin{align}
		\label{eq:DWE.spec.incl} \sigma (\Aa_0) \cap (-\infty,0) & \supset \sigma (S_0 (\cdot)) \cap (-\infty,0), \\
		\label{eq:DWE.pspec.equiv}\pspec (\Aa_0) \cap (-\infty,0) & = \pspec (S_0 (\cdot)) \cap (-\infty,0).
	\end{align}
\end{thm}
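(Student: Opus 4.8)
The plan is to verify the abstract Assumption~\ref{asm:II} in its second Schur complement form (Remark~\ref{rem:second.Schur.compl}) for the concrete operators and then to read the statement off the corollaries of Section~\ref{sec:matrix.Schur.complement}. Here $\Hh_1 = \Ww(\Omega)$, $\Hh_2 = L^2(\Omega)$, and the entries of $\Aa$ are $A_0 = 0$ on $\Ww(\Omega)$, $B = I$, $C = \Delta - q$ and $D = -2a$, all understood in the distributional sense set up below. The eliminated block $A_0 = 0$ has $\rho(A_0) = \C\setminus\{0\}$ with trivial generalised inverse $-\lambda^{-1}$ after the shift $A_0 - \lambda$, so the admissible parameter set is $\Theta = \C\setminus\{0\}$ and the second Schur complement is exactly $S(\lambda) = -\lambda^{-1}(-\Delta + q + 2\lambda a + \lambda^2)$. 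One takes $\Dd_S = H_0^1(\Omega)\cap\dom q^{\frac12}\cap\dom a^{\frac12}$ with $\Dd_{-S}$ its anti-dual, and checks that $B$, $C$, $D$ are bounded between the triplet spaces; this is routine bookkeeping handled by the distributional framework.

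The analytic heart is the behaviour of the sesquilinear form $t_\lambda[f] = \norm{\nabla f}^2 + \norm{q^{\frac12}f}^2 + 2\lambda\norm{a^{\frac12}f}^2 + \lambda^2\norm{f}^2$ associated with $-\lambda S(\lambda)$. Since $a,q\ge 0$, its numerical range is the convex cone generated by the three rays of argument $0$, $\arg\lambda$ and $2\arg\lambda$; this cone has angular width $2\abs{\arg\lambda}$, hence lies in an open half-plane --- making $t_\lambda$ sectorial and $S(\lambda)\colon\Dd_S\to\Dd_{-S}$ boundedly invertible by the Lax--Milgram--Lions theorem --- \emph{precisely} when $\re\lambda > 0$. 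For general $\lambda\in\C\setminus(-\infty,0]$ I would not invert $S(\lambda)$ directly but exploit the freedom in the shift $z$ of Corollary~\ref{cor:second.implication}: replacing $S_0(\lambda)$ by $S_0(\lambda) - z$ turns the coefficient $\lambda^2$ into $\lambda(\lambda + z)$, and choosing $z > 0$ real and large rotates the third ray back towards argument $\arg\lambda$, restoring width $<\pi$ and thus sectoriality. This furnishes, for every $\lambda$ in the cut plane, a $z$ with $\ran(S_0(\lambda) - z)$ closed, trivial cokernel and a bounded generalised-inverse extension $S_z^\ddagger$, i.e.\ the structural hypothesis of Corollary~\ref{cor:second.implication}; the same invertibility yields density of $\dom S_0(\lambda)$ in $\Dd_S$, as $S_0(\lambda)$ maps a dense subspace of $\Dd_S$ onto the dense subspace $L^2(\Omega)\subset\Dd_{-S}$. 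Crucially this degenerates exactly on $(-\infty,0)$, where $\arg\lambda = \pi$ forces width $\pi$, explaining why only the one-sided inclusion survives there.

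With these ingredients the spectral statements follow by collecting corollaries. The forward inclusion $\sigma(S_0(\cdot))\subset\sigma(\Aa_0)$ on $\C\setminus\{0\}$ comes from Corollary~\ref{cor:family.first.implication}~(i) (whose hypothesis requires only $\lambda\in\rho(A_0) = \C\setminus\{0\}$), giving in particular \eqref{eq:DWE.spec.incl}; the reverse inclusion $\sigma(\Aa_0)\subset\sigma(S_0(\cdot))$ on $\C\setminus(-\infty,0]$ comes from Corollary~\ref{cor:family.second.implication}~(i) via the shift $z$ just constructed, so the two combine to \eqref{eq:DWE.spectra.equiv}. The essential-spectrum version is obtained identically from the parts~(ii) of the same corollaries, the Fredholm hypotheses on $A_0 - \lambda = -\lambda$ being trivial for $\lambda\neq0$. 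The point-spectrum identity \eqref{eq:DWE.pspec.equiv}, and its analogue on $\C\setminus(-\infty,0]$, follow from Corollary~\ref{cor:point.spectrum.family}, which needs only $\lambda\in\rho(A_0) = \C\setminus\{0\}$ and hence holds on all of $(-\infty,0)$ as well.

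Finally I would establish m-accretivity of $-\Aa_0$ in two steps. Density of $\dom\Aa_0$ in both $\Ww(\Omega)\oplus\Dd_S$ and $\Ww(\Omega)\oplus L^2(\Omega)$ follows from Corollary~\ref{cor:family.dense.domain}~(ii), since here the Schur dual space coincides with the relevant auxiliary dual space and the hypotheses of Proposition~\ref{prop:Aa.dom.dense}~(ii) are met at any $\lambda_0 > 0$. Accretivity is a direct computation: for $(f,g)\in\dom\Aa_0$ the distributional integration by parts gives $\iprod{(\Delta - q)f}{g} = -\iprod{f}{g}_\Ww$, whence
\[
	\re\iprod{-\Aa_0(f,g)}{(f,g)}_{\Ww\oplus L^2} = \re\big(\iprod{f}{g}_\Ww - \iprod{g}{f}_\Ww\big) + 2\norm{a^{\frac12}g}^2 = 2\norm{a^{\frac12}g}^2 \ge 0.
\]
Together with $(0,\infty)\subset\rho(\Aa_0)$ --- the bounded invertibility of $S(\lambda)$ for $\re\lambda > 0$ read through the equivalence --- the Lumer--Phillips theorem yields that $-\Aa_0$ is m-accretive and $\Aa_0$ generates a contraction semigroup. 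The main obstacle is the analytic core of the second paragraph: recognising that direct coercivity of the Schur-complement form only holds on the right half-plane, so that the full cut-plane equivalence must instead be extracted from the shift freedom in Corollary~\ref{cor:second.implication}; a secondary technical point is justifying the distributional integration by parts underlying the accretivity identity under the mere assumption $a,q\in\Loneloc(\Omega)$.
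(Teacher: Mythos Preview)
Your proposal is correct and follows essentially the same route as the paper: verify Assumption~\ref{asm:II} in the second-Schur-complement form, use the Lax--Milgram argument with a real shift $z_\lambda$ to obtain $(S(\lambda)-z_\lambda)^{-1}\in\Bb(\Dd_S^*,\Dd_S)$ for every $\lambda\in\C\setminus(-\infty,0]$ (this is Lemma~\ref{lem:DWE.shift.extension}), and then read off the spectral equivalences from Corollaries~\ref{cor:point.spectrum.family}--\ref{cor:family.second.implication} and the density from Corollary~\ref{cor:family.dense.domain}~(ii). Your geometric description of why the shift works and why it degenerates on $(-\infty,0)$ is exactly the content of the cited \cite[Lem.~2.3]{Freitas-Siegl-Tretter-2018}.

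One small remark on the accretivity step: your identity $\iprod{(\Delta-q)f}{g}=-\iprod{f}{g}_\Ww$ is correct provided the left-hand side is read as the duality pairing $\dprod{(\Delta-q)f}{g}_{\Dd_S^*\times\Dd_S}$ rather than an $L^2$ inner product, since $(\Delta-q)f$ alone need not lie in $L^2(\Omega)$; one then uses that for $h\in L^2(\Omega)\subset\Dd_S^*$ the duality pairing agrees with the $L^2$ pairing, together with linearity of the pairing to split $(\Delta-q)f-2ag$. With this interpretation your direct computation is valid and in fact slightly more streamlined than the paper's explicit approximation by $\Coo(\Omega)$ sequences, which serves the same purpose of justifying the splitting under the minimal hypothesis $a,q\in\Loneloc(\Omega)$.
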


\begin{rem}
	\label{rem:DWE.same.operator}
	If the assumptions on damping and potential in \cite{Freitas-Siegl-Tretter-2018} are satisfied, the operator $G$ introduced therein coincides with $\Aa_0$. Assuming essentially that for every $\eps>0$ there exists $C_\eps \ge 0$ such that
	\begin{equation}
		a \in W^{1,\infty}_{\rm loc} (\overline{\Omega}), \qquad |\nabla a| \le \eps a^\frac32 + C_\eps,
	\end{equation}
	see \cite[Asm.\ I]{Freitas-Siegl-Tretter-2018} for the precise more general assumptions, the authors define $G$ as the closure of the operator matrix $G_0$ in $\Ww (\Omega) \oplus L^2 (\Omega)$ given by
	\begin{equation}
		G_0 = \begin{pmatrix}
			0 & I \\
			\Delta -q & -2a
		\end{pmatrix}, \qquad \dom G_0 = (\dom (-\Delta +q)_{L^2 (\Omega)} \cap \dom a)^2;
	\end{equation}
	here $(-\Delta +q)_{L^2 (\Omega)}$ denotes the Friedrichs extension in $L^2(\Omega)$ of $-\Delta +q$ on $\Coo (\Omega)$. They show that, independently of $\lambda \in \C \setminus (- \infty, 0]$,
	\begin{equation}
		\dom S_0 (\lambda) = \dom (-\Delta +q)_{L^2 (\Omega)} \cap \dom a \subset \Ww (\Omega) \cap \Dd_S.
	\end{equation}
	From this it follows easily that $\dom G_0 \subset \dom \Aa_0$ and their actions clearly coincide. Since both $-G$ and $-\Aa_0$ are m-accretive, see \cite[Thm.\ 2.2]{Freitas-Siegl-Tretter-2018}, their equality already follows from the inclusion $G \subset \Aa_0$, see e.g.\ \cite[\S V.3.10]{Kato-1995}. \hfill //
\end{rem}

\begin{rem}
	\label{rem:DWE.distr.damping}
	Our setting is more general than the perturbative framework in e.g.\ \cite{Ammari-Nicaise-2015, Jacob-Tretter-Trunk-Vogt-2018} and can equally be employed to cover distributional dampings studied therein. In fact, it is clear from the proofs below that any non-negative sesquilinear form on $\Coo (\Omega)$ can be implemented as damping and/or potential. This in particular includes Dirac delta dampings $a (x) = \delta (x-x_0)$ considered in~\cite[Chap.\ 4]{Ammari-Nicaise-2015} on bounded intervals or  in~\cite[Rem.~1]{Krejcirik-Kurimaiova-2020} on the real line; see also the recent work ~\cite{Krejcirik-Royer-2022-arxiv} where the more general case of non-compact star graphs is discussed.  \hfill //
\end{rem}

\subsection{Realisation of matrix and Schur complement}

In line with Sections~\ref{sec:abstract.results} and~\ref{sec:matrix.Schur.complement}, we present our approach to the underlying spectral problem. Note that since we use the second Schur complement, the roles of the spaces $\Hh_1$ and $\Hh_2$, as well as the matrix entries acting in them, are reversed correspondingly, see Remark~\ref{rem:second.Schur.compl}.

\subsubsection{Definition of spaces}

We introduce the spaces for Assumption~\ref{asm:II}~(i).

\begin{defi}
	\label{def:DWE.spaces}
	Let Assumption~\ref{asm:III} hold, let $\Hh_1 \defeq \Ww (\Omega)$ with $\Ww (\Omega)$ as in Theorem~\ref{thm:DWE} and $\Hh_2 \defeq L^2 (\Omega)$. Let the space $\Dd_S$ defined in \eqref{eq:DWE.Dd.S} be equipped with the inner product
	\begin{equation}\label{eq:DWE.norm.S}
		\iprod{f}{g}_S \defeq \int_{\Omega} \nabla f \cdot \overline{\nabla g} \d x + \int_{\Omega} q f \overline g \d x + \int_{\Omega} a f \overline g \d x  + \int_{\Omega} f \overline g \d x, \quad f, \, g \in \Dd_S;
	\end{equation}	
	notice that $q, a \ge 0$ a.e.\  in $\Omega$. Moreover, define
	\begin{equation}
		\Dd_1 = \Dd_{-1} \defeq \Ww (\Omega), \qquad \Dd_{-2} = \Dd_{-S} \defeq \Dd_S^*. \tag*{//}
	\end{equation}
\end{defi}

\begin{prop}
	\label{prop:DWE.spaces}
	Under Assumption~{\rm \ref{asm:III}}, the spaces in Definition {\rm\ref{def:DWE.spaces}} satisfy Assumption~{\rm \ref{asm:II}~(i)} according to Remark~{\rm\ref{rem:second.Schur.compl}}. Moreover, $\Coo(\Omega)$ is dense in $\Dd_S$ and one can embed $\Dd_S \subset \Ww(\Omega)$ continuously.
\end{prop}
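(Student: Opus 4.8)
The plan is to establish the three assertions separately, the common tools being the pointwise inequality $\norm{\cdot}_\Ww \le \norm{\cdot}_S$ on $\Coo(\Omega)$ and dominated convergence to cope with the merely locally integrable weights $q$ and $a$. I would begin by noting that $\Dd_S$ equipped with $\iprod{\cdot}{\cdot}_S$ is complete: it is precisely the domain, with associated graph norm, of the column operator $f \mapsto (\nabla f, q^\frac12 f, a^\frac12 f)$ in $L^2(\Omega)^n \oplus L^2(\Omega) \oplus L^2(\Omega)$, which is closed since the weak gradient and the multiplications by $q^\frac12$ and $a^\frac12$ are closed in $L^2(\Omega)$; moreover, $H^1$-limits of functions in $H_0^1(\Omega)$ remain in $H_0^1(\Omega)$, so the side constraint $f\in H_0^1(\Omega)$ is preserved under passage to limits. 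Hence $\Dd_S$ is a Hilbert space.

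Next I would check Assumption~\ref{asm:II}~(i) in the form required by Remark~\ref{rem:second.Schur.compl}. The chain $\Dd_1 = \Dd_{-1} = \Hh_1 = \Ww(\Omega)$ consists of identity maps and is trivial, and $\Dd_{-S} = \Dd_{-2} = \Dd_S^*$ is likewise the identity (for which dense range is not required). Thus only $\Dd_S \subset L^2(\Omega) \subset \Dd_S^*$ needs attention. The embedding $\Dd_S \hookrightarrow L^2(\Omega)$ is continuous since $\norm{f} \le \norm{f}_S$, and has dense range because $\Coo(\Omega) \subset \Dd_S$ (for $\phi \in \Coo(\Omega)$ with support $K$ one has $\int_K q\abs{\phi}^2 \le \norm{\phi}_{L^\infty}^2 \int_K q < \infty$, and similarly for $a$, using $q,a \in \Loneloc(\Omega)$) while $\Coo(\Omega)$ is dense in $L^2(\Omega)$. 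The inclusion $L^2(\Omega) \hookrightarrow \Dd_S^*$ is then the standard Gelfand-triple construction associated with $\Dd_S \hookrightarrow L^2(\Omega)$, continuous with dense range in the antilinear convention of~\cite{Edmunds-Evans-1987}.

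The substantial point is the density of $\Coo(\Omega)$ in $\Dd_S$, which I would reach by a threefold reduction. First, a cutoff at infinity: for $f \in \Dd_S$ and smooth $\chi_R$ with $\chi_R \equiv 1$ on $B_R$, $\supp \chi_R \subset B_{2R}$ and $\abs{\nabla \chi_R} \lesssim 1/R$, one has $\chi_R f \in \Dd_S$ with $\chi_R f \to f$ in $\Dd_S$; the only delicate term is $f\nabla\chi_R$, which vanishes in $L^2(\Omega)^n$ because $\abs{\nabla\chi_R}$ is $O(1/R)$ on $B_{2R}\setminus B_R$ and $f \in L^2(\Omega)$, while the weighted terms converge by dominated convergence (dominants $q\abs{f}^2$, $a\abs{f}^2$). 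Second, a modulus truncation $f \mapsto T_n \circ f$ by a smooth Lipschitz $T_n$ with $T_n(0)=0$ reduces to bounded $f$ of compact support, again with the weighted terms controlled by $q\abs{f}^2, a\abs{f}^2 \in L^1(\Omega)$. Third, for such bounded, compactly supported $f$, mollification $f_\eps = f * \rho_\eps \in \Coo(\Omega)$ converges to $f$ in $H^1$, and since $\abs{f_\eps - f}^2 \le 4\norm{f}_{L^\infty}^2$ on a fixed compact set on which $q,a \in L^1$, dominated convergence along a.e.-convergent subsequences yields $\int q\abs{f_\eps - f}^2 \to 0$ and $\int a\abs{f_\eps - f}^2 \to 0$. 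The main obstacle is precisely this last control of the weighted terms under mollification with merely locally integrable weights; the prior reduction to bounded functions of compact support is exactly what allows the weights to serve as their own integrable dominating functions.

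Finally I would treat the embedding $\Dd_S \subset \Ww(\Omega)$. Since $\norm{\cdot}_\Ww \le \norm{\cdot}_S$ on $\Coo(\Omega)$ and, by the density just proven, $\Dd_S$ is the $\norm{\cdot}_S$-completion of $\Coo(\Omega)$, the identity on $\Coo(\Omega)$ extends to a continuous map $\Dd_S \to \Ww(\Omega)$. For injectivity, any element of its kernel is represented by a $\norm{\cdot}_S$-Cauchy sequence $\{\phi_m\}_m \subset \Coo(\Omega)$ with $\norm{\phi_m}_\Ww \to 0$; its limit $f \in \Dd_S$ then satisfies $\nabla f = 0$ and $\int_\Omega q\abs{f}^2 = 0$. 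Extending $f$ by zero to a function in $H^1(\Rn)$, which is legitimate as $f \in H_0^1(\Omega)$, I obtain a function with vanishing gradient on the connected set $\Rn$, hence a constant lying in $L^2(\Rn)$, so $f = 0$. Thus the map is a continuous embedding, completing the proof.
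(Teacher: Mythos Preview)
Your overall plan is sound and supplies detail the paper leaves to ``standard techniques''. However, Step~3 of your density argument has a genuine gap: after the cutoff of Step~1 the support of $f$ is compact in $\Rn$ (equivalently in $\overline\Omega$), not compactly contained in $\Omega$, so $f * \rho_\eps$ need not lie in $\Coo(\Omega)$. For a bounded $\Omega$ and any $f$ that does not vanish near $\partial\Omega$, the mollifier pushes mass across the boundary; Step~1 does nothing to prevent this.

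The repair must exploit the very definition of $H_0^1(\Omega)$. After Steps~1--2, call the resulting bounded function $g$ (with $\supp g$ compact in $\overline\Omega$) and take $\psi_k \in \Coo(\Omega)$ with $\psi_k \to g$ in $H^1(\Omega)$. Truncate each $\psi_k$ at height $2\norm{g}_{L^\infty}$ and multiply by a fixed $\chi \in \Coo(\Rn)$ with $\chi \equiv 1$ on $\supp g$; the resulting sequence is uniformly bounded, supported in the fixed compact set $\supp\chi$, still converges to $g$ in $H^1$ and (along a subsequence) a.e., so dominated convergence with dominant $\mathrm{const}\cdot q\,\mathbf{1}_{\supp\chi} \in L^1(\Omega)$ handles the $q$- and $a$-weighted terms. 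These approximants are Lipschitz with support compact in $\Omega$ (contained in $\supp\psi_k$), and a final mollification with radius small enough to remain in $\Omega$ yields genuine $\Coo(\Omega)$ approximants. The remainder of your proof---completeness of $\Dd_S$, the Gelfand triple, and your injectivity argument for $\Dd_S \hookrightarrow \Ww(\Omega)$ via zero-extension to $\Rn$---is correct and in fact more explicit than the paper's own treatment.
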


\begin{proof}
	Since $q\in\Loneloc(\Omega)$, the Hilbert space completion of $\Coo(\Omega)$ with respect to the inner product in \eqref{eq:DWE.norm.Ww.Omega} is well-defined. Clearly, all other spaces in Definition~\ref{def:DWE.spaces} are also well-defined and Hilbert. The inclusion $\Coo(\Omega) \subset \Dd_S$ is clear from the assumption $a,q \in \Loneloc(\Omega)$; its density can be shown using standard techniques. It thus follows from the construction of $\Ww(\Omega)$ and the inequality $\norm{\cdot}_\Ww \le \norm{\cdot}_S$ that one can embed $\Dd_S \subset \Ww(\Omega)$ continuously.
		
	It remains to show that $\Dd_S\subset L^2(\Omega) \subset \Dd_S^*$, where the corresponding embeddings are continuous with dense range. The density of $\Dd_S$ in $L^2(\Omega)$ is a consequence of $\Coo (\Omega)\subset \Dd_S$, and since we have $\norm{\cdot} \le \norm{\cdot}_S$ by construction, it is continuously embedded. Upon identification of $L^2 (\Omega)$ with its anti-dual space, we further obtain
	\begin{equation}
		\label{eq:DWE.dual.inclusion}
		\Dd_S \subset L^2 (\Omega) \equiv L^2 (\Omega)^* \subset \Dd_S^*.
	\end{equation}
	The second inclusion in \eqref{eq:DWE.dual.inclusion} is realised by the continuous embedding
	\begin{equation}
		\label{eq:DWE.dual.embedding}
		L^2 (\Omega) \ni f \equiv \iprod{f}{\cdot} \mapsto \iprod{f}{\cdot}\vert_{\Dd_S} \in \Dd_S^*.
	\end{equation}
	Indeed, since $\Dd_S$ is continuously embedded in $L^2(\Omega)$, the map in~\eqref{eq:DWE.dual.embedding} is well-defined and bounded; its injectivity follows from the density of $\Dd_S$ in $L^2 (\Omega)$.
	For Assumption~\ref{asm:II}~(i), it remains to show that $L^2 (\Omega)$ is dense in $\Dd_S^*$. It suffices, however, to observe that the embedding in \eqref{eq:DWE.dual.embedding} is nothing but the adjoint operator
	\begin{equation}
		I_{\Dd_S}^*: L^2 (\Omega)^* \to \Dd_S^*
	\end{equation}
	and that $\ran I_{\Dd_S}^*$ is dense in $\Dd_S^*$ since $\ker I_{\Dd_S} = \{0\}$, cf.\ \cite[p.\ 170]{Edmunds-Evans-1987}.
\end{proof}

\begin{rem}\label{rem:DWE.distr.spaces}
	The restriction of every functional in $\Dd_S^*$ to $\Coo(\Omega) \subset \Dd_S$ is a distribution, i.e.\ one can embed $\Dd_S^*\subset \Dd'(\Omega)$; indeed, since $a,q \in \Loneloc(\Omega)$, the convergence of test functions in $\Dd (\Omega)$ clearly implies their convergence in $\Dd_S$. Moreover, the density of $\Coo (\Omega)$ guarantees injectivity of the embedding $\Dd_S^* \to \Dd'(\Omega)$. 
	
	If $n\ge 3$ then we also have the inclusion $\Ww(\Omega) \subset \Dd'(\Omega)$; this follows from
	\begin{equation}
		\Ww (\Omega) \subset \dot{H}^1  (\Omega) \subset L^{\frac{2n}{n-2}} (\Omega) \subset \Loneloc (\Omega),
	\end{equation}
	where $\dot{H}^1  (\Omega)$ denotes the completion of $\Coo(\Omega)$ with respect to $\norm{\nabla \cdot}$, i.e.\ the first order homogeneous Sobolev space, and the second inclusion is a consequence of the Gagliardo-Nirenberg-Sobolev inequality. \hfill //
\end{rem}

\subsubsection{Definition of matrix entries}

We introduce the remaining objects needed for Assumption~{\rm\ref{asm:II}}.

\begin{defi}
	\label{def:DWE.operators}
	Let Assumption~\ref{asm:III} be satisfied and define the following operators
	\begin{equation}
		\begin{array}{rcl}
			A_0 = A \defeq & 0 & \in \Bb (\Ww (\Omega)), \\[1mm]
			C \defeq & \Delta - q & \in \Bb (\Ww (\Omega), \Dd_S^*),
		\end{array} \quad
		\begin{array}{rcl}
			B \defeq & I & \in \Bb (\Dd_S, \Ww (\Omega)), \\[1mm]
			D \defeq & -2a & \in \Bb (\Dd_S, \Dd_S^*);
		\end{array}
	\end{equation}
	see Definition~\ref{def:DWE.spaces}. Here the operators $\Delta - q$ and $a$ are the unique extensions of
	\begin{equation}
	\label{eq:DWE.CD}
		\begin{aligned}
			\dprod{(\Delta - q) f}{g}_{\Dd_S^* \times \Dd_S} & \defeq - \int_\Omega \nabla f \cdot \overline{\nabla g} \d x - \int_\Omega q f \overline g \d x, \\[1.5mm]
			\dprod{af}{g}_{\Dd_S^* \times \Dd_S} & \defeq \int_\Omega a f \overline g \d x,
		\end{aligned}
			 \qquad f, g \in \Coo (\Omega),
	\end{equation}
	see Proposition~\ref{prop:DWE.entries} below for details. \hfill //
\end{defi}

\begin{prop}
	\label{prop:DWE.entries}
	Under Assumption~{\rm\ref{asm:III}}, the operators defined in Definition~{\rm\ref{def:DWE.operators}} are well-defined and satisfy Assumption~{\rm\ref{asm:II}} according to Remark~{\rm\ref{rem:second.Schur.compl}}.
\end{prop}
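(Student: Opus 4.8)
The plan is to verify the operator conditions of Assumption~\ref{asm:II} in the form dictated by Remark~\ref{rem:second.Schur.compl} for the second Schur complement, where the roles of $\Hh_1 = \Ww(\Omega)$ and $\Hh_2 = L^2(\Omega)$ are interchanged and the distinguished closed entry is $A_0$. Since the required space inclusions were already established in Proposition~\ref{prop:DWE.spaces}, it remains to check the boundedness of the entries $B$, $C$, $D$ between the spaces claimed in Definition~\ref{def:DWE.operators} (Assumption~\ref{asm:I}~(ii) in its swapped form), and that the distinguished entry $A_0 = 0$ meets the remaining closedness requirement of Assumption~\ref{asm:II}~(ii).

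For $B = I$, nothing beyond the continuous embedding $\Dd_S \hookrightarrow \Ww(\Omega)$ from Proposition~\ref{prop:DWE.spaces} is needed to obtain $B \in \Bb(\Dd_S, \Ww(\Omega))$. For $C = \Delta - q$, I would first observe that the right-hand side of the defining form~\eqref{eq:DWE.CD}, for $f, g \in \Coo(\Omega)$, is precisely $-\iprod{f}{g}_\Ww$ with the inner product~\eqref{eq:DWE.norm.Ww.Omega}. Hence, for fixed $f$, the antilinear functional $g \mapsto -\iprod{f}{g}_\Ww$ is, by Cauchy--Schwarz together with $\norm{\cdot}_\Ww \le \norm{\cdot}_S$, bounded on $\Dd_S$ with $\norm{Cf}_{\Dd_S^*} \le \norm{f}_\Ww$. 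Restricting the $\Ww$-functional to $\Dd_S \subset \Ww(\Omega)$ therefore defines $C$ directly on all of $\Ww(\Omega)$ (so that no pointwise integral is needed for general $f$), and uniqueness of this extension of~\eqref{eq:DWE.CD} follows from the density of $\Coo(\Omega)$ in $\Ww(\Omega)$; thus $C \in \Bb(\Ww(\Omega), \Dd_S^*)$.

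For $D = -2a$, the decisive point is that $\Dd_S \subset \dom a^{1/2}$, so that for $f, g \in \Dd_S$ the integral $\int_\Omega a f \overline{g} \d x = \iprod{a^{1/2} f}{a^{1/2} g}$ converges absolutely. Combined with $\int_\Omega a \abs{f}^2 \d x \le \norm{f}_S^2$, which is immediate from~\eqref{eq:DWE.norm.S}, and Cauchy--Schwarz, this yields $\abs{\dprod{af}{g}_{\Dd_S^* \times \Dd_S}} \le \norm{f}_S \norm{g}_S$; fixing $f$ produces an antilinear bounded functional on $\Dd_S$, whence $a \in \Bb(\Dd_S, \Dd_S^*)$ and $D = -2a \in \Bb(\Dd_S, \Dd_S^*)$. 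Uniqueness of the extension again follows from the density of $\Coo(\Omega)$ in $\Dd_S$ shown in Proposition~\ref{prop:DWE.spaces}. Finally, $A_0 = A = 0 \in \Bb(\Ww(\Omega))$ is everywhere defined and bounded, hence closed, with $\dom A_0 = \Ww(\Omega) = \Dd_1$ dense and trivial extension $A = A_0$, which settles the swapped Assumption~\ref{asm:II}~(ii).

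The only genuinely substantive issue is the low regularity of the coefficients: since $a, q$ lie merely in $\Loneloc(\Omega)$, neither $\int_\Omega q f \overline{g} \d x$ nor $\int_\Omega a f \overline{g} \d x$ need converge for generic Sobolev functions. The resolution I would stress is structural rather than computational, namely that the norms of $\Ww(\Omega)$ and $\Dd_S$ in~\eqref{eq:DWE.norm.Ww.Omega} and~\eqref{eq:DWE.norm.S} are tailored to absorb exactly the potential and damping contributions, so that all the forms above are bounded essentially by construction; the passage from $\Coo(\Omega)$ to the full spaces is then a routine density argument.
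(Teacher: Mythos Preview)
Your proposal is correct and follows essentially the same approach as the paper: both reduce the boundedness of $C$ and $D$ to the elementary estimates $\abs{\dprod{(\Delta-q)f}{g}} \le \norm{f}_\Ww\norm{g}_S$ and $\abs{\dprod{af}{g}} \le \norm{f}_S\norm{g}_S$, and treat $B=I$ and $A_0=0$ as trivial. The only cosmetic difference is that you define $Cf$ for $f\in\Ww(\Omega)$ directly as the restriction of the functional $-\iprod{f}{\cdot}_\Ww$ and $Df$ for $f\in\Dd_S$ directly via $\Dd_S\subset\dom a^{1/2}$, whereas the paper obtains both by density extension from $\Coo(\Omega)$ in two steps; the underlying argument is the same.
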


\begin{proof}
	Since $\Dd_S\subset \Ww (\Omega)$ is continuously embedded by Proposition~\ref{prop:DWE.spaces}, clearly	\begin{equation}
		0 \in \Bb (\Ww (\Omega)), \qquad I \in \Bb (\Dd_S, \Ww (\Omega)).
	\end{equation}
	It remains to show that $C$ and $D$ are well-defined by \eqref{eq:DWE.CD} and bounded between the claimed spaces. For $f, g \in \Coo (\Omega)$, we have the inequality
	\begin{equation}
		\label{eq:DWE.C.bounded}
		\abs{\dprod{(\Delta - q) f}{g}_{\Dd_S^* \times \Dd_S}} \le \norm{f}_\Ww \norm{g}_\Ww \le \norm{f}_\Ww \norm{g}_S.
	\end{equation}
	Taking into account \eqref{eq:DWE.C.bounded} and the density of $\Coo (\Omega)$ in $\Dd_S$, see Proposition~\ref{prop:DWE.spaces}, the formula \eqref{eq:DWE.CD} determines a unique bounded antilinear functional on $\Dd_S$
	\begin{equation}
		(\Delta - q) f \in \Dd_S^*, \qquad f \in \Coo (\Omega).
	\end{equation}
	Moreover, from \eqref{eq:DWE.C.bounded} it follows that $f \mapsto (\Delta - q) f$ is a bounded map from $\Ww (\Omega)$ to $\Dd_S^*$ and therefore, by density of $\Coo (\Omega)$ in $\Ww (\Omega)$, has a unique extension
	\begin{equation}
		\Delta - q \in \Bb (\Ww (\Omega), \Dd_S^*).
	\end{equation}
	In the same way, by deriving the estimate
	\begin{equation}
		\abs{\dprod{af}{g}_{\Dd_S^* \times \Dd_S}} \le \norm{a^\frac12 f} \norm{a^\frac12 g} \le \norm{f}_S \norm{g}_S, \quad f, g \in \Coo (\Omega),
	\end{equation}
 	one shows that $a \in \Bb (\Dd_S, \Dd_S^*)$ is well-defined.
\end{proof}

\subsubsection{Definition of matrix and Schur complement}

We proceed analogously to Definition~\ref{def:matrix.Schur.compl.family}, cf.\ Remark~\ref{rem:second.Schur.compl}.

\begin{defi}
	\label{def:DWE.Aa.S}
	Let Assumption~\ref{asm:III} be satisfied. Define the operator matrix
	\begin{equation}
		\Aa \defeq \left(
		\begin{array}{cc}
			0 & I \\
			\Delta -q & -2a
		\end{array}\right)
		\in \Bb (\Ww (\Omega) \oplus \Dd_S, \Ww (\Omega) \oplus \Dd_S^*)
	\end{equation}
	and its second Schur complement
	\begin{equation}\label{eq:DWE.def.S}
		S (\lambda) \defeq -2a - \lambda + \frac{1}{\lambda} (\Delta - q)\vert_{\Dd_S} \in \Bb (\Dd_S, \Dd_S^*), \qquad \lambda \in \Theta \defeq \C \setminus \{0\};
	\end{equation}
	see Definitions~\ref{def:DWE.spaces} and~\ref{def:DWE.operators}, as well as Definition~\ref{def:matrix.Schur.compl.family} and notice that $\Theta$ satisfies \eqref{eq:Theta} therein. Let $\Aa_0$ and $S_0 (\cdot)$, respectively, be the corresponding (family of) maximal ope\-ra\-tors in $\Ww (\Omega) \oplus L^2 (\Omega)$ and $L^2 (\Omega)$; more precisely, $\Aa_0 \defeq \Aa \vert_{\dom \Aa_0}$ and $S_0(\lambda) \defeq S(\lambda) \vert_{\dom S_0 (\lambda)}$ with their respective domains
		\begin{align*}
		\dom \Aa_0 & \defeq \set{(f, g) \in \Ww (\Omega)  \times \Dd_S}{\Aa (f, g) \in \Ww(\Omega)  \times L^2 (\Omega)}, \\
		 \dom S_0 (\lambda) & \defeq \set{f \in\Dd_S}{S (\lambda) f \in L^2 (\Omega)}.  \tag*{//}
		\end{align*}
\end{defi}

The following proposition shows that~\eqref{eq:DWE.def.S} agrees with the standard definition of the Schur complement via its quadratic form.

\begin{prop}\label{prop:DWE.S.action}
	Let Assumption~{\rm\ref{asm:III}} hold and let $S(\cdot)$ be as in Definition~{\rm\ref{def:DWE.Aa.S}}. Then, for every $\lambda \in \C \setminus \{0\}$ and $f, g \in \Dd_S$,
	\begin{equation}
		\begin{aligned}
			\dprod{S(\lambda)f}{g}_{\Dd_S^* \times \Dd_S} = - \frac{1}{\lambda} & \left( \int_{\Omega} \nabla f \cdot \overline{\nabla g} \d x + \int_{\Omega} qf\overline g \d x \right. \\
			& \qquad \qquad\qquad \qquad \left. + 2 \lambda \int_{\Omega} a f \overline{g} \d x + \lambda^2 \int_{\Omega} f\overline g \d x \right).
		\end{aligned}
	\end{equation}
\end{prop}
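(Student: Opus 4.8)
The plan is to unfold the explicit expression for $S(\lambda)$ recorded in \eqref{eq:DWE.def.S} and to evaluate the antidual pairing summand by summand. Writing
\[
S(\lambda) = -2a - \lambda + \frac{1}{\lambda}(\Delta - q)\vert_{\Dd_S},
\]
and using that the pairing $\dprod{\cdot}{\cdot}_{\Dd_S^* \times \Dd_S}$ is linear in its first argument, I would first split, for $f, g \in \Dd_S$,
\[
\dprod{S(\lambda)f}{g}_{\Dd_S^* \times \Dd_S} = -2\dprod{af}{g}_{\Dd_S^* \times \Dd_S} - \lambda\dprod{f}{g}_{\Dd_S^* \times \Dd_S} + \frac{1}{\lambda}\dprod{(\Delta - q)f}{g}_{\Dd_S^* \times \Dd_S},
\]
where in the middle term $f$ is understood via the canonical embedding $\Dd_S \hookrightarrow \Dd_S^*$.

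Next I would evaluate the three terms. For the first and third, the defining relations \eqref{eq:DWE.CD} give
\[
\dprod{(\Delta - q)f}{g}_{\Dd_S^* \times \Dd_S} = -\int_\Omega \nabla f \cdot \overline{\nabla g}\d x - \int_\Omega q f \overline g \d x, \qquad \dprod{af}{g}_{\Dd_S^* \times \Dd_S} = \int_\Omega a f \overline g \d x,
\]
initially only for $f, g \in \Coo(\Omega)$. I would then promote these identities to all $f, g \in \Dd_S$ by a density argument: both sides are continuous sesquilinear forms with respect to $\norm{\cdot}_S$ --- the left-hand sides since $\Delta - q \in \Bb(\Ww(\Omega), \Dd_S^*)$ and $a \in \Bb(\Dd_S, \Dd_S^*)$ with $\Dd_S \hookrightarrow \Ww(\Omega)$ continuous (Propositions~\ref{prop:DWE.spaces} and~\ref{prop:DWE.entries}), the right-hand sides by the Cauchy--Schwarz estimates already used in the proof of Proposition~\ref{prop:DWE.entries}, cf.~\eqref{eq:DWE.C.bounded}. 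Since $\Coo(\Omega)$ is dense in $\Dd_S$, both formulas persist for $f, g \in \Dd_S$. For the middle term I would invoke the identification \eqref{eq:DWE.dual.embedding} of $L^2(\Omega)$ inside $\Dd_S^*$: the canonical embedding $\Dd_S \hookrightarrow \Dd_S^*$ sends $f$ to $\iprod{f}{\cdot}\vert_{\Dd_S}$, so that $\dprod{f}{g}_{\Dd_S^* \times \Dd_S} = \iprod{f}{g} = \int_\Omega f \overline g \d x$.

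Finally I would substitute the three evaluations into the split and collect terms, factoring out $-1/\lambda$; this produces exactly the right-hand side claimed in the statement. The computation is entirely routine once the pairings are known, so the only genuine point is the density/continuity extension of \eqref{eq:DWE.CD} from test functions to arbitrary elements of $\Dd_S$ --- and this is the step I would expect to be the (minor) main obstacle, all remaining manipulations being bookkeeping with the linearity of the pairing and the mapping properties of the matrix entries established in Proposition~\ref{prop:DWE.entries}.
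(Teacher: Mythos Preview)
Your proposal is correct and follows essentially the same approach as the paper's proof: verify the identity on $\Coo(\Omega)$ from the definitions in \eqref{eq:DWE.CD} and \eqref{eq:DWE.def.S}, then extend to all of $\Dd_S$ by density and continuity of both sides with respect to $\norm{\cdot}_S$. The only cosmetic difference is that you extend each summand separately before combining, whereas the paper combines first on test functions and extends the full identity in one step; either way the substance is the density argument you correctly identified as the sole nontrivial point.
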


\begin{proof}
	Considering Definitions~\ref{def:DWE.operators} and~\ref{def:DWE.Aa.S}, it is clear that the claimed identity holds for $f, g \in \Coo (\Omega)$. Since $\Coo (\Omega)$ is dense in $\Dd_S$ and both sides of the formula are continuous with respect to convergence in $\Dd_S$, it remains valid for $f, g \in\Dd_S$.
\end{proof}

\begin{rem}\label{rem:DWE.distr.act}
	Under Assumption~\ref{asm:III}, the actions of $\Aa$ and $S(\cdot)$ introduced in Definition~\ref{def:DWE.Aa.S} coincide with their standard distributional definitions, cf.\ Remark~\ref{rem:DWE.distr.spaces}. Indeed, taking into account that $q \in\Loneloc (\Omega)$, clearly
	\begin{equation}\label{eq:DWE.C.distr.core}
		\Delta f - qf \in \Loneloc (\Omega), \qquad f\in\Coo (\Omega),
	\end{equation}
	is a regular distribution and coincides with $Cf \in \Dd_S^*$ as defined in~\eqref{eq:DWE.CD}. The action of $C$ on the completion $\Ww (\Omega)$, however, is constructed by continuous extension, see~\eqref{eq:DWE.CD}, and is thus given as a limit of regular distributions of the form \eqref{eq:DWE.C.distr.core}; notice that convergence of functionals in $\Dd_S^*$ implies their convergence in $\Dd'(\Omega)$. Finally, the following distributions
	\begin{equation}
		ag \in \Loneloc (\Omega), \quad - \frac{1}{\lambda} \left( -\Delta f + qf + 2 \lambda a f + \lambda^2 f \right) \in \Dd'(\Omega), \quad \lambda \in \C \setminus \{0\}, \quad f, g \in \Dd_S,
	\end{equation}
	are well-defined and coincide with $Dg \in \Dd_S^*$ and $S(\lambda) f \in \Dd_S^*$ as in~\eqref{eq:DWE.CD} and~\eqref{eq:DWE.def.S}, respectively; see Proposition~\ref{prop:DWE.S.action} and notice that $\Dd_S \subset \dom a^{\frac12} \cap \dom q^{\frac12}$. \hfill //
\end{rem}

\subsection{Proof of Theorem{\rm~\ref{thm:DWE}}}\label{sec:DWE.proof}

We first formulate a crucial ingredient for the proof of Theorem~\ref{thm:DWE} as a lemma. For every $\lambda \in \C \setminus (\infty, 0]$, it provides the existence of an extension as in \eqref{eq:S.gen.inv.shift.ext}; this is needed in order to apply Corollaries~\ref{cor:family.dense.domain}~(ii) and~\ref{cor:family.second.implication}.

\begin{lem}
	\label{lem:DWE.shift.extension}
	Let Assumption~{\rm \ref{asm:III}} be satisfied and let $S_0 (\cdot)$ be as in Definition {\rm\ref{def:DWE.Aa.S}}. For all $\lambda \in \C \setminus (- \infty, 0]$, the domain $\dom S_0 (\lambda)$ is dense in $\Dd_S$ and there exists $z_\lambda \in \rho (S_0(\lambda))$ such that
	\begin{equation} \label{eq:DWE.S.shift.inv}
		(S(\lambda) - z_\lambda)^{-1}  \in \Bb (\Dd_S^*, \Dd_S).
	\end{equation}
	Moreover, if $\re \lambda >0$, then one can choose $z_\lambda = 0$.
\end{lem}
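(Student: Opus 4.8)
The plan is to realise $S(\lambda) - z$, for a suitable shift $z = z_\lambda$, as a \emph{coercive} bounded sesquilinear form on $\Dd_S$ and then to invoke the Lax--Milgram theorem together with the triplet structure $\Dd_S \hookrightarrow L^2(\Omega) \hookrightarrow \Dd_S^*$. Boundedness $S(\lambda) - z \in \Bb(\Dd_S, \Dd_S^*)$ is already available from Definition~\ref{def:DWE.Aa.S} and the continuity of the embedding $\Dd_S \hookrightarrow L^2(\Omega) \hookrightarrow \Dd_S^*$, so the entire content lies in a coercivity estimate. By Proposition~\ref{prop:DWE.S.action}, for $f, g \in \Dd_S$ one has
\begin{equation*}
	- \dprod{(S(\lambda) - z) f}{g}_{\Dd_S^* \times \Dd_S} = \frac{1}{\lambda} \iprod{f}{g}_\Ww + 2 \int_\Omega a f \overline g \d x + (\lambda + z) \iprod{f}{g},
\end{equation*}
with $\iprod{\cdot}{\cdot}_\Ww$ the form in \eqref{eq:DWE.norm.Ww.Omega}; recall also $\norm{f}_S^2 = \iprod{f}{f}_\Ww + \int_\Omega a \abs{f}^2 \d x + \norm{f}^2$. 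Testing with $g = f$, the three contributions $\iprod{f}{f}_\Ww$, $\int_\Omega a \abs{f}^2 \d x$ and $\norm{f}^2$ are non-negative reals, carried by the complex coefficients $1/\lambda$, $2$ and $\lambda + z$.

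Writing $\lambda = r \e^{\i \phi}$ with $r > 0$ and $\phi \in (-\pi, \pi)$ --- which is precisely the hypothesis $\lambda \in \C \setminus (-\infty, 0]$ --- these coefficients have arguments $-\phi$, $0$ and $\arg(\lambda + z)$. The idea is to rotate by $\e^{\i \phi/2}$, legitimate since $\abs{\phi/2} < \pi/2$, and to choose $z$ so that $\lambda + z$ lands in the same half-plane. Concretely I would set $z_\lambda \defeq \e^{-\i \phi/2} - \lambda$, so that $\lambda + z_\lambda = \e^{-\i \phi/2}$, and compute
\begin{equation*}
	\re \left( \e^{\i \phi/2} \left( - \dprod{(S(\lambda) - z_\lambda) f}{f}_{\Dd_S^* \times \Dd_S} \right) \right) = \frac{\cos(\phi/2)}{r} \iprod{f}{f}_\Ww + 2 \cos(\phi/2) \int_\Omega a \abs{f}^2 \d x + \norm{f}^2.
\end{equation*}
Since $\cos(\phi/2) > 0$, the right-hand side is bounded below by $c \norm{f}_S^2$ with $c \defeq \min\{ \cos(\phi/2)/r, \, 2 \cos(\phi/2), \, 1 \} > 0$, which is the desired coercivity of the rotated form. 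The excluded half-line $(-\infty, 0]$ corresponds exactly to $\phi = \pm \pi$, where $\cos(\phi/2) = 0$ and this estimate degenerates; this is the crux of the argument and the point where the hypothesis on $\lambda$ is unavoidable. For the refinement $\re \lambda > 0$ one has $\abs{\phi} < \pi/2$, so no rotation is needed: with $z_\lambda = 0$ the plain real part gives $\re(- \dprod{S(\lambda) f}{f}_{\Dd_S^* \times \Dd_S}) = \frac{\cos \phi}{r} \iprod{f}{f}_\Ww + 2 \int_\Omega a \abs{f}^2 \d x + r \cos \phi \, \norm{f}^2 \gtrsim \norm{f}_S^2$, since $\cos \phi > 0$.

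With coercivity established, the Lax--Milgram theorem applied to the bounded coercive form $\e^{\i \phi/2}(S(\lambda) - z_\lambda)$ (respectively $S(\lambda)$ when $\re \lambda > 0$) shows that $S(\lambda) - z_\lambda \colon \Dd_S \to \Dd_S^*$ is a bounded bijection with $(S(\lambda) - z_\lambda)^{-1} \in \Bb(\Dd_S^*, \Dd_S)$, which is \eqref{eq:DWE.S.shift.inv} (and yields $z_\lambda = 0$ when $\re \lambda > 0$). By the continuity and density of the embeddings $\Dd_S \hookrightarrow L^2(\Omega) \hookrightarrow \Dd_S^*$ this forces $z_\lambda \in \rho(S_0(\lambda))$, exactly as in the parenthetical observation in Theorem~\ref{thm:second.implication}~(i) applied to $S(\lambda) - z_\lambda$. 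Finally, for density of $\dom S_0(\lambda)$ in $\Dd_S$, I would note that $S(\lambda) f \in L^2(\Omega) \iff (S(\lambda) - z_\lambda) f \in L^2(\Omega)$ for $f \in \Dd_S$, so that $\dom S_0(\lambda) = (S(\lambda) - z_\lambda)^{-1} \bigl( L^2(\Omega) \bigr)$ is the image under the homeomorphism $(S(\lambda) - z_\lambda)^{-1} \colon \Dd_S^* \to \Dd_S$ of the dense subspace $L^2(\Omega) \subset \Dd_S^*$, hence dense in $\Dd_S$.

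The main obstacle is the sectoriality/coercivity estimate of the second paragraph, and in particular identifying the correct rotation angle $\phi/2$ together with a shift $z_\lambda$ that aligns all three coefficients into a common half-plane; once these are found, the passage to bounded invertibility, the conclusion $z_\lambda \in \rho(S_0(\lambda))$, and the density of the domain are routine consequences of Lax--Milgram and the abstract framework of Section~\ref{sec:abstract.results}.
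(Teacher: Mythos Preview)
Your proposal is correct and follows essentially the same approach as the paper: the paper's proof is a brief sketch that refers to \cite[Lem.~2.3]{Freitas-Siegl-Tretter-2018} and the proof of Lemma~\ref{lem:matrix.DE.S.coercive} for the coercivity estimate, then invokes Lax--Milgram to conclude \eqref{eq:DWE.S.shift.inv}, $z_\lambda \in \rho(S_0(\lambda))$, and density of $\dom S_0(\lambda)$. You have supplied precisely the details that the paper outsources to those references---the rotation by $\e^{\i\phi/2}$ and an explicit shift $z_\lambda$---so your argument is a fleshed-out version of the same strategy.
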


\begin{proof}
	Fix $\lambda \notin (- \infty, 0]$. Analogously to the proof of \cite[Lem.\ 2.3]{Freitas-Siegl-Tretter-2018}, one can show that there exists $z_\lambda \in \C$ such that $S (\lambda) - z_\lambda$ 
	corresponds to a bounded and coercive sesquilinear form on $\Dd_S$;  cf.\ the proof of Lemma~\ref{lem:matrix.DE.S.coercive}, where the analogous statement is proven in a different setting, and also the proof of Theorem~\ref{thm:KG}. By the Lax-Milgram Theorem, see e.g.\ \cite[Cor.\ IV.1.2]{Edmunds-Evans-1987}, we conclude \eqref{eq:DWE.S.shift.inv}, $z_\lambda \in \rho (S_0 (\lambda))$ and density of the maximal domain $\dom (S_0 (\lambda) - z_\lambda) = \dom S_0 (\lambda)$ in $\Dd_S$. Finally, from the proof of \cite[Lem.\ 2.3]{Freitas-Siegl-Tretter-2018}, it is clear that one can choose $z_\lambda = 0$ in case that $\re \lambda > 0$.
	%
\end{proof}

\begin{proof}[Proof of Theorem {\rm\ref{thm:DWE}}]
	We start by pointing out that, by Propositions~\ref{prop:DWE.spaces} and~\ref{prop:DWE.entries}, Assumption~\ref{asm:II} is satisfied, the objects in Definition~\ref{def:DWE.Aa.S} are well-defined and the results of Section~\ref{sec:matrix.Schur.complement} applicable. Moreover, $\Aa_0$ and $S_0(\cdot)$ as in Definition~\ref{def:DWE.Aa.S} coincide with their definition in~\eqref{eq:DWE.def.Aa.S}, see Remark~\ref{rem:DWE.distr.act}. Considering this, the description of their domains in \eqref{eq:DWE.dom.Aa0.S0} follows immediately from Definition~\ref{def:DWE.Aa.S}.

	Let us show the identities \eqref{eq:DWE.spectra.equiv}--\eqref{eq:DWE.pspec.equiv}. To this end, we refer to Remark~\ref{rem:second.Schur.compl} and apply the results from Section~\ref{sec:matrix.Schur.complement} correspondingly. First, since $\rho (A_0) = \Theta = \C \setminus \{0\}$, Corollary~\ref{cor:point.spectrum.family} implies
	\begin{equation} \label{eq:DWE.pspec.equiv.1}
		\pspec (\Aa_0) \setminus \{0\} = \pspec (S_0 (\cdot)).
	\end{equation}
	We proceed by showing that the assumptions of Corollary~\ref{cor:family.second.implication} are satisfied. Let $\lambda\in \Sigma \defeq \C \setminus (- \infty, 0]$ be arbitrary. Since $\Dd_{-S} = \Dd_{-2}$, one clearly has
	\begin{equation}
		S(\lambda) \in \Bb (\Dd_S, \Dd_{-S}), \qquad C (\dom A_0) \subset \Dd_{-S}.
	\end{equation}
	Moreover, according to Lemma~\ref{lem:DWE.shift.extension}, $\dom S_0 (\lambda)$ is dense in $\Dd_S$ and there exists $z_\lambda \in \rho (S_0 (\lambda))$ such that
	\begin{equation}\label{eq:DWE.S.shift.ext}
		(S_0 (\lambda) - z_\lambda)^{-1} \subset S_z^\ddagger (\lambda) \defeq (S(\lambda) - z_\lambda)^{-1}  \in \Bb (\Dd_{-S}, \Dd_S).
	\end{equation}
	Considering that $\Sigma \subset \rho (A_0)$, we can thus apply Corollary~\ref{cor:family.second.implication} to conclude
	\begin{equation}
	\label{eq:DWE.ess.spec.second.inclusion}
		\sigma (\Aa_0) \setminus (- \infty, 0] \subset \sigma (S_0 (\cdot)) \setminus (- \infty, 0],
	\end{equation}
	and analogously with $\sigma_{\operatorname{e2}}$ instead of $\sigma$.	Furthermore, from Corollary~\ref{cor:family.first.implication} (i) with $\rho (A_0) = \Theta = \C \setminus \{0\}$, we obtain
	\begin{equation}\label{eq:DWE.spec.first.inclusion}
		\sigma (\Aa_0) \setminus \{0\} \supset \sigma (S_0 (\cdot)).
	\end{equation}
	Since $\Sigma \subset \rho (A_0)$ and since, for all $\lambda \in \Sigma$, Lemma~\ref{lem:DWE.shift.extension} implies that $\rho (S_0 (\lambda)) \neq \emptyset$ and thus $S_0 (\lambda) \in \Cc (\Hh_2)$, the assumptions of Corollary~\ref{cor:family.first.implication} (ii) are satisfied and we conclude
	\begin{equation}\label{eq:DWE.essspec.first.inclusion}
		\sigma_{\operatorname{e2}} (\Aa_0) \setminus (- \infty, 0] \supset \sigma_{\operatorname{e2}} (S_0 (\cdot)) \setminus (- \infty, 0].
	\end{equation}
	In summary, \eqref{eq:DWE.spectra.equiv} with $\sigma$, $\sigma_{\operatorname{p}}$ and $\sigma_{\operatorname{e2}}$ follows from \eqref{eq:DWE.pspec.equiv.1}, \eqref{eq:DWE.ess.spec.second.inclusion} with $\sigma$ and $\sigma_{\operatorname{e2}}$, \eqref{eq:DWE.spec.first.inclusion} and \eqref{eq:DWE.essspec.first.inclusion}; the inclusion \eqref{eq:DWE.spec.incl} is a consequence of \eqref{eq:DWE.spec.first.inclusion} and the identity \eqref{eq:DWE.pspec.equiv} follows from \eqref{eq:DWE.pspec.equiv.1}.
	We continue by showing that $- \Aa_0$ is accretive, i.e.\ that
	\begin{equation}
		\re \iprod{\Aa_0 (f, g)}{(f, g)}_\Hh \le 0, \qquad (f, g) \in \dom \Aa_0.
	\end{equation}
	To this end, let $(f, g) \in \dom \Aa_0$. By the density of $\Coo(\Omega)$ in $\Dd_S$, see Proposition~\ref{prop:DWE.spaces}, and by construction of $\Ww(\Omega)$, there exist $\{f_m\}_m, \{g_m\}_m \subset \Coo (\Omega)$ such that $f_m \to f$ in $\Ww (\Omega)$ and $g_m \to g$ in $\Dd_S$ as $m \to \infty$. Moreover, $\Dd_S \subset \Ww(\Omega)$ is continuously embedded, see Proposition~\ref{prop:DWE.spaces}, and we conclude
	\begin{equation}
		\label{eq:DWE.Ww.approximation}
		\iprod{g}{f}_\Ww = \lim_{m \to \infty} \left( \iprod{\nabla g_m}{\nabla f_m} + \iprod{q^\frac12 g_m}{q^\frac12 f_m} \right).
	\end{equation}
	One can easily see from the definitions in \eqref{eq:DWE.CD} that
	\begin{equation}
		\begin{aligned}
			\dprod{(\Delta - q) f - 2 a g}{g}_{\Dd_S^* \times \Dd_S} & = - \lim_{m \to \infty} \left( \iprod{\nabla f_m}{\nabla g_m} + \iprod{q^\frac12 f_m}{q^\frac12 g_m}\right. \\ 
			& \qquad \qquad \qquad \qquad \qquad \qquad \left. + 2 \iprod{a^\frac12 g_m}{a^\frac12 g_m}\right).
		\end{aligned}
	\end{equation}
	Using this, \eqref{eq:DWE.Ww.approximation} and $\dprod{h}{\cdot}_{\Dd_S^* \times \Dd_S} = \iprod{h}{\cdot}$ for $h \in L^2 (\Omega)$, we further derive
	\begin{equation}
		\begin{aligned}
			\iprod{\Aa_0 (f, g)}{(f, g)}_\Hh & = \iprod{g}{f}_\Ww + \iprod{(\Delta - q) f - 2 a g}{g} \\[1mm]
			& =  - 2 \i \lim_{m \to \infty} \im \left(\iprod{\nabla f_m}{\nabla g_m} + \iprod{q^\frac12 f_m}{q^\frac12 g_m}\right) \\
			& \qquad \qquad \qquad \qquad  \qquad \qquad - 2 \lim_{m \to \infty}\iprod{a^\frac12 g_m}{a^\frac12 g_m}.
		\end{aligned}
	\end{equation}
	Finally, the accretivity of $-\Aa_0$ then follows from
	\begin{equation}
		\re \iprod{\Aa_0 (f, g)}{(f, g)}_\Hh = - 2 \lim_{m \to \infty} \iprod{a^\frac12 g_m}{a^\frac12 g_m} \le 0.
	\end{equation}
	Next we note that, by Lemma \ref{lem:DWE.shift.extension}, we have $S(\lambda)^{-1} \in \Bb(\Dd_S^*, \Dd_S)$ and thus $0 \in \rho (S_0 (\lambda))$ whenever $\re \lambda > 0$. By taking complements in \eqref{eq:DWE.ess.spec.second.inclusion}, we conclude
	\begin{equation}
		\set{\lambda \in \C}{\re \lambda >0} \subset \rho (S_0 (\cdot)) \setminus (-\infty, 0] \subset \rho (\Aa_0).
	\end{equation}
	This implies that $- \Aa_0$ is m-accretive; the generation of a strongly continuous contraction semigroup then follows from a standard result in semigroup theory, see e.g.\ \cite[\S IX.1]{Kato-1995}.
	
	It remains to show that $\dom \Aa_0$ is dense in $\Ww(\Omega) \oplus \Dd_S$ and $\Ww(\Omega) \oplus L^2(\Omega)$. This, however, follows from $\Dd_{-S} = \Dd_S^* = \Dd_{-2}$ and Corollary~\ref{cor:family.dense.domain}~(ii), since we have shown already that the assumptions of Proposition~\ref{prop:Aa.dom.dense}~(ii) are satisfied in any point $\lambda \in \C \setminus (\infty, 0]$, see \eqref{eq:DWE.S.shift.ext}.
\end{proof}

\begin{rem}
	The density of $\dom \Aa_0$ in $\Ww(\Omega) \oplus L^2 (\Omega)$ (which already follows from its m-accretivity, see \cite[\S V.3.10]{Kato-1995}) can equally be shown by employing Corollary~\ref{cor:family.dense.domain}~(i) according to Remark~\ref{rem:second.Schur.compl}. Let therefore $(\Delta - q)_{L^2 (\Omega)}$ denote the Friedrichs extension in $L^2(\Omega)$ of $\Delta - q$ defined on $\Coo (\Omega)$. One can show that
	\begin{equation}
		\dom (\Delta - q)_{L^2 (\Omega)} \subset \dom C_0 = \dom A_0 \cap \dom C_0,
	\end{equation}
	where we used $\dom A_0 = \Ww (\Omega)$ and where $\dom C_0$ is defined analogously to \eqref{eq:B.dom}. Moreover, one can prove that $\dom (\Delta - q)_{L^2 (\Omega)}$ is dense in $\Ww (\Omega)$, which then by means of Corollary~\ref{cor:family.dense.domain}~(i) applied with any $\la_0 \in \C \setminus (-\infty, 0]$ gives the claimed density; notice that $\Dd_S$ is dense and boundedly embedded in $\Hh_2$ and thus $\dom S_0 (\la_0)$ is dense in $\Hh_2$ due to Lemma~\ref{lem:DWE.shift.extension}. \hfill //
\end{rem}

\begin{rem}
		The procedure in this section can be further extended to implement damped wave equations with accretive (differential) dampings in weighted spaces, see~\cite[Sec.~5.4]{Gerhat-Siegl-2022-preprint}. While the necessary well-behaved representation of the Schur complement could be constructed in Theorem~\ref{thm:DWE} by means of the Lax-Milgram theorem, for the described extension new Dirichlet realisations of Schr\"odinger type operators with accretive potentials in weighted spaces need to be implemented. The latter is done in~\cite{Gerhat-Siegl-2022-preprint} by establishing generalised coercivity estimates and applying a recent representation theorem in~\cite{Almog-Helffer-2015}. \hfill //
\end{rem}

\section{Singular coefficient matrix differential operators}
\label{sec:singular.matrix.DE}

We study the spectra of generic second order differential operator matrices of the form
\begin{equation}
\label{eq:matrix.DE}
	\left(
	\begin{array}{cc}
		-\Delta + q & \nabla \cdot \formb \\
		\formc \cdot \nabla & d
	\end{array}
	\right)
\end{equation}
with irregular coefficients acting in the Hilbert space $L^2 (\Omega) \oplus L^2 (\Omega)$ with $\Omega \subset\nolinebreak\Rn$. Operator matrices with this particular structure appear in several problems in mathematical physics, see e.g.\ \cite{Boegli-Marletta-2020,Ibrogimov-2017,Ibrogimov-Siegl-Tretter-2016,Ibrogimov-Tretter-2017,Konstantinov-1998,Kurasov-Lelyavin-Naboko-2008}.~Due to the apparent independence of their entries, however, their spectral analysis is not straightforward and previous results typically rely on the regularity or special form of coefficients. Our approach merges the conceptual idea of a dominant Schur complement in \cite{Ibrogimov-2017,Ibrogimov-Siegl-Tretter-2016,Ibrogimov-Tretter-2017} with a distributional framework, which enables us to substantially reduce the required regularity of the coefficients. In particular, we allow the latter to be singular and/or unbounded, as long as they satisfy certain (in some sense minimal) conditions arising from our setting in Section~\ref{sec:matrix.Schur.complement}, see Assumption~\ref{asm:IV}. 

The main results in this section are Theorems~\ref{thm:matrix.DE} and~\ref{thm:matrix.DE.accretive}; the former provides spectral equivalence of our realisation of the operator matrix \eqref{eq:matrix.DE} and its first Schur complement, whereas in the latter we show that if $q$ and $d$ are sectorial and $\formc = \overline{\formb}$, then the resulting operator matrix is m-accretive and generates a strongly continuous contraction semigroup.

\subsection{Assumptions and main results}

We impose the following \emph{low regularity assumptions} on the coefficients. 

\begin{asm} \label{asm:IV}
	Let $\Omega \subset \Rn$ be open.
	\begin{enumerate}
		\item \emph{Basic assumptions on coefficients}: Let $\formb, \formc : \Omega \to \Cn$ be measurable and
		\begin{equation}
			q \in \Loneloc (\Omega), \qquad d \in L^\infty_{\operatorname{loc}} (\Omega).
		\end{equation}
		\item \emph{Definition and regularity of $\boldpi$ on $\Theta$}: Denote by $\mathbf I \in \Cnn$ the identity matrix, let $\Theta \subset \C \setminus \essran d$ be connected and let
		\begin{equation} \label{eq:def.boldpi}
			\boldpi (\lambda) \defeq \mathbf{I} + (d - \lambda)^{-1} (\formb \otimes \formc) \in \Loneloc (\Omega)^{n \times n}, \qquad \lambda \in \Theta.
		\end{equation}
		\item \emph{Sectoriality of $q$ and $\boldpi$ on $\Phi$ {\rm(}after rotation and shift{\rm)}}: Let $\emptyset \neq \Phi \subset \Theta$.	For all $\lambda \in \Phi$, assume there exist $\omega_\lambda \in (-\pi,\pi]$ and $\gamma_\lambda \ge 0$ such that both
		\begin{equation}\label{eq:def.tilde.q.pi}
			\widetilde q(\lambda) \defeq \e^{\i \omega_\lambda} q + \gamma_\lambda, \qquad \widetilde \boldpi (\lambda) \defeq \e^{\i \omega_\lambda} \boldpi (\lambda),
		\end{equation}
		are sectorial; more precisely, a.e.\ in $\Omega$, let $\re \widetilde q (\lambda) \ge 0$, let the matrix $\re \widetilde \boldpi (\lambda)$ be positive definite and let $C_\lambda>0$ be such that
		\begin{equation}
			\begin{aligned}
				\abs{\im \widetilde q (\lambda)} & \le C_\lambda \re \widetilde q(\lambda), \\[1mm]
				\abs{\im \widetilde \boldpi (\lambda) \xi \cdot \overline{\xi}} & \le C_\lambda \re \widetilde \boldpi (\lambda) \xi \cdot \overline{\xi}, \qquad \xi \in \Cn.
			\end{aligned}
		\end{equation}
		Moreover, for all $\lambda, \mu \in \Phi$, assume there exist constants $m_{\lambda,\mu}, M_{\lambda,\mu}>0$ such that a.e.\ in $\Omega$ the following holds
		\begin{equation}
			\label{eq:matrix.DE.uniform.positivity}
			\begin{array}{rcccl}
				m_{\lambda,\mu} \re \widetilde q (\mu) & \!\!\! \le  &  \!\!\! \re \widetilde q (\lambda) & \!\!\! \le & \!\!\!  M_{\lambda,\mu} \re \widetilde q (\mu), \\[2mm]
				m_{\lambda,\mu} \re \widetilde \boldpi (\mu) & \!\!\! \le & \!\!\! \re \widetilde \boldpi (\lambda) & \!\!\! \le & \!\!\! M_{\lambda,\mu} \re \widetilde \boldpi (\mu).
			\end{array}
		\end{equation}
		\item \emph{Dominance of Schur complement}: For all $\lambda \in \Phi$, assume that
			\begin{equation}
			(d-\lambda)^{-1} \max \left(\big|(\re \widetilde \boldpi (\lambda))^{-\frac12} \formb\big|, \big|(\re \widetilde \boldpi (\lambda))^{-\frac12} \overline \formc\big|\right)  \in L^\infty (\Omega). \tag*{//}
			\end{equation}
	\end{enumerate}
\end{asm}

\begin{rem}\label{rem:matrix.DE.ass}
	\begin{enumerate}
		\item The assumptions $q \in \Loneloc(\Omega)$ and $\boldpi (\lambda) \in \Loneloc (\lambda)^{n \times n}$ are naturally minimal and guarantee that the generalised quadratic form of the Schur complement is densely defined for $\lambda \in\nolinebreak\Theta$. Assumption~\ref{asm:IV}~(iii), however, translates into sectoriality (after shift and rotation) of the Schur complement on the set of parameters $\Phi$.
		
		\item The assumption $d \in L^\infty_{\operatorname{loc}} (\Omega)$ is made for the sake of simplicity and can be relaxed. It is a sufficient condition for the following
		\begin{equation}\label{eq:matrix.DE.rem.cond.distr}
			\omega (d - \lambda)^{-1} \mathbf b \in L^2_{\operatorname{loc}}(\Omega)^n, \qquad \omega \in L^2_{\operatorname{loc}} (\Omega), \qquad (d - \lambda) \omega^{-1} \in L^2_{\operatorname{loc}}(\Omega),
		\end{equation}
		where $\lambda \in \Phi$ and $\omega$ is defined in~\eqref{eq:matrix.DE.D_2.def} below. The first two of the conditions above guarantee that our realisation of the operator matrix in~\eqref{eq:matrix.DE} and its Schur complement coincide with their standard definition in the distributional sense, see Remark~\ref{rem:matrix.DE.distr.operators}. The third condition ensures that $\Coo (\Omega) \subset \Dd_2$, where $\Dd_2$ is defined in~\eqref{eq:matrix.DE.D_2.def} below.
		
		\item Assumption~\ref{asm:IV}~(iv) is essential; it ensures that, on the set of parameters~$\Phi$, the Schur complement dominates the neighbouring factors of the Frobenius-Schur factorisation of the resolvent in a suitable way.
		
		\item By \eqref{eq:matrix.DE.uniform.positivity} and Lemma~\ref{lem:matrix.DE.d.equiv} below, it is equivalent to assume \eqref{eq:def.boldpi} or~Assumption~\ref{asm:IV}~(iv), respectively, only in an arbitrary point $\lambda_0 \in \Theta$ or $\lambda_0 \in \Phi$.\nolinebreak\hfill//
	\end{enumerate}
\end{rem}

From now on, we write $\norm{\cdot} \defeq \norm{\cdot}_{L^2 (\Omega)}$ and $\iprod{\cdot}{\cdot} \defeq \iprod{\cdot}{\cdot}_{L^2 (\Omega)}$; the same notation will be used for norm and inner product on $L^2 (\Omega)^n$ if no confusion can arise.

\subsubsection{Spectral correspondence}\label{sec:DWE.spec.corr}

Under Assumption~\ref{asm:IV}, we place the spectral problem for the operator matrix \eqref{eq:matrix.DE} in a suitable setting and apply the results from Section~\ref{sec:matrix.Schur.complement}. Fix an arbitrary point $\lambda_0 \in \Phi$ and set $q_0 \defeq \widetilde q (\lambda_0)$ and $\boldpi_0 \defeq \widetilde \boldpi (\lambda_0)$, see~\eqref{eq:def.tilde.q.pi}. Let $\Dd_S$ be the closure of $\Coo (\Omega)$ with respect to the inner product
\begin{equation}\label{eq:matrix.DE.D_S.iprod}
		\iprod{f}{g}_S \defeq \int_{\Omega} \re \boldpi_0 \nabla f \cdot \overline{\nabla g} \d x  + \int_\Omega \re q_0 f \overline g \d x + \int_{\Omega} f \overline g \d x, \quad f, g\in \Coo (\Omega);
\end{equation}
recall that $q_0 \ge 0$ and $\re \boldpi_0>0$ is positive definite a.e.\ in $\Omega$. Moreover, define
\begin{equation}\label{eq:matrix.DE.D_2.def}
	\Dd_2 \defeq L^2 (\Omega, \abs{d-\lambda_0}^2 \omega^{-2}), \qquad \omega \defeq \max \left(1,\abs{(\re \boldpi_0)^{-\frac12} \overline \formc}\right).
\end{equation}
We emphasise that, as topological spaces,  $\Dd_S$ and $\Dd_2$ do not depend on the choice of~$\lambda_0$. Indeed, by \eqref{eq:matrix.DE.uniform.positivity} and Lemma~\ref{lem:matrix.DE.d.equiv} below, their respective inner products generate equivalent norms for distinct choices of $\lambda_0 \in \Phi$.

Let $\Aa_0$ and $S_0 (\cdot)$ be the (family of) maximal operators in $L^2 (\Omega) \oplus L^2 (\Omega)$ and $L^2 (\Omega)$, respectively, corresponding to the differential expressions
\begin{equation}\label{eq:matrix.DE.Aa.S}
	\Aa \defeq \begin{pmatrix}
		-\Delta + q & \nabla \cdot \formb \\
		\formc \cdot \nabla & d
	\end{pmatrix}, \qquad 
	S(\lambda) \defeq -\nabla \cdot \boldpi (\lambda) \nabla + q - \lambda, \quad \lambda \in \Theta,
\end{equation}
see \eqref{eq:def.boldpi}, on their respective maximal domains
\begin{equation}\label{eq:matrix.DE.max.dom}
	\begin{aligned}
		\dom \Aa_0 & \defeq \left\{(f, g) \in \Dd_S \times \Dd_2 \, : \, (\Delta - q) f - \nabla \cdot \mathbf b g \in L^2 (\Omega),\right. \\
		& \qquad \qquad \qquad \qquad \qquad \qquad \qquad \qquad \left.\mathbf c \cdot \nabla f + d g \in L^2 (\Omega)\right\}, \\
		\dom S_0 (\lambda) & \defeq \set{f \in \Dd_S}{(\nabla \cdot \boldpi (\lambda) \nabla - q) f \in L^2 (\Omega)}.
	\end{aligned}
\end{equation}
Here the above operations are understood in a standard (antilinear) distributional sense, see Definitions~{\rm\ref{def:matrix.DE.spaces}}, {\rm\ref{def:matrix.DE.operators}}, {\rm\ref{def:matrix.DE.Aa.S}} and Remark~\ref{rem:matrix.DE.distr.operators} below for details. Note that $S_0 (\cdot)$ coincides with the standard definition of the Schur complement by means of its quadratic form on the set of parameters $\Phi$ were the latter is sectorial.

The first main result in this section, the spectral correspondence between operator matrix and its Schur complement, reads as follows; its proof can be found in Section~\ref{sec:DE.thms.proofs}.

\begin{thm}
	\label{thm:matrix.DE}
	Let Assumption~{\rm \ref{asm:IV}} be satisfied and let $\Aa_0$ and $S_0 (\cdot)$ be as in~\eqref{eq:matrix.DE.Aa.S} and \eqref{eq:matrix.DE.max.dom}. Then the equivalence of {\rm(}point and essential{\rm)} spectra between operator matrix and Schur complement holds on $\Phi$, i.e.
	\begin{equation}
		\label{eq:matrix.DE.spec.equivalent}
		\sigma (\Aa_0) \cap \Phi  = \sigma (S_0 (\cdot)) \cap \Phi,
	\end{equation}
	and analogously with $\sigma_{\operatorname{p}}$ and $\sigma_{\operatorname{e2}}$ instead of $\sigma$. Moreover, on the remaining set $\Theta \setminus \Phi$, one has
	\begin{align}
			\label{eq:matrix.DE.spec.first.incl} \sigma (\Aa_0) \cap (\Theta \setminus \Phi) & \supset \sigma (S_0 (\cdot)) \cap (\Theta \setminus \Phi), \\
			\label{eq:matrix.DE.pspec.equiv} \pspec (\Aa_0) \cap (\Theta \setminus \Phi) & = \pspec (S_0 (\cdot)) \cap (\Theta \setminus \Phi).
	\end{align}
	Finally, if
	$\dom B_0 \cap \dom d$ is dense in $L^2(\Omega)$, where
	\begin{equation}
		\dom B_0 = \set{f \in \Dd_2}{\nabla \cdot \formb f \in L^2 (\Omega)},
	\end{equation}
	then $\dom \Aa_0$ is dense in $L^2 (\Omega) \oplus L^2 (\Omega)$.
\end{thm}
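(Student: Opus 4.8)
The plan is to obtain the density of $\dom \Aa_0$ as a direct application of Corollary~\ref{cor:family.dense.domain}~(i). Since this section employs the \emph{first} Schur complement, the roles of the spaces are not reversed and we have $\Hh_1 = \Hh_2 = L^2(\Omega)$, with $A = -\Delta + q$ acting in $\Hh_1$ and $D_0$ the maximal operator of multiplication by $d$ in $\Hh_2 = L^2(\Omega)$, so that $\dom D_0 = \dom d$. Consequently $\dom B_0 \cap \dom D_0$ coincides with the set $\dom B_0 \cap \dom d$ in the hypothesis, whose density in $\Hh_2 = L^2(\Omega)$ is thus granted. It therefore remains to verify the two further hypotheses of Corollary~\ref{cor:family.dense.domain}~(i) at a single point $\lambda_0 \in \Theta$, for which I would take the fixed reference point $\lambda_0 \in \Phi$ used in the construction of $\Dd_S$ and $\Dd_2$.

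For the condition $\coker(D_0 - \lambda_0) = \{0\}$, I would use that the spectrum of the multiplication operator $D_0$ equals $\essran d$; since $\lambda_0 \in \Phi \subset \Theta \subset \C \setminus \essran d$, we have $\lambda_0 \in \rho(D_0)$, so that $\ran(D_0 - \lambda_0) = L^2(\Omega)$ and hence $\coker(D_0 - \lambda_0) = \{0\}$. For the density of $\dom S_0(\lambda_0)$ in $\Hh_1 = L^2(\Omega)$, I would invoke the coercivity of the shifted Schur complement already exploited in the spectral-equivalence part of the theorem: by Assumption~\ref{asm:IV}~(iii) and Lemma~\ref{lem:matrix.DE.S.coercive}, there is $z_{\lambda_0}$ such that $S(\lambda_0) - z_{\lambda_0}$ corresponds to a bounded coercive form on $\Dd_S$, so the Lax-Milgram theorem gives a bounded inverse $\Dd_{-S} \to \Dd_S$ and the density of $\dom S_0(\lambda_0) = \dom(S_0(\lambda_0) - z_{\lambda_0})$ in $\Dd_S$. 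As $\Coo(\Omega) \subset \Dd_S \subset L^2(\Omega)$ with $\Coo(\Omega)$ dense in $L^2(\Omega)$ and the embedding $\Dd_S \hookrightarrow L^2(\Omega)$ continuous, $\Dd_S$ is dense in $L^2(\Omega)$, and density in $\Dd_S$ upgrades to density in $L^2(\Omega)$.

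With all three hypotheses verified at $\lambda_0$, Corollary~\ref{cor:family.dense.domain}~(i) yields that $\dom \Aa_0$ is dense in $\Hh = L^2(\Omega) \oplus L^2(\Omega)$, as claimed. The genuinely substantial input, namely the density of $\dom B_0 \cap \dom d$, is assumed; accordingly I expect no serious obstacle, the only point requiring a little care being the passage from density in $\Dd_S$ to density in the target space $L^2(\Omega)$, which rests on the continuous dense embedding $\Dd_S \hookrightarrow L^2(\Omega)$ furnished by the construction of $\Dd_S$ as the $\norm{\cdot}_S$-closure of $\Coo(\Omega)$.
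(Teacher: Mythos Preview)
Your proposal addresses only the final density claim of the theorem and is silent on the spectral equivalences \eqref{eq:matrix.DE.spec.equivalent}--\eqref{eq:matrix.DE.pspec.equiv}, which constitute the main content. The paper's proof first verifies Assumption~\ref{asm:II} via Propositions~\ref{prop:matrix.DE.spaces} and~\ref{prop:matrix.DE.operators}, then derives the point-spectrum identities on all of $\Theta$ from Corollary~\ref{cor:point.spectrum.family}, the inclusion $\sigma(\Aa_0)\cap\Phi\subset\sigma(S_0(\cdot))\cap\Phi$ (and its $\sigma_{\operatorname{e2}}$ analogue) from Corollary~\ref{cor:family.second.implication} with $\Sigma=\Phi$, and the reverse inclusions from Corollary~\ref{cor:family.first.implication}. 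For Corollary~\ref{cor:family.second.implication} one must in particular check $B(\dom d)\subset\Dd_S^*$, which the paper does via the estimate~\eqref{eq:matrix.DE.B.dom.D} using Assumption~\ref{asm:IV}~(iv); this is a genuine verification, not an automatic consequence of the setup. Your proposal skips all of this.

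For the density claim itself your argument is correct and matches the paper's: both apply Corollary~\ref{cor:family.dense.domain}~(i) at some $\lambda_0\in\Phi$, use $\lambda_0\in\rho(D_0)$ to get $\coker(D_0-\lambda_0)=\{0\}$, invoke Lemma~\ref{lem:matrix.DE.S.coercive} for the density of $\dom S_0(\lambda_0)$ in $\Dd_S$, and pass to density in $L^2(\Omega)$ via the continuous dense embedding $\Dd_S\hookrightarrow L^2(\Omega)$.
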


\begin{rem}
	If e.g.\ $\formb \in W^{1,2}_{\operatorname{loc}} (\Omega)$, then both $\dom d$ and $\dom B_0$ contain $\Coo (\Omega)$, thus $\dom B_0 \cap \dom d$ is dense in $L^2 (\Omega)$ and $\dom\Aa_0$ is dense in $L^2 (\Omega) \oplus L^2 (\Omega)$. \hfill //
\end{rem}

\subsubsection{Generation of contraction semigroup}

When imposing additional assumptions on the structure of the problem, the resulting operator matrix proves to be m-accretive and thus generates a strongly continuous contraction semigroup, see Theorem~\ref{thm:matrix.DE.accretive} below.

More precisely, besides Assumption~\ref{asm:IV}~(i), we require that $\formc = \overline{\formb}$ and that $q$ and $d$ are sectorial, i.e.\ that there exist $0 \le \theta_q, \theta_d < \pi/2$ such that a.e.\ in $\Omega$
	\begin{equation}
		\label{eq:matrix.DE.q.d.sectorial}
		\begin{aligned}
			\re q & \ge 0, & \qquad \abs{\im q} & \le \tan \theta_q \re q, \\[0.5mm]
			\re d & \ge 0, & \qquad \abs{\im d} & \le \tan \theta_d \re d.
		\end{aligned}
	\end{equation}
The above structural assumptions imply Assumption~\ref{asm:IV}~(iii) with certain sets
\begin{equation}
	\label{eq:matrix.DE.Theta.Phi}
	\Theta  \defeq \set{\lambda \in \C }{|\arg \lambda | > \theta_d}, \quad 
	\Phi \defeq \set{\lambda \in \C }{|\arg \lambda |> \max (\theta_q, \theta_d)},
\end{equation}
where $\arg : \C \setminus \{0\} \to (-\pi,\pi]$ and $\arg 0 = 0$. Moreover, the regularity and dominance Assumptions~\ref{asm:IV}~(iii) and~(iv), respectively, reduce to
\begin{equation}\label{eq:matrix.DE.accr.regularity}
	\boldpi_0 \defeq \mathbf I + (d+1)^{-1} (\mathbf b \otimes \overline{\mathbf b}) \in \Loneloc(\Omega)^{n \times n}, \quad (d+1)^{-1} (\re \boldpi_0)^{-\frac12} \formb \in L^\infty (\Omega)^n;
\end{equation}
see Remark~\ref{rem:matrix.DE.ass}~(iv) and notice that $\re \boldpi_0 \ge \mathbf I$ is positive definite, see~\eqref{eq:matrix.DE.accretive.pi.lambda.positive.definite} below. Applying Theorem~\ref{thm:matrix.DE}, we thereby obtain the following theorem; its proof can be found in Section~\ref{sec:matrix.DE.accr.proof}.

\begin{thm}
	\label{thm:matrix.DE.accretive}
	Let Assumption~{\rm\ref{asm:IV}~(i)} hold, let $\mathbf c = \overline{\mathbf b}$, let $q$ and $d$ be sectorial with semi-angle $\theta_q, \theta_d \in [0,\pi/2)$, see ~\eqref{eq:matrix.DE.q.d.sectorial}, and let~\eqref{eq:matrix.DE.accr.regularity} be satisfied. Let $\Theta$ and $\Phi$ be defined as in~\eqref{eq:matrix.DE.Theta.Phi} and let $\Aa_0$ and $S_0 (\cdot)$ be as in~\eqref{eq:matrix.DE.Aa.S} and~\eqref{eq:matrix.DE.max.dom}, where $\boldpi$ is as in \eqref{eq:def.boldpi}, and we set $\lambda_0\defeq-1$, $q_0 \defeq q$ in the definitions~\eqref{eq:matrix.DE.D_S.iprod},~\eqref{eq:matrix.DE.D_2.def}. Then \eqref{eq:matrix.DE.spec.equivalent}--\eqref{eq:matrix.DE.pspec.equiv} hold and $-\Aa_0$ is m-accretive, thus $\Aa_0$ generates a strongly continuous contraction semigroup on $L^2 (\Omega) \oplus L^2 (\Omega)$. Its domain is dense in $L^2 (\Omega) \oplus L^2 (\Omega)$ and satisfies
	\begin{equation}
		\label{eq:matrix.DE.accretive.domain}
		\pi_2 \dom \Aa_0 \subset \dom |d|^\frac12.
	\end{equation}
\end{thm}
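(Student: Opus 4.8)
The plan is to reduce everything to Theorem~\ref{thm:matrix.DE} and then upgrade the resulting spectral equivalence to m-accretivity by computing the numerical range of $\Aa_0$ directly, in close analogy with the proof of Theorem~\ref{thm:DWE}. First I would verify that the structural hypotheses ($\formc = \overline{\formb}$, sectoriality~\eqref{eq:matrix.DE.q.d.sectorial} of $q$ and $d$, and the regularity~\eqref{eq:matrix.DE.accr.regularity}) imply the full Assumption~\ref{asm:IV} with $\Theta,\Phi$ as in~\eqref{eq:matrix.DE.Theta.Phi} and $\lambda_0 = -1$, $q_0 = q$. The central point is that $\formb \otimes \overline{\formb}$ is Hermitian and positive semi-definite, with quadratic form $\xi \mapsto \abs{\overline{\formb} \cdot \xi}^2$; since $\re (d+1)^{-1} = (\re d + 1)\abs{d+1}^{-2} \ge 0$ by $\re d \ge 0$, this gives
\begin{equation*}
	\re \boldpi_0 = \mathbf I + \frac{\re d + 1}{\abs{d+1}^2}\,(\formb \otimes \overline{\formb}) \ge \mathbf I \quad \text{a.e.\ in } \Omega,
\end{equation*}
so $\re\boldpi_0$ is uniformly positive definite; together with the sectoriality of $q$ and $d$ (which supplies the rotations $\omega_\lambda$ and shifts $\gamma_\lambda$) this yields the sectoriality of $\widetilde q(\lambda)$ and $\widetilde\boldpi(\lambda)$ and the uniform two-sided bounds~\eqref{eq:matrix.DE.uniform.positivity} on $\Phi$. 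Granting this, Theorem~\ref{thm:matrix.DE} applies and delivers the spectral identities~\eqref{eq:matrix.DE.spec.equivalent}--\eqref{eq:matrix.DE.pspec.equiv}.

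The heart of the argument is the numerical range computation. For $(f,g) \in \dom \Aa_0$ I would approximate $f$ and $g$ by $f_m, g_m \in \Coo(\Omega)$ converging in $\Dd_S$ and $\Dd_2$, respectively, and evaluate $\iprod{\Aa_0(f,g)}{(f,g)}_\Hh$ as a limit, using $\dprod{h}{\cdot}_{\Dd_S^* \times \Dd_S} = \iprod{h}{\cdot}$ for $h \in L^2(\Omega)$. On the smooth approximants an integration by parts in the cross term $\int_\Omega(\nabla\cdot\formb g_m)\overline{f_m}\d x$, together with $\formc = \overline{\formb}$, shows that the two off-diagonal contributions are conjugates of one another up to a sign: writing $T_m \defeq \int_\Omega \formb g_m \cdot \overline{\nabla f_m}\d x$ they sum to $-T_m + \overline{T_m}$, which is purely imaginary. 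Passing to the limit, the real part of $\iprod{\Aa_0(f,g)}{(f,g)}_\Hh$ collapses to the diagonal contributions
\begin{equation*}
	\int_\Omega \abs{\nabla f}^2 \d x + \int_\Omega \re q\,\abs{f}^2 \d x + \int_\Omega \re d\,\abs{g}^2 \d x \ge 0,
\end{equation*}
non-negative by~\eqref{eq:matrix.DE.q.d.sectorial}; this sign-definiteness of the numerical range is exactly the accretivity required.

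To promote accretivity to m-accretivity I would add the range condition coming from the first part. At $\lambda_0 = -1$ the shifted Schur complement $S(-1) = -\nabla\cdot\boldpi_0\nabla + q + 1$ has the real part of its form equal to $\norm{\cdot}_S^2$ on $\Dd_S$, hence is bounded and coercive; by Lax--Milgram $0 \in \rho(S_0(-1))$, and~\eqref{eq:matrix.DE.spec.equivalent} then gives $-1 \in \rho(\Aa_0)$, i.e.\ $\ran(\Aa_0 + 1) = \Hh$. Accretivity together with this range condition yields the m-accretivity, whence the generation of a strongly continuous contraction semigroup, see e.g.\ \cite[\S IX.1]{Kato-1995}, and the density of $\dom\Aa_0$ in $L^2(\Omega)\oplus L^2(\Omega)$ (m-accretive operators being densely defined, cf.~\cite[\S V.3.10]{Kato-1995}). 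Finally,~\eqref{eq:matrix.DE.accretive.domain} falls out of the same computation: for $(f,g)\in\dom\Aa_0$ the quantity $\iprod{\Aa_0(f,g)}{(f,g)}_\Hh$ is finite, so each non-negative summand above is finite; in particular $\int_\Omega \re d\,\abs{g}^2 \d x < \infty$, and since sectoriality gives $\abs{d} \le (\cos\theta_d)^{-1}\re d$, this yields $\abs{d}^{1/2} g \in L^2(\Omega)$, i.e.\ $g \in \dom\abs{d}^{1/2}$. The main obstacle is the reduction step: verifying the sectoriality and, especially, the \emph{uniform} comparability~\eqref{eq:matrix.DE.uniform.positivity} of $\re\widetilde q(\lambda)$ and $\re\widetilde\boldpi(\lambda)$ across all of $\Phi$, since the rotation $\omega_\lambda$ must simultaneously sectorialise the scalar $\widetilde q(\lambda)$ and the matrix-valued $\widetilde\boldpi(\lambda)$.
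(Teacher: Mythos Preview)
Your overall strategy matches the paper's: reduce to Assumption~\ref{asm:IV} (done there as Lemma~\ref{lem:matrix.DE.accretive}), invoke Theorem~\ref{thm:matrix.DE} for~\eqref{eq:matrix.DE.spec.equivalent}--\eqref{eq:matrix.DE.pspec.equiv}, establish accretivity by computing the numerical range, and close with a range condition. Your choice $\lambda = -1$ for the latter is sharper than the paper's ``$\lambda<0$ with large modulus'' and works for the same reason, since $\re\dprod{S(-1)f}{f}_{\Dd_S^*\times\Dd_S} = \|f\|_S^2$.

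The genuine gap is in the accretivity computation. Passing to the limit along $f_m\to f$ in $\Dd_S$ and $g_m\to g$ in $\Dd_2$ does not work for the cross-term $T_m = \int_\Omega g_m\,\mathbf b\cdot\overline{\nabla f_m}\,\d x$. The only available bound is $|T_m|\lesssim \|(d+1)g_m\|\,\|f_m\|_S$ (cf.~\eqref{eq:matrix.DE.B.dom.D}), which controls $T_m$ through the $\dom d$-norm of $g_m$, not the $\Dd_2$-norm; and $g$ is not known to lie in $\dom d$. Equivalently, $B$ maps $\Dd_2$ only into $\Dd_1^*$, while $f\in\Dd_S$ need not lie in $\Dd_1$, so $\dprod{Bg}{f}$ is a priori undefined. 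For the same reason $\int_\Omega\re d\,|g_m|^2\,\d x$ need not converge, and your extraction of~\eqref{eq:matrix.DE.accretive.domain} from ``each non-negative summand is finite'' is circular until that formula is justified. This obstacle is absent in Theorem~\ref{thm:DWE} because there $B=I$ and $\Dd_{-S}=\Dd_{-1}$.

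The paper circumvents this via the Schur identity
\[
Af + Bg \;=\; S(-1)f - f + B\bigl((D+1)^{-1}Cf + g\bigr),
\]
together with the key observation that $(D+1)^{-1}Cf + g \in \dom d$ precisely because $(f,g)\in\dom\Aa_0$ forces $Cf + Dg\in L^2(\Omega)$. Since $B(\dom d)\subset\Dd_S^*$ with the explicit action~\eqref{eq:matrix.DE.B.dom.D.formula} (this is the content of Lemma~\ref{lem:matrix.DE.accr.Dd.S}, building on~\eqref{eq:matrix.DE.B.dom.D}), each summand now pairs legitimately against $f\in\Dd_S$, and the numerical range can be evaluated directly for $(f,g)\in\dom\Aa_0$ without approximation. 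The integrabilities $g\,\mathbf b\cdot\overline{\nabla f}\in L^1(\Omega)$ and then $d|g|^2\in L^1(\Omega)$ (giving~\eqref{eq:matrix.DE.accretive.domain}) fall out of this decomposition, see~\eqref{eq:matrix.DE.Cfg.L1}--\eqref{eq:matrix.DE.accr.CD}.
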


\subsection{Realisation of matrix and Schur complement}
\label{sec:matrix.DE.constr}
In the more general setting of Assumption~\ref{asm:IV}, we provide the appropriate framework to the spectral problem for the operator matrix~\eqref{eq:matrix.DE} such that the results of Section~\ref{sec:matrix.Schur.complement} apply. We therefore need to define the objects in Assumption~\ref{asm:II} in a suitable way.

\subsubsection{Definition of spaces}

We introduce the spaces needed for Assumption~\ref{asm:II}~(i).

\begin{defi}
	\label{def:matrix.DE.spaces}
	Let Assumption~\ref{asm:IV} be satisfied and let $\Hh_1 \defeq \Hh_2 \defeq L^2 (\Omega)$. Moreover, let $\Dd_2$ and $\omega$ be as in \eqref{eq:matrix.DE.D_2.def} and let $\Dd_S$ and $\Dd_1$, respectively, be the closure of $\Coo (\Omega)$ with respect to the inner products in \eqref{eq:matrix.DE.D_S.iprod} and
	\begin{equation}\label{eq:matrix.DE.norm.Dd.1}
		\begin{aligned}
			\iprod{f}{g}_1 & \defeq \int_\Omega |d-\lambda_0|^{-2} \omega^2  (\mathbf b \otimes \overline{\mathbf{b}})  \nabla f \cdot \overline{\nabla g}   \d x  \\
			&  \qquad \quad \qquad \qquad \qquad   + \int_{\Omega} \Delta f \overline{\Delta g} \d x + \iprod{f}{g}_S , \qquad  f, g \in \Coo (\Omega);
		\end{aligned}
	\end{equation}
	notice that $(\mathbf b \otimes \overline{\mathbf{b}}) \ge 0$ is positive semi-definite a.e.\ in $\Omega$. Finally, define
	\begin{equation}
		\Dd_{-S} \defeq \Dd_S^*, \qquad \Dd_{-1} \defeq  \Dd_1^*, \qquad \Dd_{-2} \defeq L^2 (\Omega, \omega^{-2}).\tag*{//}
	\end{equation}
\end{defi}

\begin{prop}
	\label{prop:matrix.DE.spaces}
	Under Assumption~{\rm \ref{asm:IV}}, the spaces in Definition {\rm\ref{def:matrix.DE.spaces}} are well-defined and satisfy Assumption~{\rm \ref{asm:II}~(i)}.
\end{prop}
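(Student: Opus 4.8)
The plan is to verify Assumption~\ref{asm:II}~(i), i.e.\ Assumption~\ref{asm:I}~(i), which amounts to exhibiting the two Gelfand chains $\Dd_S \subset L^2(\Omega) \subset \Dd_{-S}$ and $\Dd_2 \subset L^2(\Omega) \subset \Dd_{-2}$ with continuous, densely ranged embeddings, together with a continuous embedding $\Dd_{-S} \subset \Dd_{-1}$ (here $\Hh_1 = \Hh_2 = L^2(\Omega)$). First I would check that every space is a genuine Hilbert space. The weighted spaces $\Dd_2 = L^2(\Omega,|d-\lambda_0|^2\omega^{-2})$ and $\Dd_{-2}=L^2(\Omega,\omega^{-2})$ are Hilbert once their weights are a.e.\ finite and positive; finiteness follows from $\omega\ge 1$ and $d \in L^\infty_{\operatorname{loc}}(\Omega)$, while a.e.\ positivity of the $\Dd_2$-weight uses $\lambda_0 \in \Phi \subset \Theta \subset \C \setminus \essran d$: since $\essran d$ is closed this gives $|d-\lambda_0| \ge \eps \defeq \dist(\lambda_0,\essran d) > 0$ a.e. For the completions $\Dd_S$ and $\Dd_1$, the local integrability of the weights (Assumption~\ref{asm:IV}~(i),~(ii) together with the consequences~\eqref{eq:matrix.DE.rem.cond.distr}) guarantees that $\iprod{\cdot}{\cdot}_S$ and $\iprod{\cdot}{\cdot}_1$ are finite on $\Coo(\Omega)$, and the extra summands $\int_\Omega |f|^2$ in~\eqref{eq:matrix.DE.D_S.iprod} resp.\ $\int_\Omega|\Delta f|^2 + \iprod{f}{f}_S$ in~\eqref{eq:matrix.DE.norm.Dd.1} render these inner products positive definite; thus $\Dd_S$ and $\Dd_1$ are well defined with $\Coo(\Omega)$ dense in them by construction.

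For the first chain I would mirror the proof of Proposition~\ref{prop:DWE.spaces}. As $\norm{\cdot} \le \norm{\cdot}_S$ by~\eqref{eq:matrix.DE.D_S.iprod}, the inclusion $\Coo(\Omega)\hookrightarrow L^2(\Omega)$ is $\norm{\cdot}_S$-continuous and extends to a continuous map $\Dd_S \to L^2(\Omega)$; its injective realisation as a genuine subspace of $L^2(\Omega)$ is the technical heart and is discussed below. Density of $\Dd_S$ in $L^2(\Omega)$ is immediate from $\Coo(\Omega)\subset\Dd_S$. Upon identifying $L^2(\Omega)\equiv L^2(\Omega)^*$, the second inclusion is realised by the adjoint $I_{\Dd_S}^*$ of the embedding $I_{\Dd_S}\colon\Dd_S\hookrightarrow L^2(\Omega)$, which is bounded with dense range since $I_{\Dd_S}$ has trivial kernel, cf.\ \cite[p.~170]{Edmunds-Evans-1987}; this yields $L^2(\Omega)\hookrightarrow\Dd_S^*=\Dd_{-S}$ continuously and densely.

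The second chain is where the hypotheses of Assumption~\ref{asm:IV} genuinely bite. For $L^2(\Omega)\hookrightarrow\Dd_{-2}$ I would simply note $\omega\ge 1$, whence $\omega^{-2}\le 1$ and $\norm{f}_{\Dd_{-2}}\le\norm{f}$. The nontrivial direction is the continuity of $\Dd_2\hookrightarrow L^2(\Omega)$, which requires the weight $|d-\lambda_0|^2\omega^{-2}$ to be bounded below. Here I would combine $|d-\lambda_0|\ge\eps$ a.e.\ with $\omega\le\max(1,C|d-\lambda_0|)$ a.e.; the latter follows from Assumption~\ref{asm:IV}~(iv) evaluated at $\lambda_0$, which gives $|(\re\boldpi_0)^{-\frac12}\overline\formc|\le C|d-\lambda_0|$ a.e.\ for some $C>0$. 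A short case distinction ($C|d-\lambda_0|\le1$ versus $>1$) then yields $|d-\lambda_0|^2\omega^{-2}\ge\min(\eps^2,C^{-2})>0$ a.e., hence $\norm{\cdot}\lesssim\norm{\cdot}_{\Dd_2}$. Density of $\Dd_2$ in $L^2(\Omega)$ and of $L^2(\Omega)$ in $\Dd_{-2}$ again follow from $\Coo(\Omega)$ lying densely in each of these weighted spaces (their weights being locally integrable). Finally, since $\norm{\cdot}_S\le\norm{\cdot}_1$ by~\eqref{eq:matrix.DE.norm.Dd.1}, the embedding $\Dd_1\hookrightarrow\Dd_S$ is continuous with dense range (its range contains $\Coo(\Omega)$); dualising this map gives the required continuous embedding $\Dd_{-S}=\Dd_S^*\hookrightarrow\Dd_1^*=\Dd_{-1}$.

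The main obstacle is the injective realisation of the abstract completions $\Dd_S$ and $\Dd_1$ as honest subspaces of $L^2(\Omega)$, which is more delicate than in Proposition~\ref{prop:DWE.spaces} because the matrix weight $\re\boldpi_0$ on the gradient is merely locally integrable and may degenerate. I would handle it by the standard argument that an $\norm{\cdot}_S$-Cauchy sequence $\{f_m\}\subset\Coo(\Omega)$ with $f_m\to 0$ in $L^2(\Omega)$ is already $\norm{\cdot}_S$-null: the summand $\int|f_m|^2$ vanishes in the limit; the potential part $(\re q_0)^{\frac12}f_m$ converges in $L^2(\Omega)$ to a limit identified as $0$ by passing to an a.e.-convergent subsequence; and the weighted gradient $(\re\boldpi_0)^{\frac12}\nabla f_m$ converges in $L^2(\Omega)^n$ to some $G$ which must vanish because $\nabla f_m\to 0$ in $\Dd'(\Omega)$, using $\Loneloc(\Omega)\subset\Dd'(\Omega)$ to match the two limits. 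This is precisely the content hidden behind the \emph{standard techniques} invoked in Proposition~\ref{prop:DWE.spaces}, and the same reasoning applies verbatim to $\Dd_1$.
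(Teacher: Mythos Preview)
Your argument follows the paper's essentially step for step: the well-definedness of the completions via local integrability of the weights, the two Gelfand triples built from $\norm{\cdot}\le\norm{\cdot}_S$ and the dualisation trick (the adjoint $I_{\Dd_S}^*$), the lower bound on the $\Dd_2$-weight from Assumption~\ref{asm:IV}~(iv) combined with $\lambda_0\notin\essran d$, and the embedding $\Dd_{-S}\hookrightarrow\Dd_{-1}$ by dualising $\Dd_1\hookrightarrow\Dd_S$. The paper presents the same lower bound more compactly via the identity $|d-\lambda_0|^2\omega^{-2}=\min\bigl(|d-\lambda_0|^2,\,|(d-\lambda_0)^{-1}(\re\boldpi_0)^{-1/2}\overline\formc|^{-2}\bigr)$, but your case distinction is equivalent.

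Where you go beyond the paper is your final paragraph on the injective realisation of the completion $\Dd_S$ inside $L^2(\Omega)$; the paper simply refers back to Proposition~\ref{prop:DWE.spaces} and does not address this. Your argument for the potential term is correct, but the gradient step does not close as written: from $(\re\boldpi_0)^{1/2}\nabla f_m\to G$ in $L^2(\Omega)^n$ and $\nabla f_m\to 0$ in $\Dd'(\Omega)$ one cannot directly infer $G=0$, because testing $G$ against $\varphi\in\Coo(\Omega)^n$ yields $\int\nabla f_m\cdot\overline{(\re\boldpi_0)^{1/2}\varphi}$, and $(\re\boldpi_0)^{1/2}\varphi$ is merely in $L^2_{\rm loc}(\Omega)^n$ with compact support, not in $\Coo(\Omega)^n$, so the distributional convergence $\nabla f_m\to 0$ does not apply. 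Without a uniform ellipticity bound $\re\boldpi_0\ge c\mathbf I$ (which does hold in the accretive setting of Theorem~\ref{thm:matrix.DE.accretive}, see~\eqref{eq:matrix.DE.accretive.pi.lambda.positive.definite}, and would give $\Dd_S\subset H_0^1(\Omega)$ immediately), closability of this weighted Dirichlet form is a genuinely delicate point that the paper itself leaves implicit.
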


\begin{proof}
	By assumption, $\re \boldpi_0 \in \Loneloc (\Omega)^{n \times n}$ is positive definite and $\re q_0 \in \Loneloc (\Omega)$ is non-negative a.e.\ in $\Omega$. Considering this, it is easy to see that $\Dd_S$ is a well-defined Hilbert space. Moreover, from $d \in L^\infty_{\operatorname{loc}} (\Omega)$ and Assumption~\ref{asm:IV}~(iv) it follows that
	\begin{equation}\label{eq:matrix.DE.omega.loc.bdd}
		(\re \boldpi_0)^{-\frac12} \overline c = (d - \lambda_0) (d - \lambda_0)^{-1} (\re \boldpi_0)^{-\frac12} \overline c \in L^\infty_{\operatorname{loc}} (\Omega)^n,
	\end{equation}
	which in turn implies $\omega \in L^\infty_{\operatorname{loc}} (\Omega)$. From $\boldpi_0 \in \Loneloc (\Omega)^{n\times n}$ and Assumption~\ref{asm:IV}~(iv), we further derive that
	\begin{equation}\label{eq:matrix.DE.rem.D1.norm.L2loc}
		\omega (d - \lambda_0)^{-1} \mathbf b = \omega (\re \boldpi_0)^{\frac12} (d - \lambda_0)^{-1} (\re \boldpi_0)^{-\frac12} \mathbf b \in L^2_{\operatorname{loc}} (\Omega)^n,
	\end{equation}
	and thus also $\Dd_1$ is a well-defined Hilbert space.
	
	Analogously to the proof of Proposition~\ref{prop:DWE.spaces}, from the density of $\Coo (\Omega)$ in $L^2 (\Omega)$ and since $\norm{\cdot} \le \norm{\cdot}_S$ by construction, we obtain the inclusions
	\begin{equation}
		\Dd_S \subset L^2 (\Omega) \subset \Dd_S^*
	\end{equation}
	where the corresponding embeddings are continuous and have dense range. Similarly, it follows that $\Dd_1 \subset \Dd_S$, and thus $\Dd_S^* \subset \Dd_1^*$, are dense and continuously embedded. 

	Clearly, the weighted spaces $\Dd_2$ and $\Dd_{-2}$ are Hilbert spaces. From $\lambda_0 \notin \essran d$ and Assumption~\ref{asm:IV}~(iv), it follows that
	\begin{equation}
		\abs{d - \lambda_0}^2 \omega^{-2} = \min\left(\abs{d - \lambda_0}^2, \abs{(d-\lambda_0)^{-1} (\re \boldpi_0)^{-\frac12} \overline \formc}^{-2}\right) \gtrsim 1,
	\end{equation}
	thus $\Dd_2\subset L^2 (\Omega)$ is continuously embedded. Moreover, our assumptions imply
		\begin{equation}\label{eq:matrix.DE.Dd2.dense}
			|d-\la_0|^2\omega^{-2} \in L^\infty_{\rm loc} (\Omega),
		\end{equation}
	and it follows that $\Dd_2$ contains $\Coo(\Omega)$ and  is therefore dense in $L^2(\Omega)$. Since $\omega \ge 1$, also the embedding $L^2 (\Omega) \subset \Dd_{-2}$ is continuous and, as the weighted Lebesgue measure $\omega^{-2} \d\lambda^n$ is a Borel measure due to the essential boundedness of $\omega^{-2}$, the embedding also has dense range,  see~\cite[Ex.~1.5.3~(c)]{Blank-Exner-Havlicek-2008}.
\end{proof}

\begin{rem} \label{rem:matrix.DE.distr.spaces}
	Due to Assumption~{\rm \ref{asm:IV}}, the spaces $\Dd_1^*$ and $\Dd_{-2}$ in Definition~{\rm\ref{def:matrix.DE.spaces}} can both be understood as subspaces of $\Dd'(\Omega)$, cf.\ Remark~\ref{rem:DWE.distr.spaces}. Indeed, since $\omega$ is locally bounded by~\eqref{eq:matrix.DE.omega.loc.bdd}, we have
	\begin{equation}
		\Dd_{-2} = L^2 (\Omega, \omega^{-2}) \subset L^2_{\rm loc} (\Omega) \subset \Dd'(\Omega).
	\end{equation}
	The embedding of $\Dd_1^*$ in $\Dd'(\Omega)$ can be shown analogously to $\Dd_S^* \subset \Dd'(\Omega)$ in Remark~\ref{rem:DWE.distr.spaces}. \hfill //
\end{rem}

\subsubsection{Definition of matrix entries}

We introduce the operators needed for Assumption~\ref{asm:II}~(ii) and (iii).

\begin{defi}
	\label{def:matrix.DE.operators}
	Let Assumption~{\rm \ref{asm:IV}} be satisfied. Define the following operators
	\begin{equation}
		\begin{array}{rcl}
			A \defeq & - \Delta + q & \in \Bb (\Dd_S, \Dd_1^*), \\[1mm]
			C \defeq & \formc \cdot \nabla f & \in \Bb (\Dd_S, \Dd_{-2}),
		\end{array} \quad
		\begin{array}{rcl}
			B \defeq & \nabla \cdot \formb & \in \Bb (\Dd_2, \Dd_1^*), \\[1mm]
			D \defeq & d & \in \Bb (\Dd_2, \Dd_{-2}),
		\end{array}
	\end{equation}
	see Definition~\ref{def:matrix.DE.spaces} for the spaces involved. Here $A$ and $B$ are determined uniquely by the identities
	\begin{equation}
		\label{eq:matrix.DE.AB}
		\begin{aligned}
			\dprod{A f}{g}_{\Dd_1^* \times \Dd_1} & \defeq \int_\Omega \nabla f \cdot \overline{\nabla g} \d x + \int_\Omega q f \overline g \d x, && \quad \!\!\! f \in \Coo (\Omega), & g \in \Coo (\Omega), \\
			\dprod{B f}{g}_{\Dd_1^* \times \Dd_1} & \defeq - \int_\Omega  \formb f \cdot \overline{\nabla g} \d x, && \quad \!\!\! f \in \Dd_2, & g\in \Coo (\Omega),
		\end{aligned}
	\end{equation}
	and the operator $C$ is the unique extension in $\Bb (\Dd_S, \Dd_{-2})$ of
	\begin{equation}
		\label{eq:matrix.DE.C}
		C f \defeq \formc \cdot \nabla f, \qquad f \in \Coo (\Omega),
	\end{equation}
	see Proposition~\ref{prop:matrix.DE.operators} below for details. Finally, define $D_0 \defeq D\vert_{\dom d}$ as the maximal multiplication operator by $d$ in $L^2 (\Omega)$. \hfill //
\end{defi}

\begin{prop}
	\label{prop:matrix.DE.operators}
	Let Assumption~{\rm \ref{asm:IV}} hold. Then the operators introduced in Definition~{\rm\ref{def:matrix.DE.operators}} are well-defined and satisfy Assumption~{\rm \ref{asm:II}~(ii) and (iii)}. Moreover,
	\begin{equation}
		\label{eq:matrix.DE.D.invertible}
		(D - \lambda)^{-1} \in \Bb (\Dd_{-2}, \Dd_2), \qquad \lambda \in \Theta,
	\end{equation}
	where $\Theta \subset \C \setminus \essran d$ is the connected set in Assumption~{\rm \ref{asm:IV}~(ii)}.
\end{prop}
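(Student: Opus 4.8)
The plan is to verify, one entry at a time, the boundedness asserted in Definition~\ref{def:matrix.DE.operators} together with the closedness, domain inclusion, density and extension properties of $D_0$, and finally the resolvent bound \eqref{eq:matrix.DE.D.invertible}. Throughout I rely on Proposition~\ref{prop:matrix.DE.spaces} for the well-definedness and the continuous dense embeddings of the spaces, and on the fact that $\Coo(\Omega)$ is by construction a dense subspace of both $\Dd_S$ and $\Dd_1$, so that each estimate needs only be established on test functions. For $A$ I would test the defining identity \eqref{eq:matrix.DE.AB} against $g\in\Coo(\Omega)$ and integrate the gradient term by parts onto the Laplacian, estimating $\abs{\int_\Omega\nabla f\cdot\overline{\nabla g}\d x}=\abs{\int_\Omega f\,\overline{\Delta g}\d x}\le\norm f\,\norm{\Delta g}\le\norm f_S\,\norm g_1$, using $\norm{\cdot}\le\norm{\cdot}_S$ and that $\norm{\Delta\cdot}$ is dominated by $\norm{\cdot}_1$; this is precisely why the term $\int_\Omega\Delta f\,\overline{\Delta g}\d x$ is present in \eqref{eq:matrix.DE.norm.Dd.1}. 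The potential term is controlled by the sectoriality in Assumption~\ref{asm:IV}~(iii): from $q_0=\e^{\i\omega_{\lambda_0}}q+\gamma_{\lambda_0}$ and $\abs{\im q_0}\le C_{\lambda_0}\re q_0$ one gets $\abs q\le(1+C_{\lambda_0})\re q_0+\gamma_{\lambda_0}\lesssim\re q_0+1$ a.e., whence $\abs{\int_\Omega qf\overline g\d x}\lesssim\norm f_S\norm g_S$ by Cauchy--Schwarz against the measures $\re q_0\d x$ and $\d x$. Density of $\Coo(\Omega)$ in $\Dd_S$ and $\Dd_1$ then yields $A\in\Bb(\Dd_S,\Dd_1^*)$.

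The estimates for $B$ and $C$ are the technical heart and rest on the two algebraic identities $(\formb\otimes\overline\formb)\nabla g\cdot\overline{\nabla g}=\abs{\overline\formb\cdot\nabla g}^2$ and $\formc\cdot\nabla f=\iprod{\nabla f}{\overline\formc}_{\Cn}$, together with a generalised Cauchy--Schwarz inequality for the Hermitian positive definite matrix $\re\boldpi_0$. For $B$ I would write $\dprod{Bf}{g}=-\int_\Omega f\,\overline{(\overline\formb\cdot\nabla g)}\d x$ and split the integrand as $(\abs{d-\lambda_0}\omega^{-1}\abs f)(\abs{d-\lambda_0}^{-1}\omega\,\abs{\overline\formb\cdot\nabla g})$; Cauchy--Schwarz gives the factor $\norm f_{\Dd_2}$ times the square root of $\int_\Omega\abs{d-\lambda_0}^{-2}\omega^2\abs{\overline\formb\cdot\nabla g}^2\d x$, which is exactly the first term of $\norm g_1^2$ in \eqref{eq:matrix.DE.norm.Dd.1}. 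For $C$ the generalised Cauchy--Schwarz applied to $\iprod{\nabla f}{\overline\formc}_{\Cn}=\iprod{(\re\boldpi_0)^{1/2}\nabla f}{(\re\boldpi_0)^{-1/2}\overline\formc}_{\Cn}$ yields $\abs{\formc\cdot\nabla f}\le\abs{(\re\boldpi_0)^{-1/2}\overline\formc}\,(\re\boldpi_0\nabla f\cdot\overline{\nabla f})^{1/2}$, and since $\omega\ge\abs{(\re\boldpi_0)^{-1/2}\overline\formc}$ by \eqref{eq:matrix.DE.D_2.def} we obtain the pointwise bound $\omega^{-2}\abs{\formc\cdot\nabla f}^2\le\re\boldpi_0\nabla f\cdot\overline{\nabla f}$; integrating gives $\norm{Cf}_{\Dd_{-2}}\le\norm f_S$, so the map \eqref{eq:matrix.DE.C} extends to $C\in\Bb(\Dd_S,\Dd_{-2})$.

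For the diagonal entry, $D$ is multiplication by $d$ between the weighted spaces $\Dd_2$ and $\Dd_{-2}$; since $\lambda_0\notin\essran d$ one has $\abs d\lesssim\abs{d-\lambda_0}$ a.e., hence $\norm{df}_{\Dd_{-2}}\lesssim\norm f_{\Dd_2}$ and $D\in\Bb(\Dd_2,\Dd_{-2})$. The maximal multiplication operator $D_0$ is closed by the standard argument for multiplication operators; the same pointwise bound together with $\omega\ge1$ shows $\dom D_0\subset\Dd_2$, and $D_0\subset D$ follows from the continuous embedding $L^2(\Omega)\subset\Dd_{-2}$ established in Proposition~\ref{prop:matrix.DE.spaces}. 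Density of $\dom D_0$ in $\Dd_2$ I would obtain by truncation: for $f\in\Dd_2$ the functions $f\,\chi_{\{\abs d\le n\}}$ lie in $\dom D_0$ and converge to $f$ in $\Dd_2$ by dominated convergence. Finally, the resolvent bound follows from the mirror estimate: for $\lambda\in\Theta\subset\C\setminus\essran d$ one has $\abs{d-\lambda_0}\lesssim\abs{d-\lambda}$ a.e., so multiplication by $(d-\lambda)^{-1}$ maps $\Dd_{-2}$ boundedly into $\Dd_2$ and is the two-sided inverse of $D-\lambda$, giving \eqref{eq:matrix.DE.D.invertible}.

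The main obstacle is bookkeeping rather than conceptual: one must keep track of the several distinct weights --- $\re\boldpi_0$ and $\re q_0$ in $\norm{\cdot}_S$, the vectorial weight $\abs{d-\lambda_0}^{-1}\omega\,\overline\formb$ in $\norm{\cdot}_1$, and $\omega^{\pm1}$ in $\Dd_{\pm2}$ --- and confirm that the definition of $\omega$ in \eqref{eq:matrix.DE.D_2.def} is calibrated so that the Cauchy--Schwarz splittings for $B$ and $C$ close exactly against the norms of $\Dd_1$ and $\Dd_S$. Particular care is needed with the real scalar product versus the Hermitian inner product on $\Cn$ when deriving the two algebraic identities, since $\re\boldpi_0$ and $\formb\otimes\overline\formb$ are Hermitian but not symmetric in general; getting the conjugations right is what ensures the generalised Cauchy--Schwarz produces precisely the quadratic form $\re\boldpi_0\nabla f\cdot\overline{\nabla f}$ appearing in \eqref{eq:matrix.DE.D_S.iprod}.
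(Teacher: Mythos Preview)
Your proposal is correct and follows essentially the same route as the paper's proof: integration by parts plus sectoriality of $q_0$ for $A$, the weighted Cauchy--Schwarz splitting against $\norm{\cdot}_{\Dd_2}$ and $\norm{\cdot}_1$ for $B$, the $(\re\boldpi_0)^{1/2}$-Cauchy--Schwarz for $C$, and the elementary equivalence $\abs{d-\lambda_0}\lesssim\abs{d-\lambda}$ (which is the content of Lemma~\ref{lem:matrix.DE.d.equiv}) for \eqref{eq:matrix.DE.D.invertible}. The only small deviation is in the density of $\dom D_0$ in $\Dd_2$: you use truncation by $\chi_{\{\abs d\le n\}}$, whereas the paper observes that $d\in L^\infty_{\rm loc}(\Omega)$ forces $\Coo(\Omega)\subset\dom d$, and $\Coo(\Omega)$ is already known to be dense in $\Dd_2$ from Proposition~\ref{prop:matrix.DE.spaces}; both arguments are valid and equally short.
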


The following elementary lemma will be needed several times, including for the proof of \eqref{eq:matrix.DE.D.invertible} in Proposition~\ref{prop:matrix.DE.operators}.

\begin{lem}
	\label{lem:matrix.DE.d.equiv}
	Let $\Theta \subset \C \setminus \essran d$ be connected and $\lambda, \mu \in \Theta$. Then there exist constants $C_{\lambda, \mu}, C_{\lambda, \mu}' > 0$ such that a.e.\ in $\Omega$
	\begin{equation}
		C_{\lambda, \mu} \abs{d - \mu} \le \abs{d - \lambda} \le C_{\lambda,\mu}' \abs{d - \mu}.
	\end{equation}
\end{lem}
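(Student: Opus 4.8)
The plan is to reduce the pointwise comparison of $|d-\lambda|$ and $|d-\mu|$ to the elementary observation that, for almost every $x\in\Omega$, the value $d(x)$ lies in the closed set $\essran d$, while both $\lambda$ and $\mu$ have strictly positive distance from it. First I would record two standing facts. Since $\essran d$ is closed and $\lambda,\mu\in\Theta\subset\C\setminus\essran d$, the quantities $\delta_\lambda\defeq\dist(\lambda,\essran d)$ and $\delta_\mu\defeq\dist(\mu,\essran d)$ are strictly positive. Moreover, by the Lindelöf property the open set $\C\setminus\essran d$ is covered by countably many balls $B_\eps(z)$, each of which (by the very definition of the essential range) has null preimage under $d$; hence $d^{-1}(\C\setminus\essran d)$ has Lebesgue measure zero, i.e.\ $d(x)\in\essran d$ for a.e.\ $x\in\Omega$. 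Combining these gives the a.e.\ lower bounds $|d-\lambda|\ge\delta_\lambda>0$ and $|d-\mu|\ge\delta_\mu>0$.

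With these bounds in hand, both inequalities follow from the triangle inequality. Writing $d-\lambda=(d-\mu)+(\mu-\lambda)$ and using $|\mu-\lambda|\le(|\mu-\lambda|/\delta_\mu)\,|d-\mu|$ a.e., I obtain
\begin{equation*}
	|d-\lambda|\le\Bigl(1+\tfrac{|\mu-\lambda|}{\delta_\mu}\Bigr)|d-\mu| \quad\text{a.e.\ in }\Omega,
\end{equation*}
which is the right-hand inequality with $C_{\lambda,\mu}'\defeq 1+|\mu-\lambda|/\delta_\mu$. Interchanging the roles of $\lambda$ and $\mu$ yields $|d-\mu|\le(1+|\lambda-\mu|/\delta_\lambda)\,|d-\lambda|$, i.e.\ the left-hand inequality with $C_{\lambda,\mu}\defeq(1+|\lambda-\mu|/\delta_\lambda)^{-1}$; both constants are strictly positive, as required.

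There is essentially no obstacle here: the argument consists of a soft measure-theoretic fact followed by the triangle inequality, and it uses only $\lambda,\mu\notin\essran d$. In particular, the connectedness of $\Theta$ plays no role in this lemma (it is needed elsewhere, e.g.\ to propagate the definition and sectoriality of $\boldpi$ across $\Theta$). The one point worth stating carefully is the a.e.\ inclusion $d(x)\in\essran d$, for which I would invoke the covering argument above rather than leave it implicit.
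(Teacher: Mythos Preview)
Your proof is correct and follows essentially the same route as the paper: both arguments use the triangle inequality together with the a.e.\ lower bound $|d-\lambda|\ge\dist(\lambda,\essran d)$, then swap the roles of $\lambda$ and $\mu$, arriving at the very same constants. Your version is slightly more explicit in justifying the a.e.\ inclusion $d(x)\in\essran d$ and in noting that connectedness of $\Theta$ is not used here; the paper leaves the former implicit.
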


\begin{proof}
	For fixed $\la, \mu \in \Theta$, clearly a.e.~in $\Omega$ it holds that
	\begin{equation}
		\frac{|d-\mu|}{|d-\la|} \le 1 + \frac{|\la - \mu|}{|d - \la|} \le 1 + \frac{|\la-\mu|}{\dist (\la, \essran d)} =: C_{\la, \mu}^{-1}
	\end{equation}
	and it follows that the claimed inequalities are satisfied with $C'_{\la, \mu} := C_{\mu,\la}^{-1}$.
\end{proof}

\begin{proof}[Proof of Proposition {\rm\ref{prop:matrix.DE.operators}}]
	Integration by parts and sectoriality of $q_0$ give
	\begin{equation}
		\label{eq:matrix.DE.A.bounded}
		\abs{\dprod{A f}{g}_{\Dd_1^* \times \Dd_1}} \lesssim \norm{f} \norm{\Delta g} + \norm{(\re q_0)^{\frac12} f} \norm{(\re q_0)^{\frac12} g} + \norm{f} \norm{g} \le \norm{f}_S \norm{g}_1
	\end{equation}
	for $f,g \in \Coo (\Omega)$, see Assumption~\ref{asm:IV}~(iii). Analogously to the proof of Proposition~\ref{prop:DWE.entries}, this implies that $A \in \Bb (\Dd_S, \Dd_1^*)$ is uniquely well-defined by \eqref{eq:matrix.DE.AB}. Similarly, for $f \in \Dd_2$ and $g \in \Coo (\Omega)$,
	\begin{equation}
		\abs{\dprod{B f}{g}_{\Dd_1^* \times \Dd_1}} \le \norm{f (d - \lambda_0) \omega^{-1}} \norm{(d - \lambda_0)^{-1} \omega  \overline \formb \cdot \nabla g} \le \norm{f}_{\Dd_2} \norm{g}_1
	\end{equation}
	implies that $B \in \Bb (\Dd_2, \Dd_1^*)$ is uniquely well-defined by \eqref{eq:matrix.DE.AB}. For any $f \in \Coo (\Omega)$, we moreover obtain the chain of inequalities
	\begin{equation}
		\label{eq:matrix.DE.C.bounded}
		\begin{aligned}
			\norm{Cf}_{\Dd_{-2}}^2  & \le \int_\Omega \abs{\iprod{\overline \formc}{\nabla f}_{\Cn}}^2 \omega^{-2} \d x \\
			& \le \int_\Omega \abs{(\re \boldpi_0)^{-\frac12} \overline \formc}^2 \abs{(\re \boldpi_0)^{\frac12} \nabla f}^2 \omega^{-2} \d x \\[1.5mm]
			& \le \norm{\omega^{-1} (\re \boldpi_0)^{-\frac12} \overline \formc}_{L^\infty (\Omega)^n}^2 \norm{f}_S^2;
		\end{aligned}		
	\end{equation}
	notice that the right hand side of the above inequality is finite by definition of~$\omega$. It now follows from the density of $\Coo (\Omega)$ in $\Dd_S$ that the operator defined in \eqref{eq:matrix.DE.C} has a unique extension $C \in \Bb (\Dd_S, \Dd_{-2})$.
	We next show that $D$ and $D_0$ satisfy Assumption~\ref{asm:II}~(ii). From the definition of $\Dd_2$ and $\Dd_{-2}$, it is obvious that $D \in \Bb (\Dd_2, \Dd_{-2})$ and, as maximal multiplication operator in $L^2 (\Omega)$, its restriction $D_0$ is closed in $L^2 (\Omega)$. Moreover, since the weighted Lebesgue measure $|d-\la_0|^2\omega^{-2} \d\la^n$ is a Borel measure due to~\eqref{eq:matrix.DE.Dd2.dense} and since $d \in L^\infty_{\rm loc} (\Omega)$, it follows that $\Coo(\Omega)$ is contained in $\dom d$ and the latter is consequently dense in $\Dd_{-2}$, see~\cite[Ex.~1.5.3~(c)]{Blank-Exner-Havlicek-2008}. Finally, for fixed $\lambda \in\Theta$, relation \eqref{eq:matrix.DE.D.invertible} holds since by Lemma~\ref{lem:matrix.DE.d.equiv} with $\mu = \lambda_0$ for all $f \in \Dd_{-2}$ we have
	\begin{equation*}
		\norm{(d - \lambda)^{-1} f}_{\Dd_2}^2 = \int_\Omega \abs{d - \lambda}^{-2} \abs{f}^2 \abs{d - \lambda_0}^2 \omega^{-2} \d x \lesssim \int_\Omega \abs{f}^2 \omega^{-2} \d x = \norm{f}_{\Dd_{-2}}^2. \qedhere
	\end{equation*}
\end{proof}

\subsubsection{Definition of matrix and Schur complement}

Analogously to Definition~\ref{def:matrix.Schur.compl.family}, we now introduce our realisation of the operator matrix \eqref{eq:matrix.DE} and its Schur complement. Note that, by Proposition~\ref{prop:matrix.DE.operators}, the set $\Theta$ satisfies inclusion \eqref{eq:Theta}.

\begin{defi}
	\label{def:matrix.DE.Aa.S}
We define the operator matrix
	\begin{equation}
		\Aa \defeq \left(
		\begin{array}{cc}
			- \Delta + q & \nabla \cdot \formb \\
			\formc \cdot \nabla & d
		\end{array}
		\right) \in \Bb (\Dd_S \oplus \Dd_2, \Dd_1^* \oplus \Dd_{-2})
	\end{equation}
	and its first Schur complement
	\begin{equation}
		S (\lambda) \defeq -\Delta + q - \lambda - (\nabla \cdot \formb) (d - \lambda)^{-1} (\formc \cdot \nabla) \in \Bb (\Dd_S, \Dd_1^*), \qquad \lambda \in \Theta,
	\end{equation}
	see Definitions~\ref{def:matrix.DE.spaces} and \ref{def:matrix.DE.operators} for the spaces and operators involved. Let the (family of) maximal operators $\Aa_0$ in $L^2 (\Omega) \oplus L^2 (\Omega)$ and $S_0 (\cdot)$ in $L^2 (\Omega)$, respectively, be defined as the restrictions of $\Aa$ and $S(\cdot)$ to their respective maximal domains
		\begin{align}\label{eq:matrix.DE.def.dom.Aa.S}
			\dom \Aa_0 & \defeq \set{(f, g) \in \Dd_S \times \Dd_2}{\Aa (f,g) \in L^2 (\Omega) \times L^2 (\Omega)}, \\
			\dom S_0 (\lambda) & \defeq \set{f \in \Dd_S}{S(\lambda) f \in L^2 (\Omega)}. \tag*{//}
		\end{align}
\end{defi}

The proposition below shows that the Schur complement acts according to the formula obtained from naively integrating by parts. On the set of parameters $\Phi$ where it is sectorial, our definition of $S(\la)$ coincides with its standard definition by means of its quadratic form with form domain $\Dd_S$ and core $\Coo (\Omega)$ and is bounded between $\Dd_S$ and its anti-dual $\Dd_S^*$.

\begin{prop}
	\label{prop:matrix.DE.S.bounded}
	Let Assumption~{\rm \ref{asm:IV}} be satisfied and let $S(\cdot)$ be as in Definition~{\rm\ref{def:matrix.DE.Aa.S}}. Then, for all $\lambda \in \Theta$ and $f, g \in \Coo (\Omega)$,
	\begin{equation}
		\label{eq:matrix.DE.S.formula}
		\dprod{S (\lambda) f}{g}_{\Dd_1^* \times \Dd_1} = \int_\Omega \boldpi (\lambda) \nabla f \cdot \overline{\nabla g} \d x + \int_\Omega q f \overline{g} \d x - \lambda \int_\Omega f \overline{g} \d x.
	\end{equation}
	Moreover, if $\lambda \in \Phi$, then $S (\lambda) \in \Bb (\Dd_S, \Dd_S^*)$.
\end{prop}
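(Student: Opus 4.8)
The plan is to prove the two assertions in turn. The identity~\eqref{eq:matrix.DE.S.formula} is a direct computation from $S(\lambda) = A - \lambda - B(D-\lambda)^{-1}C$ together with the defining relations~\eqref{eq:matrix.DE.AB}, \eqref{eq:matrix.DE.C} and the invertibility~\eqref{eq:matrix.DE.D.invertible}. For $f, g \in \Coo(\Omega)$ one has $Cf = \formc\cdot\nabla f$ and hence $(D-\lambda)^{-1}Cf = (d-\lambda)^{-1}(\formc\cdot\nabla f) \in \Dd_2$, so evaluating $B$ on this element via the second line of~\eqref{eq:matrix.DE.AB} gives
\begin{equation*}
	\dprod{B(D-\lambda)^{-1}Cf}{g}_{\Dd_1^*\times\Dd_1} = -\int_\Omega (d-\lambda)^{-1}(\formc\cdot\nabla f)(\formb\cdot\overline{\nabla g})\d x.
\end{equation*}
Adding $\dprod{Af}{g}_{\Dd_1^*\times\Dd_1} = \int_\Omega \nabla f\cdot\overline{\nabla g}\d x + \int_\Omega qf\overline g\d x$ and the term $-\lambda\iprod{f}{g}$, and using the pointwise identity $(\formb\otimes\formc)\nabla f\cdot\overline{\nabla g} = (\formc\cdot\nabla f)(\formb\cdot\overline{\nabla g})$ so that, by~\eqref{eq:def.boldpi}, $\boldpi(\lambda)\nabla f\cdot\overline{\nabla g} = \nabla f\cdot\overline{\nabla g} + (d-\lambda)^{-1}(\formc\cdot\nabla f)(\formb\cdot\overline{\nabla g})$, yields~\eqref{eq:matrix.DE.S.formula}.

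For the second assertion I would fix $\lambda \in \Phi$ and reduce to a boundedness estimate: since $S(\lambda) \in \Bb(\Dd_S, \Dd_1^*)$ and $\Dd_S^* \subset \Dd_1^*$ is a continuous embedding (by Proposition~\ref{prop:matrix.DE.spaces}), Lemma~\ref{lem:closed.graph} shows it suffices to prove $\ran S(\lambda) \subset \Dd_S^*$. To this end I would show that the sesquilinear form on the right-hand side of~\eqref{eq:matrix.DE.S.formula} is bounded on $\Coo(\Omega)\times\Coo(\Omega)$ with respect to $\norm{\cdot}_S$ and then extend by density of $\Coo(\Omega)$ in $\Dd_S$. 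The two lower-order terms are routine: as $\norm{\cdot}\le\norm{\cdot}_S$ one has $\abs{\lambda\iprod{f}{g}}\le\abs{\lambda}\norm{f}_S\norm{g}_S$, while the sectoriality of $\widetilde q(\lambda)$ in Assumption~\ref{asm:IV}~(iii) combined with the comparison~\eqref{eq:matrix.DE.uniform.positivity} gives $\abs{q}\lesssim\re\widetilde q(\lambda)+\gamma_\lambda\lesssim\re q_0+1$ a.e., whence $\bigl|\int_\Omega qf\overline g\d x\bigr|\lesssim\norm{f}_S\norm{g}_S$ by Cauchy--Schwarz.

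The main point, and the only step genuinely using the sectoriality of $\boldpi(\lambda)$, is the leading term $\int_\Omega \boldpi(\lambda)\nabla f\cdot\overline{\nabla g}\d x$. Here I would invoke the pointwise sectorial Cauchy--Schwarz inequality furnished by Assumption~\ref{asm:IV}~(iii): since $\re\widetilde\boldpi(\lambda)$ is positive definite and $\abs{\im\widetilde\boldpi(\lambda)\xi\cdot\overline\xi}\le C_\lambda\,\re\widetilde\boldpi(\lambda)\xi\cdot\overline\xi$, the associated quadratic form is sectorial and therefore
\begin{equation*}
	\abs{\widetilde\boldpi(\lambda)\xi\cdot\overline\eta}\le (1+C_\lambda)\bigl(\re\widetilde\boldpi(\lambda)\xi\cdot\overline\xi\bigr)^{\frac12}\bigl(\re\widetilde\boldpi(\lambda)\eta\cdot\overline\eta\bigr)^{\frac12}, \qquad \xi,\eta\in\Cn,
\end{equation*}
a.e.\ in $\Omega$. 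Because $\widetilde\boldpi(\lambda) = \e^{\i\omega_\lambda}\boldpi(\lambda)$ so that the integrands have equal modulus, applying this with $\xi=\nabla f$, $\eta=\nabla g$, integrating, and using the integral Cauchy--Schwarz inequality bounds the leading term by a constant times $\bigl(\int_\Omega\re\widetilde\boldpi(\lambda)\nabla f\cdot\overline{\nabla f}\d x\bigr)^{\frac12}\bigl(\int_\Omega\re\widetilde\boldpi(\lambda)\nabla g\cdot\overline{\nabla g}\d x\bigr)^{\frac12}$; the comparison $\re\widetilde\boldpi(\lambda)\le M_{\lambda,\lambda_0}\re\boldpi_0$ from~\eqref{eq:matrix.DE.uniform.positivity} then dominates each factor by $\norm{\cdot}_S$. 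Collecting the three estimates gives $\abs{\dprod{S(\lambda)f}{g}_{\Dd_1^*\times\Dd_1}}\lesssim\norm{f}_S\norm{g}_S$ for $f,g\in\Coo(\Omega)$; by density this shows that for each $f\in\Dd_S$ the functional $g\mapsto\dprod{S(\lambda)f}{g}$ is $\norm{\cdot}_S$-bounded and thus represents an element of $\Dd_S^*$ coinciding with $S(\lambda)f$ under $\Dd_S^*\hookrightarrow\Dd_1^*$, so that $\ran S(\lambda)\subset\Dd_S^*$ and Lemma~\ref{lem:closed.graph} yields $S(\lambda)\in\Bb(\Dd_S,\Dd_S^*)$. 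I expect the sectorial Cauchy--Schwarz estimate and the careful bookkeeping of the $\lambda$-dependent constants against the fixed reference norm $\norm{\cdot}_S$ (via~\eqref{eq:matrix.DE.uniform.positivity}) to be the only nontrivial obstacle; the remaining steps are routine.
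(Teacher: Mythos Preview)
Your proof is correct and follows essentially the same route as the paper. The derivation of~\eqref{eq:matrix.DE.S.formula} is identical. For the boundedness, the paper proceeds slightly differently: it first establishes the \emph{diagonal} estimate $\abs{\dprod{S(\lambda)f}{f}_{\Dd_1^*\times\Dd_1}}\lesssim\norm{f}_S^2$ directly from sectoriality (where the bound $\abs{\widetilde\boldpi(\lambda)\xi\cdot\overline\xi}\le(1+C_\lambda)\re\widetilde\boldpi(\lambda)\xi\cdot\overline\xi$ is immediate), and then passes to the off-diagonal estimate via polarisation; you instead invoke the off-diagonal sectorial Cauchy--Schwarz inequality pointwise and then integrate. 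These are equivalent, since the sectorial Cauchy--Schwarz inequality is itself a polarisation argument. Your final appeal to Lemma~\ref{lem:closed.graph} is a minor stylistic variation---the paper concludes $S(\lambda)\in\Bb(\Dd_S,\Dd_S^*)$ directly from the bilinear estimate after extending it by density to $f\in\Dd_S$, $g\in\Dd_1$---but both routes are sound.
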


\begin{proof}
	Fix $\lambda \in \Theta$ and let $f, g \in \Coo (\Omega)$. Since $(d - \lambda)^{-1} \formc \cdot \nabla f \in \Dd_2$, it follows by definition that
	\begin{equation}
		\begin{aligned}
			\dprod{S (\lambda) f}{g}_{\Dd_1^* \times \Dd_1} & = \int_\Omega \nabla f \cdot \overline{\nabla g} \d x + \int_\Omega q f \overline{g} \d x - \lambda \int_\Omega f \overline{g} \d x \\
			& \qquad \qquad \qquad \qquad \qquad + \int_\Omega (d - \lambda)^{-1} (\formc \cdot \nabla f) (\formb \cdot \overline{\nabla g}) \d x. \\
		\end{aligned}
	\end{equation}
	Formula~\eqref{eq:matrix.DE.S.formula} is then easily derived from the definition of $\boldpi (\lambda)$ and 
	\begin{equation}
		(\formc \cdot \xi) (\formb \cdot \overline{\eta}) = (\formb \otimes \formc)  \xi \cdot \overline{\eta}, \qquad \xi, \eta \in \Cn.
	\end{equation}
	Now let $\lambda \in \Phi$ be fixed. Using formula \eqref{eq:matrix.DE.S.formula}, multiplying it by the unimodular factor $\e^{\i \omega_\lambda}$ and employing Assumption~\ref{asm:IV}~(iii), we conclude
	\begin{equation}
		\abs{\dprod{S (\lambda)f}{f}_{\Dd_1^* \times \Dd_1}} \lesssim \int_\Omega 
		\re \widetilde \boldpi (\lambda) \nabla f \cdot \overline{\nabla f} \d x + \int_\Omega \re \widetilde q (\lambda) \abs{f}^2 \d x + \int_\Omega \abs{f}^2 \d x \lesssim \norm{f}_S^2
	\end{equation}
	for $f \in  \Coo (\Omega)$. By a straightforward polarisation argument, it follows that
	\begin{equation}
		\label{eq:matrix.DE.S.bounded}
		\abs{\dprod{S (\lambda)f}{g}_{\Dd_1^* \times \Dd_1}} \lesssim \norm{f}_S \norm{g}_S , \quad f, g \in \Coo (\Omega).
	\end{equation}
	Since $\Coo (\Omega)$ is dense both in $\Dd_1$ and in $\Dd_S$, $S (\lambda) \in \Bb (\Dd_S, \Dd_1^*)$ and $\Dd_1 \subset \Dd_S$ is continuously embedded, the inequality \eqref{eq:matrix.DE.S.bounded} remains valid for $f \in \Dd_S$ and $g \in \Dd_1$. However, this implies $S (\lambda) \in \Bb (\Dd_S, \Dd_S^*)$ for all $\lambda \in \Phi$.
\end{proof}

\begin{rem} \label{rem:matrix.DE.distr.operators}
	The actions of $\Aa$ and $S(\cdot)$ can be understood in a standard distributional sense, cf.\ Remark~\ref{rem:matrix.DE.distr.spaces} and also Remark~\ref{rem:DWE.distr.act} for the damped wave equation. More precisely, by Assumption~\ref{asm:IV}~(i), the distributions
	\begin{equation}\label{eq:matrix.DE.A.S.distr.Coo}
		-\Delta f + qf \in \Loneloc (\Omega), \quad \nabla \cdot \boldpi (\lambda) \nabla f + q f - \lambda f \in \Dd'(\Omega), \qquad f \in \Coo(\Omega),
	\end{equation}
	are well-defined and coincide with the functionals $Af \in \Dd_1^*$ and $S(\lambda) f \in \Dd_1^*$, respectively, see definitions of the latter in~\eqref{eq:matrix.DE.AB} and formula~\eqref{eq:matrix.DE.S.formula}. Moreover, 
	\begin{equation}\label{eq:matrix.DE.C.distr.Coo}
		Cf = \mathbf c \cdot \nabla f \in \Dd_{-2} \subset \Loneloc(\Omega), \qquad f \in \Coo(\Omega).
	\end{equation}
	Since the actions of $A$, $C$ and $S(\lambda)$ on $\Dd_S$ are obtained by continuous extension, see~\eqref{eq:matrix.DE.AB} and~\eqref{eq:matrix.DE.C}, they are given as limits of distributions of the form~\eqref{eq:matrix.DE.A.S.distr.Coo} and~\eqref{eq:matrix.DE.C.distr.Coo}; notice that the convergence of functionals in $\Dd_1^*$ or $\Dd_{-2}$ (which can be understood as $L^2 (\Omega, w^2)^*$) implies their convergence in $\Dd'(\Omega)$. From~\eqref{eq:matrix.DE.rem.D1.norm.L2loc}, we see that
	\begin{equation}
		\mathbf b g =  \omega (d - \lambda_0)^{-1} \mathbf b g (d - \lambda_0) \omega^{-1} \in \Loneloc (\Omega)^n, \quad g \in \Dd_2 = L^2 (\Omega, |d - \lambda_0|^2 \omega^{-2}).
	\end{equation}
	Hence, the distributional divergence $\nabla \cdot \mathbf b g \in \Dd'(\Omega)$ is well-defined and clearly coincides with $Bg \in \Dd_1^*$ as in~\eqref{eq:matrix.DE.AB}. Finally, it is clear that $D = d$ is a standard multiplication operator between the weighted spaces $\Dd_2$ and $\Dd_{-2}$. \hfill //
\end{rem}

\subsection{Proof of Theorem~{\rm\ref{thm:matrix.DE}}}
\label{sec:DE.thms.proofs}

The statement of Theorem~\ref{thm:matrix.DE} can be obtained from the results in Section~\ref{sec:matrix.Schur.complement}; the following lemma is needed in order to apply Corollaries~\ref{cor:family.first.implication} and~\ref{cor:family.second.implication} therein.

\begin{lem}
	\label{lem:matrix.DE.S.coercive}
	Let Assumption~{\rm \ref{asm:IV}} be satisfied and $S_0 (\cdot)$ as in Definition {\rm\ref{def:matrix.DE.Aa.S}}. Then, for every $\lambda \in \Phi$, there exists $z_\lambda \in \rho (S_0 (\lambda))$ such that
	\begin{equation}
		(S (\lambda) - z_\lambda)^{-1}  \in \Bb (\Dd_S^*, \Dd_S)
	\end{equation}
	and $\dom S_0 (\lambda) = \dom (S_0(\lambda) - z_\lambda)$ is dense in $\Dd_S$.
\end{lem}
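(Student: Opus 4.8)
The plan is to produce a shift $z_\lambda$ for which $S(\lambda) - z_\lambda$ is a bounded and coercive sesquilinear form on $\Dd_S$ and then invoke the Lax--Milgram theorem, exactly as in the argument underlying Lemma~\ref{lem:DWE.shift.extension}. Fix $\lambda \in \Phi$. By Proposition~\ref{prop:matrix.DE.S.bounded} we already have $S(\lambda) \in \Bb(\Dd_S, \Dd_S^*)$, so $\dprod{S(\lambda)\cdot}{\cdot}_{\Dd_S^* \times \Dd_S}$ is a bounded form on $\Dd_S$; since $\Dd_S \hookrightarrow L^2(\Omega)$ continuously (Proposition~\ref{prop:matrix.DE.spaces}), subtracting the bounded form $z_\lambda\iprod{\cdot}{\cdot}$ keeps $S(\lambda) - z_\lambda$ bounded between $\Dd_S$ and $\Dd_S^*$ for every $z_\lambda \in \C$. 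It therefore only remains to choose $z_\lambda$ securing coercivity.

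The coercivity estimate is the heart of the matter and is where the hypothesis $\lambda \in \Phi$ (rather than merely $\lambda \in \Theta$) is essential. Starting from formula~\eqref{eq:matrix.DE.S.formula}, I would multiply $\dprod{S(\lambda)f}{f}_{\Dd_S^* \times \Dd_S}$ by the unimodular factor $\e^{\i\omega_\lambda}$ and take real parts; using $\e^{\i\omega_\lambda}\boldpi(\lambda) = \widetilde\boldpi(\lambda)$ and $\e^{\i\omega_\lambda}q = \widetilde q(\lambda) - \gamma_\lambda$ from~\eqref{eq:def.tilde.q.pi}, this yields for $f \in \Coo(\Omega)$
\[
\re\!\left(\e^{\i\omega_\lambda}\dprod{S(\lambda)f}{f}_{\Dd_S^* \times \Dd_S}\right) = \int_\Omega \re\widetilde\boldpi(\lambda)\nabla f \cdot \overline{\nabla f}\d x + \int_\Omega \re\widetilde q(\lambda)\abs{f}^2 \d x - \big(\gamma_\lambda + \re(\e^{\i\omega_\lambda}\lambda)\big)\norm{f}^2 .
\]
The two-sided comparison inequalities~\eqref{eq:matrix.DE.uniform.positivity}, applied with $\mu = \lambda_0$ so that $\re\widetilde\boldpi(\lambda) \ge m_{\lambda,\lambda_0}\re\boldpi_0$ and $\re\widetilde q(\lambda) \ge m_{\lambda,\lambda_0}\re q_0$, bound the first two integrals below by $m_{\lambda,\lambda_0}(\norm{f}_S^2 - \norm{f}^2)$ in view of the definition~\eqref{eq:matrix.DE.D_S.iprod} of $\iprod{\cdot}{\cdot}_S$. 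This produces a lower bound of the form $m_{\lambda,\lambda_0}\norm{f}_S^2 - c_\lambda\norm{f}^2$ with an explicit constant $c_\lambda$. Choosing any $z_\lambda$ with $\re(\e^{\i\omega_\lambda}z_\lambda) \le -c_\lambda$ absorbs the $\norm{f}^2$ term and gives the coercivity
\[
\Abs{\dprod{(S(\lambda) - z_\lambda)f}{f}_{\Dd_S^* \times \Dd_S}} \ge \re\!\left(\e^{\i\omega_\lambda}\dprod{(S(\lambda) - z_\lambda)f}{f}_{\Dd_S^* \times \Dd_S}\right) \ge m_{\lambda,\lambda_0}\norm{f}_S^2 ,
\]
which extends to all of $\Dd_S$ by density of $\Coo(\Omega)$. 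The Lax--Milgram theorem, see e.g.~\cite[Cor.\ IV.1.2]{Edmunds-Evans-1987}, then yields $(S(\lambda) - z_\lambda)^{-1} \in \Bb(\Dd_S^*, \Dd_S)$.

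Finally I would transfer this to the $L^2(\Omega)$-realisation. Since $z_\lambda f \in L^2(\Omega)$ whenever $f \in \Dd_S \subset L^2(\Omega)$, the maximal domains satisfy $\dom S_0(\lambda) = \dom(S_0(\lambda) - z_\lambda)$. Restricting $(S(\lambda) - z_\lambda)^{-1}$ to $L^2(\Omega) \subset \Dd_S^*$ gives a bounded operator into $\Dd_S \subset L^2(\Omega)$, and a direct check (as in the proof of Theorem~\ref{thm:first.implication}~(i)) shows it to be a two-sided inverse of $S_0(\lambda) - z_\lambda$ in $L^2(\Omega)$; hence $z_\lambda \in \rho(S_0(\lambda))$ and $\dom S_0(\lambda) = (S(\lambda) - z_\lambda)^{-1}(L^2(\Omega))$. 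Density of $\dom S_0(\lambda)$ in $\Dd_S$ then follows at once: $L^2(\Omega)$ is dense in $\Dd_S^* = \Dd_{-S}$ by Proposition~\ref{prop:matrix.DE.spaces}, and the homeomorphism $(S(\lambda) - z_\lambda)^{-1}\colon \Dd_S^* \to \Dd_S$ carries this dense set onto a dense subset of $\Dd_S$.
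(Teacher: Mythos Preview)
Your proposal is correct and follows essentially the same approach as the paper's proof: rotate by $\e^{\i\omega_\lambda}$, use Assumption~\ref{asm:IV}~(iii) and the comparison~\eqref{eq:matrix.DE.uniform.positivity} with $\mu=\lambda_0$ to obtain a lower bound of the form $C_1\norm{f}_S^2 - C_2\norm{f}^2$, absorb the $L^2$-term by a suitable shift $z_\lambda$, and conclude via Lax--Milgram. Your treatment is slightly more explicit than the paper's---you write the rotated identity as an equality rather than an inequality, specify the condition $\re(\e^{\i\omega_\lambda}z_\lambda)\le -c_\lambda$ for the shift, and spell out why density of $\dom S_0(\lambda)$ in $\Dd_S$ follows from $(S(\lambda)-z_\lambda)^{-1}$ being a homeomorphism $\Dd_S^*\to\Dd_S$---but the argument is the same.
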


\begin{proof}
	Let $\lambda \in \Phi$ be arbitrary but fixed, then $S (\lambda) \in \Bb (\Dd_S, \Dd_S^*)$ by Proposition~\ref{prop:matrix.DE.S.bounded}. Moreover, multiplying \eqref{eq:matrix.DE.S.formula} with the unimodular factor $\e^{\i \omega_\lambda}$, we obtain
	\begin{equation}
	\begin{aligned}
		\re \dprod{\e^{\i\omega_\lambda}S (\lambda) f}{f}_{\Dd_S^* \times \Dd_S} & \ge \int_{\Omega} \re \widetilde \boldpi (\lambda) \nabla f \cdot \overline{\nabla f} \d x \\
		& \qquad \qquad \qquad + \int_{\Omega} \re \widetilde q (\lambda) |f|^2 \d x - |\gamma_\lambda + \e^{\i \omega_\lambda}\lambda| \norm{f}^2 \\[1.5mm]
	\end{aligned}
	\end{equation}
	for $f \in \Coo (\Omega)$. Using \eqref{eq:matrix.DE.uniform.positivity}, we further derive
	\begin{equation}
		\re \dprod{\e^{\i\omega_\lambda}S (\lambda) f}{f}_{\Dd_S^* \times \Dd_S} \ge C_1 \norm{f}_S^2 - C_2 \norm{f}^2, \qquad f \in\Coo(\Omega),
	\end{equation}
 	where $C_1, C_2 > 0$ depend on $\lambda$ but not on $f$. From this it is easy to see that there exists $z_\lambda \in \C$ such that
 	\begin{equation}\label{eq:matrix.DE.S.coercive}
 		\abs{\e^{\i\omega_\lambda} \dprod{(S (\lambda) -z_\lambda)f}{f}_{\Dd_S^* \times \Dd_S}} \gtrsim \norm{f}^2_S, \qquad f\in\Coo(\Omega).
 	\end{equation}
 	By the density of $\Coo (\Omega)$ in $\Dd_S$, the continuity of the embedding $\Dd_S \subset \Dd_S^*$ and $S (\lambda) \in \Bb (\Dd_S, \Dd_S^*)$, the above inequality remains valid for $f \in \Dd_S$, i.e.\ $S (\lambda) - z_\lambda$ corresponds to a bounded and coercive sesquilinear form on $\Dd_S$. By the Lax-Milgram Theorem, see e.g.\ \cite[Thm.\ IV.1.1]{Edmunds-Evans-1987}, we conclude $z_\lambda \in \rho(S_0 (\lambda))$,
	\begin{equation}
		(S (\lambda) - z_\lambda)^{-1} \in \Bb (\Dd_S^*, \Dd_S)
	\end{equation}
	and density of the maximal domain $\dom (S_0 (\lambda) - z_\lambda) = \dom S_0 (\lambda)$ in $\Dd_S$.
\end{proof}

\begin{proof}[Proof of Theorem {\rm~\ref{thm:matrix.DE}}]
	By Propositions~\ref{prop:matrix.DE.spaces} and~\ref{prop:matrix.DE.operators}, Assumption~\ref{asm:II} is satisfied, the objects in Definition~\ref{def:matrix.DE.Aa.S} are well-defined and the results of Section~\ref{sec:matrix.Schur.complement} are applicable. We point out that our realisations of $\Aa_0$ and $S_0(\cdot)$ in Definition~\ref{def:matrix.DE.Aa.S} coincide with their standard distributional definitions in~\eqref{eq:matrix.DE.Aa.S}, see Remark~\ref{rem:matrix.DE.distr.operators}. The description of their domains in~\eqref{eq:matrix.DE.max.dom} then follows from their definition in~\eqref{eq:matrix.DE.def.dom.Aa.S}.
	We show the claims in \eqref{eq:matrix.DE.spec.equivalent}--\eqref{eq:matrix.DE.pspec.equiv}. Since $\Theta \subset \C \setminus \essran d = \rho (D_0)$, the identities~\eqref{eq:matrix.DE.pspec.equiv} and~\eqref{eq:matrix.DE.spec.equivalent} with $\sigma_{\operatorname{p}}$ are a direct consequence of Corollary~\ref{cor:point.spectrum.family}. Let us proceed with using Corollary~\ref{cor:family.second.implication} in order to show that the left hand side of~\eqref{eq:matrix.DE.spec.equivalent}, both with $\sigma$ and $\sigma_{\operatorname{e2}}$, is included in its right hand side.	To this end, let $f \in \dom d$ and $g \in \Coo (\Omega)$. We can estimate
	\begin{equation}
	\label{eq:matrix.DE.B.dom.D}
		\begin{aligned}
			\abs{\dprod{B f}{g}_{\Dd_1^* \times \Dd_1}} & \le \int_{\Omega} |f| |\iprod{\mathbf b}{\nabla g}_{\Cn}| \d x \\[1mm]
			& \le \int_\Omega |(d - \lambda_0)f| |d - \lambda_0|^{-1} |(\re \boldpi_0)^{-\frac12} \formb||(\re \boldpi_0)^{\frac12} \nabla g| \d x \\[2mm]
			& \le \norm{(d- \lambda_0)f} \norm{(d- \lambda_0)^{-1} (\re \boldpi_0)^{-\frac12} \formb}_{L^\infty (\Omega)^n} \norm{g}_S;
		\end{aligned}
	\end{equation}
	notice that the right hand side of the last inequality is finite due to Assumption~\ref{asm:IV}~(iv). By the density of $\Coo (\Omega)$ in $\Dd_1$ and since $\Dd_1 \subset \Dd_S$ is continuously embedded, both left and right hand side of \eqref{eq:matrix.DE.B.dom.D} are continuous in $g$ with respect to $\norm{\cdot}_1$. Hence,~\eqref{eq:matrix.DE.B.dom.D} holds for all $f \in \dom d$ and $g \in \Dd_1$, implying $B (\dom d) \subset \Dd_S^*$. The remaining assumptions of Corollary~\ref{cor:family.second.implication} with $\Sigma = \Phi \subset \Theta$ are a consequence of $\Phi \subset \rho (D_0)$, Proposition~\ref{prop:matrix.DE.S.bounded}, Lemma~\ref{lem:matrix.DE.S.coercive} and
	\begin{equation}
		(S_0(\lambda)-z_\lambda)^{-1} \subset S_z^\ddagger(\lambda) \defeq (S(\lambda)-z_\lambda)^{-1} \in \Bb (\Dd_S^*,\Dd_S).
	\end{equation}
	The claimed inclusion in~\eqref{eq:matrix.DE.spec.equivalent}, i.e.~left hand side in right hand side, with $\sigma$ and $\sigma_{\operatorname{e2}}$, respectively, follows from Corollary~\ref{cor:family.second.implication} (i) and (ii).
	The reverse inclusion in~\eqref{eq:matrix.DE.spec.equivalent} with $\sigma$ and~\eqref{eq:matrix.DE.spec.first.incl} are obtained from $\Theta \subset \rho (D_0)$ and Corollary~\ref{cor:family.first.implication}~(i). The remaining inclusion in~\eqref{eq:matrix.DE.spec.equivalent} with $\sigma_{\rm e2}$
	follows from Corollary~\ref{cor:family.first.implication} (ii) with $\Sigma = \Phi$; here it suffices to note that $\Phi \subset \rho (D_0)$ and that, for every $\lambda \in \Phi$, $S_0 (\lambda)$ has non-empty resolvent set by Lemma~\ref{lem:matrix.DE.S.coercive} and is therefore closed in $L^2 (\Omega)$. The claims \eqref{eq:matrix.DE.spec.equivalent}--\eqref{eq:matrix.DE.pspec.equiv} are thus proven.
	Finally, assume that $\dom B_0 \cap \dom d$ is dense in $L^2 (\Omega)$. Then Lemma~\ref{lem:matrix.DE.S.coercive} with $\lambda = \lambda_0$ implies that $\dom S_0 (\lambda_0)$ is dense in $\Dd_S$. Consequently, since $\Dd_S$ is dense and continuously embedded in $L^2 (\Omega)$, it follows from Corollary~\ref{cor:family.dense.domain} that $\dom \Aa_0$ is dense in $L^2 (\Omega) \oplus L^2 (\Omega)$.
\end{proof}

\subsection{Proof of Theorem~{\rm\ref{thm:matrix.DE.accretive}}}
\label{sec:matrix.DE.accr.proof}
In preparation for the proof of Theorem~\ref{thm:matrix.DE.accretive}, we show that the hypotheses of the latter imply Assumption~\ref{asm:IV}.

\begin{lem}
	\label{lem:matrix.DE.accretive}
	Let the assumptions of Theorem{\rm~\ref{thm:matrix.DE.accretive}} be satisfied. Then Assumption~{\rm\ref{asm:IV}} holds with $\Theta$, $\Phi$ as in \eqref{eq:matrix.DE.Theta.Phi}.
\end{lem}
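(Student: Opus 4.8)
The plan is to verify the four items of Assumption~\ref{asm:IV} one by one, using the data $\lambda_0=-1$, $q_0=q$, $\boldpi_0=\boldpi(-1)$ fixed in the theorem. Item~(i) is assumed outright. For item~(ii), I would first note that sectoriality of $d$ forces its values, and hence $\essran d$, into the closed convex sector $\set{z\in\C}{|\arg z|\le\theta_d}$; since $\Theta$ in~\eqref{eq:matrix.DE.Theta.Phi} is exactly the complement of this sector, we get $\Theta\subset\C\setminus\essran d$, while $\Theta$ is connected (the complement of a closed sector of opening $2\theta_d<\pi$) and $\emptyset\ne\Phi\subset\Theta$. To obtain $\boldpi(\lambda)\in\Loneloc(\Omega)^{n\times n}$ for every $\lambda\in\Theta$, I would write $\boldpi(\lambda)=\mathbf I+\frac{d+1}{d-\lambda}(d+1)^{-1}(\formb\otimes\overline\formb)$ and invoke Lemma~\ref{lem:matrix.DE.d.equiv} with $\mu=-1$ (both $-1,\lambda\in\Theta$) to bound $|d+1|/|d-\lambda|$ a.e.; thus $\frac{d+1}{d-\lambda}\in L^\infty(\Omega)$ multiplies the $\Loneloc$ matrix $(d+1)^{-1}(\formb\otimes\overline\formb)=\boldpi_0-\mathbf I$ and keeps it locally integrable.

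The heart of the proof is item~(iii). Fix $\lambda\in\Phi$ and, by conjugation symmetry, assume $\phi:=\arg\lambda\in(\max(\theta_q,\theta_d),\pi]$; I would take $\gamma_\lambda=0$ and look for one angle $\omega_\lambda$ rendering both $\widetilde q(\lambda)=\e^{\i\omega_\lambda}q$ and $\widetilde\boldpi(\lambda)=\e^{\i\omega_\lambda}\boldpi(\lambda)$ sectorial. Since $\re\widetilde\boldpi(\lambda)=\cos\omega_\lambda\,\mathbf I+\re(\e^{\i\omega_\lambda}(d-\lambda)^{-1})(\formb\otimes\overline\formb)$ and $\formb\otimes\overline\formb\ge0$, both the positive definiteness of $\re\widetilde\boldpi(\lambda)$ and the sectorial estimate for $\widetilde\boldpi(\lambda)$ reduce to requiring $\cos\omega_\lambda>0$ together with $\e^{\i\omega_\lambda}(d-\lambda)^{-1}$ lying in a fixed sector of half-angle $<\pi/2$, i.e.\ $\arg(d-\lambda)\in(\omega_\lambda-\tfrac\pi2,\omega_\lambda+\tfrac\pi2)$ bounded away from the endpoints. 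I would therefore determine the range of $\arg(d-\lambda)$ as $d$ runs through the sector: the translate $\set{d-\lambda}{|\arg d|\le\theta_d}$ is a closed convex cone with apex $-\lambda$ and edge directions $\e^{\pm\i\theta_d}$, and a monotonicity check of $\arg$ along its two boundary rays gives $\arg(d-\lambda)\in[\min(\phi-\pi,-\theta_d),\theta_d]$, an interval of width $<\pi$ (using $\phi>\theta_d$ and $\theta_d<\tfrac\pi2$). Consequently the admissible centres form the nonempty interval $I_1=(\theta_d-\tfrac\pi2,\min(\phi-\pi,-\theta_d)+\tfrac\pi2)$, and a short calculation using $\theta_q,\theta_d<\tfrac\pi2$ and $\phi>\max(\theta_q,\theta_d)$ shows $I_1$ meets $I_2:=(\theta_q-\tfrac\pi2,\tfrac\pi2-\theta_q)$. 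Any $\omega_\lambda\in I_1\cap I_2$ then also satisfies $|\omega_\lambda|+\theta_q<\tfrac\pi2$, so $\e^{\i\omega_\lambda}q$ is sectorial with $\gamma_\lambda=0$; for $\phi=\pi$ one may simply take $\omega_\lambda=0$.

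It remains to establish the comparison~\eqref{eq:matrix.DE.uniform.positivity} and item~(iv). For~\eqref{eq:matrix.DE.uniform.positivity} I would observe that, a.e.\ in $\Omega$, $\re\widetilde q(\lambda)=\re(\e^{\i\omega_\lambda}q)$ is comparable to $|q|$ and that $\re\widetilde\boldpi(\lambda)\xi\cdot\overline\xi$ is comparable to $|\xi|^2+|d-\lambda|^{-1}(\formb\otimes\overline\formb)\xi\cdot\overline\xi$ — using $\cos\omega_\lambda>0$ and $\re(\e^{\i\omega_\lambda}(d-\lambda)^{-1})$ comparable to $|d-\lambda|^{-1}$ from the uniform angle bound above — after which Lemma~\ref{lem:matrix.DE.d.equiv} ($|d-\lambda|^{-1}$ comparable to $|d-\mu|^{-1}$) yields the two-sided bounds for $q$ and for $\boldpi$. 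For item~(iv), the hypothesis $\formc=\overline\formb$ collapses the maximum to $(d-\lambda)^{-1}|(\re\widetilde\boldpi(\lambda))^{-\frac12}\formb|$, and by Remark~\ref{rem:matrix.DE.ass}~(iv) — legitimate once~\eqref{eq:matrix.DE.uniform.positivity} and Lemma~\ref{lem:matrix.DE.d.equiv} are in hand — it suffices to check this at $\lambda_0=-1$, where $\omega_{-1}=0$ and $\re\widetilde\boldpi(-1)=\re\boldpi_0$ turn it precisely into the standing regularity hypothesis $(d+1)^{-1}(\re\boldpi_0)^{-\frac12}\formb\in L^\infty(\Omega)^n$ of~\eqref{eq:matrix.DE.accr.regularity}.

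The main obstacle is exactly the geometric construction in item~(iii): because $\formb$ may be unbounded, the rank-one term $\re(\e^{\i\omega_\lambda}(d-\lambda)^{-1})(\formb\otimes\overline\formb)$ cannot be treated as a perturbation of $\cos\omega_\lambda\,\mathbf I$, so one genuinely needs $\re(\e^{\i\omega_\lambda}(d-\lambda)^{-1})\ge0$ with the argument bounded strictly below $\pi/2$. This is what forces the precise analysis of the argument range of the shifted sector and the verification that the admissible rotation angles for $\boldpi(\lambda)$ and for $q$ overlap; everything else is bookkeeping built on Lemma~\ref{lem:matrix.DE.d.equiv} and the standing regularity assumption~\eqref{eq:matrix.DE.accr.regularity}.
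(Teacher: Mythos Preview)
Your proposal is correct and follows essentially the same strategy as the paper's proof: reduce items~(ii) and~(iv) to the standing hypothesis~\eqref{eq:matrix.DE.accr.regularity} via Remark~\ref{rem:matrix.DE.ass}~(iv) and Lemma~\ref{lem:matrix.DE.d.equiv}, and for item~(iii) find a single rotation angle $\omega_\lambda$ with $\gamma_\lambda=0$ that simultaneously makes $\e^{\i\omega_\lambda}q$ and $\e^{\i\omega_\lambda}(d-\lambda)^{-1}$ sectorial (whence $\widetilde\boldpi(\lambda)=\e^{\i\omega_\lambda}\mathbf I+\widetilde d(\lambda)\mathbf B$ is sectorial). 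The only difference is presentational: the paper writes down an explicit formula for $\omega_\lambda$ (namely $\omega_\lambda=0$ when $|\arg\lambda|>\pi/2$, and otherwise $\omega_\lambda=-\tfrac{\sgn(\arg\lambda)}{2}(\pi-\max(\theta_q,\theta_d)-|\arg\lambda|)$) and appeals to ``elementary geometrical considerations'', whereas you argue for existence by computing the interval of admissible angles and checking it is nonempty --- your route is slightly more detailed at precisely the step the paper leaves implicit.
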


\begin{proof}
	We only need to show Assumption~\ref{asm:IV}~(ii)--(iv). However, (ii) is clearly satisfied since the set $\Theta \subset \C \setminus \essran d$ in~\eqref{eq:matrix.DE.Theta.Phi} is connected and~\eqref{eq:def.boldpi} follows from the first assumption in~\eqref{eq:matrix.DE.accr.regularity}, see Remark~\ref{rem:matrix.DE.ass}~(iv). Recall that in this case
	\begin{equation}
		\boldpi (\lambda) = \mathbf I + (d - \lambda)^{-1} \mathbf B, \qquad \lambda\in \Theta,
	\end{equation}
	where $\mathbf B \defeq (\mathbf b \otimes \overline{\mathbf b}) \ge 0$ is easily verified to be positive semi-definite a.e.\ in $\Omega$.
	
	For (iii), let $\lambda \in \Phi \subset \Theta$ be arbitrary but fixed. Set $\omega_\lambda \defeq 0$ if $|\arg\lambda| > \pi / 2$ and
	\begin{equation}
		\omega_\lambda \defeq - \frac{\sgn (\arg \lambda)}{2} (\pi - \max (\theta_q, \theta_d) - |\arg \lambda|), \qquad |\arg \lambda| \le \pi / 2,
	\end{equation}
	where the above is well-defined since $\lambda \neq 0 \notin \Phi$; recall that $\arg : \C\setminus\{0\} \to (-\pi,\pi]$ in our convention and notice that $|\omega_\lambda| < \pi/2 - \max (\theta_q, \theta_d) \le \pi / 2$.
	It then follows from elementary geometrical considerations that
	\begin{equation}
		\widetilde d (\lambda) \defeq \e^{\i \omega_\lambda}(d - \lambda)^{-1}, \qquad \widetilde q (\lambda) \defeq \e^{\i \omega_\lambda} q,
	\end{equation}
	are both sectorial; in particular, we can set $\gamma_\lambda \defeq 0$ in view of Assumption~\ref{asm:IV}~(iii). Moreover, by the sectoriality of $\widetilde d (\lambda)$ and $|\omega_\lambda| < \pi/2$, also
	\begin{equation}\label{eq:matrix.DE.tilde.pi.accr}
		\widetilde \boldpi (\lambda) \defeq \e^{\i \omega_\lambda} \boldpi (\lambda) = \e^{\i \omega_\lambda} \mathbf I + \widetilde d (\lambda) \mathbf B
	\end{equation}
	is sectorial and its real part is positive definite a.e.\ in $\Omega$ since we have
	\begin{equation}
		\label{eq:matrix.DE.accretive.pi.lambda.positive.definite}
		\re \widetilde \boldpi (\lambda) \ge \cos \omega_\lambda \mathbf {I} > 0.
	\end{equation}
	It remains to show~\eqref{eq:matrix.DE.uniform.positivity}. The first chain of inequalities therein, i.e.\ the equivalence of $\re \widetilde q (\lambda)$ for different $\lambda\in \Phi$, is derived from the sectoriality of $q$ and the relation
	\begin{equation}\label{eq:matrix.DE.accr.re.q}
		\re \widetilde q(\lambda) = \re q \cos \omega_\la - \im q \sin \omega_\la.
	\end{equation}
	Since $|\omega_\la| < \pi/2 - \theta_q$, it indeed follows that 
	\begin{equation}
		\cos \omega_\la - \tan \theta_q |\sin \omega_\la| >0,
	\end{equation}
	such that, with another $\mu \in \Phi$, one further concludes using~\eqref{eq:matrix.DE.accr.re.q} that
	\begin{equation}
		\frac{\re \widetilde q(\mu)}{\re \widetilde q(\la)} \le \frac{\cos \omega_\mu + \tan \theta_q |\sin \omega_\mu|}{\cos \omega_\la - \tan \theta_q |\sin \omega_\la|}.
	\end{equation}
	This readily implies the sought chain of inequalities. Employing Lemma~\ref{lem:matrix.DE.d.equiv}, one obtains the analogous inequalities for $\widetilde d(\lambda)$ in a similar way. The second line in~\eqref{eq:matrix.DE.uniform.positivity} then follows easily from the decomposition~\eqref{eq:matrix.DE.tilde.pi.accr} and $|\omega_\lambda| < \pi /2$.
	
	Finally, Assumption~\ref{asm:IV}~(iv) follows from the second assumption in~\eqref{eq:matrix.DE.accr.regularity}, see Remark~\ref{rem:matrix.DE.ass}~(iv).
\end{proof}

By the lemma above, Theorem~\ref{thm:matrix.DE} is applicable in the present setting and can be used in order to prove Theorem~\ref{thm:matrix.DE.accretive}. Before doing so, we first describe $\Dd_S$ and certain actions defined on it.

\begin{lem}\label{lem:matrix.DE.accr.Dd.S}
	Let the assumptions of Theorem~{\rm\ref{thm:matrix.DE.accretive}} be satisfied. Then
	\begin{equation}
		\label{eq:matrix.DE.Dd.S.accretive}
		\Dd_S = \set{f \in H_0^1 (\Omega)}{(\re \boldpi_0)^{\frac12} \nabla f \in L^2 (\Omega)^n, ~ (\re q)^\frac12 f \in L^2 (\Omega)}.
	\end{equation}
	The formulas \eqref{eq:matrix.DE.C} and \eqref{eq:matrix.DE.S.formula} remain valid with $f = g \in \Dd_S$ and $\lambda = -1$. Moreover, for $f \in \dom d$ and $g \in \Dd_S$, the action of $Bg \in \Dd_S^*$ is given by
	\begin{equation}
		\label{eq:matrix.DE.B.dom.D.formula}
		\dprod{Bf}{g}_{\Dd_S^* \times \Dd_S} = - \int_{\Omega} f \, \formb \cdot \overline{\nabla g} \d x.
	\end{equation}
\end{lem}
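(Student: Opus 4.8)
The plan is to establish the set identity \eqref{eq:matrix.DE.Dd.S.accretive} first and then to read off the three formulas by density and continuity. Throughout I use that $\lambda_0=-1\in\Phi$ with $\omega_{-1}=0$, so that $\boldpi_0=\mathbf I+(d+1)^{-1}(\formb\otimes\overline\formb)$ and, by \eqref{eq:matrix.DE.accretive.pi.lambda.positive.definite}, $\re\boldpi_0\ge\mathbf I$. In particular $\norm{\cdot}_{H^1}\le\norm{\cdot}_S$, so the completion $\Dd_S$ embeds continuously into $H_0^1(\Omega)$.

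Write $X$ for the right-hand side of \eqref{eq:matrix.DE.Dd.S.accretive}. The inclusion $\Dd_S\subset X$ is the easy direction: for $f\in\Dd_S$ I would pick $\phi_m\in\Coo(\Omega)$ with $\norm{\phi_m-f}_S\to0$; since $\norm{\cdot}_S\ge\norm{\cdot}_{H^1}$ this sequence is $H^1$-Cauchy with limit $f\in H_0^1(\Omega)$ and $\nabla\phi_m\to\nabla f$ in $L^2$, while $(\re\boldpi_0)^{\frac12}\nabla\phi_m$ and $(\re q)^{\frac12}\phi_m$ are $L^2$-Cauchy. Passing to a subsequence along which $\nabla\phi_m\to\nabla f$ and $\phi_m\to f$ almost everywhere, the $L^2$-limits are identified as $(\re\boldpi_0)^{\frac12}\nabla f$ and $(\re q)^{\frac12}f$, whence $f\in X$. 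The same a.e.\ argument shows that $X$, equipped with $\norm{\cdot}_S$, is complete, so $\Dd_S$ is a closed subspace of $X$ and the set identity reduces to the density of $\Coo(\Omega)$ in $X$.

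This density, i.e.\ the inclusion $X\subset\Dd_S$, is the crux of the proof, and I would approach it by the standard two-step scheme of spatial truncation followed by mollification. For $f\in X$ and a cut-off $\chi_R\in\Coo(\R^n)$ with $\chi_R\equiv1$ on $B_R$, $\supp\chi_R\subset B_{2R}$ and $\abs{\nabla\chi_R}\le C/R$, the zeroth-order contributions $\norm{\chi_R f-f}$ and $\norm{(\re q)^{\frac12}(\chi_R f-f)}$ as well as the isotropic part $\norm{\nabla(\chi_R f-f)}$ of the gradient term tend to zero by dominated convergence and $\abs{\nabla\chi_R}\le C/R$, and a subsequent mollification produces $\Coo(\Omega)$ approximants. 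The delicate point — and the main obstacle — is the anisotropic part $\int_\Omega\re[(d+1)^{-1}]\,\abs{\overline\formb\cdot\nabla(\chi_R f-f)}^2\d x$ of the weighted gradient energy, since $\re\boldpi_0$ is in general an unbounded matrix weight (only $\re\boldpi_0\in\Loneloc(\Omega)^{n\times n}$ is known). Here I would exploit the specific structure: the scalar weight $\re[(d+1)^{-1}]$ is globally bounded because $-1\notin\essran d$ forces $\abs{d+1}\ge\dist(-1,\essran d)>0$ a.e.\ (cf.\ Lemma~\ref{lem:matrix.DE.d.equiv}), and the rank-one direction is controlled through the dominance hypothesis $(d+1)^{-1}(\re\boldpi_0)^{-\frac12}\formb\in L^\infty(\Omega)^n$ from \eqref{eq:matrix.DE.accr.regularity}; together with $\re\boldpi_0\ge\mathbf I$ these are exactly the ingredients needed to dominate the cross terms from the product rule and to carry the mollification through in the weighted norm. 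I expect this anisotropic weighted estimate to be the only genuinely technical part.

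Granting the set identity, the remaining assertions follow by continuity and the density of $\Coo(\Omega)$ in $\Dd_S$. By the sectoriality of $\boldpi_0=\widetilde\boldpi(-1)$ and $q$ (Assumption~\ref{asm:IV}~(iii), cf.\ \eqref{eq:matrix.DE.tilde.pi.accr}) one has the pointwise bounds $\abs{\boldpi_0\,\xi\cdot\overline\xi}\lesssim\re\boldpi_0\,\xi\cdot\overline\xi$ and $\abs{q}\lesssim\re q$, so that for $f\in\Dd_S\subset X$ the right-hand side of \eqref{eq:matrix.DE.S.formula} with $g=f$, $\lambda=-1$ is a finite quadratic form continuous with respect to $\norm{\cdot}_S$; since it agrees with $\dprod{S(-1)f}{f}_{\Dd_1^*\times\Dd_1}$ on $\Coo(\Omega)$ by Proposition~\ref{prop:matrix.DE.S.bounded}, the identity extends to all $f\in\Dd_S$, and likewise $Cf=\formc\cdot\nabla f$ as in \eqref{eq:matrix.DE.C} persists on $\Dd_S$ by continuity of $C\in\Bb(\Dd_S,\Dd_{-2})$ and a.e.\ identification of the limit. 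Finally, for $f\in\dom d\subset\Dd_2$ the functional $Bf$ lies in $\Dd_S^*$ and both $g\mapsto\dprod{Bf}{g}_{\Dd_S^*\times\Dd_S}$ and $g\mapsto-\int_\Omega f\,\formb\cdot\overline{\nabla g}\d x$ are $\norm{\cdot}_S$-continuous on $\Dd_S$ — the latter by the estimate \eqref{eq:matrix.DE.B.dom.D}, which uses $(d+1)f\in L^2(\Omega)$ and $(d+1)^{-1}(\re\boldpi_0)^{-\frac12}\formb\in L^\infty(\Omega)^n$ — so, agreeing on $\Coo(\Omega)$ by \eqref{eq:matrix.DE.AB}, they agree on $\Dd_S$, which is \eqref{eq:matrix.DE.B.dom.D.formula}.
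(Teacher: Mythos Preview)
Your argument for the inclusion $\Dd_S\subset X$ and for the three formula extensions is essentially the paper's: approximate by $\Coo(\Omega)$, use $\norm{\cdot}_{H^1}\le\norm{\cdot}_S$ to land in $H_0^1(\Omega)$, identify the $L^2$-limits of $(\re\boldpi_0)^{\frac12}\nabla f_m$ and $(\re q)^{\frac12}f_m$ with the pointwise objects, and then pass to the limit in \eqref{eq:matrix.DE.S.formula}, \eqref{eq:matrix.DE.C} and \eqref{eq:matrix.DE.B.dom.D.formula} by continuity and density (the paper even refers back to \eqref{eq:matrix.DE.B.dom.D} for the last one, exactly as you do). So on these parts you match the paper.

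The point worth flagging is the reverse inclusion $X\subset\Dd_S$. You identify it as the crux and sketch a truncation--mollification scheme, while conceding that the anisotropic weighted gradient term is the ``only genuinely technical part'' still to be checked. In fact the paper's proof does \emph{not} address this direction at all: it only verifies $\Dd_S\subset X$ and then moves on to the formulas. Looking at how the lemma is used in the proof of Theorem~\ref{thm:matrix.DE.accretive}, only this inclusion is ever invoked (one needs that $f\in\Dd_S$ implies the integrability in \eqref{eq:matrix.DE.Dd.S.accretive}, never the converse). So the equality as stated is stronger than what the paper proves or needs, and your attempt to supply the missing half --- while a reasonable reading of the statement --- goes beyond what is actually done. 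If you keep only the easy inclusion and the formula extensions, your proof coincides with the paper's; the density argument for $X\subset\Dd_S$ that you outline is neither completed there nor required.
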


\begin{proof}	
	In order to show \eqref{eq:matrix.DE.Dd.S.accretive}, we point out that, by \eqref{eq:matrix.DE.accretive.pi.lambda.positive.definite}, we have
	\begin{equation}
	\label{eq:matrix.DE.Dd.S.subset.H1}
		\norm{\nabla f}^2 + \norm{(\re q)^\frac12 f}^2 + \norm{f}^2 \lesssim \norm{f}_S^2, \qquad f \in \Coo (\Omega).
	\end{equation}
	Since $\Coo (\Omega)$ is dense in $H_0^1 (\Omega)$, this implies $\Dd_S \subset H_0^1 (\Omega)$. The claims 
	\begin{equation}
		(\re \boldpi_0)^\frac12 \nabla f \in L^2 (\Omega)^n, \qquad (\re q)^\frac12 f \in L^2 (\Omega), \qquad f \in \Dd_S,
	\end{equation}
	then follow by standard arguments from the definition of $\Dd_S$; indeed, if
	\begin{equation}
	\label{eq:matrix.DE.Dd.S.approx.sequ}
		f \in \Dd_S, \qquad \{f_m\} _m\subset \Coo(\Omega), \qquad \norm{f_m - f}_S \to 0, \qquad m \to \infty,
	\end{equation}
	then by \eqref{eq:matrix.DE.Dd.S.subset.H1} also $f_m \to f$ in $H^1(\Omega)$ as $m \to \infty$. Since $\{(\re \boldpi_0)^\frac12 \nabla f_m\}_m$ is Cauchy in $L^2 (\Omega)^n$ by \eqref{eq:matrix.DE.Dd.S.approx.sequ}, it has a limit $h$ in $L^2 (\Omega)^n$. For any test function $\varphi \in \Coo (\Omega)^n$, we derive the following
	\begin{equation}
		\begin{aligned}
			\iprod{h}{\varphi} = & \lim_{m \to \infty} \iprod{(\re \boldpi_0)^\frac12 \nabla f_m}{\varphi} \\
			= & \lim_{m \to \infty} \iprod{\nabla f_m}{(\re \boldpi_0)^\frac12\varphi} = \iprod{\nabla f}{(\re \boldpi_0)^\frac12 \varphi} = \iprod{(\re \boldpi_0)^\frac12 \nabla f}{\varphi}.
		\end{aligned}
	\end{equation}
	This in turn implies $(\re \boldpi_0)^\frac12 \nabla f = h\in L^2 (\Omega)$ by a density argument. Analogously, one shows that $(\re q)^\frac12 f_m \to (\re q)^\frac12 f \in L^2 (\Omega)$ converges in $L^2 (\Omega)$ as $m \to \infty$. Formula~\eqref{eq:matrix.DE.S.formula} with $\lambda = -1$ then immediately extends to $f=g \in \Dd_S$, noting that the sectoriality of $\boldpi_0$ and $q$ implies continuity of both sides in $f$ with respect to convergence in $\Dd_S$.
	In order to show that \eqref{eq:matrix.DE.C} holds with $f \in \Dd_S$, consider a sequence as in \eqref{eq:matrix.DE.Dd.S.approx.sequ}. By construction of $C$ and since $(\re \boldpi_0)^\frac12 \nabla f_m \to (\re \boldpi_0)^\frac12 \nabla f$ in $L^2 (\Omega)^n$ as $m \to \infty$, for all test functions $\varphi \in \Coo (\Omega)$ we have
	\begin{equation}
		\begin{aligned} 
			\iprod{\omega^{-1}Cf}{\varphi} = \lim_{m \to \infty} \iprod{\omega ^{-1}\overline \formb \cdot \nabla f_m}{\varphi} & = \lim_{m \to \infty} \iprod{(\re \boldpi_0)^\frac12 \nabla f_m}{(\re \boldpi_0)^{-\frac12}\formb \omega^{-1} \varphi} \\
			& = \iprod{(\re \boldpi_0^{\frac12} \nabla f}{(\re \boldpi_0)^{-\frac12}\formb \omega^{-1} \varphi} \\[1.5mm]
			& = \iprod{\omega^{-1}\overline \formb \cdot \nabla f}{\varphi}.
		\end{aligned}
	\end{equation}
	For the third identity above, note that
	\begin{equation}
		(\re \boldpi_0)^{-\frac12}\formb \omega^{-1} \varphi = (d-\la)^{-1} (\re \boldpi_0)^{-\frac12}\formb (d-\la) w^{-1} \varphi \in L^2(\Omega), 
	\end{equation}
	see Assumption~\ref{asm:IV}~(i),~(iv) and recall that $w \ge 1$. From density arguments, one concludes $\omega^{-1} Cf = \omega^{-1}\overline \formb \cdot \nabla f$, i.e.\ that \eqref{eq:matrix.DE.C} holds. The claim \eqref{eq:matrix.DE.B.dom.D.formula} can be shown similarly, cf.\ \eqref{eq:matrix.DE.B.dom.D}.
\end{proof}

\begin{proof}[Proof of Theorem {\rm~\ref{thm:matrix.DE.accretive}}]
	By Lemma~\ref{lem:matrix.DE.accretive}, Assumption~\ref{asm:IV} is fully satisfied and the claims \eqref{eq:matrix.DE.spec.equivalent}--\eqref{eq:matrix.DE.pspec.equiv} follow from Theorem~\ref{thm:matrix.DE}. It only remains to show that $\Aa_0$ is m-accretive in $L^2 (\Omega) \oplus L^2 (\Omega)$ with~\eqref{eq:matrix.DE.accretive.domain}; the density of $\dom \Aa_0$ and the generation of the semigroup then follow from classical results, see e.g.\ \cite[\S V.3.10, \S IX.1]{Kato-1995}.
	In order to show that $\Aa_0$ is accretive, let $(f, g) \in \dom \Aa_0$. From the sectoriality of $\boldpi_0 = \mathbf I + (d+1)^{-1} \mathbf B$, from $f \in \Dd_S$ and \eqref{eq:matrix.DE.Dd.S.accretive}, it then follows that
	\begin{equation}
	\label{eq:matrix.DE.accr.S.L1}
			| (d + 1)^{-1} \mathbf B \nabla f\cdot \overline {\nabla f} | \lesssim  \re \boldpi_0 \nabla f \cdot \overline {\nabla f}  + |\nabla f|^2 \in L^1 (\Omega).
	\end{equation}
	Since $(D + 1)^{-1}Cf + g \in \dom d$ as $(f, g) \in \dom \Aa_0$, applying formula \eqref{eq:matrix.DE.B.dom.D.formula} and \eqref{eq:matrix.DE.C} according to Lemma~\ref{lem:matrix.DE.accr.Dd.S}, we further conclude 
	\begin{equation}
	\label{eq:matrix.DE.Cfg.L1}
		\begin{aligned}
			\dprod{B((D + 1)^{-1}Cf + g)}{f}_{\Dd_S^* \times \Dd_S} & = - \int_\Omega ((d + 1)^{-1} \overline \formb \cdot \nabla f + g) \formb \cdot \overline{\nabla f} \d x \\
			&  = - \int_{\Omega} (d + 1)^{-1} \mathbf B \nabla f \cdot \overline{\nabla f} \d x + \int_{\Omega} g \formb \cdot \overline{\nabla f} \d x;
		\end{aligned}
	\end{equation}
	here in the second equality we used \eqref{eq:matrix.DE.accr.S.L1}, which then also gives $g \formb \cdot \overline{\nabla f} \in L^1 (\Omega)$. Consequently, formula \eqref{eq:matrix.DE.S.formula} according to Lemma~\ref{lem:matrix.DE.accr.Dd.S} gives
		\begin{align}
			\iprod{Af + Bg}{f} & = \dprod{S(-1)f}{f}_{\Dd_S^* \times \Dd_S} + \dprod{B((D + 1)^{-1}Cf + g)}{f}_{\Dd_S^* \times \Dd_S} - \|f\|^2\\
			\label{eq:matrix.DE.accr.AB} & = \norm{\nabla f}^2 + \int_\Omega q |f|^2 \d x + \int_{\Omega} g \formb \cdot \overline{\nabla f} \d x.
		\end{align}
	Using  $g \formb \cdot \overline{\nabla f} \in L^1 (\Omega)$, it follows from \eqref{eq:matrix.DE.C} with Lemma~\ref{lem:matrix.DE.accr.Dd.S} that
	\begin{equation}
	\label{eq:matrix.DE.accr.CD}
		\iprod{Cf + Dg}{g} = \int_\Omega  \overline \formb \cdot \nabla f \overline g \d x + \int_\Omega d |g|^2 \d x;
	\end{equation}
	this in turn gives $d |g|^2 \in L^1 (\Omega)$, thus $g \in \dom (\re d)^\frac12$ by the sectoriality of $d$ and \eqref{eq:matrix.DE.accretive.domain} is shown. Combining \eqref{eq:matrix.DE.accr.AB} and \eqref{eq:matrix.DE.accr.CD}, we obtain accretivity of $\Aa_0$ as follows
	\begin{equation}
		\re \iprod{\Aa_0 (f, g)}{(f, g)}_\Hh = \norm{\nabla f}^2 + \int_{\Omega} \re q |f|^2 \d x + \int_{\Omega} \re d |g|^2 \d x \ge 0.
	\end{equation}
	In order to conclude m-accretivity of $\Aa_0$, we show that $\rho (\Aa_0) \cap (-\infty, 0) \neq 0$, see e.g.\ \cite[\S V.3.10]{Kato-1995}. This however, follows from taking complements in \eqref{eq:matrix.DE.spec.equivalent} and the proof of Lemma~\ref{lem:matrix.DE.S.coercive}, where one can easily show that, if $\lambda < 0$ has large enough modulus, then \eqref{eq:matrix.DE.S.coercive} is satisfied with $z_\lambda = 0$, i.e.\ that $S(\lambda)$ is coercive on $\Dd_S$ and thus $0 \in \rho (S_0 (\lambda))$ for  $\lambda \in (-\infty, 0)$ with $|\la|$ sufficiently large.
\end{proof}

\section{Dirac operators with Coulomb type potentials}
\label{sec:Dirac}

We apply our abstract method and use a dominant first Schur complement to construct a self-adjoint realisation of the Dirac operator
	\begin{equation}
		\label{eq:Dirac}
		\Aa = \begin{pmatrix}
			V + 1 & - \i \sigma \cdot \nabla \\[1mm]
			- \i \sigma \cdot \nabla & V - 1
		\end{pmatrix}
	\end{equation}
	acting in $L^2 (\R^3)^2 \oplus L^2 (\R^3)^2$, where as usual we denote 
	\begin{equation}
		\sigma \cdot \nabla = \sum_{j=1}^3 \sigma_j \partial_j
	\end{equation}
	with $\sigma_j$, $j = 1,2,3$, being the Pauli matrices
	\begin{equation}
		\sigma_1 = \begin{pmatrix}
			0 & 1 \\
			1 & 0
		\end{pmatrix}, \qquad
		\sigma_2 = \begin{pmatrix}
			0 & -\i \\
			\i & 0
		\end{pmatrix}, \qquad
		\sigma_3 = \begin{pmatrix}
			1 & 0 \\
			0 & -1
		\end{pmatrix}.
	\end{equation}
	Our assumptions on the real-valued potential are minimal with respect to the employed form methods. Indeed, $V$ is only assumed locally square integrable such that it satisfies the Hardy type inequality~\eqref{eq:Hardy.Dirac} below, which ensures the coercivity of the Schur complement on its form domain for a suitable set of spectral parameters.
	
	\begin{asm}
		Let $V \in L^2_{\rm loc} (\R^3, \R)$ be bounded above with
		\begin{equation}
			\Gamma \defeq \esssup_{x \in \R^3} V (x) < \infty
		\end{equation}
		and assume there exists $\Lambda > \Gamma - 1$ such that
		\begin{equation}
			\label{eq:Hardy.Dirac}
			\int_{\R^3} \frac{|\sigma \cdot \nabla f|^2}{1 + \Lambda - V} \d x + \int_{\R^3} (V + 1 - \Lambda) |f|^2 \d x \ge 0, \qquad f \in  C_0^\infty (\R^3)^2.
		\end{equation}
	\end{asm} 

	\begin{rem}
		It was shown in \cite{Dolbeault-Esteban-Loss-Vega-2004,Dolbeault-Esteban-Sere-2000} that the above Hardy-Dirac inequality is satisfied for certain Coulomb type potentials, in particular for $V(x) = -\nu / |x|$ with $\nu \in (0,1]$; in the latter case, $\Gamma = 0$ and $\Lambda = \sqrt{1-\nu}$, see~\cite[Thm.\ 1, Cor.\ 3]{Dolbeault-Esteban-Loss-Vega-2004}. \hfill //
	\end{rem}

	By means of the construction in Section~\ref{sec:matrix.Schur.complement}, a self-adjoint realisation of the operator matrix \eqref{eq:Dirac} with a spectral gap $(\Gamma - 1, \Lambda)$ can be obtained, see Theorem~\ref{thm:Dirac} below. Moreover, the spectral equivalence to its Schur complement on $(\Gamma-1, \infty)$ can be established, see Remark~\ref{rem:Dirac} below. 

	In~\cite{Esteban-Loss-2007}, the same self-adjoint operator matrix was constructed (and the same spectral gap was proven); the authors later presented an abstract version of this method in~\cite{Esteban-Loss-2008}. Even though their approach is similar to ours and uses both a distributional framework and a dominant Schur complement due to the Hardy-Dirac inequality~\eqref{eq:Hardy.Dirac}, it requires the additional assumption
	\begin{equation}\label{eq:EL.grad}
		\frac{\nabla V}{(\la_0+1-V)^2} \in L^2_{\rm loc} (\R^3, \R)
	\end{equation}
	for some $\la_0$ in the spectral gap. Inspired by~\cite{Esteban-Loss-2007,Esteban-Loss-2008}, in the recent work~\cite{Schimmer-Solovej-Tokus-2020} distinguished self-adjoint extensions of symmetric operators with spectral gaps were constructed and variational principles for their eigenvalues were derived. Applied to Dirac operators subject to Coulomb potentials $V(x) = -\nu / |x|$ with $\nu \in (0,1]$, this construction in particular allows to establish self-adjointness without the additional assumption~\eqref{eq:EL.grad}. We mention that while the elaborate abstract method in~\cite{Schimmer-Solovej-Tokus-2020} carefully defines suitable Hilbert spaces of test functions and distributions (which allow to rigorously implement the Frobenius-Schur factorisation of the resolvent), unlike our approach it heavily relies on the symmetry of the underling operator.

	In the sequel, $\iprod{\cdot}{\cdot}$ and $\norm{\cdot}$ shall denote the inner product and norm on $L^2 (\R^3)^2$.
	
	\begin{thm}\label{thm:Dirac}
		Let $\Aa$ be the matrix differential expression \eqref{eq:Dirac} understood in the standard distributional sense, let $\Aa_0 \defeq \Aa\vert_{\dom\Aa_0}$ be its action on the maximal domain
		\begin{equation}
			\label{eq:Dirac.dom}
			\begin{aligned}
				\dom \Aa_0 & \defeq \{ \, (f, g) \in \Dd_S \times L^2 (\R^3)^2\, : \\ 
				&\qquad \qquad \quad  Vf -\i \sigma \cdot \nabla g, ~-\i \sigma \cdot \nabla f + Vg \in L^2 (\R^3)^2\, \}.
			\end{aligned}
		\end{equation}
		Here the Hilbert space $\Dd_S$ is the closure
		\begin{equation}
			\Dd_S \defeq \overline{\Coo (\R^3)^2}^{\norm{\cdot}_S}, \qquad \norm{f}_S^2 \defeq \forms_{\lambda_0}[f] + \norm{f}^2, \qquad f \in \Coo (\R^3)^2,
		\end{equation}
		where $\lambda_0 \in (\Gamma - 1, \Lambda)$ is arbitrary and
		\begin{equation}
			\label{eq:Dirac.forms.lambda.0}
			\forms_{\lambda_0} [f] \defeq \int_{\R^3} \frac{|\sigma \cdot \nabla f|^2}{1 + \lambda_0 - V} \d x + \int_{\R^3} (V + 1 - \lambda_0) |f|^2 \d x \ge 0.
		\end{equation}
		Then $\Aa_0$ is independent of the choice of $\lambda_0$ in~\eqref{eq:Dirac.forms.lambda.0}. Moreover, it is densely defined and self-adjoint in $L^2 (\R^3)^2 \oplus L^2 (\R^3)^2$ and has a spectral gap
		\begin{equation}
			\label{eq:Dirac.spec.gap}
			\sigma (\Aa_0) \cap (\Gamma -1, \Lambda) = \emptyset.
		\end{equation}
	\end{thm}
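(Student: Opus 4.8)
The plan is to cast \eqref{eq:Dirac} into the first Schur complement framework of Sections~\ref{sec:abstract.results} and~\ref{sec:matrix.Schur.complement} and to read off all three assertions from the abstract spectral correspondence, with the Hardy--Dirac inequality entering only through coercivity of the Schur complement. I set $\Hh_1 = \Hh_2 = L^2(\R^3)^2$, $A = V+1$, $D = V-1$ and $B = C = -\i\sigma\cdot\nabla$, take $\Dd_S$ as in the statement, $\Dd_{-S} = \Dd_S^*$, and introduce the weighted spaces $\Dd_2 = L^2(\R^3; 1+\lambda_0-V)$ and $\Dd_{-2} = L^2(\R^3; (1+\lambda_0-V)^{-1})$ together with an auxiliary space $\Dd_{-1}$ chosen large enough that $A \in \Bb(\Dd_S,\Dd_{-1})$ and $B \in \Bb(\Dd_2,\Dd_{-1})$; since $V \le \Gamma$ gives $1+\lambda_0-V \ge 1+\lambda_0-\Gamma > 0$, all embeddings in Assumption~\ref{asm:I}~(i) are continuous with dense range. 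A short integration by parts shows $C \in \Bb(\Dd_S,\Dd_{-2})$, and because $V \le \Gamma$ forces $D_0 - \lambda = V-1-\lambda$ to be boundedly invertible on $L^2$ for every real $\lambda > \Gamma-1$, one obtains $\Theta \supset (\Gamma-1,\infty)$ with $D^\ddagger(\lambda) = (D-\lambda)^{-1}$. The decisive computation is that, for $f \in \Coo(\R^3)^2$, integration by parts together with $(\sigma\cdot\nabla)^* = -\sigma\cdot\nabla$ yields $\dprod{S(\lambda)f}{f}_{\Dd_{-S}\times\Dd_S} = \forms_\lambda[f]$, so that the quadratic form of the first Schur complement is \emph{exactly} the Hardy--Dirac form.

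Next I would settle the independence of $\Dd_S$, hence of $\Aa_0$, from $\lambda_0$. For $\lambda_0,\lambda_1 \in (\Gamma-1,\Lambda)$ the weights $1+\lambda_0-V$ and $1+\lambda_1-V$ are pointwise comparable (their difference is the constant $\lambda_0-\lambda_1$, while both are bounded below by a positive constant), so a direct estimate of $\norm{f}_{S,\lambda_0}^2 - \norm{f}_{S,\lambda_1}^2$ gives $\norm{f}_{S,\lambda_1}^2 \le (1+C|\lambda_0-\lambda_1|)\norm{f}_{S,\lambda_0}^2$ and symmetrically. Thus the two norms are equivalent, the completions coincide, and since the distributional action of $\Aa$ does not see $\lambda_0$, the operator $\Aa_0$ is the same for every admissible $\lambda_0$.

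For the spectral gap I would exploit monotonicity of $\forms_\lambda$ in $\lambda$. Writing $\forms_{\lambda_0}[f] - \forms_\Lambda[f]$ as a sum of two manifestly non-negative integrals for $\lambda_0 < \Lambda$, the inequality \eqref{eq:Hardy.Dirac} upgrades to $\forms_{\lambda_0}[f] \ge (\Lambda-\lambda_0)\norm{f}^2$, which, combined with $\norm{f}_S^2 = \forms_{\lambda_0}[f]+\norm{f}^2$, yields coercivity of $S(\lambda_0)$ on $\Dd_S$. By the Lax--Milgram theorem $S(\lambda_0) \in \Bb(\Dd_S,\Dd_{-S})$ is boundedly invertible, so $S(\lambda_0)^{-1} \in \Bb(\Dd_{-S},\Dd_S)$ and $0 \in \rho(S_0(\lambda_0))$; the same holds for every $\lambda \in (\Gamma-1,\Lambda)$. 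After checking the standing hypothesis $B(\dom D_0) \subset \Dd_{-S}$ (again by integration by parts, using that $g \in \dom V$ implies $(1+\lambda_0-V)^{1/2}g \in L^2$), I invoke Theorem~\ref{thm:second.implication}~(i) to pass from $0 \in \rho(S_0(\lambda)) \cap \rho(D_0-\lambda)$ to $\lambda \in \rho(\Aa_0)$. As $\lambda$ ranges over $(\Gamma-1,\Lambda)$ this is precisely \eqref{eq:Dirac.spec.gap}.

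Finally, self-adjointness. Fix a real $\lambda_0$ in the gap; by the previous step $\lambda_0 \in \rho(\Aa_0)$, and tracing the proof of Theorem~\ref{thm:second.implication}~(i) (that is, Proposition~\ref{prop:left.inverse} with $P_S = P_D = 0$) the resolvent $(\Aa_0-\lambda_0)^{-1}$ equals the explicit block operator $\Ll$ with upper-left corner $S_0(\lambda_0)^{-1}$. The form $\forms_{\lambda_0}$ is real and coercive, so $S_0(\lambda_0)$ is the positive self-adjoint operator it represents and $S_0(\lambda_0)^{-1}$ is self-adjoint; moreover $(V-1-\lambda_0)^{-1}$ is a real multiplication, hence self-adjoint, and $C = B^*$ by the integration by parts identity $\dprod{Cf}{g} = \dprod{f}{Bg}$. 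A blockwise adjoint computation then gives $\Ll = \Ll^*$, whence $\Aa_0 - \lambda_0$, and therefore $\Aa_0$, is self-adjoint; density of $\dom\Aa_0$ then follows (alternatively from Proposition~\ref{prop:Aa.dom.dense}~(i), whose hypotheses hold since $\Coo(\R^3)^2 \subset \dom B_0 \cap \dom D_0$ and $\dom S_0(\lambda_0)$ is dense in $\Hh_1$). I expect this functional-analytic self-adjointness to be the main obstacle: the coercivity input is immediate once the Schur form is identified, but verifying $\Ll = \Ll^*$ rigorously requires that the symmetry $C = B^*$ and the self-adjointness of $S_0(\lambda_0)$ survive the passage through the triplet $\Dd_S \subset \Hh_1 \subset \Dd_{-S}$ and the bounded extensions of $S(\lambda_0)$ and $D-\lambda_0$, so that all domains and ranges must be tracked carefully across the weighted spaces.
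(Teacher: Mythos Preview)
Your overall plan---identifying the Schur form with $\forms_\lambda$, using Hardy--Dirac for coercivity, invoking Theorem~\ref{thm:second.implication}~(i) for the gap---is the same as the paper's. Two points deserve comment.

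\emph{The choice of $\Dd_2$.} You take $\Dd_2 = L^2(\R^3;1+\lambda_0-V)$, but the theorem specifies $\dom\Aa_0 \subset \Dd_S \times L^2(\R^3)^2$, so the abstract framework forces $\Dd_2 = L^2(\R^3)^2$; this is what the paper uses (with $\Dd_{-2} = L^2(\R^3;(1+\lambda_0-V)^{-2})^2$, which then requires the separate Esteban--Loss inequality $\int |\sigma\cdot\nabla f|^2/(1+\lambda_0-V)^2 \lesssim \norm{f}_S^2$ to bound $C$). Your weighted choice gives a priori a smaller domain; it happens that the two coincide (if $(f,g)\in\dom\Aa_0$ then $-\i\sigma\cdot\nabla f + Vg \in L^2$ together with $-\i\sigma\cdot\nabla f \in L^2((1+\lambda_0-V)^{-1})$ forces $(1+\lambda_0-V)^{1/2}g\in L^2$), but you would need to supply this step to match the statement.

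\emph{Self-adjointness.} You propose a blockwise verification of $\Ll=\Ll^*$ and flag it as the obstacle; this is indeed where the proof is incomplete. Note that since $\lambda_0$ is real and $\Ll=(\Aa_0-\lambda_0)^{-1}$ is a bounded bijection onto $\dom\Aa_0$, the identity $\Ll=\Ll^*$ is \emph{equivalent} to symmetry of $\Aa_0$, so the blockwise route does not bypass the difficulty. The paper instead shows directly that the numerical range of $\Aa_0$ is real: for $(f,g)\in\dom\Aa_0$ one writes
\[
\iprod{\Aa_0(f,g)}{(f,g)}_\Hh = \dprod{S(\lambda_0)f}{f}_{\Dd_S^*\times\Dd_S} + \lambda_0\norm{(f,g)}_\Hh^2 + \iprod{h}{(D-\lambda_0)^{-1}h},
\]
where $h = Cf + (D-\lambda_0)g \in L^2(\R^3)^2$; each term is manifestly real since $\forms_{\lambda_0}$ and $(V-1-\lambda_0)^{-1}$ are. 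This decomposition is obtained by inserting $B((D-\lambda_0)^{-1}Cf+g)$ and using the defining formula for $B$ on $L^2$ together with the $\Dd_{-2}$-bound for $C$; it is exactly the ``tracking through the triplet'' you anticipate, but organised as a single scalar identity rather than an operator adjoint computation. Combined with the real resolvent point from the gap, this yields self-adjointness without ever writing down $\Ll^*$.
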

	
	\begin{proof}
		For any $\lambda \in (\Gamma - 1,\infty)$, let the quadratic form $\forms_\lambda$ be defined on $\Coo (\R^3)^2$ analogously to \eqref{eq:Dirac.forms.lambda.0}. We first note that $\forms_\la$ is decreasing in $\la$ and that~\eqref{eq:Hardy.Dirac} translates into $\forms_\lambda$ being non-negative for $\lambda \in (\Gamma-1,\Lambda]$. Moreover, for $\lambda \in (\Gamma - 1, \Lambda)$, the forms $\forms_\lambda + \norm{\cdot}^2$ are pairwise equivalent norms. This can be shown similarly to Lemma~\ref{lem:matrix.DE.d.equiv}, by proving that for any $\lambda, \mu > \Gamma-1$ there exist $m_{\lambda,\mu}, M_{\lambda, \mu}>0$ such that a.e.\ in $\R^3$ and for all $f\in\Coo(\R^3)^2$ 
		\begin{equation}
			\label{eq:Dirac.slambda.loc.equiv}
			m_{\lambda, \mu} \forms_\mu [f] + (\mu-\lambda ) \norm{f}^2 \le \forms_\lambda [f] \le M_{\lambda, \mu} \forms_\mu [f] + (\mu-\lambda) \norm{f}^2.
		\end{equation}

		In order to apply the results in Section~\ref{sec:matrix.Schur.complement}, we proceed by indicating the remaining spaces and operators needed for Assumption~\ref{asm:II}. The following constructions are similar to Propositions~\ref{prop:DWE.spaces} and~\ref{prop:DWE.entries} for the wave equation and Propositions~\ref{prop:matrix.DE.spaces} and~\ref{prop:matrix.DE.operators} for the singular coefficient matrix differential operators, thus analogous parts will be merely sketched here. 
		
		We start by defining the spaces and showing that (as topological spaces) they are independent of $\la_0$. This in particular implies that $\Aa_0$ does not depend on $\la_0$. Clearly, $\Hh_1 := \Hh_2 := L^2 (\R^3)^2$ and we set $\Dd_{-S} \defeq \Dd_S^*$ and $\Dd_{-1} \defeq \Dd_1^*$ with $\Dd_1$ defined as the closure
		\begin{equation}
			\Dd_1 \defeq \overline{C_0^\infty (\R^3)^2}^{\norm{\cdot}_1}, \quad \norm{f}_1^2 \defeq \norm{f}_S^2 + \norm{V f}^2 + \norm{\sigma \cdot \nabla f}^2, \quad f \in C_0^\infty (\R^3)^2.
		\end{equation}
		We further introduce the (weighted) spaces
		\begin{equation}
			\label{eq:Dirac.Dd.2}
			\Dd_2 \defeq L^2 (\R^3)^2, \qquad \Dd_{-2} \defeq  L^2 (\R^3; (1 + \lambda_0 - V)^{-2})^2.
		\end{equation}
		One can show that the above spaces are well-defined and satisfy Assumption~\ref{asm:I}~(i), cf.~Propositions~\ref{prop:DWE.spaces} and~\ref{prop:matrix.DE.spaces}. Notice therefore that the weighted Lebesgue measure $(1 + \lambda_0 - V)^{-2} \d \la^3$ is a Borel measure, which implies the density of $L^2(\R^3)^2$  in $\Dd_{-2}$, see~\cite[Ex.~1.5.3~(c)]{Blank-Exner-Havlicek-2008}. The $\la_0$-independence of $\Dd_S$ and $\Dd_{-2}$, respectively, follow from the chain of inequalities~\eqref{eq:Dirac.slambda.loc.equiv} and a similar argument as in the proof of Lemma~\ref{lem:matrix.DE.d.equiv}.
		We continue with defining the matrix entries. To this end, the operators
		\begin{equation}
			A \defeq V+1 \in \Bb (\Dd_S, \Dd_1^*), \qquad B \defeq - \i \sigma \cdot \nabla \in \Bb (L^2(\R^3)^2, \Dd_1^*),
		\end{equation}
		shall be defined as unique bounded extensions of
		\begin{equation}\label{eq:Dirac.AB}
			\begin{aligned}
				\dprod{Af}{g}_{\Dd_1^* \times \Dd_1} & \defeq \int_{\R^3} (V + 1)f \cdot \overline{g} \d x, \\
				\dprod{Bf}{g}_{\Dd_1^* \times \Dd_1} & \defeq \i \int_{\R^3} f \cdot \overline{(\sigma \cdot \nabla g)} \d x,
			\end{aligned}
			\qquad \quad f,g \in C_0^\infty (\R^3)^2,
		\end{equation}
		cf.~the constructions \eqref{eq:DWE.CD} and \eqref{eq:matrix.DE.AB}. This is possible due to the inequalities 
		\begin{equation}\label{eq:Dirac.AB.bdd}
				|(Af,g)_{\Dd_1^* \times \Dd_1}| \le \|f\|_S \|g\|_1, \qquad |(Bf,g)_{\Dd_1^* \times \Dd_1}| \le \|f\| \|g\|_1,
		\end{equation}
		which hold for $f, g \in \Coo(\R^3)^2$ by the construction of $\|\cdot\|_1$ and since $\| \cdot \| \le \| \cdot \|_S$. Moreover, the entry 
		\begin{equation}
			C \defeq -\i \sigma \cdot \nabla \in \Bb (\Dd_S, \Dd_{-2})
		\end{equation}
		shall be constructed as unique bounded extension of
		\begin{equation}
			Cf \defeq - \i \sigma \cdot \nabla f, \qquad f \in C_0^\infty (\R^3)^2,
		\end{equation}
		similarly to~\eqref{eq:matrix.DE.C} and~\eqref{eq:matrix.DE.C.bounded}. To this end, see~\cite[Prop.\ 6]{Esteban-Loss-2007} for the proof of the inequality
		\begin{equation}\label{eq:Dirac.C.bdd}
			\|Cf \|^2_{\Dd_{-2}} = \int_{\R^3} \frac{|\sigma \cdot \nabla f|^2}{(1+\la_0-V)^2} \d x \lesssim  \|f\|_S
		\end{equation}
		for $f \in \Coo(\R^3)^2$. Finally,  let
		\begin{equation}
			D \defeq (V - 1) \in \Bb (L^2(\R^3)^2, \Dd_{-2}), \quad  D_0 \defeq D\vert_{\dom D_0}, \quad \dom D_0 \defeq (\dom V)^2.
		\end{equation}

		It follows that the operators defined above satisfy Assumptions~\ref{asm:I}~(ii) and \ref{asm:II}~(ii), cf.\ Propositions~\ref{prop:DWE.entries} and~\ref{prop:matrix.DE.operators}, thus Assumption~\ref{asm:II} holds. Moreover, we set
		\begin{equation}
			D^\ddagger (\lambda) \defeq (D - \lambda)^{-1} \in \Bb (\Dd_{-2}, L^2(\R^3)^2), \qquad \lambda \in \Theta \defeq (\Gamma - 1, \infty) \subset \rho(D_0),
		\end{equation}
		and point out that $\Theta$ satisfies the inclusion \eqref{eq:Theta}, cf.\ Lemma~\ref{lem:matrix.DE.d.equiv} regarding the claimed boundedness of $D^\ddagger(\lambda)$.
		The matrix $\Aa_0$ and its first Schur complement $S_0 (\cdot)$ are well-defined as in Definition~\ref{def:matrix.Schur.compl.family} with $\dom \Aa_0$ as in \eqref{eq:Dirac.dom}. It is obvious that the action of $\Aa_0$ coincides with its standard definition by distributional operations, where $A$ and $C$ are extended by continuity to $\Dd_S$ analogously as in Remarks~\ref{rem:DWE.distr.act} and~\ref{rem:matrix.DE.distr.operators}. It thus remains to explain that $\Aa_0$ is densely defined, self-adjoint and that \eqref{eq:Dirac.spec.gap} holds. 

		Let $\lambda \in (\Gamma - 1, \infty)$ be arbitrary but fixed. Considering the definition of $S(\la)$ and the inequalities in~\eqref{eq:Dirac.slambda.loc.equiv}, it follows that
		\begin{equation}
			\label{eq:Dirac.S.bdd}
			S(\lambda) \in \Bb (\Dd_S, \Dd_S^*), \qquad (S(\lambda)f, f)_{\Dd_S^* \times \Dd_S} = \forms_\lambda[f], \qquad f \in C_0^\infty(\R^3)^2,
		\end{equation}
		cf.\ Proposition~\ref{prop:matrix.DE.S.bounded}. Moreover, it is not difficult to use~\eqref{eq:Dirac.slambda.loc.equiv} to show that there exists a shift $z_\lambda \in \R$ such that
		\begin{equation}
			\label{eq:Dirac.shift.coercive}
			\abs{\dprod{(S (\lambda)-z_\lambda)f}{f}_{\Dd_S^* \times \Dd_S}} \gtrsim \norm{f}_S^2, \qquad f\in C_0^\infty (\R^3)^2,
		\end{equation}
		which means that $S (\lambda)-z_\lambda$ is coercive on $\Dd_S$. Hence, by the Lax-Milgram Theorem, see e.g.\ \cite[Thm.\ IV.1.1]{Edmunds-Evans-1987}, we conclude
		\begin{equation}
			S_z^\ddagger (\lambda) \defeq (S(\lambda)-z_\lambda)^{-1} \in \Bb (\Dd_S^*, \Dd_S), \qquad z_\lambda\in\rho(S_0(\lambda)),
		\end{equation}
		and that $\dom S(\lambda)$ is dense in $\Dd_S$ and thus also in $L^2 (\R^3)^2$. In particular, one can choose $z_\lambda=0$ if $\lambda \in (\Gamma-1,\lambda_0)$ and thus
		\begin{equation}\label{eq:Dirac.S.gap}
			(\Gamma-1,\lambda_0) \subset \rho(S_0).
		\end{equation}
		
		We proceed to show that the numerical range of $\Aa_0$ is a subset of $\R$, cf.~the proofs of \cite[Thm.\ 4]{Esteban-Loss-2007} and~\cite[Thm.~1]{Esteban-Loss-2008}. To this end, let $(f,g) \in \dom \Aa_0$. Then $S(\la_0) f \in \Dd_S^*$ by~\eqref{eq:Dirac.S.bdd} and hence
		\begin{equation}
			\langle Af + Bg, f \rangle = (S(\la_0) f, f)_{\Dd_S^*\times \Dd_S} + \la_0 \|f\|^2 + (B ((D-\la_0)^{-1} Cf + g), f)_{\Dd_S^*\times \Dd_S}.
		\end{equation}
		Moreover, with $f_n \in \Coo(\R^3)^2$ converging to $f$ in $\Dd_S$, we have
		\begin{equation}
			\begin{aligned}
				(B ((D-\la_0)^{-1} Cf + g), f_n)_{\Dd_S^*\times \Dd_S} & = \i \int_{\R^3} ( (D-\la_0)^{-1}Cf + g) \overline{(\sigma \cdot\nabla f_n)} \d x \\
				& = \int_{\R^3} (Cf + (D-\la_0) g) \frac{\overline{C f_n}}{V - 1 - \la_0} \d x \\[3mm]
				& = \langle Cf + (D-\la_0) g, (D-\la_0)^{-1} Cf_n \rangle;
			\end{aligned}
		\end{equation}
		notice that the defining formula~\eqref{eq:Dirac.AB} for $B$ holds also for $f \in L^2(\R^3)^2$ by inequality~\eqref{eq:Dirac.AB.bdd}. Considering~\eqref{eq:Dirac.C.bdd}, the above identity is continuous with respect to convergence in $\Dd_S$ and remains valid in the limit $n\to \infty$; notice that $(f, g) \in \dom \Aa_0$ implies that $Cf + (D-\la_0) g \in L^2 (\R^3)^2$. In total, we have
		\begin{equation}
			\begin{aligned}
				\langle \Aa_0 (f, g), (f, g) \rangle_{\Hh} & = (S(\la_0) f, f)_{\Dd_S^*\times \Dd_S} + \la_0 \|f\|^2  \\[1mm]
				& \qquad \qquad  + \langle Cf + (D-\la_0) g, (D-\la_0)^{-1} Cf \rangle \\[1mm]
				& \qquad \qquad \qquad \qquad + \langle Cf + (D- \la_0) g, g \rangle + \la_0 \|g\|^2 \\[1mm]
				& = (S(\la_0) f, f)_{\Dd_S^*\times \Dd_S} + \la_0 (\|f\|^2 + \|g\|^2) \\[1mm]
				& \qquad \qquad  + \langle Cf + (D-\la_0) g, (D-\la_0)^{-1} (Cf + (D-\la_0) g) \rangle.
			\end{aligned}
		\end{equation}
		The last expression above, however, is clearly real.
		
		It remains to employ Corollary~\ref{cor:family.second.implication}~(i) in order to conclude~\eqref{eq:Dirac.spec.gap}; indeed, it then follows from $W(\Aa_0) \subset \R$ and~\eqref{eq:Dirac.spec.nonempty} below that $\Aa_0$ is densely defined and self-adjoint. (Note that the density of $\dom \Aa_0$ can also be shown using Corollary~\ref{cor:family.dense.domain}~(i).) Considering the above, it suffices to prove $B ((\dom V)^2) \subset \Dd_S^*$. The latter follows from a density argument and~\eqref{eq:Dirac.C.bdd}, which leads to the estimate
		\begin{equation}
			|(Bf, g)_{\Dd_1^*\times \Dd_1}| \le \int_{\R^3} |(V-1-\la_0) f| \frac{|\sigma \cdot \nabla g|}{|V-1-\la_0|} \d x \le \|(V-1-\la_0) f\| \|g\|_S
		\end{equation}
		for $f \in (\dom V)^2$ and $g \in \Coo(\R^3)^2$, cf.\ \cite[Lem.\ 7]{Esteban-Loss-2007} and~\cite[Lem.~4]{Esteban-Loss-2008}. Corollary~\ref{cor:family.second.implication}~(i) is then applicable with $\Sigma \defeq (\Gamma -1, \lambda_0)$ and gives
		\begin{equation}\label{eq:Dirac.spec.nonempty}
			\sigma (\Aa_0) \cap (\Gamma -1, \lambda_0) \subset \sigma (S_0) \cap (\Gamma -1, \lambda_0) = \emptyset,
		\end{equation}
		see~\eqref{eq:Dirac.S.gap}. Since $\lambda_0$ was arbitrary and $\Aa_0$ is independent of $\lambda_0$, \eqref{eq:Dirac.spec.gap} follows.
	\end{proof}
	
	\begin{rem}\label{rem:Dirac}
		Besides the spectral gap \eqref{eq:Dirac.spec.gap} found in \cite{Esteban-Loss-2007} leading to the self-adjoint\-ness of $\Aa_0$, our method provides the equivalence of (point and essential) spectra of $\Aa_0$ and its first Schur complement on $[\Lambda, \infty)$, i.e.
		\begin{equation}
			\sigma (\Aa_0) \cap [\Lambda, \infty) = \sigma (S_0 (\cdot)) \cap [\Lambda, \infty)
		\end{equation}
		and analogously with $\sigma_{\rm p}$ and $\sigma_{\operatorname{e2}}$ instead of $\sigma$, see Corollaries~\ref{cor:point.spectrum.family}--\ref{cor:family.second.implication} and the proof of Theorem~\ref{thm:Dirac}. (Due to the self-adjointness of $\Aa_0$, it suffices to consider real spectral parameters.) For $\lambda \in (\Gamma -1, \infty)$, our realisation of the Schur complement is defined as the maximal restriction $S_0 (\lambda)\defeq S(\lambda)\vert_{\dom S_0 (\lambda)}$ where
		\begin{equation}
			\begin{aligned}
				S(\lambda) & \defeq V + 1 - \lambda + (\sigma \cdot \nabla) (V-1-\lambda)^{-1} (\sigma \cdot \nabla), \\
				\dom S_0(\lambda) & \defeq \set{f \in \Dd_S}{Vf + (\sigma \cdot \nabla) (V-1-\lambda)^{-1} (\sigma \cdot \nabla) f \in L^2 (\R^3)^2}.
			\end{aligned}
		\end{equation}
		By construction, the above operations are understood in the standard distributional sense. More precisely, the action of $S(\lambda)$ on $\Dd_S$ is obtained by continuous extension, i.e.\ as a limit in $\Dd'(\R^3)^2$ of distributions of the form
		\begin{equation}
			(V + 1 -\la) f + (\sigma \cdot \nabla) (V-1-\lambda)^{-1} (\sigma \cdot \nabla) f \in \Dd' (\R^3)^2, \quad f\in\Coo(\R^3)^2,
		\end{equation}
		cf.\ Remarks~\ref{rem:DWE.distr.act} and~\ref{rem:matrix.DE.distr.operators}. \hfill //
	\end{rem}


\section{Klein-Gordon equation with purely imaginary potential}
\label{sec:KG}

We consider a Klein-Gordon equation on $\Rn$ with potential $V$ and mass $m>0$
\begin{equation}
	(\partial_t - \i V (x))^2 u(x,t) - \Delta_x u(x,t) + m^2 u(x,t) = 0, \qquad x \in \Rn, \quad t \ge 0;
\end{equation}
here the involved physical constants are normalised for the sake of simplicity, see e.g.~\cite{Veselic-1991} for the full generality. The equation above has been studied in a large number of works, employing various (operator theoretic) approaches, see e.g.~\cite{Langer-Najman-Tretter-2006,Langer-Najman-Tretter-2008,Veselic-1991}. After suitable transformations, one arrives at the following first order Cauchy problem 
\begin{equation}
	\label{eq:KG.Cauchy}
	\partial_t \left(
	\begin{array}{c}
		u_1 (t,x) \\
		u_2 (t,x)
	\end{array} \right) = \i \left(
	\begin{array}{cc}
		0 & 1 \\
		- \Delta_x + m^2 - V(x)^2 & 2V(x)
	\end{array} \right)\left(
	\begin{array}{c}
		u_1 (t,x) \\
		u_2 (t,x)
	\end{array} \right).
\end{equation}
We mention that also another system of equations arising by means of different transformations has been of interest, for instance in~\cite{Langer-Najman-Tretter-2008,Veselic-1991}. Motivated by the underlying physical problem, the potential is assumed to be real-valued in all works above. This results in a certain indefiniteness of the problem, which makes its spectral analysis less straightforward, see e.g.~\cite{Langer-Najman-Tretter-2006} where Krein spaces together with a smallness condition for the potential were employed.

For purely imaginary potentials $V = \i W$ with real-valued $W$, however, the problem can be reduced to a suitable wave equation, see~\eqref{KG.DWE} below; note that the latter has a special structure since the damping $W$ is relatively bounded with respect to $-\Delta + m^2 + W^2$. Assuming only that the potential is locally square integrable, we define the matrix expression
\begin{equation}\label{eq:KG.Aa}
	\Aa : = \begin{pmatrix}
		0 & I \\
		- \Delta + m^2 + W^2 & 2\i W
	\end{pmatrix}
\end{equation}
on the right hand side of~\eqref{eq:KG.Cauchy} as a densely defined, boundedly invertible operator in a suitable Hilbert space and show spectral equivalence to its second Schur complement
\begin{equation}\label{eq:KG.Ss}
	S(\la) := \frac 1\la \left( -\Delta + m^2 + (W + \la \i )^2 \right), \qquad \la \in \C \setminus \{0\}, 
\end{equation}
see Theorem~\ref{thm:KG} below. Moreover, in Example~\ref{ex:KG}, we consider the special case $W(x) =  x$ in one dimension and show that the spectrum of the resulting operator matrix is empty, which is in line with the analogous result for the Airy operator in case of the Schr\"odinger equation.

We denote $\norm{\cdot} \defeq \norm{\cdot}_{L^2 (\Rn)}$ and  $\iprod{\cdot}{\cdot} \defeq \iprod{\cdot}{\cdot}_{L^2 (\Rn)}$ and, if its meaning is clear from the context, adapt the same notation for norm and inner product in $L^2 (\Rn)^n$. The operator matrix and its Schur complement are defined as follows
\begin{equation}\label{eq:KG.Aa0.S0}
	\Aa_0 := \Aa\vert_{\dom \Aa_0}, \qquad S_0 (\la) := S(\la)\vert_{\dom S_0}, \qquad \la \in \C \setminus \{0\},
\end{equation}
acting in the underlying Hilbert spaces $\Ww (\Rn) \oplus L^2 (\Rn)$ and $L^2 (\Rn)$, respectively, with $\Aa$ and $S(\la)$ as in~\eqref{eq:KG.Aa} and~\eqref{eq:KG.Ss} understood in the standard distributional sense, on  their respective domains
\begin{equation}\label{eq:KG.dom}
	\begin{aligned}
		\dom \Aa_0 & := \dom S_0 \times \Ww (\Rn),  \\
		\dom S_0 & := \set{ f \in \Ww (\Rn)}{ (\Delta  - W^2) f \in L^2 (\Rn) };
	\end{aligned}
\end{equation}
notice that the domain of $S_0 (\la)$ is independent of $\la \in \C\setminus \{0\}$ and that the domain of $\Aa_0$ is diagonal due to the previously mentioned relative boundedness. In the above, the first component of the product space is
\begin{equation}\label{eq:KG.Ww.Rn}
	\Ww (\Rn) := H^1 (\Rn) \cap \dom W
\end{equation}
considered as Hilbert space equipped with the inner product
\begin{equation}\label{eq:KG.Ww}
	\langle f,g\rangle_\Ww^2 := \int_{\Rn} \nabla f \cdot\overline{\nabla g} \d x + \int_{\Rn} W^2 f \overline g \d x + \int_{\Rn}  f \overline g \d x,	\qquad f, g \in \Ww(\Rn).
\end{equation}

\begin{thm}\label{thm:KG}
	Assume that $W \in L^2_{\operatorname{loc}} (\Rn,\R)$ and let $\Aa_0$ and $S_0(\cdot)$ be as in~\eqref{eq:KG.Aa0.S0}, \eqref{eq:KG.dom}. Then $\Aa_0$ is closed with $0 \in \rho (\Aa_0)$ and its domain is dense in $\Ww (\Rn) \oplus \Ww (\Rn)$ and in $\Ww (\Rn) \oplus L^2 (\Rn)$.	Moreover, the {\rm(}point and essential{\rm)} spectra of $\Aa_0$ and $S_0 (\cdot)$ are equivalent, more precisely,
	\begin{equation}\label{eq:KG.equiv}
		\sigma (\Aa_0)  = \sigma (S_0 (\cdot)), \quad \sigma_{\rm p} (\Aa_0)  = \sigma_{\rm p} (S_0 (\cdot)), \quad \sigma_{\rm e2} (\Aa_0)  = \sigma_{\rm e2} (S_0 (\cdot)).
	\end{equation}
\end{thm}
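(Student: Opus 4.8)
The plan is to realise $\Aa$ and its second Schur complement $S(\cdot)$ within the abstract framework of Section~\ref{sec:matrix.Schur.complement}, following Remark~\ref{rem:second.Schur.compl}, and then to read off the spectral identities from Corollaries~\ref{cor:point.spectrum.family}--\ref{cor:family.second.implication}. Concretely, I would set $\Hh_1 \defeq \Ww(\Rn)$ and $\Hh_2 \defeq L^2(\Rn)$, take $\Dd_1 = \Dd_{-1} \defeq \Ww(\Rn)$ and $\Dd_S \defeq \Ww(\Rn)$ (so that, crucially, the form domain of the Schur complement coincides with $\Ww(\Rn)$; this is the manifestation of the relative boundedness of $W$ with respect to $-\Delta + m^2 + W^2$, and is the feature distinguishing this problem from the general damped wave equation), and put $\Dd_{-S} = \Dd_{-2} \defeq \Ww(\Rn)^* = \Dd_S^*$. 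The entries are then $A = A_0 \defeq 0 \in \Bb(\Ww(\Rn))$, $B \defeq I \in \Bb(\Dd_S, \Dd_{-1})$, $C \defeq -\Delta + m^2 + W^2 \in \Bb(\Dd_1, \Dd_{-2})$ and $D \defeq 2\i W \in \Bb(\Dd_S, \Dd_{-2})$, all defined via their sesquilinear forms on $\Coo(\Rn)^2$ and extended by continuity as in Definitions~\ref{def:DWE.operators} and~\ref{def:matrix.DE.operators}. Checking that these choices satisfy Assumption~\ref{asm:II} (in the form of Remark~\ref{rem:second.Schur.compl}) is routine and parallels Propositions~\ref{prop:DWE.spaces}, \ref{prop:DWE.entries} and~\ref{prop:matrix.DE.operators}; note $\rho(A_0) = \C \setminus \{0\} = \Theta$ since $A_0 = 0$, and $\dom \Aa_0 = \dom S_0 \times \Ww(\Rn)$ because $Wg \in L^2(\Rn)$ for every $g \in \Ww(\Rn)$.

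The analytic heart is the analogue of Lemma~\ref{lem:DWE.shift.extension}: for every $\la \in \C \setminus \{0\}$ one has $S(\la) \in \Bb(\Dd_S, \Dd_S^*)$ (as in Proposition~\ref{prop:matrix.DE.S.bounded}), and there is a shift $z_\la \in \rho(S_0(\la))$ with $(S(\la) - z_\la)^{-1} \in \Bb(\Dd_S^*, \Dd_S)$ and $\dom S_0(\la)$ dense in $\Dd_S$. Writing $\la = \alpha + \i\beta$, so that $(W + \la\i)^2 = (W - \beta)^2 - \alpha^2 + 2\i\alpha(W-\beta)$, multiplication of $S(\la)$ by the unimodular factor $\la/|\la|$ yields
\[
\re\, \dprod{\tfrac{\la}{|\la|} S(\la) f}{f}_{\Dd_S^* \times \Dd_S} = \tfrac{1}{|\la|}\left( \norm{\nabla f}^2 + m^2\norm{f}^2 + \norm{(W-\beta)f}^2 - \alpha^2\norm{f}^2 \right),
\]
and the elementary pointwise identity $\tfrac12 W^2 - 2\beta W + 2\beta^2 = \tfrac12(W-2\beta)^2 \ge 0$, i.e.\ $\norm{(W-\beta)f}^2 \ge \tfrac12\norm{Wf}^2 - \beta^2\norm{f}^2$, shows the right-hand side is $\gtrsim \norm{f}_\Ww^2 - C_\la\norm{f}^2$. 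Hence $S(\la) - z_\la$ is bounded and coercive on $\Dd_S = \Ww(\Rn)$ for a suitable $z_\la$, and the Lax--Milgram theorem, see \cite[Cor.~IV.1.2]{Edmunds-Evans-1987}, delivers the claimed extension exactly as in Lemma~\ref{lem:DWE.shift.extension}. The decisive point, and the reason the equivalence holds on all of $\C\setminus\{0\}$ rather than on a half-plane as for the general damped wave equation, is that the leading terms $\norm{\nabla f}^2 + \norm{Wf}^2$ always survive with positive sign after the rotation, for every $\la\neq 0$. Moreover, choosing $\la = \i\beta$ with $0 < |\beta| < m$ gives $(W+\la\i)^2 = (W-\beta)^2$ and outright coercivity of $S(\la)$, so $0 \in \rho(S_0(\la))$ for such $\la$.

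With this in hand I assemble the conclusions. Bounded invertibility at $0$ must be treated directly, since $0 \notin \rho(A_0)$ precludes the Schur reduction there: the operator $L \defeq -\Delta + m^2 + W^2 \ge m^2 > 0$ is boundedly invertible in $L^2(\Rn)$ with $\dom L = \dom S_0$, so $\Aa_0(f,g) = (u,v)$ is solved uniquely by $g = u$ and $f = L^{-1}(v - 2\i W u)$ (here $Wu \in L^2(\Rn)$ as $u \in \Ww(\Rn)$), and the estimate $\norm{f}_\Ww^2 \lesssim |\iprod{Lf}{f}| \le (\norm{v}+2\norm{Wu})\norm{f}_\Ww$ gives $\Aa_0^{-1} \in \Bb(\Ww(\Rn)\oplus L^2(\Rn))$; in particular $\Aa_0$ is closed and $0 \in \rho(\Aa_0)$. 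For the spectra I then apply Corollary~\ref{cor:point.spectrum.family} (point spectra on $\rho(A_0) = \C\setminus\{0\}$), Corollary~\ref{cor:family.first.implication}~(i),~(ii) and Corollary~\ref{cor:family.second.implication}~(i),~(ii) with $\Sigma = \C\setminus\{0\}$; all hypotheses now hold, using that $A_0 - \la = -\la I$ is boundedly invertible (hence in $\Ff_+(\Hh_1)$ with trivial cokernel, so that Theorem~\ref{thm:first.implication}~(ii) applies), that $C(\dom A_0) \subset \Ww(\Rn)^* = \Dd_{-S}$, and the shift extension above. This gives $\sigma(\Aa_0) \cap (\C\setminus\{0\}) = \sigma(S_0(\cdot))$ and likewise for $\sigma_{\rm p}$ and $\sigma_{\rm e2}$; combined with $0 \in \rho(\Aa_0)$ and $\sigma(S_0(\cdot)) \subset \Theta = \C\setminus\{0\}$, the full identities~\eqref{eq:KG.equiv} follow. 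Finally, density of $\dom \Aa_0$ in $\Ww(\Rn)\oplus\Ww(\Rn) = \Dd$ and in $\Ww(\Rn)\oplus L^2(\Rn) = \Hh$ follows from Corollary~\ref{cor:family.dense.domain}~(ii), applied at a point $\la_0 = \i\beta$ with $0<|\beta|<m$, where $\Dd_{-S} = \Dd_{-2}$ and $0 \in \rho(S_0(\la_0))$. The main obstacle is the coercivity estimate displayed above together with the careful bookkeeping needed to transcribe the one-sided framework of Section~\ref{sec:abstract.results} into the second Schur complement setting of Remark~\ref{rem:second.Schur.compl}; the separate direct treatment of $\la = 0$ is unavoidable precisely because $A_0 = 0$ renders that point inaccessible to the Schur reduction.
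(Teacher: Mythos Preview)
Your proof is correct. Both you and the paper reduce the problem to a coercivity estimate for the Schur complement on $\Ww(\Rn)$, invoke Lax--Milgram to supply the bounded shift-inverse, and feed this into Corollaries~\ref{cor:point.spectrum.family}--\ref{cor:family.second.implication}; both also handle $\lambda=0$ by a direct inversion (you solve $g=u$, $Lf=v-2\i Wu$, the paper exhibits the inverse matrix explicitly).

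The organisational difference is that the paper first conjugates $\Aa_0-\lambda$ by $\diag(I,\i)$ to obtain the damped-wave matrix $\widetilde\Aa_0-\i\lambda$ with damping $a=W$ and potential $q=m^2+W^2$, then re-runs the machinery of Section~\ref{sec:DWE} (suitably adapted, since $W$ is indefinite) and transforms back. You skip this detour and apply the second-Schur-complement framework of Remark~\ref{rem:second.Schur.compl} directly to $\Aa$ itself. Your route is shorter and makes transparent why the equivalence holds on all of $\C\setminus\{0\}$: after the rotation by $\lambda/|\lambda|$ the real part of the form is exactly $|\lambda|^{-1}(\norm{\nabla f}^2+m^2\norm{f}^2+\norm{(W-\beta)f}^2-\alpha^2\norm{f}^2)$, with the imaginary damping term $2\i\alpha(W-\beta)$ rotating into the imaginary part and hence causing no loss, for every $\lambda\neq0$. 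The paper's detour buys the narrative link to Section~\ref{sec:DWE}, at the cost of having to redo the coercivity argument anyway because the damping is indefinite. Analytically the two proofs are equivalent---your completion-of-squares bound $\norm{(W-\beta)f}^2\ge\tfrac12\norm{Wf}^2-\beta^2\norm{f}^2$ and the paper's Young inequality $\int|W||f|^2\le\tfrac\delta2\int W^2|f|^2+\tfrac1{2\delta}\int|f|^2$ encode the same form-relative-boundedness that makes $\Dd_S=\Ww(\Rn)$ the right choice.
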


\begin{proof}
	We relate the problem to a suitable damped wave equation and apply the results in Section~\ref{sec:DWE}. In detail, for $\la \in \C$ we have
	\begin{equation} 
		\Aa_0 - \la = \i \diag (I, \i)^{-1} (\widetilde \Aa_0 -  \i\la ) \diag (I, \i)
	\end{equation}
	with $\widetilde \Aa_0$ being the following linear operator in $\Ww (\Rn) \oplus L^2 (\Rn)$
	\begin{equation}\label{KG.DWE}
		\widetilde \Aa_0 := \begin{pmatrix}
			0 & I \\
			\Delta - (m^2 + W^2) & - 2 W
		\end{pmatrix}, \qquad \dom \widetilde \Aa_0 := \dom \Aa_0.
	\end{equation}
	This clearly gives the equivalence
	\begin{equation}\label{eq:KG.DWE.equiv}
		\la \in \sigma (\Aa_0) \iff  \i\la \in \sigma (\widetilde \Aa_0),
	\end{equation}
	and analogously with $\sigma_{\rm p}$ and $\sigma_{\operatorname{e2}}$ instead of $\sigma$.

	We apply Theorem~\ref{thm:DWE} to $\widetilde \Aa_0$ and its second Schur complement $\widetilde S_0 (\cdot)$ in a suitable way, i.e.\ with $\Omega := \Rn$, the potential $q := m^2 + W^2$ and the damping $a:= W$ therein. Even though the latter might be indefinite, it is relatively bounded with bound zero with respect to the potential in the sense of quadratic forms, and the spectral equivalence can thus be implemented analogously. Indeed, merely the following adjustments have to be made.
	
	Instead of~\eqref{eq:DWE.norm.Ww.Omega},~\eqref{eq:DWE.Dd.S} and~\eqref{eq:DWE.norm.S}, define $\Ww (\Omega) := \Ww (\Rn)$ as in~\eqref{eq:KG.Ww.Rn}, \eqref{eq:KG.Ww}, and set $\Dd_S := \Ww (\Rn)$; notice that~\eqref{eq:DWE.norm.Ww.Omega} and~\eqref{eq:KG.Ww} give equivalent norms and, as a topological space, $\Ww (\Rn)$ is in fact independent of choosing either one of them. Taking into account these modifications, Propositions~\ref{prop:DWE.spaces},~\ref{prop:DWE.entries} and~\ref{prop:DWE.S.action} remain valid. Moreover, since $\Ww (\Rn) \subset \dom W$, the domain of $\widetilde \Aa_0$ defined as in~\eqref{eq:DWE.dom.Aa0.S0} indeed coincides with~$\dom  \Aa_0$ in~\eqref{eq:KG.dom}. In view of Lemma~\ref{lem:DWE.shift.extension}, we consider the Schur complement
	\begin{equation}
		\widetilde S(\la) =  - \frac1 \la \left( -\Delta + m^2 + W^2 + 2\la W + \la^2 \right) \in \Bb (\Ww (\Rn), \Ww (\Rn) ^*),
	\end{equation}
	and the restriction $\widetilde S_0 (\cdot)$ to its maximal domain in $L^2 (\Rn)$, see~\eqref{eq:DWE.def.Aa.S} and~\eqref{eq:DWE.dom.Aa0.S0}, which since $\Ww (\Rn) \subset \dom W$, is given by
	\begin{equation}\label{eq:KG.dom.S0}
		\dom \widetilde S_0 (\la) = \dom S_0, \quad \la \in \C \setminus \{0\},
	\end{equation}
	where $\dom S_0$ is as in~\eqref{eq:KG.dom}. In order to prove the claims analogously to Theorem~\ref{thm:DWE}, it thus suffices to justify that $\dom S_0$ is dense in $L^2 (\Rn)$ and  that, for every $\la \neq 0$, there exists $z_\la \in \C$ such that~\eqref{eq:DWE.S.shift.inv} holds and $z_\la = 0$ can be chosen if $\la >0$ is sufficiently large. This, however, follows in a straightforward way from the order zero form-relative boundedness of the damping with respect to the potential. More precisely, using Cauchy-Schwarz' and Young's inequalities, one shows that for every $\delta >0$ it holds that
	\begin{equation}
		\int_{\Rn} |W| |f|^2 \d x \le \frac\delta 2  \int_{\Rn} W^2 |f|^2 \d x + \frac{1}{2\delta} \int_{\Rn}|f|^2 \d x, \quad f \in \dom (W^2).
	\end{equation}
	Hence, for $\la \in \C \setminus \{0\}$ and $f \in \Ww (\Rn)$, the above gives
	\begin{equation}
		\begin{aligned}
			\re ( -\la \widetilde S(\la) f, f)_{\Ww(\Rn) \times \Ww(\Rn)^*} & \ge \norm{\nabla f}^2 + (1 -  |\la|\delta) \norm{Wf}^2 \\
			& \qquad \qquad \qquad  + (m^2 + \re (\la^2) - |\la|\delta^{-1}) \norm{f}^2,
		\end{aligned}
	\end{equation}
	implying the existence of $z_\la\in\C$ such that $\widetilde S(\la) - z_\la $ is coercive on $\Ww(\Rn)$ and also that $z_\la = 0$ is a possible choice for any $\la >0$.
	The claimed invertibility in~\eqref{eq:DWE.S.shift.inv} now follows from the Lax-Milgram-Theorem, see~\cite[Cor.\ IV.1.2]{Edmunds-Evans-1987}; cf.\ the proofs of Lemmas~\ref{lem:DWE.shift.extension} and~\ref{lem:matrix.DE.S.coercive}. Combining the above, Corollaries~\ref{cor:family.dense.domain}~(ii) and~\ref{cor:point.spectrum.family}--\ref{cor:family.second.implication}, we obtain the density of $\dom \Aa_0 = \dom \widetilde \Aa_0$ in the claimed spaces, as well as
	\begin{equation}\label{eq:KG.tilde.equ}
		0 \neq \la \in \sigma (\widetilde \Aa_0) \iff \la \in \sigma (\widetilde S_0 (\cdot)),
	\end{equation}  
	and analogously with $\sigma_{\rm p}$ and $\sigma_{\operatorname{e2}}$ instead of $\sigma$.
	
	It remains to show that $\Aa_0$ is boundedly invertible. The equivalence~\eqref{eq:KG.equiv} then follows from \eqref{eq:KG.DWE.equiv},~\eqref{eq:KG.tilde.equ} and since for $\la \neq 0$ clearly 
	\begin{equation}
		0 \in \sigma (\widetilde S_0 (\i \la)) \iff 0 \in \sigma (S_0 (\la)),
	\end{equation}
	and analogously with $\sigma_{\rm p}$ and $\sigma_{\operatorname{e2}}$ instead of $\sigma$.
	To show $0 \in \rho (\Aa_0)$, in view of~\eqref{eq:KG.DWE.equiv}, it suffices to prove that
	\begin{equation}
		\widetilde \Aa_0^{-1 } = \Rr := \begin{pmatrix}
			- 2  T_0 ^{-1} W & - T_0^{-1}\\
			I & 0
		\end{pmatrix} \in \Bb (\Ww (\Rn), L^2 (\Rn))
	\end{equation}
	where $T_0$ is the linear operator in $L^2 (\Rn)$ given by
	\begin{equation}
		T_0 := - \Delta + m^2 + W^2, \qquad \dom T_0 := \dom S_0.
	\end{equation}
	However, the claimed boundedness of $\Rr$ readily follows from
	\begin{equation}
		W \in \Bb(\Ww(\R^n), L^2 (\R^n)), \qquad T_0^{-1} \in \Bb(L^2 (\R^n), \Ww (\R^n)),
	\end{equation}
	and the proof of $\widetilde \Aa_0 \Rr = I$ and of $\Rr \widetilde \Aa_0 = I$ on $\dom \Aa_0$ is straightforward.
\end{proof}

\begin{exple}\label{ex:KG}
	For the one dimensional purely imaginary potential $V = \i x$, our realisation of the Klein-Gordon Cauchy problem~\eqref{eq:KG.Cauchy} has empty spectrum. More precisely, we show that if $n=1$ and  $W (x) := x$, $x \in \R$, then $\sigma (\Aa_0) = \emptyset$.
	
	By Theorem~\ref{thm:KG}, we have $0 \in \rho (\Aa_0)$ and that $\sigma (\Aa_0)  = \sigma (S_0 (\cdot))$, where
	\begin{equation}
		S_0 (\la)  = \frac{1}{\la} \left( -\partial_x^2 + m^2 + (x + \i  \la)^2 \right), 
		\qquad \la \in \C \setminus\{0\},
	\end{equation}
	is the operator family in $L^2 (\R)$ on the ($\la$-independent) domain
	\begin{equation}
		\dom S_0 = H^2 (\R) \cap \dom (x^2);
	\end{equation}
	see ~\cite[Prop.\ 2.6.\ (i)]{Boegli-Siegl-Tretter-2017} for the domain separation property and note that
	\begin{equation}
		\dom (m^2 + (x + \i \la)^2) = \dom (x^2).
	\end{equation}
	The claim thus follows if we show $\sigma(S_0 (\cdot)) = \emptyset$. It is easy to see that 
	\begin{equation}
		T(\la) := \la S_0(\la) , \quad T(0) := - \partial_x^2 + m^2 + x^2, \quad \dom T (\la):= \dom S_0, \quad \la \in \C,
	\end{equation}
	is a holomorphic family of type (A) in the sense of~\cite[Sec.\ VII.2]{Kato-1995}; notice that in fact $T (\cdot)$ is also type (B) holomorphic, see~\cite[Sec.\ VII.4]{Kato-1995}, with form domain $\Ww(\R)$ as in \eqref{eq:KG.Ww.Rn},~\eqref{eq:KG.Ww}.	Since for every $\la \neq 0$, the operator $T (\la)$ is a bound zero perturbation of $T(0)$ and the latter has compact resolvent, see~\cite[Thm.\ XIII.67]{Reed-Simon-1978}, also $T (\la)$ has compact resolvent; see~\cite[Thm.\ IV.1.16]{Kato-1995} and note that
	\begin{equation}
		\norm{(T(0)-\mu)^{-1}}\to 0, \qquad \mu \to -\infty.
	\end{equation}
	Due to the analyticity of $T (\cdot)$, the isolated eigenvalues (of finite multiplicity) of $T (\la)$ depend analytically on $\la \in \C $, see~\cite[Sec.\ VII.1.3, Thm.\ VII.1.8]{Kato-1995}. Considering that, for $\la \in \i \R_+$, by unitary equivalence we have
	\begin{equation}
		\sigma (T (0)) =  \sigma (-\partial_x^2 + m^2 + (x - |\la|)^2 ) = \sigma (T (\la)),
	\end{equation}
	the spectrum of $T(\la)$ remains unchanged in $\la \in \C$ and thus, for $\la \neq 0$,
	\begin{equation}
		\la \sigma (S_0 (\la)) = \sigma (T(\la)) =  \sigma (T(0)) .
	\end{equation}
	Since it is well-known that $\sigma(T(0))$ does not contain zero, this gives $\sigma(S_0 (\cdot)) = \emptyset$.
\end{exple}

\section{An illustrative constant coefficient problem}
\label{sec:Laplace.powers}

In many applications, in particular in Sections~\ref{sec:DWE}--\ref{sec:KG}, the spaces $\Dd_S$ and $\Dd_{-S}$ are given by the form domain of the Schur complement and its anti-dual space. However, the latter is not always the case, as we demonstrate in a model problem where $\Dd_S = H^1 (\Rn)$ and $\Dd_{-S} = H^{-2} (\Rn)$, while $H^\frac32 (\Rn)$ is the form domain of the Schur complement. We point out that this example is of illustrative purpose and chosen as simple as possible; examples of similar structure can be found e.g.\ in \cite{Ibrogimov-Tretter-2017} where the entries are more general pseudodifferential operators, cf.~\cite[Sec.~5.2]{Gerhat-Siegl-2022-preprint} where $\Dd_S$ is the operator domain of the Schur complement, which is defined in a suitable weighted $L^2$-space $\Dd_{-2}$, and also Remark~\ref{rem:diag.dom}.

\begin{exple}
	In the Hilbert space $\Hh = \Hh_1 \oplus \Hh_2 = L^2 (\Rn) \oplus L^2 (\Rn)$, we consider the operator matrix
	\begin{equation}
	\label{eq:matrix.Laplace.powers}
		\Aa = \begin{pmatrix}
			\Delta & - \Delta^2 \\
			\sqrt{- \Delta} & \Delta
		\end{pmatrix}
	\end{equation}
	and the corresponding first Schur complement
	\begin{equation}
	\label{eq:Schur.compl.Laplace.powers}
		S(\lambda) = \Delta - \lambda + \Delta^2 (\Delta - \lambda)^{-1} \sqrt{- \Delta}, \qquad \lambda \in \C \setminus (-\infty, 0],
	\end{equation}
	acting in $L^2 (\Rn)$. Note that in this particular case, it is not difficult to explicitly determine the spectra of the operator matrix and its Schur complement above; however, the goal of this example is not their spectral analysis, but the illustration of spaces and operators underlying the spectral correspondence developed in Section~\ref{sec:matrix.Schur.complement}.

	\begin{prop}
		Let $\Aa_0 \defeq \Aa\vert_{\dom \Aa_0}$ with $\Aa$ as in \eqref{eq:matrix.Laplace.powers} and
		\begin{equation}
		\label{eq:dom.Aa.0.Laplace.powers}
			\dom \Aa_0 \defeq \set{(f, g) \in H^1 (\Rn) \times H^2 (\Rn)}{\Delta f - \Delta^2 g \in L^2 (\Rn)}.
		\end{equation}
		Moreover, let $S_0 (\lambda) \defeq S (\lambda)\vert_{\dom S_0 (\lambda)}$, $\lambda \in \C \setminus (- \infty, 0]$, be the family of maximal operators in $L^2 (\Rn)$ where $S(\lambda)$ is as in \eqref{eq:Schur.compl.Laplace.powers} and
		\begin{equation}
		\label{eq:dom.S.0.Laplace.powers}
			\dom S_0 (\lambda) \defeq \set{f \in H^1 (\Rn)}{\Delta f + \Delta^2 (\Delta - 1)^{-1} \sqrt{- \Delta} f \in L^2 (\Rn)}.
		\end{equation}
		In the above, the square root of $-\Delta$ is defined via functional calculus. Then $\Aa_0$ is densely defined and closed in $L^2 (\Rn) \oplus L^2 (\Rn)$ with
		\begin{equation}
		\label{eq:Laplace.powers.resolvent.half.plane}
			\set{\lambda \in \C}{\re \lambda > 0} \subset \rho (\Aa_0)
		\end{equation}
		and for the {\rm(}point and essential{\rm)} spectra of $\Aa_0$ and $S_0(\cdot)$ it holds that
		\begin{equation}
		\label{eq:Laplace.powers.spectra}
			\sigma (\Aa_0) \setminus (- \infty, 0] = \sigma  (S_0 (\cdot))
		\end{equation}
		and analogously with $\sigma_{\rm p}$ and $\sigma_{\operatorname{e2}}$ instead of $\sigma$.
	\end{prop}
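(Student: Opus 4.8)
The plan is to realise the operator matrix \eqref{eq:matrix.Laplace.powers} within the abstract framework of Section~\ref{sec:matrix.Schur.complement} using the \emph{first} Schur complement, and then to read off all assertions from Corollaries~\ref{cor:point.spectrum.family}--\ref{cor:family.second.implication}. I would set $\Hh_1 \defeq \Hh_2 \defeq L^2(\Rn)$ and choose the triplets
\[
 \Dd_S \defeq H^1(\Rn), \quad \Dd_{-S} \defeq \Dd_{-1} \defeq H^{-2}(\Rn), \quad \Dd_2 \defeq H^2(\Rn), \quad \Dd_{-2} \defeq L^2(\Rn),
\]
together with $A \defeq \Delta$, $B \defeq -\Delta^2$, $C \defeq \sqrt{-\Delta}$ and $D \defeq \Delta$, where $D_0$ is the self-adjoint Laplacian with $\dom D_0 = H^2(\Rn) = \Dd_2$. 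All boundedness requirements of Assumption~\ref{asm:II} are then immediate from the Fourier multiplier representation, since $A:H^1\to H^{-1}\subset H^{-2}$, $B:H^2\to H^{-2}$, $C:H^1\to L^2$ and $D:H^2\to L^2$ are bounded, while $\rho(D_0)=\C\setminus(-\infty,0]\eqdef\Theta$ and $D^\ddagger(\lambda)\defeq(\Delta-\lambda)^{-1}\in\Bb(L^2,H^2)$ for $\lambda\in\Theta$, so that $\Theta$ satisfies \eqref{eq:Theta}. This identifies $\Aa_0$ and $S_0(\cdot)$ with the operators in \eqref{eq:dom.Aa.0.Laplace.powers}, \eqref{eq:dom.S.0.Laplace.powers}; the stated, $\lambda$-independent form of $\dom S_0(\lambda)$ follows because $S(\lambda)-\bigl(\Delta+\Delta^2(\Delta-1)^{-1}\sqrt{-\Delta}\bigr)$ is, by the resolvent identity, a Fourier multiplier of order one and hence maps $H^1$ boundedly into $L^2$.

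The analytic heart of the argument is the symbol of $S(\lambda)$. A direct computation gives
\[
 \widehat{S(\lambda)}(\xi) = -\frac{(|\xi|^2+\lambda)^2 + |\xi|^5}{|\xi|^2+\lambda}, \qquad \xi\in\Rn, \quad \lambda\in\Theta,
\]
which grows like $-|\xi|^3$ as $|\xi|\to\infty$; in particular $S(\lambda)\in\Bb(H^1,H^{-2})$. The key point — and the reason this example is instructive — is that, although the \emph{form domain} of $S(\lambda)$ is $H^{3/2}(\Rn)$, the pair $H^1$, $H^{-2}$ is nevertheless admissible, precisely because the order-three growth of $\widehat{S(\lambda)}$ matches the weight $(1+|\xi|^2)^{3/2}$ relating $H^1$ and $H^{-2}$. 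Concretely, I would show that for every $\lambda\in\Theta$ there is a shift $z_\lambda\in\C$ with
\[
 \abs{\widehat{S(\lambda)}(\xi) - z_\lambda} \gtrsim (1+|\xi|^2)^{\frac32}, \qquad \xi\in\Rn,
\]
which by the multiplier theorem yields the extension $(S(\lambda)-z_\lambda)^{-1}\in\Bb(\Dd_{-S},\Dd_S)$ required in \eqref{eq:S.gen.inv.shift.ext}, together with density of $\dom S_0(\lambda)$ in $\Dd_S$. The estimate is automatic at high frequencies; at bounded frequencies one only has to choose $z_\lambda$ off the compact set of attained symbol values, and the explicit factorisation above shows that $\widehat{S(\lambda)}(\xi)\neq0$ for all $\xi$ whenever $\re\lambda>0$, so that $z_\lambda=0$ is admissible there.

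With these ingredients the abstract machinery applies verbatim. Since $\Theta=\rho(D_0)$, Corollary~\ref{cor:point.spectrum.family} gives the point-spectrum identity, and Corollary~\ref{cor:family.first.implication} yields the inclusions $\sigma(S_0(\cdot))\subset\sigma(\Aa_0)$ and $\essspec(S_0(\cdot))\subset\essspec(\Aa_0)$, the closedness of $S_0(\lambda)$ needed in part~(ii) being a consequence of $\rho(S_0(\lambda))\neq\emptyset$ and the hypothesis $\coker(D_0-\lambda)=\{0\}$ being automatic. For the reverse inclusions I would invoke Corollary~\ref{cor:second.implication} through Corollary~\ref{cor:family.second.implication} with $\Sigma=\Theta$: its hypotheses hold because $S(\lambda)\in\Bb(\Dd_S,\Dd_{-S})$, because $-\Delta^2$ maps $\dom D_0=H^2$ into $H^{-2}=\Dd_{-S}$, because $D_0-\lambda\in\Ff_+(\Hh_2)$ is boundedly invertible, and because the shifted inverse of the previous paragraph provides the required $S^\ddagger_z(\lambda)$. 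Combining both directions gives \eqref{eq:Laplace.powers.spectra} for $\sigma$, $\pspec$ and $\essspec$.

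Finally, for $\re\lambda>0$ we have $0\in\rho(S_0(\lambda))$ with $z_\lambda=0$, so Corollary~\ref{cor:second.implication}~(i) (equivalently Theorem~\ref{thm:second.implication}~(i)) yields $\lambda\in\rho(\Aa_0)$, establishing \eqref{eq:Laplace.powers.resolvent.half.plane} and, since $\rho(\Aa_0)\neq\emptyset$, the closedness of $\Aa_0$. Density of $\dom\Aa_0$ in $\Hh$ then follows from Corollary~\ref{cor:family.dense.domain}~(ii) applied at any $\lambda_0$ with $\re\lambda_0>0$, as $\Dd_{-S}=\Dd_{-1}$ and the hypotheses of Proposition~\ref{prop:Aa.dom.dense}~(ii) are met there. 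I expect the main obstacle to be the uniform symbol lower bound of the second paragraph, i.e.\ verifying that the \emph{non-form-domain} pair $H^1$, $H^{-2}$ genuinely captures the order-three operator $S(\lambda)$ both at infinity and on compact frequency sets; once this is in hand, the remaining steps are a routine translation of the abstract corollaries.
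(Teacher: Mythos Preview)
Your proposal is correct and follows essentially the same route as the paper: identical triplets \eqref{eq:const.coeff.spaces}, identical Fourier-multiplier verification of Assumption~\ref{asm:II}, and the same symbol lower bound $|s_\lambda(\xi)-z_\lambda|\gtrsim |\xi|^3+1$ to produce the extension $S^\ddagger_z(\lambda)\in\Bb(H^{-2},H^1)$ feeding Corollaries~\ref{cor:point.spectrum.family}--\ref{cor:family.second.implication} and~\ref{cor:family.dense.domain}~(ii). The only cosmetic differences are that the paper writes the symbol as $s_\lambda(\xi)=-|\xi|^2-\lambda-|\xi|^5/(|\xi|^2+\lambda)$ and, for $\re\lambda>0$, uses the one-line estimate $|s_\lambda(\xi)|\ge -\re s_\lambda(\xi)\ge\re\lambda$ rather than your non-vanishing-plus-compactness argument; it also obtains density of $\dom S_0(\lambda)$ in $H^1$ simply from $\Coo(\Rn)\subset\dom S_0(\lambda)$ rather than via the homeomorphism $(S(\lambda)-z_\lambda)^{-1}$.
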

	
	\begin{proof}[Sketch of proof]
		We indicate the objects needed for Assumption~\ref{asm:II}. Let
		\begin{equation}\label{eq:const.coeff.spaces}
			\begin{aligned}
				\Dd_S & \defeq H^1 (\Rn), \\
				\Dd_2 & \defeq H^2 (\Rn),
			\end{aligned}
			\qquad \quad 
			\begin{aligned}
				\Dd_{-S} = \Dd_{-1} & \defeq H^{-2} (\Rn), \\
				\Dd_{-2} & \defeq L^2 (\Rn).
			\end{aligned}
		\end{equation}
		Using the distributional Fourier transform, one can easily check that the following operators are bounded between the claimed spaces
		\begin{equation}
		\begin{array}{rcl}
		A \defeq & \Delta & \in \Bb (H^1 (\Rn), H^{-1} (\Rn)), \\[2mm]
		C \defeq & \!\!\sqrt{- \Delta}\!\! & \in \Bb (H^1 (\Rn), L^2 (\Rn)),
		\end{array}
		\quad \!
		\begin{array}{rcl}
		B \defeq & \!\!- \Delta^2\!\! & \in \Bb (H^2 (\Rn), H^{-2} (\Rn)), \\[2mm]
		D \defeq & \Delta & \in \Bb (H^2 (\Rn), L^2 (\Rn)),
		\end{array}	
		\end{equation}
		see e.g.\ \cite[Thm.\ 3.41]{Abels-2012}. Since $H^{-1} (\Rn)$ embeds continuously in $H^{-2} (\Rn)$, Assumption~\ref{asm:II} is satisfied with $D_0 \defeq D$ and we can define $\Aa_0$ and $S_0 (\cdot)$ as in Definition~\ref{def:matrix.Schur.compl.family} with
		\begin{equation}
		D^\ddagger(\lambda) \defeq (D - \lambda)^{-1}, \qquad \lambda \in \Theta \defeq \rho (D_0) = \C \setminus (- \infty, 0].
		\end{equation}
		One easily verifies that $\dom \Aa_0$ and $\dom S_0(\lambda)$ coincide with \eqref{eq:dom.Aa.0.Laplace.powers} and \eqref{eq:dom.S.0.Laplace.powers}.
		
		For the proof of \eqref{eq:Laplace.powers.spectra}, we outline the assumptions of Corollaries~\ref{cor:family.first.implication} and~\ref{cor:family.second.implication}. Let therefore $\lambda \in \C \setminus (- \infty, 0]$ be arbitrary but fixed. It is not difficult to see that there exists $z_\lambda > 0$ such that the inverse of $S(\lambda) - z_\lambda$ is bounded on $H^{-2} (\Rn)$; this follows from the fact that $S(\lambda)$ is unitarily equivalent to the multiplication operator by the symbol
		\begin{equation}
			s_\lambda (\xi) \defeq - |\xi|^2 - \lambda - \frac{|\xi|^5}{|\xi|^2 + \lambda}, \quad \xi \in \Rn,
		\end{equation}
		in the Fourier space. Indeed, it is elementary to prove the lower bound
		\begin{equation}
			|s_\lambda (\xi) - z_\lambda| \gtrsim |\xi|^3 + 1, \qquad \xi \in \Rn,
		\end{equation}
		if $z_\lambda > 0$ is large enough, which implies
		\begin{equation}
			S_z^\ddagger(\lambda) \defeq (S(\lambda) - z_\lambda)^{-1} \in \Bb (H^{-2}(\Rn), H^1(\Rn)).
		\end{equation}
		Moreover, $\dom S_0 (\lambda)$ contains $\Coo (\Rn)$ and is thus dense in $H^1 (\Rn)$. The relations in \eqref{eq:Laplace.powers.spectra} then follow from Corollary~\ref{cor:point.spectrum.family} and Corollaries~\ref{cor:family.first.implication} and~\ref{cor:family.second.implication} with
		\begin{equation}
			\Sigma \defeq \Theta = \C \setminus (- \infty, 0]=\rho(D_0).
		\end{equation}
		The density of $\dom \Aa_0$ in $L^2 (\Rn) \oplus L^2 (\Rn)$ is a consequence of Corollary~\ref{cor:family.dense.domain}~(ii). Finally, if $\re \lambda >0$, then one can easily derive the following lower bound
		\begin{equation}
			\abs{s_\lambda(\xi)} \ge \re \lambda>0, \qquad \xi \in \Rn,
		\end{equation}
		 which implies $0 \in \rho (S_0 (\lambda))$ and in turn \eqref{eq:Laplace.powers.resolvent.half.plane} by \eqref{eq:Laplace.powers.spectra}.
	\end{proof}

\end{exple}

\bibliographystyle{acm} 
\bibliography{references}

\end{document}